\renewcommand*{\baselinestretch}{1.25}
\newtheorem{theorem}{Theorem}[section]
\newtheorem{lemma}{Lemma}[section]
\newtheorem{proposition}{Proposition}[section]
\newtheorem{corollary}{Corollary}[section]
\theoremstyle{definition}
\newtheorem*{rmk*}{Remark}
\newtheorem{rmk}{Remark}[section]
\renewcommand*\proofname{\upshape{\bfseries{Proof}}}
\numberwithin{equation}{section}
    \renewcommand*{\section}{\@startsection{section}{1}{\z@}%
    {6pt}{3pt}{\reset@font\normalsize\bfseries}}
    \renewcommand*{\subsection}{\@startsection{subsection}{2}{\z@}%
    {3pt}{3pt}{\reset@font\normalsize\itshape}}
    \renewcommand*{\subsubsection}{\@startsection{subsubsection}{3}{\z@}%
    {3pt}{3pt}{\reset@font\normalsize\mdseries\itshape}}
\def\@seccntformat#1{\csname the#1\endcsname.\quad}
\def\@listi{\leftmargin\leftmargini
  \topsep=.5\baselineskip 
  \partopsep=0pt \parsep=0pt \itemsep=0pt}
\let\@listI\@listi
\def\@listii{\leftmargin\leftmarginii
  \labelwidth\leftmarginii \advance\labelwidth-\labelsep
  \topsep=0pt \partopsep=0pt \parsep=0pt \itemsep=0pt}
\def\@listiii{\leftmargin\leftmarginiii
  \labelwidth\leftmarginiii \advance\labelwidth-\labelsep
  \topsep=0pt \partopsep=0pt \parsep=0pt \itemsep=0pt}
\def\@listiv{\leftmargin\leftmarginiv
  \labelwidth\leftmarginiv \advance\labelwidth-\labelsep
  \topsep=0pt \partopsep=0pt \parsep=0pt \itemsep=0pt}
\newcommand{\opnorm}{\@ifstar\@opnorms\@opnorm}
\newcommand{\@opnorms}[1]{%
  \left|\mkern-1.5mu\left|\mkern-1.5mu\left|
   #1
  \right|\mkern-1.5mu\right|\mkern-1.5mu\right|
}
\newcommand{\@opnorm}[2][]{%
  \mathopen{#1|\mkern-1.5mu#1|\mkern-1.5mu#1|}
  #2
  \mathclose{#1|\mkern-1.5mu#1|\mkern-1.5mu#1|}
}
\renewenvironment{proof}[1][\proofname]{\par
  \pushQED{\qed}%
  \normalfont \topsep6\p@\@plus6\p@\relax
  \trivlist
  \item[\hskip\labelsep
        \bfseries
    #1\@addpunct{.}]\ignorespaces
}{%
  \popQED\endtrivlist\@endpefalse
}
\DeclareMathOperator{\vectorize}{vec}
\DeclareMathOperator{\trace}{tr}
\DeclareMathOperator{\diag}{diag}
\DeclareMathOperator{\sign}{sign}
\DeclareMathOperator{\expectation}{E}
\DeclareMathOperator{\bic}{BIC}
\DeclareMathOperator{\leb}{\mathsf{Leb}}
\newcommand{\labs}{\left|}
\newcommand{\rabs}{\right|}
\newcommand{\lpa}{\left(}
\newcommand{\rpa}{\right)}
\newcommand{\bs}[1]{\boldsymbol{#1}}
\newcommand{\ol}[1]{\overline{#1}}
\newcommand{\ul}[1]{\underline{#1}}
\newcommand{\wh}[1]{\widehat{#1}}
\newcommand{\wt}[1]{\widetilde{#1}}
\newcommand{\ex}[1]{\expectation\left[#1\right]}
\newcommand{\expe}[1]{\mathrm{E}[#1]}
\newcommand{\mf}[1]{\mathfrak{#1}}
\newcommand{\mcl}[1]{\mathcal{#1}}
\newcommand{\pck}[1]{{\bfseries#1}}
\newcommand{\pushright}[1]{\ifmeasuring@#1\else\omit\hfill$\displaystyle#1$\fi\ignorespaces}
\newcommand{\pushleft}[1]{\ifmeasuring@#1\else\omit$\displaystyle#1$\hfill\fi\ignorespaces}
\title{De-biased graphical Lasso for high-frequency data}
\author{Yuta Koike\thanks{Mathematics and Informatics Center and Graduate School of Mathematical Sciences, The University of Tokyo, 3-8-1 Komaba, Meguro-ku, Tokyo 153-8914 Japan}
\thanks{CREST, Japan Science and Technology Agency}
}
\begin{document}

\maketitle

\begin{abstract}

This paper develops a new statistical inference theory for the precision matrix of high-frequency data in a high-dimensional setting. The focus is not only on point estimation but also on interval estimation and hypothesis testing for entries of the precision matrix. 
To accomplish this purpose, we establish an abstract asymptotic theory for the weighted graphical Lasso and its de-biased version without specifying the form of the initial covariance estimator. 
We also extend the scope of the theory to the case that a known factor structure is present in the data. 
The developed theory is applied to the concrete situation where we can use the realized covariance matrix as the initial covariance estimator, and we obtain a feasible asymptotic distribution theory to construct (simultaneous) confidence intervals and (multiple) testing procedures for entries of the precision matrix. 
%
\vspace{3mm}

\noindent \textit{Keywords}: asymptotic mixed normality; factor model; high-dimensions; Malliavin calculus; precision matrix; sparsity.

\end{abstract}

\section{Introduction}

In high-frequency financial econometrics, covariance matrix estimation of asset returns has been extensively studied in the past two decades. 
High-frequency financial data are commonly modeled as a discretely observed semimartingale for which the quadratic covariation matrix plays the role of the covariance matrix, so their treatments are often different from those in a standard i.i.d.~setting. 
In the recent years, motivated by application to portfolio allocation and risk management in a large scale asset universe, the high-dimensionality problem has attracted much attention in this area. 
Since the 2000s, great progress has been made in high-dimensional covariance estimation from i.i.d.~data, so researchers are naturally led to apply the techniques developed therein to the context of high-frequency data. 
For example, \citet{WZ2010} have applied the entry-wise shrinkage methods considered in \cite{BL2008th,BL2008band} to estimating the covariance matrix of high-frequency data which are asynchronously observed with noise. See also \cite{TWZ2013,TWC2013,KWZ2016,KKLW2018} for further developments in this approach. 
In the meantime, it is well-recognized that the \textit{factor structure} is an important ingredient both theoretically and empirically for financial data. In the context of high-dimensional covariance estimation from high-frequency data, this perspective was first taken into account by \citet{FFX2016} and subsequently built up by, among others, \cite{AX2017,FK2017,DLX2019}. 
Other common methods used in i.i.d.~settings have also been investigated in the literature of high-frequency financial econometrics. 
\citet{HKO2012} and \citet{MN2017} formally apply eigenvalue regularization methods based on random matrix theory to high-frequency data. 
\citet{LFH2017} accommodate the non-linear shrinkage estimator of \cite{LW2012} to a high-frequency data setting with the help of the spectral distribution theory for the realized covariance matrix developed in \cite{ZL2011}. 
\citet{BNS2018} employ the $\ell_1$-penalized Gaussian MLE, which is known as the \textit{graphical Lasso}, to estimate the precision matrix (the inverse of the covariance matrix) of high-frequency data. The latter approach is closely related to the methodology we will focus on. 

Despite the recent advances in this topic as above, most studies in this area focus only on \textit{point estimation} of covariance and precision matrices, and there are little work about \textit{interval estimation} and \textit{hypothesis testing} for these objects. 
A few exceptions are \cite{KL2018,Pelger2019} and \cite{Koike2018sk}. 
The first two articles are concerned with continuous-time factor models: \citet{KL2018} propose a test for the constancy of the factor loading matrix, while \citet{Pelger2019} assumes constant loadings and develops an asymptotic distribution theory to make inference for the factors and loadings. Meanwhile, \citet{Koike2018sk} establishes a high-dimensional central limit theorem for the realized covariance matrix which allows us to construct simultaneous confidence regions or carry out multiple testing for entries of the high-dimensional covariance matrix of high-frequency data. 
The aim of this study is to develop such a statistical inference theory for the \textit{precision matrix} of high-frequency data. This is naturally motivated by the fact that the precision matrix of asset returns plays an important role in mean-variance analysis of portfolio allocation (see e.g.~\cite[Chapter 5]{Cochrane2005}). 
We accomplish this purpose by imposing a sparsity assumption on the precision matrix. Such an assumption has a clear interpretation in connection with \textit{Gaussian graphical models}: For a Gaussian random vector $\bs{\xi}=(\xi_1,\dots,\xi_d)^\top$ with covariance matrix $\Sigma$, $\xi_i$ and $\xi_j$ are conditionally independent given the other components if and only if the $(i,j)$-th entry of $\Sigma^{-1}$ is equal to 0, so the sparsity of $\Sigma^{-1}$ is interpreted as the sparsity of the edge structure of the \textit{conditional independence graph} associated with $\bs{\xi}$. We refer to Chapter 13 of \cite{BvdG2011} and references therein for more details on graphical models. 
This standpoint also makes it interesting to estimate the precision matrix of financial data in view of the recent attention to financial network analysis such as \cite{ACOT2012}. 

Statistical inference for high-dimensional sparse precision matrices has been actively studied in the recent literature, and various methodologies have ever been proposed; see \cite{JvdG2018} for an overview. Among others, this paper studies (a weighted version of) the de-biased (or de-sparsified) graphical Lasso in the context of high-frequency data. The de-biased graphical Lasso was introduced in \citet{JvdG2015} and its theoretical property was investigated in the i.i.d.~case. In this paper we consider its weighted version discussed in \cite{JvdG2018} because of its theoretically preferable behavior (see Remark \ref{rmk:weighted}). 
Compared to the i.i.d.~case, we need to handle a new theoretical difficulty in the application to high-frequency data, which is caused by the non-ergodic nature of the problem. That is, the precision matrix of high-frequency data is generally stochastic and not (stochastically) independent of the observation data. 
In our context, the precision matrix appears in the coefficients of the linear approximation of the de-biased estimator (see Lemma \ref{prop:AL}), so it spoils the martingale structure of the linear approximation which we usually have in the i.i.d.~case. In a low-dimensional setting, this issue is typically resolved by the concept of \textit{stable convergence} (see e.g.~\cite{PV2010}), but the applicability of this approach is questionable in our setting due to the high-dimensionality (see pages 1451--1452 of \cite{Koike2018sk} for a discussion). Instead, we rely on the recent high-dimensional central limit theory of \cite{Koike2018sk} to establish the asymptotic distribution theory for the de-biased estimator, where we settle the above difficulty with the help of Malliavin calculus. 

The rest of this paper is organized as follows. 
In Section \ref{sec:main} we develop an abstract asymptotic theory for the weighted graphical Lasso based on a generic estimator for the quadratic covariation matrix of a high-dimensional semimartingale. This allows us to flexibly apply the developed theory to various settings arising in high-frequency financial econometrics. In Section \ref{sec:factor} we extend the scope of the theory to a situation where a known factor structure is present in data and a sparsity assumption is imposed on the precision matrix of the residual process rather than that of the original process. 
In Section \ref{sec:rc}, we apply the abstract theory developed in Section \ref{sec:factor} to a concrete setting where we observe the process at equidistant times without jumps and noise. 
Section \ref{sec:simulation} conducts a Monte Carlo study to assess the finite sample performance of the asymptotic theory. 
All the technical proofs are collected in the Appendix. 

\section*{Notation}

Throughout the paper, we assume $d\geq2$. 
$\top$ stands for the transpose of a matrix. For a vector $x\in\mathbb{R}^d$, we write the $i$-th component of $x$ as $x^i$ for $i=1,\dots,d$. 
For two vectors $x,y\in\mathbb{R}^d$, the statement $x\leq y$ means $x^i\leq y^i$ for all $i=1,\dots,d$. 
The identity matrix of size $d$ is denoted by $\mathsf{E}_d$. 
We write $\mathbb{R}^{l\times k}$ for the set of all $l\times k$ matrices. 
$\mcl{S}_d$ denotes the set of all $d\times d$ symmetric matrices. 
$\mcl{S}_d^+$ denotes the set of all $d\times d$ positive semidefinite matrices. 
$\mcl{S}_d^{++}$ denotes the set of all $d\times d$ positive definite matrices. 
For a $l\times k$ matrix $A$, the $(i,j)$-th entry of $A$ is denoted by $A^{ij}$. Also, $A^{i\cdot}$ and $A^{\cdot j}$ denote the $i$-th row vector and the $j$-th column vector, respectively (both are regarded as column vectors). 
We write $\vectorize(A)$ for the vectorization of $A$:
\[
\vectorize(A):=(A^{11},\dots,A^{l1},A^{12},\dots,A^{l2},\dots,A^{1k},\dots,A^{lk})^\top\in\mathbb{R}^{lk}.
\]
For every $w\in[1,\infty]$, we set 
\[
\|A\|_{\ell_w}:=
\left\{\begin{array}{ll}
\{\sum_{i=1}^l\sum_{j=1}^{k}|A^{ij}|^w\}^{1/w} & \text{if }w<\infty,\\
\max_{1\leq i\leq l}\max_{1\leq j\leq k}|A^{ij}| & \text{if }w=\infty.
\end{array}\right.
\] 
Also, we write $\opnorm{A}_w$ for the $\ell_w$-operator norm of $A$:
\[
\opnorm{A}_w:=\sup\{\|Ax\|_{\ell^w}:x\in\mathbb{R}^k,\|x\|_{\ell_w}=1\}.
\]
It is well-known that $\opnorm{A}_1=\max_{1\leq j\leq k}\sum_{i=1}^{l}|A^{ij}|$ and $\opnorm{A}_\infty=\max_{1\leq i\leq l}\sum_{j=1}^k|A^{ij}|$. 
When $l=k$, $\diag(A)$ denotes the diagonal matrix with the same diagonal entries as $A$, and we set $A^-:=A-\diag(A)$. If $A$ is symmetric, we denote by $\Lambda_{\max}(A)$ and $\Lambda_{\min}(A)$ the maximum and minimum eigenvalues of $A$, respectively. 
For two matrices $A$ and $B$, $A\otimes B$ denotes their Kronecker product. When $A$ and $B$ has the same size, we write $A\circ B$ for their Hadamard product. 

For a random variable $\xi$ and $p\in(0,\infty]$, $\|\xi\|_p$ denotes the $L^p$-norm of $\xi$. 
For a $l$-dimensional semimartingale $X=(X_t)_{t\in[0,1]}$ and a $k$-dimensional semimartingale $Y=(Y_t)_{t\in[0,1]}$, we define $\Sigma_{XY}:=[X,Y]_1:=([X^i,Y^j]_1)_{1\leq i\leq l,1\leq j\leq k}$. We write $\Sigma_X=\Sigma_{XX}$ for short. If $\Sigma_X$ is a.s.~invertible, we write $\Theta_X:=\Sigma_X^{-1}$. 

\section{Estimators and abstract results}\label{sec:main}

Given a stochastic basis $\mcl{B}=(\Omega,\mcl{F},(\mcl{F}_t)_{t\in[0,1]},P)$, we consider a $d$-dimensional semimartingale $Y=(Y_t)_{t\in[0,1]}$ defined there. We assume $\Sigma_Y=[Y,Y]_1$ is a.s.~invertible. 
In this paper we consider the asymptotic theory such that the dimension $d$ possibly depends on a parameter $n\in\mathbb{N}$ so that $d=d_n\to\infty$ as $n\to\infty$. As a consequence, both $\mcl{B}$ and $Y$ may also depend on $n$. However, following the custom of the literature, we omit the indices $n$ from these objects and many other ones appearing below. 

Our aim is to estimate the precision matrix $\Theta_Y=\Sigma_Y^{-1}$ when we have an estimator $\hat{\Sigma}_n$ for $\Sigma_Y$; as a corollary, we can also estimate $\Sigma_Y$ itself. 
We assume that $\hat{\Sigma}_n$ is an $\mcl{S}_d^+$-valued random variable all of whose diagonal entries are a.s.~positive, but we do not specify the form of $\hat{\Sigma}_n$ because the asymptotic theory developed in this section depends on the property of $\hat{\Sigma}_n$ rather than their construction. 
This is convenient because construction of the estimator depends heavily on observation schemes for $Y$ (with or without noise, synchronous or not, continuous or discontinuous and so on; see \cite{KY2019} for details). In Section \ref{sec:rc} we illustrate how we apply the abstract theory developed in this and the next sections to a concrete situation. 

We use the \textit{weighted graphical Lasso} to estimate $\Theta_Y$ (cf.~\cite{JvdG2018}). The weighted graphical Lasso estimator $\hat{\Theta}_\lambda$ with penalty parameter $\lambda>0$ based on $\hat{\Sigma}_n$ is defined by
\begin{equation}\label{wglasso}
\hat{\Theta}_\lambda:=\arg\min_{\Theta\in\mcl{S}_d^{++}}\left\{\trace\left(\Theta \hat{\Sigma}_n\right)-\log\det\left(\Theta\right)+\lambda\sum_{i\neq j}\hat{ V}_n^{ii}\hat{ V}_n^{jj}\left|\Theta^{ij}\right|\right\},
\end{equation}
where $\hat{ V}_n:=\diag(\hat{\Sigma}_n)^{\frac{1}{2}}$. 
According to the proof of \cite[Lemma 1]{DGK2008}, the optimization problem in \eqref{wglasso} has the unique solution when $\lambda>0$ and $\hat{\Sigma}_n$ is positive semidefinite and all the diagonal entries of $\hat{\Sigma}_n$ are positive, so $\hat{\Theta}_\lambda$ is a.s.~defined in our setting. 
In the following we allow $\lambda$ to be a random variable because we typically select $\lambda$ in a data-driven way. 

%
To analyze the theoretical property of $\hat{\Theta}_\lambda$, it is convenient to consider the graphical Lasso estimator $\hat{K}_\lambda$ based on the correlation matrix estimator $\hat{R}_n:=\hat{ V}_n^{-1}\hat{\Sigma}_n\hat{ V}_n^{-1}$ as follows:
\begin{equation}\label{glasso-cormat}
\hat{K}_\lambda:=\arg\min_{K\in\mcl{S}_d^{++}}\left\{\trace\left(K \hat{R}_n\right)-\log\det\left(K\right)+\lambda\left\|K^{-}\right\|_{\ell_1}\right\}.
\end{equation}
We can easily check $\hat{\Theta}_\lambda=\hat{ V}_n^{-1}\hat{K}_\lambda\hat{ V}_n^{-1}$. 
\begin{rmk}\label{rmk:weighted}
As pointed out in \citet{RBLZ2008} and \citet{JvdG2018}, the graphical Lasso based on correlation matrices is theoretically preferable to that based on covariance matrices (so the weighted graphical Lasso is also preferable). In particular, we do not need to impose the so-called \textit{irrepresentability condition} on $\Sigma_Y$ to derive the theoretical properties of our estimators, which contrasts with \citet{BNS2018} (see Assumption 2 in \cite{BNS2018}).  
\end{rmk}

We introduce some notation related to the sparsity assumptions we will impose on $\Theta_Y$. Let $A\in\mcl{S}_d$. For $j=1,\dots,d$, we set $\mf{D}_j(A):=\{i:A^{ij}\neq0,i\neq j\}$ and $\mf{d}_j(A):=\#\mf{D}_j(A)$. Then we define $\mf{d}(A):=\max_{1\leq j\leq d}\mf{d}_j(A)$. We also define $S(A):=\bigcup_{j=1}^d\mf{D}_j(A)=\{(i,j):A^{ij}\neq0,i\neq j\}$ and $s(A):=\# S(A)$. 
These quantities have a clear interpretation when the matrix $A$ represents the edge structure of some graph so that $A^{ij}\neq0$ is equivalent to the presence of an edge between vertices $i$ and $j$ for $i\neq j$; in this case, $\mf{d}_j(A)$ is the number of edges adjacent to vertex $j$ (which is called the \textit{degree} of vertex $j$) and $s(A)$ is the total number of edges contained in the graph. 

To derive our asymptotic results, we will impose the following structural assumptions on $\Sigma_Y$. 
\begin{enumerate}[label={\normalfont[A\arabic*]}]


\item\label{ass:eigen} $\Lambda_\mathrm{max}(\Sigma_Y)+1/\Lambda_\mathrm{min}(\Sigma_Y)=O_p(1)$ as $n\to\infty$.

\item\label{ass:sparse} $s(\Theta_Y)=O_p(s_n)$ as $n\to\infty$ for some sequence $s_n\in[1,\infty)$, $n=1,2,\dots$.

\item\label{ass:degree} $\mf{d}(\Theta_Y)=O_p(\mf{d}_n)$ as $n\to\infty$ for some sequence $\mf{d}_n\in[1,\infty)$, $n=1,2,\dots$. 

\end{enumerate}

\ref{ass:eigen} is standard in the literature; see e.g.~Condition A1 in \cite{JvdG2018}. 
\ref{ass:sparse} states that the sparsity of $\Theta_Y$ is controlled by the deterministic sequence $s_n$; we will require the growth rate of $s_n$ to be moderate. 
\ref{ass:degree} is another sparsity assumption on $\Theta_Y$. It is weaker than \ref{ass:sparse} in the sense that it always holds true with $\mf{d}_n=s_n$ under \ref{ass:sparse}. However, we can generally take $\mf{d}_n$ smaller than $s_n$.  

\subsection{Consistency}\label{sec:rate}

Set $ V_Y:=\diag(\Sigma_Y)^{\frac{1}{2}}$, $R_Y:= V_Y^{-1}\Sigma_Y V_Y^{-1}$ and $K_Y:=R_Y^{-1}$. 
\begin{proposition}\label{glasso:rate}
Assume \ref{ass:eigen}--\ref{ass:sparse}. Let $(\lambda_n)_{n=1}^\infty$ be a sequence of positive-valued random variables satisfying the following conditions:
\begin{enumerate}[label={\normalfont[B\arabic*]}]

\item\label{ass:est} $\lambda_n^{-1}\|\hat{\Sigma}_n-\Sigma_Y\|_{\ell_\infty}\to^p0$ as $n\to\infty$.

\item\label{ass:rate} $s_n\lambda_n\to^p0$ as $n\to\infty$.

\end{enumerate}
Then we have
\begin{equation}\label{rate:K}
\lambda_n^{-1}\|\hat{K}_{\lambda_n}-K_Y\|_{\ell_2}=O_p(\sqrt{s_n}),\qquad
\lambda_n^{-1}\opnorm{\hat{K}_{\lambda_n}-K_Y}_w=O_p(s_n)
\end{equation}
and
\begin{equation}\label{rate:Theta}
\lambda_n^{-1}\opnorm{\hat{\Theta}_{\lambda_n}-\Theta_Y}_w=O_p(s_n),\qquad
\lambda_n^{-1}\opnorm{\hat{\Theta}_{\lambda_n}^{-1}-\Sigma_Y}_2=O_p(s_n)
\end{equation}
as $n\to\infty$ for any $w\in[1,\infty]$. 
\end{proposition}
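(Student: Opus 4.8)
The plan is to exploit the reduction $\hat{\Theta}_\lambda = \hat V_n^{-1}\hat K_\lambda \hat V_n^{-1}$ and to analyze the correlation-matrix estimator $\hat K_{\lambda_n}$ first, since its penalty has the clean unweighted form $\lambda\|K^-\|_{\ell_1}$. The starting point is the KKT (first-order optimality) conditions for the optimization problem \eqref{glasso-cormat}: at the minimizer, $\hat R_n - \hat K_{\lambda_n}^{-1} + \lambda_n \hat Z = 0$ for some matrix $\hat Z$ with $\|\hat Z\|_{\ell_\infty}\leq 1$ and $\hat Z^{ij}=\sign(\hat K_{\lambda_n}^{ij})$ on the off-diagonal support (with $\hat Z^{ii}=0$). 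Subtracting the population identity $R_Y - K_Y^{-1}=0$, one writes $\hat K_{\lambda_n}^{-1} - K_Y^{-1}$ in terms of the "noise" $\hat R_n - R_Y$ and the penalty term $\lambda_n \hat Z$. The key deterministic input is \ref{ass:est}, which gives $\|\hat R_n - R_Y\|_{\ell_\infty} = o_p(\lambda_n)$; indeed since the diagonal of $\hat\Sigma_n$ is consistent under \ref{ass:eigen}, $\|\hat R_n - R_Y\|_{\ell_\infty}$ is of the same order as $\lambda_n^{-1}$-normalized, so $\lambda_n^{-1}\|\hat R_n - R_Y\|_{\ell_\infty}\to^p 0$.

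The second step is the standard graphical-Lasso deterministic bound, essentially the argument of \citet{RBLZ2008} (see also \citet{JvdG2018}): on the event where $\|\hat R_n - R_Y\|_{\ell_\infty}\leq \lambda_n$ and $\lambda_n$ is small enough relative to $\Lambda_{\min}(R_Y)$ and $s_n$, one performs a convexity/Taylor analysis of the objective $G(K)=\trace(K\hat R_n)-\log\det K + \lambda_n\|K^-\|_{\ell_1}$ around $K_Y$. Writing $\Delta = \hat K_{\lambda_n}-K_Y$, the remainder of the log-det term is controlled from below by a quadratic form whose curvature is bounded below by a constant (using \ref{ass:eigen}, since $\Lambda_{\min}(R_Y)$ and $\Lambda_{\max}(R_Y)$ are bounded away from $0$ and $\infty$ in probability), provided $\|\Delta\|_{\ell_2}$ is not too large; the linear term is bounded by $\|\hat R_n - R_Y\|_{\ell_\infty}\|\Delta\|_{\ell_1}$; and the penalty difference, using that $K_Y^-$ has support of size $s(\Theta_Y)=O_p(s_n)$, is bounded below by $\lambda_n(\|\Delta_{S^c}\|_{\ell_1} - \|\Delta_S\|_{\ell_1})$ where $S=S(\Theta_Y)$. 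Combining these and using $\|\Delta_S\|_{\ell_1}\leq \sqrt{s_n}\,\|\Delta\|_{\ell_2}$ yields, after the usual rearrangement, $\|\Delta\|_{\ell_2}=O_p(\lambda_n\sqrt{s_n})$, which is the first bound in \eqref{rate:K}. For the operator-norm bound one combines $\|\Delta_{S^c}\|_{\ell_1}\lesssim \|\Delta_S\|_{\ell_1}\lesssim \sqrt{s_n}\|\Delta\|_{\ell_2}$ with $\opnorm{\Delta}_w\leq \|\Delta\|_{\ell_1}$ (valid for every $w\in[1,\infty]$, since $\|\cdot\|_{\ell_1}$ dominates every entrywise-supported operator norm via $\opnorm{\Delta}_1\leq \|\Delta\|_{\ell_1}$ and $\opnorm{\Delta}_\infty\leq\|\Delta\|_{\ell_1}$, and interpolation for intermediate $w$), giving $\opnorm{\Delta}_w=O_p(\lambda_n s_n)$.

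The final step transfers these bounds from $\hat K_{\lambda_n}$ to $\hat\Theta_{\lambda_n}=\hat V_n^{-1}\hat K_{\lambda_n}\hat V_n^{-1}$ and to $\hat\Theta_{\lambda_n}^{-1}=\hat V_n\hat K_{\lambda_n}^{-1}\hat V_n$. Writing $\hat\Theta_{\lambda_n}-\Theta_Y = \hat V_n^{-1}\hat K_{\lambda_n}\hat V_n^{-1} - V_Y^{-1}K_Y V_Y^{-1}$ and splitting into the term with $\hat K_{\lambda_n}-K_Y$ sandwiched by the estimated diagonals and the terms coming from $\hat V_n^{-1}-V_Y^{-1}$, one bounds the operator norm using submultiplicativity $\opnorm{ABC}_w\leq\opnorm{A}_w\opnorm{B}_w\opnorm{C}_w$, the fact that diagonal matrices have $\opnorm{\cdot}_w$ equal to their largest entry, and that $\opnorm{\hat V_n^{-1}}_w$, $\opnorm{V_Y^{-1}}_w$ are $O_p(1)$ while $\opnorm{\hat V_n - V_Y}_w$ is controlled by $\|\hat\Sigma_n-\Sigma_Y\|_{\ell_\infty}$, hence $o_p(\lambda_n)=o_p(\lambda_n s_n)$ — absorbed into the $O_p(\lambda_n s_n)$ term, noting also that $\opnorm{K_Y}_w=O_p(1)$ is needed for the cross terms, which follows from \ref{ass:eigen}. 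The $\ell_2$-bound on $\hat\Theta_{\lambda_n}^{-1}-\Sigma_Y$ is handled the same way, replacing $\hat K_{\lambda_n}-K_Y$ by $\hat K_{\lambda_n}^{-1}-K_Y^{-1}$; the latter is $O_p(\lambda_n\sqrt{s_n})$ in $\ell_2$ because $\hat K_{\lambda_n}^{-1}-K_Y^{-1}=-K_Y^{-1}\Delta K_Y^{-1}+O_p(\|\Delta\|_{\ell_2}^2)$ and $\opnorm{K_Y^{-1}}_2=\Lambda_{\max}(R_Y)=O_p(1)$. I expect the main obstacle to be the curvature lower bound in the deterministic step: one must show the quadratic remainder of $-\log\det$ stays uniformly convex along the segment from $K_Y$ to $\hat K_{\lambda_n}$, which requires first establishing an a priori (crude) bound that keeps $\hat K_{\lambda_n}$ inside a spectral neighborhood of $K_Y$ — this is where \ref{ass:rate} ($s_n\lambda_n\to^p 0$) is essential, ensuring the $O_p(\lambda_n s_n)$ error is $o_p(1)$ so the neighborhood argument closes.
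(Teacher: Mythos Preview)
Your approach is the paper's: reduce to the correlation-based estimator $\hat K_{\lambda_n}$, apply the deterministic graphical-Lasso oracle inequality (packaged in the paper as Proposition~\ref{oracle}, essentially the \citet{JvdG2018} argument you sketch) to get $\|\hat K_{\lambda_n}-K_Y\|_{\ell_2}=O_p(\sqrt{s_n}\lambda_n)$ and $\|\hat K_{\lambda_n}^- - K_Y^-\|_{\ell_1}=O_p(s_n\lambda_n)$, then transfer through the diagonal scaling. Two small slips worth fixing. First, the claim $\opnorm{K_Y}_w=O_p(1)$ for general $w$ does \emph{not} follow from \ref{ass:eigen}---that assumption only controls $\opnorm{K_Y}_2$; for $w=1,\infty$ the paper uses $\opnorm{K_Y}_1=O_p(s_n)$ via the sparsity \ref{ass:sparse}, and the cross term becomes $o_p(\lambda_n)\cdot O_p(s_n)=o_p(s_n\lambda_n)$, still absorbed. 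Second, your route $\opnorm{\Delta}_w\leq\|\Delta\|_{\ell_1}$ combined with the bounds on $\|\Delta_S\|_{\ell_1}$ and $\|\Delta_{S^c}\|_{\ell_1}$ leaves the diagonal of $\Delta$ uncontrolled, since $S=S(K_Y)$ is the off-diagonal support; the paper instead writes $\opnorm{\Delta}_1\leq\|\diag(\Delta)\|_{\ell_\infty}+\|\Delta^-\|_{\ell_1}$ and bounds the diagonal piece by $\|\Delta\|_{\ell_2}=O_p(\sqrt{s_n}\lambda_n)$. Neither slip changes the final rates once corrected.
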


Proposition \ref{glasso:rate} is essentially a rephrasing of Theorem 14.1.3 in \cite{JvdG2018}. To get a better convergence rate in Proposition \ref{glasso:rate}, we should choose $\lambda_n$ as small as possible, where a lower bound of $\lambda_n$ is determined by the convergence rate of $\hat{\Sigma}_n$ in the $\ell_\infty$-norm by \ref{ass:est}. One typically derives this convergence rate by establishing entry-wise concentration inequalities for $\hat{\Sigma}_n$. Such inequalities have already been established for various covariance estimators used in high-frequency financial econometrics; see Theorems 1--2 and Lemma 3 in \cite{FLY2012}, Theorem 1 in \cite{TWZ2013}, Theorem 1 in \cite{KW2016}, and Theorem 2 in \cite{BNS2018} for example. 
We however note that $\hat{\Sigma}_n$ should be positive semidefinite to ensure that the graphical Lasso has the unique solution. This property is not necessarily ensured by many covariance estimators used in this area. In this regard, we mention that pre-averaging and realized kernel estimators have versions to ensure this property, for which relevant bounds are available in \cite[Theorem 2]{KWZ2016} and \cite[Lemma 1]{DLX2019}. 
%
\begin{rmk}[Comparison to \citet{BNS2018}]
Compared with \cite[Theorem 1]{BNS2018}, Proposition \ref{glasso:rate} has two major theoretical improvements. 
First, Proposition \ref{glasso:rate} does not assume the so-called irrepresentability condition, which is imposed in \cite[Theorem 1]{BNS2018} as Assumption 2. 
Second, Proposition \ref{glasso:rate} gives consistency in the $\ell_w$-operator norm for all $w\in[1,\infty]$, while \cite[Theorem 1]{BNS2018} only shows consistency in the $\ell_\infty$-norm. 
We shall remark that consistency in matrix operator norms is important in application. For example, the consistency of $\hat{\Theta}_{\lambda_n}$ in the $\ell_2$-operator norm implies that eigenvalues of $\hat{\Theta}_{\lambda_n}$ consistently estimate the corresponding eigenvalues of $\Theta_Y$. Also, the consistency in the $\ell_\infty$-operator norm ensures $\|\hat{\Theta}_{\lambda_n}x-\Theta_Yx\|_{\ell_\infty}\to^p0$ as $n\to\infty$ for any $x\in\mathbb{R}^d$ such that $\|x\|_{\ell_\infty}=O(1)$. This result is important for portfolio allocation because the weight vector for the global minimum variance portfolio is given by $\Theta_Y\bs{1}/\bs{1}^\top\Theta_Y\bs{1}$ when assets have covariance matrix $\Sigma_Y$, where $\bs{1}=(1,\dots,1)^\top\in\mathbb{R}^d$; see e.g.~\cite[Section 5.2]{Cochrane2005}.  

On the other hand, unlike \cite[Theorem 1]{BNS2018}, we do not show selection consistency (i.e.~$P(S(\hat{\Theta}_{\lambda_n})=S(\Theta_Y))\to1$ as $n\to\infty$) under our assumptions. Indeed, in the linear regression setting, it is known that an irrepresentability type condition is \textit{necessary} for the selection consistency of the Lasso; see \cite[Section 7.5.3]{BvdG2011} for more details. 
However, we shall remark that the asymptotic mixed normality of the de-biased estimator stated below might be used to construct an estimator having selection consistency via thresholding; see \cite[Section 3.1]{RSZZ2015} and \cite[Section 4.2]{CQYZ2018} for such applications. 
\end{rmk}

\subsection{Asymptotic mixed normality}\label{sec:AMN}

The following lemma states that $\hat{\Theta}_{\lambda_n}-\Theta_Y$ is asymptotically linear in $\hat{\Sigma}_n-\Sigma_Y$ after bias correction when $\Theta_Y$ is sufficiently sparse. 
\begin{lemma}\label{prop:AL}
Suppose that the assumptions of Proposition \ref{glasso:rate} and \ref{ass:degree} are satisfied. Then we have
\[
\lambda_n^{-2}\|\hat{\Theta}_{\lambda_n}-\Theta_Y-\Gamma_n+\Theta_Y(\hat{\Sigma}_n-\Sigma_Y)\Theta_Y\|_{\ell_\infty}=O_p(s_n\sqrt{\mf{d}_n})
\]
as $n\to\infty$, where $\Gamma_n:=-(\hat{\Theta}_{\lambda_n}-\hat{\Theta}_{\lambda_n}\hat{\Sigma}_n\hat{\Theta}_{\lambda_n})$.
\end{lemma}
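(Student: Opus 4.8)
The plan is to exploit the first-order optimality (KKT) conditions for the weighted graphical Lasso together with the consistency rates already established in Proposition \ref{glasso:rate}. Writing $\hat\Theta:=\hat\Theta_{\lambda_n}$ for brevity, the KKT conditions for \eqref{wglasso} state that $\hat\Theta^{-1}-\hat\Sigma_n$ equals $\lambda_n$ times a matrix whose $(i,j)$-entry has absolute value at most $\hat V_n^{ii}\hat V_n^{jj}$ off the diagonal and vanishes on the diagonal; equivalently, $\|\hat\Theta^{-1}-\hat\Sigma_n-\diag(\hat\Theta^{-1}-\hat\Sigma_n)\|_{\ell_\infty}\le\lambda_n\|\hat V_n\|_{\ell_\infty}^2$, and the diagonal of $\hat\Theta^{-1}$ equals that of $\hat\Sigma_n$. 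Since $\|\hat V_n\|_{\ell_\infty}^2$ is $O_p(1)$ under \ref{ass:eigen} and \ref{ass:est}, this gives $\|\hat\Theta^{-1}-\hat\Sigma_n\|_{\ell_\infty}=O_p(\lambda_n)$. Now expand $\Gamma_n=-(\hat\Theta-\hat\Theta\hat\Sigma_n\hat\Theta)=\hat\Theta(\hat\Sigma_n-\hat\Theta^{-1})\hat\Theta$, so the quantity inside the norm to be bounded is
\[
\hat\Theta-\Theta_Y+\Theta_Y(\hat\Sigma_n-\Sigma_Y)\Theta_Y-\hat\Theta(\hat\Sigma_n-\hat\Theta^{-1})\hat\Theta.
\]

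The next step is algebraic rearrangement. Using $\hat\Theta(\hat\Theta^{-1}-\hat\Sigma_n)\hat\Theta = \hat\Theta - \hat\Theta\hat\Sigma_n\hat\Theta$ and the identity $\hat\Theta - \Theta_Y = -\Theta_Y(\hat\Theta^{-1}-\Sigma_Y)\hat\Theta + (\hat\Theta^{-1}\text{-free remainder})$, I would massage the expression into a sum of terms each of the form (product of $\Theta_Y$ or $\hat\Theta$) $\times$ (a "small" factor) $\times$ (product of $\Theta_Y$ or $\hat\Theta$), where the small factors are of two kinds: the KKT residual $\hat\Theta^{-1}-\hat\Sigma_n$, which is $O_p(\lambda_n)$ in $\ell_\infty$; and differences $\hat\Theta-\Theta_Y$, which by \eqref{rate:Theta} satisfy $\opnorm{\hat\Theta-\Theta_Y}_w=O_p(s_n\lambda_n)$ for every $w$, in particular for $w=1,\infty$. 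The residual terms combine into something like $\Theta_Y(\hat\Theta^{-1}-\hat\Sigma_n)\Theta_Y$ plus cross terms $(\hat\Theta-\Theta_Y)(\hat\Theta^{-1}-\hat\Sigma_n)\Theta_Y$ etc. The genuinely first-order piece $\Theta_Y(\hat\Theta^{-1}-\hat\Sigma_n)\Theta_Y$ is dangerous a priori because it is only $O_p(\lambda_n)$ entry-wise before hitting $\Theta_Y$ on both sides; this is where \ref{ass:degree} enters. The point is that the KKT residual $U:=\hat\Theta^{-1}-\hat\Sigma_n$ is supported, up to the diagonal, essentially on $S(\hat\Theta)$ — more precisely, where $\hat\Theta^{ij}\neq0$ the subgradient is exactly $\sign(\hat\Theta^{ij})\hat V_n^{ii}\hat V_n^{jj}$, and one shows the relevant support has row/column sparsity $O_p(\mf d_n)$ using the support-recovery-in-norm control from the proof of Proposition \ref{glasso:rate}. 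Then for a matrix $U$ with $\|U\|_{\ell_\infty}=O_p(\lambda_n)$ whose off-diagonal is supported on a set with at most $O_p(\mf d_n)$ entries per row, one has $\opnorm{U}_\infty=O_p(\lambda_n\mf d_n)$ (and likewise $\opnorm{U}_1$), plus the diagonal contributes nothing since $\diag(\hat\Theta^{-1})=\diag(\hat\Sigma_n)$. Hence $\|\Theta_Y U\Theta_Y\|_{\ell_\infty}\le\opnorm{\Theta_Y}_\infty\opnorm{U}_\infty\opnorm{\Theta_Y}_1$, but this only gives $O_p(\lambda_n\mf d_n)$, not the claimed $O_p(\lambda_n^2 s_n\sqrt{\mf d_n})$ — so in fact this first-order term must be expected to match $\Theta_Y(\hat\Sigma_n-\Sigma_Y)\Theta_Y$ only approximately, and what is actually bounded is the discrepancy; I would therefore recombine $\Theta_Y(\hat\Sigma_n-\Sigma_Y)\Theta_Y - \Theta_Y U\Theta_Y = \Theta_Y(\hat\Theta^{-1}-\Sigma_Y)\Theta_Y\cdot(-1)\cdot(-1)$... i.e. the leading terms telescope, leaving only products in which at least one factor is $\hat\Theta-\Theta_Y$ (size $s_n\lambda_n$) and at least one is either another $\hat\Theta-\Theta_Y$ or the sparse residual $U$ (size $\lambda_n$, sparsity $\mf d_n$).

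Collecting the bounds, a typical surviving term is $\opnorm{\hat\Theta-\Theta_Y}_\infty\cdot\opnorm{U}_{?}\cdot(\cdots)$ or $\|(\hat\Theta-\Theta_Y)A(\hat\Theta-\Theta_Y)\|_{\ell_\infty}$; using $\|BC\|_{\ell_\infty}\le\|B\|_{\ell_2}\|C^{\cdot j}\|_{\ell_2}$-type inequalities together with $\|\hat K-K_Y\|_{\ell_2}=O_p(\sqrt{s_n}\lambda_n)$ and the Frobenius bound transferring to $\hat\Theta-\Theta_Y$ via $\hat V_n^{-1}$, one gets terms of order $\lambda_n^2\sqrt{s_n}\sqrt{s_n}$ from pure quadratic pieces and $\lambda_n^2 s_n\sqrt{\mf d_n}$ from the piece $(\hat\Theta-\Theta_Y)\,U\,\Theta_Y$, where $\sqrt{s_n}$ comes from $\|\hat\Theta-\Theta_Y\|_{\ell_2}$ crudely bounded and $\sqrt{\mf d_n}$ from $\|U\|_{\ell_2}\le\|U\|_{\ell_\infty}\sqrt{\#\mathrm{supp}}$ with $\#\mathrm{supp}=O_p(s_n)$ row-wise refined to $\mf d_n$... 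The dominant contribution should be $O_p(\lambda_n^2 s_n\sqrt{\mf d_n})$ after one checks $s_n\le s_n\sqrt{\mf d_n}$ and that the purely quadratic terms $O_p(\lambda_n^2 s_n)$ are absorbed. Dividing by $\lambda_n^2$ yields the claim.

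The main obstacle, and the step I expect to require the most care, is the bookkeeping in the algebraic rearrangement: getting the leading first-order terms $\Theta_Y U\Theta_Y$ and $\Theta_Y(\hat\Sigma_n-\Sigma_Y)\Theta_Y$ to cancel cleanly so that every remaining term carries at least one factor $\hat\Theta-\Theta_Y$ (hence a spare $s_n\lambda_n$), and then choosing for each such term the right pairing of norms ($\ell_2$–$\ell_2$ vs.\ $\opnorm{\cdot}_\infty$–$\ell_\infty$) so that the sparsity budget $\mf d_n$ is used optimally rather than the cruder $s_n$. The sparsity of the KKT residual $U$ off the diagonal — that its nonzeros sit on the support of $\hat\Theta$, whose degree is controlled by $\mf d(\Theta_Y)+$ (an error term from Proposition \ref{glasso:rate}) $=O_p(\mf d_n)$ — is the conceptual crux that makes $\sqrt{\mf d_n}$ appear instead of $\sqrt{s_n}$, and verifying it rigorously (as opposed to the support of $\hat\Theta$ merely having $O_p(s_n)$ total nonzeros) is the other delicate point. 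Everything else is a routine application of submultiplicativity of operator norms together with \eqref{rate:K}–\eqref{rate:Theta}.
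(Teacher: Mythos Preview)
Your proposal contains a genuine gap centered on a mistaken claim about the KKT residual. You assert that $U:=\hat\Theta^{-1}-\hat\Sigma_n$ is, off the diagonal, ``supported essentially on $S(\hat\Theta)$'', and then try to control its row/column sparsity by $\mf d_n$. This is backwards: the KKT conditions say $U=\lambda_n\hat V_n\hat Z_n\hat V_n$ with $\|\hat Z_n\|_{\ell_\infty}\le1$ and $\hat Z_n^{ij}=\sign(\hat\Theta^{ij})$ when $\hat\Theta^{ij}\neq0$, but they impose \emph{no} vanishing of $\hat Z_n^{ij}$ when $\hat\Theta^{ij}=0$; there the subgradient can be any value in $[-1,1]$. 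So $U$ is generically a dense matrix with $\|U\|_{\ell_\infty}=O_p(\lambda_n)$ and nothing better, and the support-sparsity argument you outline cannot be carried out. (Relatedly, Proposition~\ref{glasso:rate} gives no support recovery; the paper explicitly refrains from claiming selection consistency.)

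The source of the $\sqrt{\mf d_n}$ is entirely different and much simpler: it comes from bounding $\opnorm{\Theta_Y}_\infty$ (and hence $\opnorm{\hat\Theta_{\lambda_n}}_\infty$) via $\opnorm{\Theta_Y}_\infty\le\sqrt{\mf d(\Theta_Y)}\,\opnorm{\Theta_Y}_2=O_p(\sqrt{\mf d_n})$, using \ref{ass:degree} and \ref{ass:eigen}. The paper's algebra is also far cleaner than your sketch: writing $\hat\Sigma_n\hat\Theta_{\lambda_n}=\mathsf E_d-\lambda_n\hat V_n\hat Z_n\hat V_n\hat\Theta_{\lambda_n}$, one obtains the exact identity
\[
\hat\Theta_{\lambda_n}-\Theta_Y-\Gamma_n+\Theta_Y(\hat\Sigma_n-\Sigma_Y)\Theta_Y
=\lambda_n(\hat\Theta_{\lambda_n}-\Theta_Y)\hat V_n\hat Z_n\hat V_n\hat\Theta_{\lambda_n}
-\Theta_Y(\hat\Sigma_n-\Sigma_Y)(\hat\Theta_{\lambda_n}-\Theta_Y),
\]
so the remainder consists of exactly two terms, each already carrying one factor $\hat\Theta_{\lambda_n}-\Theta_Y$ (operator-$\infty$ norm $O_p(s_n\lambda_n)$ by \eqref{rate:Theta}), one factor of size $O_p(\lambda_n)$ (either the explicit $\lambda_n$ or $\|\hat\Sigma_n-\Sigma_Y\|_{\ell_\infty}$), and one factor $\opnorm{\hat\Theta_{\lambda_n}}_\infty$ or $\opnorm{\Theta_Y}_\infty=O_p(\sqrt{\mf d_n})$. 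No delicate norm pairings, no sparsity of $U$, and no cancellation of ``dangerous'' first-order pieces are needed; the algebra makes every surviving term second-order from the start.
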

Lemma \ref{prop:AL} is an almost straightforward consequence of \eqref{rate:Theta} and the Karush-Kuhn-Tucker (KKT) conditions for the optimization problem in \eqref{wglasso}. 
As a consequence of this lemma, we obtain the following result, which states that the ``de-biased'' weighted graphical Lasso estimator $\hat{\Theta}_{\lambda_n}-\Theta_Y-\Gamma_n$ inherits the asymptotic mixed normality of $\hat{\Sigma}_n$. 

\begin{proposition}\label{prop:AMN}
Suppose that the assumptions of Lemma \ref{prop:AL} are satisfied. 
For every $n\in\mathbb{N}$, let $a_n>0$, $\mathfrak{C}_n$ be a $d^2\times d^2$ positive semidefinite random matrix and $J_n$ be an $m\times d^2$ random matrix, where $m=m_n$ may depend on $n$. 
Assume $a_n\opnorm{J_n}_\infty\lambda_n^2s_n\sqrt{\mf{d}_n\log (m+1)}\to^p0$ as $n\to\infty$.
Assume also that
\begin{equation}\label{est:AMN}
\lim_{n\to\infty}\sup_{y\in\mathbb{R}^m}\left|P\left(a_n\wt{J}_n\vectorize\left(\hat{\Sigma}_n-\Sigma_Y\right)\leq y\right)-P\left(\wt{J}_n\mathfrak{C}_n^{1/2}\zeta_n\leq y\right)\right|=0
\end{equation}
and
\begin{equation}\label{diag-tight}
\lim_{b\downarrow0}\limsup_{n\to\infty}P(\min\diag(\wt{J}_n\mathfrak{C}_n\wt{J}_n^\top)<b)=0
\end{equation}
as $n\to\infty$, where $\wt{J}_n:=-J_n(\Theta_Y\otimes\Theta_Y)$ and $\zeta_n$ is a $d^2$-dimensional standard Gaussian vector independent of $\mathcal{F}$, which is defined on an extension of the probability space $(\Omega,\mathcal{F},P)$ if necessary. 
Then,  
\[
\lim_{n\to\infty}\sup_{y\in\mathbb{R}^m}\left|P\left(a_nJ_n\vectorize\left(\hat{\Theta}_{\lambda_n}-\Gamma_n-\Theta_Y\right)\leq y\right)-P\left(\wt{J}_n\mathfrak{C}_n^{1/2}\zeta_n\leq y\right)\right|=0.
\]
\end{proposition}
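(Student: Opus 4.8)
The plan is to derive the statement from the asymptotic linear representation of Lemma \ref{prop:AL}, the conditional Gaussian comparison \eqref{est:AMN} and the anti‑concentration condition \eqref{diag-tight}; the only genuinely new ingredient, compared with a bookkeeping exercise, is a Gaussian smoothing argument that absorbs the linearization error uniformly over the lower orthants $\{y'\leq y\}$.

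First I would vectorize Lemma \ref{prop:AL}. Since $\Theta_Y$ is symmetric, $\vectorize(\Theta_Y(\hat\Sigma_n-\Sigma_Y)\Theta_Y)=(\Theta_Y\otimes\Theta_Y)\vectorize(\hat\Sigma_n-\Sigma_Y)$, so with $R_n:=\hat\Theta_{\lambda_n}-\Theta_Y-\Gamma_n+\Theta_Y(\hat\Sigma_n-\Sigma_Y)\Theta_Y$ we get, using $\wt J_n=-J_n(\Theta_Y\otimes\Theta_Y)$,
\[
a_nJ_n\vectorize\left(\hat\Theta_{\lambda_n}-\Gamma_n-\Theta_Y\right)=a_n\wt J_n\vectorize\left(\hat\Sigma_n-\Sigma_Y\right)+V_n,\qquad V_n:=a_nJ_n\vectorize(R_n).
\]
Lemma \ref{prop:AL} gives $\|R_n\|_{\ell_\infty}=O_p(\lambda_n^2s_n\sqrt{\mf d_n})$, and $\|J_nv\|_{\ell_\infty}\leq\opnorm{J_n}_\infty\|v\|_{\ell_\infty}$, so the hypothesis $a_n\opnorm{J_n}_\infty\lambda_n^2s_n\sqrt{\mf d_n\log(m+1)}\to^p0$ yields the crucial estimate $\sqrt{\log(m+1)}\,\|V_n\|_{\ell_\infty}\leq a_n\opnorm{J_n}_\infty\|R_n\|_{\ell_\infty}\sqrt{\log(m+1)}\to^p0$. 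Writing $U_n:=a_n\wt J_n\vectorize(\hat\Sigma_n-\Sigma_Y)$, $G_n:=\wt J_n\mathfrak{C}_n^{1/2}\zeta_n$ and $\rho_n:=\sup_y|P(U_n\leq y)-P(G_n\leq y)|$, hypothesis \eqref{est:AMN} says $\rho_n\to0$, and it remains to prove $\sup_y|P(U_n+V_n\leq y)-P(G_n\leq y)|\to0$.

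The heart of the argument is a sandwiching estimate. For any $t>0$, $b>0$ and $y\in\mathbb{R}^m$,
\begin{align*}
P(U_n+V_n\leq y)&\leq P(U_n\leq y+t\bs{1})+P(\|V_n\|_{\ell_\infty}>t)\\
&\leq P(G_n\leq y)+\rho_n+P(\|V_n\|_{\ell_\infty}>t)+\Delta_n(t,y),
\end{align*}
where $\Delta_n(t,y):=P(G_n\leq y+t\bs{1})-P(G_n\leq y)\geq0$. Conditionally on $\mcl{F}$, $G_n$ is a centered Gaussian vector with covariance $\Xi_n:=\wt J_n\mathfrak{C}_n\wt J_n^\top$, so Nazarov's anti‑concentration inequality gives, on $\{\min\diag(\Xi_n)\geq b\}$, $\Delta_n(t,y)\leq C\,t\sqrt{\log(m+1)}/\sqrt{b}$ with a universal $C$, hence $\sup_y\Delta_n(t,y)\leq C\,t\sqrt{\log(m+1)}/\sqrt{b}+P(\min\diag(\Xi_n)<b)$. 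Choosing $t=t_n\downarrow0$ slowly enough that $t_n\sqrt{\log(m+1)}\to0$ while $P(\|V_n\|_{\ell_\infty}>t_n)\to0$ — possible precisely because $\sqrt{\log(m+1)}\|V_n\|_{\ell_\infty}\to^p0$ — then letting $n\to\infty$ and afterwards $b\downarrow0$ and invoking \eqref{diag-tight}, we obtain $\limsup_n\sup_y[P(U_n+V_n\leq y)-P(G_n\leq y)]\leq0$. The reverse inequality is symmetric, bounding $P(U_n+V_n\leq y)$ below by $P(U_n\leq y-t\bs{1})-P(\|V_n\|_{\ell_\infty}>t)$ and controlling $P(G_n\leq y)-P(G_n\leq y-t\bs{1})$ by the same anti‑concentration bound. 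Combining the two directions gives the claim.

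The hard part is this smoothing step. It is where the factor $\sqrt{\log(m+1)}$ in the hypothesis is consumed — through Nazarov's inequality, whose bound for the probability that a non‑degenerate $m$‑dimensional Gaussian lies in a thin slab grows like $\sqrt{\log m}$ — and where condition \eqref{diag-tight} is indispensable, since the conditional limit $G_n$ has a random covariance $\Xi_n$ that may degenerate; one therefore first works on $\{\min\diag(\Xi_n)\geq b\}$ and absorbs the exceptional probability at the very end. In contrast to a fixed‑dimensional Slutsky argument, adding the $o_p(1)$ perturbation $V_n$ is not automatically harmless for the uniformity over the growing family of orthants, which is exactly why this anti‑concentration machinery is needed; everything else (the vectorization identity, the operator‑norm bound, and the order estimate for $R_n$ from Lemma \ref{prop:AL}, itself resting on \eqref{rate:Theta} of Proposition \ref{glasso:rate}) is routine.
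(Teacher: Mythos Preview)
Your proof is correct and follows essentially the same approach as the paper: vectorize Lemma~\ref{prop:AL}, show that the remainder $V_n$ satisfies $\sqrt{\log(m+1)}\|V_n\|_{\ell_\infty}\to^p0$, and then absorb this perturbation via Gaussian anti-concentration using~\eqref{diag-tight}. The paper compresses the entire smoothing step into a single citation of Lemma~3.1 in \cite{Koike2018sk}, which is exactly the Nazarov-based sandwiching lemma you have spelled out by hand.
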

In a standard i.i.d.~setting such that $\Theta_Y$ is non-random, we can usually verify \eqref{est:AMN} by classical Lindeberg's central limit theorem when $m=1$ and $J_n$ is non-random because $a_n\wt{J}_n\vectorize\left(\hat{\Sigma}_n-\Sigma_Y\right)$ can be written as a sum of independent random variables; see the proof of \cite[Theorem 1]{JvdG2015} for example. 
By contrast, $\Theta_Y$ is generally random and not independent of $\hat{\Sigma}_n-\Sigma_Y$ in our setting, so $a_n\wt{J}_n\vectorize\left(\hat{\Sigma}_n-\Sigma_Y\right)$ may not be a martingale even if $\vectorize\left(\hat{\Sigma}_n-\Sigma_Y\right)$ is a martingale. In the case that $d$ is fixed, we typically resolve this issue by proving \textit{stable} convergence in law of $\vectorize\left(\hat{\Sigma}_n-\Sigma_Y\right)$; see e.g.~\cite{PV2010} for details. However, extension of this approach to the case that $d\to\infty$ as $n\to\infty$ is far from trivial as discussed at the beginning of \cite[Section 3]{Koike2018sk}. For this reason, \cite{Koike2018sk} gives a result to directly establish \eqref{est:AMN} type convergence in a high-dimensional setting. This result will be used in Section \ref{sec:rc} to apply our abstract theory to a more concrete setting. 
\begin{rmk}
Proposition \ref{prop:AMN} also allows $m$ to diverge as $n\to\infty$, which is necessary when we need to derive an asymptotic approximation of the \textit{joint} distribution of $\vectorize\left(\hat{\Theta}_{\lambda_n}-\Gamma_n-\Theta_Y\right)$. Such an approximation can be used to make simultaneous inference for entries of $\Theta_Y$; see \cite{CQYZ2018} for example. 
\end{rmk}
%

\section{Factor structure}\label{sec:factor}

In financial applications, it is often important to take account of the factor structure of asset prices. In fact, many empirical studies have documented the existence of common factors in financial markets (e.g.~\cite[Section 6.5]{CLM1997}). Also, factor models play a dominant role in asset pricing theory (cf.~\cite[Chapter 9]{Cochrane2005}). 
When common factors are present across asset returns, the precision matrix cannot be sparse because all pairs of the assets are partially correlated given other assets through the common factors. Therefore, in such a situation, it is common practice to impose a sparsity assumption on the precision matrix of the residual process which is obtained after removing the co-movements induced by the factors (see e.g.~\cite[Section 4.2]{BNS2018} and \cite[Section 4.2]{BBL2018}). 
In this section we accommodate the theory developed in Section \ref{sec:main} to such an application. 

Specifically, suppose that we have an $r$-dimensional known factor process $X$, and consider the following continuous-time factor model:
\begin{equation}\label{factor-model}
Y=\beta X+Z.
\end{equation}
Here, $\beta$ is a non-random $d\times r$ matrix and $Z$ is a $d$-dimensional semimartingale such that $[Z,X]_1=0$. 
$\beta$ and $Z$ represent the factor loading matrix and residual process of the model, respectively. 
This model is widely used in high-frequency financial econometrics; see \cite{DLX2019,FFX2016,AX2017} in the context of high-dimensional covariance matrix estimation. One restriction of the model \eqref{factor-model} is that the factor loading $\beta$ is assumed to be constant, but there is empirical evidence that $\beta$ may be regarded as constant in short time intervals (one week or less); see \cite{RTT2015,KL2018} for instance. 

\begin{rmk}
The number of factors $r$ possibly depends on $n$ and (slowly) diverges as $n\to\infty$. Also, $\beta$ may depend on $n$.  
\end{rmk}

We are interested in estimating $\Sigma_Y$ based on observation data for $X$ and $Y$ while taking account of the factor structure given by \eqref{factor-model}. 
Suppose that we have generic estimators $\hat{\Sigma}_{Y,n},\hat{\Sigma}_{X,n}$ and $\hat{\Sigma}_{YX,n}$ for $\Sigma_Y,\Sigma_X$ and $\Sigma_{YX}$, respectively. $\hat{\Sigma}_{Y,n},\hat{\Sigma}_{X,n}$ and $\hat{\Sigma}_{YX,n}$ are assumed to be random variables taking values in $\mcl{S}_d,\mcl{S}_r^+$ and $\mathbb{R}^{d\times r}$, respectively. 
Now, by assumption we have
\begin{equation}\label{eq:factor-qc}
\Sigma_Y=\beta\Sigma_X\beta^\top+\Sigma_Z.
\end{equation}
Assume $\Sigma_X$ is a.s.~invertible. Then $\beta$ can be written as $\beta=\Sigma_{YX}\Sigma_X^{-1}$. 
Therefore, we can naturally estimate $\beta$ by $\hat{\beta}_n:=\hat{\Sigma}_{YX,n}\hat{\Sigma}_{X,n}^{-1}$, provided that $\hat{\Sigma}_{X,n}$ is invertible. 
In practical applications, the invertibility of $\hat{\Sigma}_{X,n}$ is usually not problematic because the number of factors $r$ is sufficiently small compared to the sample size. However, it is theoretically convenient to (formally) define $\hat{\beta}_n$ in the case that $\hat{\Sigma}_{X,n}$ is singular. For this reason we take an $\mcl{S}_d^{++}$-valued random variable $\hat{\Sigma}_{X,n}^{\dagger}$ such that $\hat{\Sigma}_{X,n}^{\dagger}=\hat{\Sigma}_{X,n}^{-1}$ on the event where $\hat{\Sigma}_{X,n}$ is invertible, and redefine $\hat{\beta}_n$ as $\hat{\beta}_n:=\hat{\Sigma}_{YX,n}\hat{\Sigma}_{X,n}^{\dagger}$. This does not affect the asymptotic properties of our estimators because $\hat{\Sigma}_{X,n}$ is asymptotically invertible under our assumptions we will impose. 
Now, from \eqref{eq:factor-qc}, $\Sigma_Z$ is estimated by
\begin{equation}\label{def:sigma.z.hat}
\hat{\Sigma}_{Z,n}:=\hat{\Sigma}_{Y,n}-\hat{\beta}_n\hat{\Sigma}_{X,n}\hat{\beta}_n^\top.
\end{equation}
Since $\hat{\Sigma}_{Z,n}$ might be a poor estimator for $\Sigma_Z$ because $d$ can be extremely large in our setting, we apply the weighted graphical Lasso to $\hat{\Sigma}_{Z,n}$ in order to estimate $\Sigma_Z$. Namely, we construct the weighted graphical Lasso estimator $\hat{\Theta}_{Z,\lambda}$ based on $\hat{\Sigma}_{Z,n}$ as follows:
\begin{equation}\label{f-wglasso}
\hat{\Theta}_{Z,\lambda}=\arg\min_{\Theta\in\mcl{S}_d^{++}}\left\{\trace\left(\Theta \hat{\Sigma}_{Z,n}\right)-\log\det\left(\Theta\right)+\lambda\sum_{i\neq j}\sqrt{\hat{\Sigma}_{Z,n}^{ii}\hat{\Sigma}_{Z,n}^{jj}}\left|\Theta^{ij}\right|\right\}.
\end{equation}
Then $\Sigma_Z$ is estimated by the inverse of $\hat{\Theta}_{Z,\lambda}$. Hence our final estimator for $\Sigma_Y$ is constructed as
\begin{equation}\label{def:f-sigma.y}
\hat{\Sigma}_{Y,\lambda}:=\hat{\beta}_n\hat{\Sigma}_{X,n}\hat{\beta}_n^\top+\hat{\Theta}_{Z,\lambda}^{-1}.
\end{equation}
\begin{rmk}
Although we will impose the assumptions which guarantee that the optimization problem in \eqref{f-wglasso} asymptotically has the unique solution with probability 1, it may have no solution for a fixed $n$. Thus, we formally define $\hat{\Theta}_{Z,\lambda}$ as an $\mcl{S}_d^{++}$-valued random variable such that $\hat{\Theta}_{Z,\lambda}$ is defined by \eqref{f-wglasso} on the event where the optimization problem in \eqref{f-wglasso} has the unique solution. 
\end{rmk}
\begin{rmk}[Positive definiteness of $\hat{\Sigma}_{Y,\lambda}$]
Since $\hat{\Theta}_{Z,\lambda}^{-1}$ is positive definite by construction, $\hat{\Sigma}_{Y,\lambda}$ is positive definite (note that we assume $\hat{\Sigma}_{X,n}$ is positive semidefinite). 
\end{rmk}

We will impose the following structural assumptions on the model:
\begin{enumerate}[label={\normalfont[C\arabic*]}]

\item\label{ass:vol} $\|\Sigma_Y\|_{\ell_\infty}=O_p(1)$ and $\|\beta\|_{\ell_\infty}=O(1)$ as $n\to\infty$.

\item\label{ass:eigen.z} $\Lambda_\mathrm{max}(\Sigma_Z)+1/\Lambda_\mathrm{min}(\Sigma_Z)=O_p(1)$ as $n\to\infty$.

\item\label{ass:eigen.f} $\|\Sigma_X\|_{\ell_\infty}+1/\Lambda_\mathrm{min}(\Sigma_X)=O_p(1)$ as $n\to\infty$. 

\item\label{ass:sparse.z} $s(\Theta_Z)=O_p(s_n)$ as $n\to\infty$ for some sequence $s_n\in[1,\infty)$, $n=1,2,\dots$. 

\item\label{ass:degree.z} $\mf{d}(\Theta_Z)=O_p(\mf{d}_n)$ as $n\to\infty$ for some sequence $\mf{d}_n\in[1,\infty)$, $n=1,2,\dots$. 

\item\label{ass:loading} There is a positive definite $d\times d$ matrix $\bs{B}$ such that $\opnorm{d^{-1}\beta^\top\beta-\bs{B}}_2\to0$ and $\Lambda_{\min}(\bs{B})^{-1}=O(1)$ as $n\to\infty$.

\end{enumerate}

\ref{ass:vol}--\ref{ass:eigen.f} are natural structural assumptions on the model and standard in the literature; see e.g.~Assumptions 2.1 and 3.3 in \cite{FLM2011}. \ref{ass:sparse.z}--\ref{ass:degree.z} are sparsity assumptions on the precision matrix of the residual process and necessary for our application of the (weighted) graphical Lasso. \ref{ass:loading} requires the factors to have non-negligible impact on almost all assets and is also standard in the context of covariance matrix estimation based on a factor model; see e.g.~Assumption 3.5 in \cite{FLM2011} and Assumption 6 in \cite{FFX2016}.

The following result establishes the consistency of the residual precision matrix estimator $\hat{\Theta}_{Z,\lambda}$. 
\begin{proposition}\label{factor:rate}
Assume \ref{ass:vol}--\ref{ass:sparse.z}. Let $(\lambda_n)_{n=1}^\infty$ be a sequence of positive-valued random variables satisfying the following conditions:
\begin{enumerate}[label={\normalfont[D\arabic*]}]

\item\label{f-ass:est} $\lambda_n^{-1}\|\hat{\Sigma}_{X,n}-\Sigma_{X}\|_{\ell_\infty}\to^p0$, $\lambda_n^{-1}\|\hat{\Sigma}_{YX,n}-\beta\hat{\Sigma}_{X,n}\|_{\ell_\infty}\to^p0$ and $\lambda_n^{-1}\|\breve{\Sigma}_{Z,n}-\Sigma_Z\|_{\ell_\infty}\to^p0$ as $n\to\infty$, where $\breve{\Sigma}_{Z,n}:=\hat{\Sigma}_{Y,n}-\hat{\Sigma}_{YX,n}\beta^\top-\beta\hat{\Sigma}_{YX,n}^\top+\beta\hat{\Sigma}_{X,n}\beta^\top$.

\item\label{f-ass:rate} $(s_n+r)\lambda_n\to^p0$ as $n\to\infty$.

\item\label{f-ass:psd} $P(\ol{\Sigma}_n\in\mcl{S}_d^+)\to1$ as $n\to\infty$, where
\[
\ol{\Sigma}_n:=
\begin{pmatrix}
\hat{\Sigma}_{X,n} & \hat{\Sigma}_{YX,n}^\top\\
\hat{\Sigma}_{YX,n} & \hat{\Sigma}_{Y,n}
\end{pmatrix}.
\]

\end{enumerate}
Then $\lambda_n^{-1}\opnorm{\hat{\Theta}_{Z,\lambda_n}-\Theta_Z}_w=O_p(s_n)$ and $\lambda_n^{-1}\opnorm{\hat{\Theta}_{Z,\lambda_n}^{-1}-\Sigma_Z}_2=O_p(s_n)$ as $n\to\infty$ for any $w\in[1,\infty]$. 
\end{proposition}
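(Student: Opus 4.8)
The plan is to reduce Proposition \ref{factor:rate} to Proposition \ref{glasso:rate} applied with $\hat\Sigma_n = \hat\Sigma_{Z,n}$, $Y$ replaced by $Z$, so that $\Sigma_Y$ becomes $\Sigma_Z$ and $\Theta_Y$ becomes $\Theta_Z$. Since \ref{ass:eigen.z} is exactly \ref{ass:eigen} for $\Sigma_Z$ and \ref{ass:sparse.z} is exactly \ref{ass:sparse} for $\Theta_Z$, the only thing that needs to be checked is the analogue of \ref{ass:est}, namely that $\lambda_n^{-1}\|\hat\Sigma_{Z,n}-\Sigma_Z\|_{\ell_\infty}\to^p 0$; the rate condition \ref{ass:rate}, i.e. $s_n\lambda_n\to^p 0$, is implied by \ref{f-ass:rate}. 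Also one must confirm that $\hat\Sigma_{Z,n}$ is (asymptotically) positive semidefinite with positive diagonal entries so that $\hat\Theta_{Z,\lambda_n}$ is well-defined; this is where \ref{f-ass:psd} enters, since $\hat\Sigma_{Z,n}$ is the Schur complement of $\hat\Sigma_{X,n}$ in $\ol\Sigma_n$ (on the event where $\hat\Sigma_{X,n}$ is invertible), so $\ol\Sigma_n\in\mcl S_d^+$ together with the asymptotic invertibility of $\hat\Sigma_{X,n}$ forces $\hat\Sigma_{Z,n}\in\mcl S_d^+$ with probability tending to one. The positivity of the diagonal entries then follows from \ref{ass:eigen.z} plus the $\ell_\infty$-consistency of $\hat\Sigma_{Z,n}$, since the diagonal entries of $\Sigma_Z$ are bounded below by $\Lambda_{\min}(\Sigma_Z)$, which is bounded away from zero in probability.

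The crux is therefore the bound $\lambda_n^{-1}\|\hat\Sigma_{Z,n}-\Sigma_Z\|_{\ell_\infty}\to^p 0$. I would write
\[
\hat\Sigma_{Z,n}-\Sigma_Z = (\hat\Sigma_{Z,n}-\breve\Sigma_{Z,n}) + (\breve\Sigma_{Z,n}-\Sigma_Z),
\]
where the second term is controlled directly by \ref{f-ass:est}. For the first term, expand both $\hat\Sigma_{Z,n}=\hat\Sigma_{Y,n}-\hat\beta_n\hat\Sigma_{X,n}\hat\beta_n^\top$ (with $\hat\beta_n=\hat\Sigma_{YX,n}\hat\Sigma_{X,n}^{\dagger}$) and $\breve\Sigma_{Z,n}=\hat\Sigma_{Y,n}-\hat\Sigma_{YX,n}\beta^\top-\beta\hat\Sigma_{YX,n}^\top+\beta\hat\Sigma_{X,n}\beta^\top$, and note that on the event $\{\hat\Sigma_{X,n}\text{ invertible}\}$, using $\hat\beta_n\hat\Sigma_{X,n}\hat\beta_n^\top = \hat\Sigma_{YX,n}\hat\Sigma_{X,n}^{-1}\hat\Sigma_{YX,n}^\top$,
\[
\hat\Sigma_{Z,n}-\breve\Sigma_{Z,n}
= -(\hat\beta_n-\beta)\hat\Sigma_{X,n}(\hat\beta_n-\beta)^\top
= -(\hat\beta_n-\beta)\hat\Sigma_{X,n}^{\dagger\,-1}?\ \
\]
more cleanly, $\hat\Sigma_{Z,n}-\breve\Sigma_{Z,n} = -(\hat\beta_n - \beta)(\hat\Sigma_{YX,n} - \beta\hat\Sigma_{X,n})^\top$, since $\hat\beta_n\hat\Sigma_{X,n} = \hat\Sigma_{YX,n}$ on that event. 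Hence by sub-multiplicativity of $\|\cdot\|_{\ell_\infty}$ under multiplication (using $\|AB^\top\|_{\ell_\infty}\le \opnorm{A}_\infty\|B\|_{\ell_\infty}$ or a row/column argument), it suffices to bound $\opnorm{\hat\beta_n-\beta}_\infty$ and $\|\hat\Sigma_{YX,n}-\beta\hat\Sigma_{X,n}\|_{\ell_\infty}$; the latter is exactly the second condition in \ref{f-ass:est} (up to the factor $\lambda_n$). For $\hat\beta_n-\beta$, write $\hat\beta_n-\beta = (\hat\Sigma_{YX,n}-\beta\hat\Sigma_{X,n})\hat\Sigma_{X,n}^{\dagger}$ on the invertibility event, so that $\opnorm{\hat\beta_n-\beta}_\infty \le r^{1/2}\|\hat\Sigma_{YX,n}-\beta\hat\Sigma_{X,n}\|_{\ell_\infty}\opnorm{\hat\Sigma_{X,n}^{\dagger}}_\infty$, and control $\opnorm{\hat\Sigma_{X,n}^{-1}}_\infty \le r^{1/2}\opnorm{\hat\Sigma_{X,n}^{-1}}_2$, which is $O_p(1)$ because $\Lambda_{\min}(\hat\Sigma_{X,n})\ge \Lambda_{\min}(\Sigma_X) - \|\hat\Sigma_{X,n}-\Sigma_X\|_{\ell_2}$ and $\|\hat\Sigma_{X,n}-\Sigma_X\|_{\ell_2}\le r\|\hat\Sigma_{X,n}-\Sigma_X\|_{\ell_\infty}\le r\lambda_n\cdot o_p(1) \to^p 0$ by \ref{f-ass:est} and \ref{f-ass:rate}. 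Combining, $\lambda_n^{-1}\|\hat\Sigma_{Z,n}-\breve\Sigma_{Z,n}\|_{\ell_\infty} = O_p\big(r\cdot\lambda_n^{-1}\|\hat\Sigma_{YX,n}-\beta\hat\Sigma_{X,n}\|_{\ell_\infty}\cdot \|\hat\Sigma_{YX,n}-\beta\hat\Sigma_{X,n}\|_{\ell_\infty}\big)$, and since $\|\hat\Sigma_{YX,n}-\beta\hat\Sigma_{X,n}\|_{\ell_\infty}=\lambda_n\cdot o_p(1)$ while $r\lambda_n\to^p0$, this is $o_p(1)$.

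Once $\lambda_n^{-1}\|\hat\Sigma_{Z,n}-\Sigma_Z\|_{\ell_\infty}\to^p 0$ is established, Proposition \ref{glasso:rate} (applied to $Z$, $\hat\Sigma_{Z,n}$, $\hat\Theta_{Z,\lambda_n}$) yields directly $\lambda_n^{-1}\opnorm{\hat\Theta_{Z,\lambda_n}-\Theta_Z}_w = O_p(s_n)$ and $\lambda_n^{-1}\opnorm{\hat\Theta_{Z,\lambda_n}^{-1}-\Sigma_Z}_2 = O_p(s_n)$ for all $w\in[1,\infty]$, which is the claim. The main obstacle I anticipate is bookkeeping the passage between $\ell_\infty$-entrywise norms and $\ell_2$/operator norms with the correct powers of $r$, and making sure that every product bound absorbs the right number of $\lambda_n$ factors so that the cross term $\|\hat\Sigma_{Z,n}-\breve\Sigma_{Z,n}\|_{\ell_\infty}$, which is quadratic in the estimation errors, is genuinely $o_p(\lambda_n)$ rather than merely $O_p(\lambda_n)$ — this is why $(s_n+r)\lambda_n\to^p0$ (not just $s_n\lambda_n\to^p0$) is needed in \ref{f-ass:rate}. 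A secondary point requiring care is that all of the above computations are carried out on the event $\{\hat\Sigma_{X,n}\in\mcl S_r^{++}\}$, whose probability tends to one, and on the complement one simply invokes the formal redefinition of $\hat\beta_n$ and $\hat\Theta_{Z,\lambda_n}$; since we are proving $O_p$ statements this causes no difficulty.
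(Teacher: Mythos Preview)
Your approach is correct and coincides with the paper's: reduce to Proposition~\ref{glasso:rate} applied with $\hat\Sigma_n=\hat\Sigma_{Z,n}$, use the Schur complement together with \ref{f-ass:psd} for asymptotic positive semidefiniteness, and control $\hat\Sigma_{Z,n}-\breve\Sigma_{Z,n}$ as a quadratic remainder so that $\lambda_n^{-1}\|\hat\Sigma_{Z,n}-\Sigma_Z\|_{\ell_\infty}\to^p0$. The paper writes the remainder in the symmetric form $-(\hat\Sigma_{YX,n}-\beta\hat\Sigma_{X,n})\hat\Sigma_{X,n}^{-1}(\hat\Sigma_{YX,n}-\beta\hat\Sigma_{X,n})^\top$ and applies the bound $\|BAC^\top\|_{\ell_\infty}\le r\opnorm{A}_2\|B\|_{\ell_\infty}\|C\|_{\ell_\infty}$ (Lemma~\ref{lemma:ogihara}), which delivers the single factor $r$ in one stroke; your intermediate inequality $\opnorm{\hat\beta_n-\beta}_\infty\le r^{1/2}\|\hat\Sigma_{YX,n}-\beta\hat\Sigma_{X,n}\|_{\ell_\infty}\opnorm{\hat\Sigma_{X,n}^{-1}}_\infty$ is off by $\sqrt r$ (the prefactor should be $r$, since $\opnorm{A}_\infty\le r\|A\|_{\ell_\infty}$ for $A\in\mathbb R^{d\times r}$), but your final combined bound $O_p(r\cdot\|\hat\Sigma_{YX,n}-\beta\hat\Sigma_{X,n}\|_{\ell_\infty}^2)$ is nonetheless correct and matches the paper's Lemma~\ref{lemma:approx-z}.
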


\begin{rmk}
(a) Since $\Sigma_{ZX}=\Sigma_{YX}-\beta\Sigma_X$ and $\Sigma_{Z}=\Sigma_{Y}-\Sigma_{YX}\beta^\top-\beta\Sigma_{XY}+\beta\Sigma_X\beta^\top$, $\hat{\Sigma}_{YX,n}$ and $\breve{\Sigma}_{Z,n}$ are seen as natural estimators for $\Sigma_{ZX}(=0)$ and $\Sigma_Z$ respectively if $\beta$ were known. In this sense, \ref{f-ass:est} is a natural extension of \ref{ass:est}. In particular, if $r=O(1)$ as $n\to\infty$, \ref{f-ass:est} follows from the convergences $\lambda_n^{-1}\|\hat{\Sigma}_{X,n}-\Sigma_{X}\|_{\ell_\infty}\to^p0$, $\lambda_n^{-1}\|\hat{\Sigma}_{YX,n}-\Sigma_{YX}\|_{\ell_\infty}\to^p0$ and $\lambda_n^{-1}\|\hat{\Sigma}_{Y,n}-\Sigma_Y\|_{\ell_\infty}\to^p0$ under \ref{ass:vol}, which are typically derived from entry-wise concentration inequalities for $\hat{\Sigma}_{X,n},\hat{\Sigma}_{YX,n}$ and $\hat{\Sigma}_{Y,n}$.  

(b) \ref{f-ass:psd} ensures that $\hat{\Sigma}_{Z,n}$ is asymptotically positive semidefinite. This is necessary for guaranteeing that the optimization problem in \eqref{f-wglasso} asymptotically has the unique solution with probability 1.
\end{rmk}

From Proposition \ref{factor:rate} we can also derive the convergence rates for the estimators $\hat{\Sigma}_{Z,\lambda_n}$ and $\hat{\Sigma}_{Z,\lambda_n}^{-1}$ in appropriate norms, which may be seen as counterparts of Theorems 1--2 in \cite{FFX2016}. 
\begin{proposition}\label{factor:max-rate}
Under the assumptions of Proposition \ref{factor:rate}, $\lambda_n^{-1}\|\hat{\Sigma}_{Z,\lambda_n}-\Sigma_Z\|_{\ell_\infty}=O_p(s_n+r^2)$ as $n\to\infty$
\end{proposition}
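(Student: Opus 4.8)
The quantity to control is $\|\hat{\Sigma}_{Z,\lambda_n}-\Sigma_Z\|_{\ell_\infty}=\|\hat{\Theta}_{Z,\lambda_n}^{-1}-\Sigma_Z\|_{\ell_\infty}$. The plan is to decompose this through an intermediate quantity that is easier to relate to the hypotheses of Proposition \ref{factor:rate}, namely $\hat{\Sigma}_{Z,n}$ itself. Write
\[
\hat{\Theta}_{Z,\lambda_n}^{-1}-\Sigma_Z=\left(\hat{\Theta}_{Z,\lambda_n}^{-1}-\hat{\Sigma}_{Z,n}\right)+\left(\hat{\Sigma}_{Z,n}-\Sigma_Z\right).
\]
For the first term I would use the KKT conditions for the optimization problem \eqref{f-wglasso}, exactly as in the proof of Lemma \ref{prop:AL}: the stationarity condition gives $\hat{\Theta}_{Z,\lambda_n}^{-1}-\hat{\Sigma}_{Z,n}=\lambda_n Z_n$ where $Z_n$ is a subgradient matrix with $\|Z_n^{-}\|_{\ell_\infty}\leq1$ and zero diagonal, so $\|\hat{\Theta}_{Z,\lambda_n}^{-1}-\hat{\Sigma}_{Z,n}\|_{\ell_\infty}\leq\lambda_n$. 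This handles the first term with a bound of order $\lambda_n$, i.e. $O_p(\lambda_n)$, which is dominated by $\lambda_n(s_n+r^2)$.

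For the second term, $\hat{\Sigma}_{Z,n}-\Sigma_Z$, I would expand using the definitions \eqref{def:sigma.z.hat} and \eqref{eq:factor-qc}. Writing $\hat{\beta}_n=\hat{\Sigma}_{YX,n}\hat{\Sigma}_{X,n}^{\dagger}$ and $\beta=\Sigma_{YX}\Sigma_X^{-1}$, and comparing $\hat{\Sigma}_{Z,n}=\hat{\Sigma}_{Y,n}-\hat{\beta}_n\hat{\Sigma}_{X,n}\hat{\beta}_n^\top$ with $\Sigma_Z=\Sigma_Y-\beta\Sigma_X\beta^\top$, I would route the comparison through $\breve{\Sigma}_{Z,n}=\hat{\Sigma}_{Y,n}-\hat{\Sigma}_{YX,n}\beta^\top-\beta\hat{\Sigma}_{YX,n}^\top+\beta\hat{\Sigma}_{X,n}\beta^\top$, whose closeness to $\Sigma_Z$ in $\|\cdot\|_{\ell_\infty}$ at rate $o_p(\lambda_n)$ is exactly \ref{f-ass:est}. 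What remains is $\hat{\Sigma}_{Z,n}-\breve{\Sigma}_{Z,n}$, and since both are quadratic in the data this difference can be written in terms of $\hat{\beta}_n-\beta$ and the estimation errors. The key algebraic identity is that with $\hat{\beta}_n\hat{\Sigma}_{X,n}=\hat{\Sigma}_{YX,n}$ (on the asymptotically full-probability event where $\hat{\Sigma}_{X,n}$ is invertible),
\[
\hat{\Sigma}_{Z,n}-\breve{\Sigma}_{Z,n}=-(\hat{\beta}_n-\beta)\hat{\Sigma}_{X,n}(\hat{\beta}_n-\beta)^\top+(\hat{\Sigma}_{YX,n}-\beta\hat{\Sigma}_{X,n})\beta^\top+\beta(\hat{\Sigma}_{YX,n}-\beta\hat{\Sigma}_{X,n})^\top-(\hat{\Sigma}_{YX,n}-\beta\hat{\Sigma}_{X,n})\hat{\Sigma}_{X,n}^{\dagger}(\hat{\Sigma}_{YX,n}-\beta\hat{\Sigma}_{X,n})^\top
\]
(the precise form should be rechecked, but the point is it is bilinear in $\hat{\Sigma}_{YX,n}-\beta\hat{\Sigma}_{X,n}$, $\hat{\Sigma}_{X,n}-\Sigma_X$, and $\hat{\beta}_n-\beta$, with coefficients involving $\beta$, $\hat{\Sigma}_{X,n}^{\dagger}$). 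To bound each term's $\ell_\infty$-norm I would combine \ref{ass:vol} ($\|\beta\|_{\ell_\infty}=O(1)$), \ref{ass:eigen.f} ($1/\Lambda_{\min}(\Sigma_X)=O_p(1)$, hence $\hat{\Sigma}_{X,n}^{\dagger}$ has bounded operator norm on the good event by \ref{f-ass:est}), the bound on $\hat\beta_n-\beta$ derivable from \ref{f-ass:est} and \ref{ass:eigen.f} (as already used in proving Proposition \ref{factor:rate}), and the sub-multiplicativity inequality $\|AB\|_{\ell_\infty}\leq\|A\|_{\ell_\infty}\opnorm{B}_1$ together with the fact that $r\times r$ matrices like $\hat{\Sigma}_{X,n}^{\dagger}$ have $\opnorm{\cdot}_1\leq r\,\opnorm{\cdot}_2$. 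This is where the factor $r^2$ enters: terms that are quadratic in an $r$-dimensional object and pass through $r\times r$ matrix multiplications pick up two powers of $r$, and multiplying two $\ell_\infty$-small vectors of $r$-dimensional quantities also contributes an $r$. Tracking the worst such term gives the $O_p(\lambda_n(s_n+r^2))$ bound claimed (the $s_n$ piece coming from the $\lambda_n$-bound on the first term, which is $O_p(\lambda_n)\leq O_p(\lambda_n s_n)$).

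The main obstacle I expect is the bookkeeping in the second step: getting the cleanest decomposition of $\hat{\Sigma}_{Z,n}-\Sigma_Z$ so that every term is manifestly either $o_p(\lambda_n)$ by \ref{f-ass:est} or of the stated order, and in particular verifying that no term is worse than $O_p(\lambda_n r^2)$ — one must be careful that products of three error terms, or cross terms with $\hat{\Sigma}_{X,n}^{\dagger}-\Sigma_X^{-1}$, are genuinely lower order (they should be $o_p(\lambda_n r)$ or better under \ref{f-ass:rate}, which forces $(s_n+r)\lambda_n\to^p0$ and hence controls the products). A secondary point is that all of this lives on the event $\{\hat{\Sigma}_{X,n}\in\mcl{S}_r^{++}\}$, whose probability tends to $1$ under \ref{f-ass:est} and \ref{ass:eigen.f}, so the $O_p$ statement is unaffected; this is the same reduction already invoked implicitly in Proposition \ref{factor:rate}. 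Once the decomposition is fixed, each bound is a routine application of the operator-norm inequalities and the hypotheses, with no new analytic input beyond what was used for Propositions \ref{factor:rate} and \ref{glasso:rate}.
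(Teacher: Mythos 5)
Your proof addresses a different quantity from the one the paper actually bounds, and as a result it misses the main content of the proposition. The notation $\hat{\Sigma}_{Z,\lambda_n}$ in the statement is a misprint: as the paper's own proof and the restatement in Theorem \ref{rc:rate} make clear, the intended claim is $\lambda_n^{-1}\|\hat{\Sigma}_{Y,\lambda_n}-\Sigma_Y\|_{\ell_\infty}=O_p(s_n+r^2)$ with $\hat{\Sigma}_{Y,\lambda_n}=\hat{\beta}_n\hat{\Sigma}_{X,n}\hat{\beta}_n^\top+\hat{\Theta}_{Z,\lambda_n}^{-1}$ as in \eqref{def:f-sigma.y}. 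The $r^2$ can only come from the factor part: the paper writes $\hat{\Sigma}_{Y,\lambda_n}-\Sigma_Y=(\hat{\beta}_n\hat{\Sigma}_{X,n}\hat{\beta}_n^\top-\beta\Sigma_X\beta^\top)+(\hat{\Theta}_{Z,\lambda_n}^{-1}-\Sigma_Z)$, bounds the second summand by $\opnorm{\hat{\Theta}_{Z,\lambda_n}^{-1}-\Sigma_Z}_2=O_p(s_n\lambda_n)$ (Proposition \ref{factor:rate}; this is where $s_n$ enters), and bounds the first by $O_p(r^2\lambda_n)$ via the telescoping decomposition $(\hat{\beta}_n-\beta)\hat{\Sigma}_{X,n}\hat{\beta}_n^\top+\beta(\hat{\Sigma}_{X,n}-\Sigma_X)\hat{\beta}_n^\top+\beta\Sigma_X(\hat{\beta}_n-\beta)^\top$, Lemma \ref{lemma:ogihara} (which gives $\|BAC^\top\|_{\ell_\infty}\leq\opnorm{A}_2\max_i\|B^{i\cdot}\|_{\ell_2}\max_j\|C^{j\cdot}\|_{\ell_2}$) and the row-wise bounds of Lemma \ref{lemma:factor-cov}, namely $\max_i\|\hat{\beta}_n^{i\cdot}-\beta^{i\cdot}\|_{\ell_2}=o_p(\sqrt{r}\lambda_n)$, $\max_i\|\hat{\beta}_n^{i\cdot}\|_{\ell_2}=O_p(\sqrt{r})$ and $\opnorm{\hat{\Sigma}_{X,n}}_2=O_p(r)$. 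Your argument never touches $\hat{\beta}_n\hat{\Sigma}_{X,n}\hat{\beta}_n^\top-\beta\Sigma_X\beta^\top$, so it cannot yield the stated rate.

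Symptomatically, your accounting of where $r^2$ ``enters'' is not correct. Along your route the only $r$-dependent contribution is $\hat{\Sigma}_{Z,n}-\breve{\Sigma}_{Z,n}$, and by Lemma \ref{lemma:approx-z} this equals the single quadratic term $-(\hat{\Sigma}_{YX,n}-\beta\hat{\Sigma}_{X,n})\hat{\Sigma}_{X,n}^{-1}(\hat{\Sigma}_{YX,n}-\beta\hat{\Sigma}_{X,n})^\top$ on the good event; your displayed identity carries spurious cross terms, since $\hat{\Sigma}_{YX,n}\beta^\top$, $\beta\hat{\Sigma}_{YX,n}^\top$ and $\beta\hat{\Sigma}_{X,n}\beta^\top$ already sit inside $\breve{\Sigma}_{Z,n}$ and cancel exactly. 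Its $\ell_\infty$-norm is $o_p(r\lambda_n^2)=o_p(\lambda_n)$ by \ref{f-ass:est} and \ref{f-ass:rate}, so no factor $r^2$ ever arises from that source. That said, the part of the argument you do carry out is sound and in fact sharper than what the paper uses for the residual block: the KKT identity gives $\|\hat{\Theta}_{Z,\lambda_n}^{-1}-\hat{\Sigma}_{Z,n}\|_{\ell_\infty}\leq\lambda_n\max_i\hat{\Sigma}_{Z,n}^{ii}=O_p(\lambda_n)$, and combined with Lemma \ref{lemma:sigma.z} this yields $\|\hat{\Theta}_{Z,\lambda_n}^{-1}-\Sigma_Z\|_{\ell_\infty}=O_p(\lambda_n)$, improving the $O_p(s_n\lambda_n)$ bound for that summand. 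To complete the proof you must add the factor-part bound described above.
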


\begin{proposition}\label{factor:inverse}
Under the assumptions of Proposition \ref{factor:rate}, we additionally assume \ref{ass:degree.z}--\ref{ass:loading}. Then, $\lambda_n^{-1}\opnorm{\hat{\Sigma}_{Y,\lambda_n}^{-1}-\Sigma_Y^{-1}}_2=O_p(s_n+r)$ and $\lambda_n^{-1}\opnorm{\hat{\Sigma}_{Y,\lambda_n}^{-1}-\Sigma_Y^{-1}}_\infty=O_p(r^{3/2}\mf{d}_n(s_n+r))$ as $n\to\infty$.
\end{proposition}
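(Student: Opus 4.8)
My plan is to relate the two precision matrices through the Sherman--Morrison--Woodbury identity and then propagate the rates of Proposition \ref{factor:rate}. Put $M:=\Sigma_X^{-1}+\beta^\top\Theta_Z\beta$ and, on the event (asymptotically certain by \ref{f-ass:psd} and the asymptotic invertibility of $\hat\Sigma_{X,n}$) that $\hat\Sigma_{X,n}$ is invertible and \eqref{f-wglasso} has a unique solution, $\hat M:=\hat\Sigma_{X,n}^{-1}+\hat\beta_n^\top\hat\Theta_{Z,\lambda_n}\hat\beta_n$. Since, as explained below, $\Lambda_{\min}(M)$ and $\Lambda_{\min}(\hat M)$ are of order $d$ there, Woodbury gives on that event
\[
\Sigma_Y^{-1}=\Theta_Z-\Theta_Z\beta M^{-1}\beta^\top\Theta_Z,\qquad
\hat\Sigma_{Y,\lambda_n}^{-1}=\hat\Theta_{Z,\lambda_n}-\hat\Theta_{Z,\lambda_n}\hat\beta_n\hat M^{-1}\hat\beta_n^\top\hat\Theta_{Z,\lambda_n},
\]
so $\hat\Sigma_{Y,\lambda_n}^{-1}-\Sigma_Y^{-1}=(\hat\Theta_{Z,\lambda_n}-\Theta_Z)-(\hat\Theta_{Z,\lambda_n}\hat\beta_n\hat M^{-1}\hat\beta_n^\top\hat\Theta_{Z,\lambda_n}-\Theta_Z\beta M^{-1}\beta^\top\Theta_Z)$. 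The first term is $O_p(s_n\lambda_n)$ in both the $\ell_2$- and $\ell_\infty$-operator norms by Proposition \ref{factor:rate}. The second term I expand telescopically into finitely many summands, each a five-fold matrix product in which exactly one factor is an increment $\hat\Theta_{Z,\lambda_n}-\Theta_Z$, $\hat\beta_n-\beta$ or $\hat M^{-1}-M^{-1}$ and the remaining four are (hatted or unhatted) copies of $\Theta_Z$, $\beta$ and $M^{-1}$.

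Assumption \ref{ass:loading} is the structural input that tames the Woodbury correction. Since $\Lambda_{\min}(d^{-1}\beta^\top\beta)\to\Lambda_{\min}(\bs B)>0$ and $1/\Lambda_{\min}(\Sigma_Z)=O_p(1)$ by \ref{ass:eigen.z}, the term $\beta^\top\Theta_Z\beta$ dominates $\Sigma_X^{-1}$ and $1/\Lambda_{\min}(\beta^\top\Theta_Z\beta)=O_p(d^{-1})$, whence $\opnorm{M^{-1}}_2=O_p(d^{-1})$; moreover, writing $\beta(\beta^\top\Theta_Z\beta)^{-1}\beta^\top=\Theta_Z^{-1/2}\Pi\Theta_Z^{-1/2}$ with $\Pi$ an orthogonal projection, matrix-monotonicity of the inverse on the positive definite cone gives $\beta M^{-1}\beta^\top\preceq\Theta_Z^{-1}$, hence $\opnorm{\beta M^{-1}\beta^\top}_2=O_p(1)$ and $\opnorm{\beta}_2=O_p(\sqrt d)$; the same bounds hold with hats once $1/\Lambda_{\min}(\hat\beta_n^\top\hat\beta_n)=O_p(d^{-1})$ is derived from the rate for $\hat\beta_n-\beta$. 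From \ref{ass:vol}--\ref{ass:eigen.f} and \ref{f-ass:est}--\ref{f-ass:rate} I also record the auxiliary rates $\opnorm{\hat\Sigma_{X,n}^{-1}-\Sigma_X^{-1}}_2=o_p(r\lambda_n)$, $\|\hat\beta_n-\beta\|_{\ell_\infty}=o_p(\sqrt r\lambda_n)$ and $\opnorm{\hat\beta_n-\beta}_2=o_p(\sqrt{dr}\lambda_n)$ (all from $\hat\beta_n-\beta=(\hat\Sigma_{YX,n}-\beta\hat\Sigma_{X,n})\hat\Sigma_{X,n}^{-1}$ by Cauchy--Schwarz), which together with Proposition \ref{factor:rate} yield $\opnorm{\hat M-M}_2=O_p(d(s_n+r)\lambda_n)$ and therefore $\opnorm{\hat M^{-1}-M^{-1}}_2=O_p((s_n+r)\lambda_n/d)$. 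For the $\ell_\infty$ part I use that \ref{ass:degree.z} forces each row of $\Theta_Z$ to have $O_p(\mf d_n)$ nonzero entries and $\|\Theta_Z\|_{\ell_\infty}\leq\opnorm{\Theta_Z}_2=O_p(1)$, so $\|\Theta_Z v\|_{\ell_\infty}=O_p(\mf d_n)\|v\|_{\ell_\infty}$ for all $v$, while \ref{ass:vol} gives $\|\beta w\|_{\ell_\infty}\leq\sqrt r\,\|\beta\|_{\ell_\infty}\|w\|_{\ell_2}$ for $w\in\mbb R^r$.

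The $\ell_2$-operator-norm bound is routine. Bounding a generic summand by splitting each $M^{-1}$-type factor as $M^{-1/2}M^{-1/2}$ and absorbing the halves into the adjacent $\beta$-type factors, one finds that the outer pieces $\Theta_Z\beta M^{-1/2}$ and $\hat M^{-1/2}\hat\beta_n^\top\hat\Theta_{Z,\lambda_n}$ satisfy $\opnorm{\Theta_Z\beta M^{-1/2}}_2^2=\opnorm{\Theta_Z\beta M^{-1}\beta^\top\Theta_Z}_2\leq\opnorm{\Theta_Z}_2^2\opnorm{\Sigma_Z}_2=O_p(1)$ (and likewise with hats), while the middle piece carrying the increment carries the gain $\opnorm{M^{-1/2}}_2=O_p(d^{-1/2})$, which cancels the $\sqrt d$ in $\opnorm{\hat\beta_n-\beta}_2$ and, in the case of the $\hat M^{-1}-M^{-1}$ increment, the two such gains reduce $\opnorm{\hat M-M}_2=O_p(d(s_n+r)\lambda_n)$ to $O_p((s_n+r)\lambda_n)$. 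Every summand is thus $O_p((s_n+r)\lambda_n)$, and together with the first term this proves $\lambda_n^{-1}\opnorm{\hat\Sigma_{Y,\lambda_n}^{-1}-\Sigma_Y^{-1}}_2=O_p(s_n+r)$.

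The hard part is the $\ell_\infty$-operator-norm bound, since $\opnorm{\cdot}_\infty$ is not unitarily invariant and a crude use of submultiplicativity through $\beta^\top$ costs a factor of $d$. I estimate $\opnorm{T}_\infty=\sup_{\|u\|_{\ell_\infty}\le1}\|Tu\|_{\ell_\infty}$ for each summand $T$ by propagating norms through the chain $u\mapsto Tu$ one factor at a time, alternating between $\ell_2$ and $\ell_\infty$: while the running vector lives in $\mbb R^d$ I control its $\ell_2$-norm using the $\opnorm{\cdot}_2$ of the factor just applied (the $\sqrt d$'s thereby produced cancel against the $d^{-1}$ of $\hat M^{-1}$, and against the $d^{-1/2}$ gained each time $\hat M^{-1}$, written as $\hat M^{-1/2}\hat M^{-1/2}$, maps an $\mbb R^d$-contraction into $\mbb R^r$); a $\beta$-factor mapping into $\mbb R^r$ is handled via $\opnorm{\beta}_2=O_p(\sqrt d)$, a $\beta$-factor mapping back into $\mbb R^d$ via $\|\beta w\|_{\ell_\infty}\le\sqrt r\|\beta\|_{\ell_\infty}\|w\|_{\ell_2}$, and the final, leftmost factor—always $\Theta_Z$, $\hat\Theta_{Z,\lambda_n}$ or an increment thereof—via the row-sparsity of $\Theta_Z$ or via $\opnorm{\hat\Theta_{Z,\lambda_n}-\Theta_Z}_\infty=O_p(s_n\lambda_n)$. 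Carried out for each of the finitely many summands, this produces bounds of order $r^{1/2}\mf d_n(s_n+r)\lambda_n$ or smaller; since one may take $\mf d_n\le s_n$ (as $\mf d(\Theta_Z)\le s(\Theta_Z)$) and $r\ge1$, all of them are dominated by $O_p(r^{3/2}\mf d_n(s_n+r)\lambda_n)$, which with the first term gives the assertion.
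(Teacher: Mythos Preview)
Your proposal is correct and follows essentially the same route as the paper: apply the Sherman--Morrison--Woodbury identity to both $\Sigma_Y^{-1}$ and $\hat\Sigma_{Y,\lambda_n}^{-1}$, telescope the Woodbury correction into finitely many five-factor products each containing exactly one increment, and bound each summand using the rates from Proposition~\ref{factor:rate} together with the size estimates $\opnorm{\Pi}_2=\opnorm{M^{-1}}_2=O_p(d^{-1})$ (Lemma~\ref{lemma:loading}) and $\opnorm{\Theta_Z}_\infty=O_p(\sqrt{\mf d_n})$ (Lemma~\ref{lemma:op-sp}).

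The differences are purely tactical. For the $\ell_2$ bound the paper simply invokes submultiplicativity and the auxiliary Lemmas~\ref{lemma:factor-cov}--\ref{lemma:loading}; you instead split $M^{-1}=M^{-1/2}M^{-1/2}$ and use the projection inequality $\beta M^{-1}\beta^\top\preceq\Sigma_Z$ to obtain $\opnorm{\Theta_Z\beta M^{-1/2}}_2=O_p(1)$ directly. For the $\ell_\infty$ bound the paper just records the individual $\Delta_i$-rates; your factor-by-factor $\ell_2/\ell_\infty$ propagation makes explicit how the $\sqrt d$'s from $\|u\|_{\ell_2}\le\sqrt d\|u\|_{\ell_\infty}$ and $\opnorm{\beta}_2$ cancel against the $d^{-1}$ of $M^{-1}$, and in fact yields the slightly sharper dominant rate $r^{1/2}\mf d_n(s_n+r)\lambda_n$, which you then weaken to match the statement. (The remark that ``one may take $\mf d_n\le s_n$'' is unnecessary for this step---$r^{1/2}\le r^{3/2}$ already suffices.) Both packagings are sound; yours is more self-contained, the paper's is terser because it leans on the prepared Lemmas~\ref{lemma:factor-cov}--\ref{lemma:loading}.
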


Next we present the high-dimensional asymptotic mixed normality of the de-biased version of $\hat{\Theta}_{Z,\lambda}$. 
\begin{proposition}\label{factor:AMN}
Suppose that the assumptions of Proposition \ref{factor:rate} and \ref{ass:degree.z} are satisfied. 
For every $n\in\mathbb{N}$, let $a_n>0$, $\mathfrak{C}_n$ be a $d^2\times d^2$ positive semidefinite random matrix and $J_n$ be an $m\times d^2$ random matrix, where $m=m_n$ may depend on $n$. 
Assume $a_n\opnorm{J_n}_\infty\lambda_n^2s_n\sqrt{\mf{d}_n\log (m+1)}\to^p0$ as $n\to\infty$. 
Assume also that
\begin{equation}\label{f-est:AMN}
\lim_{n\to\infty}\sup_{y\in\mathbb{R}^m}\left|P\left(a_n\wt{J}_{Z,n}\vectorize\left(\breve{\Sigma}_{Z,n}-\Sigma_Z\right)\leq y\right)-P\left(\wt{J}_{Z,n}\mathfrak{C}_n^{1/2}\zeta_n\leq y\right)\right|=0
\end{equation}
and
\begin{equation}\label{factor:diag-tight}
\lim_{b\downarrow0}\limsup_{n\to\infty}P(\min\diag(\wt{J}_{Z,n}\mathfrak{C}_n\wt{J}_{Z,n}^\top)<b)=0
\end{equation}
as $n\to\infty$, where $\wt{J}_{Z,n}:=-J_n(\Theta_Z\otimes\Theta_Z)$ and $\zeta_n$ is a $d^2$-dimensional standard Gaussian vector independent of $\mathcal{F}$, which is defined on an extension of the probability space $(\Omega,\mathcal{F},P)$ if necessary. 
Then,
\[
\lim_{n\to\infty}\sup_{y\in\mathbb{R}^m}\left|P\left(a_nJ_n\vectorize\left(\hat{\Theta}_{Z,\lambda_n}-\Gamma_{Z,n}-\Theta_Z\right)\leq y\right)-P\left(\wt{J}_{Z,n}\mathfrak{C}_n^{1/2}\zeta_n\leq y\right)\right|=0,
\]
where $\Gamma_{Z,n}:=-(\hat{\Theta}_{Z,\lambda_n}-\hat{\Theta}_{Z,\lambda_n}\hat{\Sigma}_{Z,n}\hat{\Theta}_{Z,\lambda_n})$.  
\end{proposition}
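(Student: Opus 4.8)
The plan is to deduce Proposition~\ref{factor:AMN} from Proposition~\ref{prop:AMN} by applying the latter with the generic initial estimator $\hat{\Sigma}_n$ of Section~\ref{sec:main} taken to be $\hat{\Sigma}_{Z,n}$ from \eqref{def:sigma.z.hat} and with $(\Sigma_Y,\Theta_Y,\Gamma_n)$ replaced by $(\Sigma_Z,\Theta_Z,\Gamma_{Z,n})$. Comparing \eqref{f-wglasso} with \eqref{wglasso} shows that $\hat{\Theta}_{Z,\lambda}$ is exactly the weighted graphical Lasso based on $\hat{\Sigma}_{Z,n}$ (the weights $\sqrt{\hat{\Sigma}_{Z,n}^{ii}\hat{\Sigma}_{Z,n}^{jj}}$ equal $\hat{V}_n^{ii}\hat{V}_n^{jj}$ for $\hat{V}_n=\diag(\hat{\Sigma}_{Z,n})^{1/2}$) and that $\Gamma_{Z,n}$ is the associated bias term. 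On the event where $\hat{\Sigma}_{X,n}$ is invertible — which has probability tending to one under the stated assumptions — $\hat{\Sigma}_{Z,n}$ equals the Schur complement of $\hat{\Sigma}_{X,n}$ in $\ol{\Sigma}_n$ and hence, by \ref{f-ass:psd}, lies in $\mcl{S}_d^+$; we may therefore restrict attention to this event. Assumptions \ref{ass:eigen.z}, \ref{ass:sparse.z}, \ref{ass:degree.z} supply the analogues of \ref{ass:eigen}, \ref{ass:sparse}, \ref{ass:degree} for $\Sigma_Z$; \ref{f-ass:rate} gives \ref{ass:rate}; and the moment condition $a_n\opnorm{J_n}_\infty\lambda_n^2 s_n\sqrt{\mf{d}_n\log(m+1)}\to^p0$ together with \eqref{factor:diag-tight} are imposed directly (with $\Theta_Z$ in place of $\Theta_Y$, so the matrix $\wt{J}_n=-J_n(\Theta_Y\otimes\Theta_Y)$ of Proposition~\ref{prop:AMN} becomes $\wt{J}_{Z,n}$). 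What remains is to establish, for $\hat{\Sigma}_{Z,n}$, the two conditions that are hypothesised only for $\breve{\Sigma}_{Z,n}$: namely \ref{ass:est}, i.e.\ $\lambda_n^{-1}\|\hat{\Sigma}_{Z,n}-\Sigma_Z\|_{\ell_\infty}\to^p0$, and the central-limit condition \eqref{est:AMN}.

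The bridge is the algebraic identity, valid on the invertibility event,
\[
\hat{\Sigma}_{Z,n}-\breve{\Sigma}_{Z,n}=-\lpa\hat{\beta}_n-\beta\rpa\lpa\hat{\Sigma}_{YX,n}-\beta\hat{\Sigma}_{X,n}\rpa^\top=-\lpa\hat{\Sigma}_{YX,n}-\beta\hat{\Sigma}_{X,n}\rpa\hat{\Sigma}_{X,n}^{-1}\lpa\hat{\Sigma}_{YX,n}-\beta\hat{\Sigma}_{X,n}\rpa^\top,
\]
which follows from $\hat{\beta}_n\hat{\Sigma}_{X,n}=\hat{\Sigma}_{YX,n}$ and the definitions of $\hat{\Sigma}_{Z,n}$ and $\breve{\Sigma}_{Z,n}$; thus $\hat{\Sigma}_{Z,n}-\breve{\Sigma}_{Z,n}$ is negative semidefinite of rank at most $r$, a genuinely second-order quantity. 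Combining \ref{f-ass:est} with \ref{ass:eigen.f} (which makes $\hat{\Sigma}_{X,n}^{-1}$ bounded in operator norm with probability tending to one) gives $\|\hat{\Sigma}_{Z,n}-\breve{\Sigma}_{Z,n}\|_{\ell_\infty}=o_p(\lambda_n)$, so that \ref{ass:est} for $\hat{\Sigma}_{Z,n}$ follows from that for $\breve{\Sigma}_{Z,n}$ (this is in any case implicit in the proof of Proposition~\ref{factor:rate}). For \eqref{est:AMN} I would write
\[
a_n\wt{J}_{Z,n}\vectorize\lpa\hat{\Sigma}_{Z,n}-\Sigma_Z\rpa=a_n\wt{J}_{Z,n}\vectorize\lpa\breve{\Sigma}_{Z,n}-\Sigma_Z\rpa+a_n\wt{J}_{Z,n}\vectorize\lpa\hat{\Sigma}_{Z,n}-\breve{\Sigma}_{Z,n}\rpa;
\]
the first term has the Gaussian-mixture limit by \eqref{f-est:AMN}, and for the second, using $\wt{J}_{Z,n}=-J_n(\Theta_Z\otimes\Theta_Z)$ and $\vectorize(\Theta_Z B\Theta_Z)=(\Theta_Z\otimes\Theta_Z)\vectorize(B)$, its $\ell_\infty$-norm is at most $a_n\opnorm{J_n}_\infty\opnorm{\Theta_Z}_\infty^2\|\hat{\Sigma}_{Z,n}-\breve{\Sigma}_{Z,n}\|_{\ell_\infty}$, where $\opnorm{\Theta_Z}_\infty=\opnorm{\Theta_Z}_1=O_p(\sqrt{\mf{d}_n})$ by Cauchy--Schwarz applied to the rows of $\Theta_Z$ (each with at most $\mf{d}_n+1$ non-zero entries by \ref{ass:degree.z}, and with $\ell_2$-norm bounded by $\Lambda_{\max}(\Theta_Z)=O_p(1)$ via \ref{ass:eigen.z}). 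Under the stated growth conditions this bound, multiplied by $\sqrt{\log(m+1)}$, tends to zero in probability.

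It then remains to absorb this negligible perturbation into the Kolmogorov distance. Conditionally on $\mathfrak{C}_n$, the comparison vector $\wt{J}_{Z,n}\mathfrak{C}_n^{1/2}\zeta_n$ is Gaussian with covariance $\wt{J}_{Z,n}\mathfrak{C}_n\wt{J}_{Z,n}^\top$, whose diagonal entries are bounded away from zero on an event of probability tending to one by \eqref{factor:diag-tight}; a Gaussian anti-concentration inequality (Nazarov's inequality) then shows that shifting the argument of $P(\wt{J}_{Z,n}\mathfrak{C}_n^{1/2}\zeta_n\le\cdot)$ by a vector of $\ell_\infty$-size $o_p(1/\sqrt{\log(m+1)})$ changes the probability uniformly by $o_p(1)$, upgrading \eqref{f-est:AMN} to \eqref{est:AMN} for $\hat{\Sigma}_{Z,n}$. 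Proposition~\ref{prop:AMN} now applies verbatim and yields the claim. The main obstacle is the second paragraph: the factor case introduces the quadratic plug-in error $\hat{\Sigma}_{Z,n}-\breve{\Sigma}_{Z,n}$ arising from estimating $\beta$, which is absent from Proposition~\ref{prop:AMN}, and one must check that it remains negligible even after being contracted by $\Theta_Z\otimes\Theta_Z$ (costing a factor $\opnorm{\Theta_Z}_\infty^2\asymp\mf{d}_n$) and amplified by the anti-concentration factor $\sqrt{\log(m+1)}$; the saving grace is that it is a product of two factors each of order $o_p(\lambda_n)$, up to the mild inflation caused by the small factor dimension $r$, hence of strictly smaller order than the $O_p(\lambda_n^2 s_n\sqrt{\mf{d}_n})$ linearization remainder already controlled by hypothesis.
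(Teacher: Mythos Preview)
Your proposal is correct and follows essentially the same route as the paper: apply Proposition~\ref{prop:AMN} with $\hat{\Sigma}_n=\hat{\Sigma}_{Z,n}$, note that the structural conditions and \ref{ass:est} are already verified in the proof of Proposition~\ref{factor:rate} (via what the paper records as Lemmas~\ref{lemma:approx-z}--\ref{lemma:sigma.z}), and then upgrade \eqref{f-est:AMN} to \eqref{est:AMN} by bounding $\sqrt{\log(m+1)}\,\|a_n\wt{J}_{Z,n}\vectorize(\hat{\Sigma}_{Z,n}-\breve{\Sigma}_{Z,n})\|_{\ell_\infty}\le a_n\opnorm{J_n}_\infty\opnorm{\Theta_Z}_\infty^2\|\hat{\Sigma}_{Z,n}-\breve{\Sigma}_{Z,n}\|_{\ell_\infty}\sqrt{\log(m+1)}$ and absorbing the perturbation via Gaussian anti-concentration. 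The paper compresses your second and third paragraphs into a citation of Lemma~\ref{lemma:approx-z} (your algebraic identity and its $o_p(r\lambda_n^2)$ consequence) and of ``Lemma~3.1 in \cite{Koike2018sk}'' (the Nazarov-type anti-concentration step you spell out), together with Lemma~\ref{lemma:op-sp} for $\opnorm{\Theta_Z}_\infty=O_p(\sqrt{\mf{d}_n})$; otherwise the arguments coincide.
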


\begin{rmk}
It is worth mentioning that condition \eqref{f-est:AMN} is stated for $\breve{\Sigma}_{Z,n}$ rather than $\hat{\Sigma}_{Z,n}$. In other words, for deriving the asymptotic distribution, we do not need to take account of the effect of plugging $\hat{\beta}_n$ into $\beta$, at least in the first order. This is thanks to Lemma \ref{lemma:sigma.z}. 
\end{rmk}

Although it is generally difficult to derive the asymptotic mixed normality of (the de-biased version of) $\hat{\Sigma}_{Y,\lambda_n}^{-1}$, this is possible when $d$ is sufficiently large. In fact, in such a situation, the entry-wise behavior of $\Sigma_Y^{-1}$ is dominated by $\Theta_Z$ as described by the following lemma:
\begin{lemma}\label{lemma:f-inv-AMN}
Under the assumptions of Proposition \ref{factor:inverse}, $\|\hat{\Sigma}_{Y,\lambda_n}^{-1}-\hat{\Theta}_{Z,\lambda_n}\|_{\ell_\infty}=O_p(r\mf{d}_n/d)$ and $\|\Sigma_{Y}^{-1}-\Theta_{Z}\|_{\ell_\infty}=O_p(r\mf{d}_n/d)$ as $n\to\infty$. 
\end{lemma}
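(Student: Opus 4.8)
The plan is to apply the Woodbury (Sherman--Morrison--Woodbury) identity to express $\Sigma_Y^{-1}$ and $\hat\Sigma_{Y,\lambda_n}^{-1}$ as $\Theta_Z$, resp.\ $\hat\Theta_{Z,\lambda_n}$, plus an explicit rank-$r$ correction, and then to bound that correction in $\|\cdot\|_{\ell_\infty}$. Since $\Sigma_Y=\beta\Sigma_X\beta^\top+\Sigma_Z$ and $\Sigma_X$ is a.s.\ invertible, Woodbury gives
\[
\Sigma_Y^{-1}-\Theta_Z=-\Theta_Z\beta\,M^{-1}\beta^\top\Theta_Z,\qquad M:=\Sigma_X^{-1}+\beta^\top\Theta_Z\beta\in\mcl{S}_r^{++},
\]
so with $v_i:=\beta^\top\Theta_Z^{\cdot i}\in\mathbb{R}^r$ the $(i,j)$-entry of the correction is $-v_i^\top M^{-1}v_j$ and hence $\|\Sigma_Y^{-1}-\Theta_Z\|_{\ell_\infty}\le\opnorm{M^{-1}}_2\sup_i\|v_i\|_{\ell_2}^2$. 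I would bound the two factors separately. Since $M\succeq\beta^\top\Theta_Z\beta\succeq\Lambda_{\min}(\Theta_Z)\beta^\top\beta$, one gets $\opnorm{M^{-1}}_2=1/\Lambda_{\min}(M)\le\Lambda_{\max}(\Sigma_Z)/\Lambda_{\min}(\beta^\top\beta)=O_p(1/d)$, using \ref{ass:eigen.z} and $\Lambda_{\min}(\beta^\top\beta)=d\,\Lambda_{\min}(d^{-1}\beta^\top\beta)$ together with \ref{ass:loading} (which forces $\Lambda_{\min}(d^{-1}\beta^\top\beta)$ to be bounded below by a positive constant eventually). For the other factor, the elementary inequality $\|\beta^\top w\|_{\ell_2}\le(\max_l\|\beta^{l\cdot}\|_{\ell_2})\|w\|_{\ell_1}\le\sqrt r\,\|\beta\|_{\ell_\infty}\|w\|_{\ell_1}$ with $w=\Theta_Z^{\cdot i}$, together with $\|\Theta_Z^{\cdot i}\|_{\ell_1}\le\sqrt{\mf{d}(\Theta_Z)+1}\,\|\Theta_Z^{\cdot i}\|_{\ell_2}\le\sqrt{\mf{d}(\Theta_Z)+1}\,\Lambda_{\max}(\Theta_Z)$, gives $\sup_i\|v_i\|_{\ell_2}^2=O_p(r\mf{d}_n)$ by \ref{ass:vol}, \ref{ass:eigen.z}, \ref{ass:degree.z}. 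Multiplying yields $\|\Sigma_Y^{-1}-\Theta_Z\|_{\ell_\infty}=O_p(r\mf{d}_n/d)$.

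For $\hat\Sigma_{Y,\lambda_n}^{-1}$ the structure is identical with hatted quantities. One first reduces to the event $\{\hat\Sigma_{X,n}\in\mcl{S}_r^{++}\}$, whose probability tends to one --- by \ref{f-ass:est} and \ref{f-ass:rate} one has $\opnorm{\hat\Sigma_{X,n}-\Sigma_X}_2\le\|\hat\Sigma_{X,n}-\Sigma_X\|_{\ell_2}\le r\|\hat\Sigma_{X,n}-\Sigma_X\|_{\ell_\infty}\to^p0$, while $1/\Lambda_{\min}(\Sigma_X)=O_p(1)$ by \ref{ass:eigen.f} --- and since only an $O_p$ bound is claimed this restriction is harmless. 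On that event $\hat\Sigma_{Y,\lambda_n}=\hat\beta_n\hat\Sigma_{X,n}\hat\beta_n^\top+\hat\Theta_{Z,\lambda_n}^{-1}$ with $\hat\beta_n=\hat\Sigma_{YX,n}\hat\Sigma_{X,n}^{-1}$, so Woodbury gives
\[
\hat\Sigma_{Y,\lambda_n}^{-1}-\hat\Theta_{Z,\lambda_n}=-\hat\Theta_{Z,\lambda_n}\hat\beta_n\,\hat M^{-1}\hat\beta_n^\top\hat\Theta_{Z,\lambda_n},\qquad\hat M:=\hat\Sigma_{X,n}^{-1}+\hat\beta_n^\top\hat\Theta_{Z,\lambda_n}\hat\beta_n\in\mcl{S}_r^{++},
\]
and with $\hat v_i:=\hat\beta_n^\top\hat\Theta_{Z,\lambda_n}^{\cdot i}$ we have $\|\hat\Sigma_{Y,\lambda_n}^{-1}-\hat\Theta_{Z,\lambda_n}\|_{\ell_\infty}\le\opnorm{\hat M^{-1}}_2\sup_i\|\hat v_i\|_{\ell_2}^2$. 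The remaining task is to reproduce the two bounds of the previous paragraph for the hatted objects. This needs a handful of perturbation estimates: $\opnorm{\hat\Theta_{Z,\lambda_n}-\Theta_Z}_w=O_p(s_n\lambda_n)=o_p(1)$ for $w\in\{1,2\}$ (Proposition \ref{factor:rate} with \ref{f-ass:rate}); $\opnorm{\hat\Sigma_{X,n}^{-1}}_2=O_p(1)$ (as above); and, writing $\hat\beta_n-\beta=(\hat\Sigma_{YX,n}-\beta\hat\Sigma_{X,n})\hat\Sigma_{X,n}^{-1}$ and using \ref{f-ass:est} together with submultiplicativity, $\opnorm{A}_2\le\|A\|_{\ell_2}\le\sqrt{dr}\,\|A\|_{\ell_\infty}$, and entrywise Cauchy--Schwarz, the bounds $\opnorm{\hat\beta_n-\beta}_2=o_p(\lambda_n\sqrt{dr})$ and $\|\hat\beta_n-\beta\|_{\ell_\infty}=o_p(\lambda_n\sqrt r)$; hence $\|\hat\beta_n\|_{\ell_\infty}=O_p(1)$, $\opnorm{\hat\beta_n}_2=O_p(\sqrt{dr})$ (since $\opnorm{\beta}_2\le\|\beta\|_{\ell_2}\le\sqrt{dr}\|\beta\|_{\ell_\infty}=O(\sqrt{dr})$ by \ref{ass:vol}), and $\opnorm{\hat\beta_n^\top\hat\beta_n-\beta^\top\beta}_2=o_p(\lambda_n dr)=o_p(d)$ by \ref{f-ass:rate}. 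From these one gets, exactly as for $M$, $\Lambda_{\min}(\hat M)\ge\Lambda_{\min}(\hat\Theta_{Z,\lambda_n})\Lambda_{\min}(\hat\beta_n^\top\hat\beta_n)$, which is bounded below by a positive multiple of $d$ with probability tending to one, so $\opnorm{\hat M^{-1}}_2=O_p(1/d)$; and, applying the elementary inequality to $\hat\beta_n^\top\hat\Theta_{Z,\lambda_n}^{\cdot i}$ with $\|\hat\Theta_{Z,\lambda_n}^{\cdot i}\|_{\ell_1}\le\|\Theta_Z^{\cdot i}\|_{\ell_1}+\opnorm{\hat\Theta_{Z,\lambda_n}-\Theta_Z}_1=O_p(\sqrt{\mf{d}_n})+o_p(1)$, one gets $\sup_i\|\hat v_i\|_{\ell_2}^2=O_p(r\mf{d}_n)$. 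Multiplying yields $\|\hat\Sigma_{Y,\lambda_n}^{-1}-\hat\Theta_{Z,\lambda_n}\|_{\ell_\infty}=O_p(r\mf{d}_n/d)$.

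I expect the main obstacle to be making the powers of $r$ and $\mf{d}_n$ come out exactly under the given rate conditions. In particular, one must route the column $\hat\Theta_{Z,\lambda_n}^{\cdot i}$ through its $\ell_1$-norm, controlled by the approximate $(\mf{d}_n+1)$-sparsity of the columns of $\Theta_Z$ plus the $\ell_1$-operator-norm consistency rate of $\hat\Theta_{Z,\lambda_n}$ (not the $\ell_2$ one), and one must bound $\opnorm{\hat\beta_n^\top\hat\beta_n-\beta^\top\beta}_2$ via $\opnorm{\hat\Sigma_{X,n}^{-1}}_2=O_p(1)$ rather than an $\ell_1$-operator norm, so that the error is $o_p(d)$ under \ref{f-ass:rate} alone; cruder choices leave a spurious factor (of $\mf{d}_n$, or of $\sqrt{r}$) and break the bound. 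The perturbation estimates themselves largely overlap with those already established in the proof of Proposition \ref{factor:inverse}, so I would cite them from there where convenient, and the rest is routine bookkeeping.
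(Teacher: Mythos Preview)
Your proposal is correct and follows essentially the same route as the paper. Both apply the Sherman--Morrison--Woodbury identity to isolate the rank-$r$ correction $\Theta_Z\beta\Pi\beta^\top\Theta_Z$ (resp.\ its hatted analogue), then bound its $\ell_\infty$-norm by $\opnorm{\Pi}_2\cdot r\cdot\opnorm{\Theta_Z}_\infty^2\|\beta\|_{\ell_\infty}^2$ via the inequality $|v_i^\top\Pi v_j|\le\opnorm{\Pi}_2\|v_i\|_{\ell_2}\|v_j\|_{\ell_2}$ (the paper packages this as Lemma~\ref{lemma:ogihara}), using $\opnorm{\Pi}_2=O_p(d^{-1})$ and $\opnorm{\Theta_Z}_\infty=O_p(\sqrt{\mf d_n})$; your column-wise route through $\|\Theta_Z^{\cdot i}\|_{\ell_1}$ is the same bound once one notes $\max_i\|\Theta_Z^{\cdot i}\|_{\ell_1}=\opnorm{\Theta_Z}_\infty$. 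For the hatted version the paper simply cites the already-established Lemmas~\ref{lemma:factor-cov} and~\ref{lemma:loading} for $\|\hat\beta_n\|_{\ell_\infty}=O_p(1)$ and $\opnorm{\hat\Pi_n}_2=O_p(d^{-1})$, whereas you rederive the latter from scratch via $\Lambda_{\min}(\hat\beta_n^\top\hat\beta_n)$; both work, and your observation that the $\ell_1$-operator-norm rate (not $\ell_2$) of $\hat\Theta_{Z,\lambda_n}-\Theta_Z$ is what is needed to control $\|\hat\Theta_{Z,\lambda_n}^{\cdot i}\|_{\ell_1}$ is exactly the content of the paper's use of $\opnorm{\hat\Theta_{Z,\lambda_n}}_\infty$.
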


As a consequence, we obtain the following result. 
\begin{proposition}\label{coro:f-inv-AMN}
Suppose that the assumptions of Proposition \ref{factor:AMN} and \ref{ass:loading} are satisfied. Suppose also $a_n\opnorm{J_n}_\infty r\mf{d}_n\sqrt{\log (m+1)}/d\to0$ as $n\to\infty$. Then we have
\[
\lim_{n\to\infty}\sup_{y\in\mathbb{R}^m}\left|P\left(a_nJ_n\vectorize\left(\hat{\Sigma}_{Y,\lambda_n}^{-1}-\Gamma_{Z,n}-\Sigma_Y^{-1}\right)\leq y\right)-P\left(\wt{J}_{Z,n}\mathfrak{C}_n^{1/2}\zeta_n\leq y\right)\right|=0.
\]
\end{proposition}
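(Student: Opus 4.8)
The plan is to deduce the statement from Proposition \ref{factor:AMN} by showing that replacing $\hat{\Theta}_{Z,\lambda_n}$ with $\hat{\Sigma}_{Y,\lambda_n}^{-1}$ and $\Theta_Z$ with $\Sigma_Y^{-1}$ in the de-biased statistic perturbs it by a quantity that is negligible in the supremum norm, and then transferring this perturbation bound to the Kolmogorov distance via a Gaussian anti-concentration argument. First I would write
\[
a_nJ_n\vectorize\left(\hat{\Sigma}_{Y,\lambda_n}^{-1}-\Gamma_{Z,n}-\Sigma_Y^{-1}\right)=a_nJ_n\vectorize\left(\hat{\Theta}_{Z,\lambda_n}-\Gamma_{Z,n}-\Theta_Z\right)+a_nJ_n\vectorize(\Delta_n),
\]
where $\Delta_n:=(\hat{\Sigma}_{Y,\lambda_n}^{-1}-\hat{\Theta}_{Z,\lambda_n})-(\Sigma_Y^{-1}-\Theta_Z)$; note that $\Gamma_{Z,n}$ cancels exactly, which is the reason the same debiasing term works for both estimators. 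By Lemma \ref{lemma:f-inv-AMN} and the triangle inequality, $\|\Delta_n\|_{\ell_\infty}=O_p(r\mf{d}_n/d)$, and since $\|J_nv\|_{\ell_\infty}\le\opnorm{J_n}_\infty\|v\|_{\ell_\infty}$ for every $v\in\mathbb{R}^{d^2}$, the remainder satisfies $\|a_nJ_n\vectorize(\Delta_n)\|_{\ell_\infty}=O_p(a_n\opnorm{J_n}_\infty r\mf{d}_n/d)$, which by the stated hypothesis is negligible even after multiplication by $\sqrt{\log(m+1)}$.

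Next I would pick a deterministic sequence $\eta_n\downarrow0$ with $\eta_n\sqrt{\log(m+1)}\to0$ and $P(\|a_nJ_n\vectorize(\Delta_n)\|_{\ell_\infty}>\eta_n)\to0$ (possible by the previous bound). Writing $\bs{1}$ for the all-ones vector in $\mathbb{R}^m$, for any $y\in\mathbb{R}^m$ this yields the sandwich
\[
P\Big(a_nJ_n\vectorize\big(\hat{\Theta}_{Z,\lambda_n}-\Gamma_{Z,n}-\Theta_Z\big)\leq y-\eta_n\bs{1}\Big)-o(1)\leq P\Big(a_nJ_n\vectorize\big(\hat{\Sigma}_{Y,\lambda_n}^{-1}-\Gamma_{Z,n}-\Sigma_Y^{-1}\big)\leq y\Big),
\]
together with the symmetric upper bound obtained by replacing $y-\eta_n\bs{1}$ with $y+\eta_n\bs{1}$. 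Applying Proposition \ref{factor:AMN} to both the left- and right-hand sides (its hypotheses are in force here) reduces the claim to
\[
\limsup_{n\to\infty}\sup_{y\in\mathbb{R}^m}P\big(y-\eta_n\bs{1}<\wt{J}_{Z,n}\mathfrak{C}_n^{1/2}\zeta_n\leq y+\eta_n\bs{1}\big)=0.
\]
Conditionally on $\mathcal{F}$, the vector $\wt{J}_{Z,n}\mathfrak{C}_n^{1/2}\zeta_n$ is centered Gaussian in $\mathbb{R}^m$ with coordinate variances given by $\diag(\wt{J}_{Z,n}\mathfrak{C}_n\wt{J}_{Z,n}^\top)$, so Gaussian anti-concentration (Nazarov's inequality) bounds the conditional probability in the display by a constant multiple of $\eta_n\sqrt{\log(m+1)}/\sqrt{\min\diag(\wt{J}_{Z,n}\mathfrak{C}_n\wt{J}_{Z,n}^\top)}$; combining this with $\eta_n\sqrt{\log(m+1)}\to0$ and the non-degeneracy condition \eqref{factor:diag-tight} (which controls the event where the minimum variance is small) gives the display, and hence the proposition.

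The routine parts are the exact cancellation of $\Gamma_{Z,n}$, the $\ell_\infty$ and operator-norm bookkeeping, and the sandwiching. The only genuinely delicate point is the anti-concentration step in the regime $m=m_n\to\infty$: it is precisely there that the extra $\sqrt{\log(m+1)}$ factor in the hypothesis and the tightness condition \eqref{factor:diag-tight} are needed. Since this should be the same anti-concentration estimate already invoked in the proof of Proposition \ref{factor:AMN}, the present argument is in effect a short reduction to that proposition once Lemma \ref{lemma:f-inv-AMN} is available.
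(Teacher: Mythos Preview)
Your proof is correct and follows essentially the same route as the paper: the paper's proof simply invokes Proposition~\ref{factor:AMN}, Lemma~\ref{lemma:f-inv-AMN}, and \cite[Lemma~3.1]{Koike2018sk}, where the last reference packages exactly the sandwich-plus-Nazarov anti-concentration argument you have written out in detail. In other words, you have unpacked the black-box perturbation lemma but the underlying logic is identical.
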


\section{Application to realized covariance matrix}\label{sec:rc}

In this section we apply the abstract theory developed above to the simplest situation where the processes have no jumps and are observed at equidistant times without noise. 
%
Specifically, we consider the continuous-time factor model \eqref{factor-model} and assume that both $Y$ and $X$ are observed at equidistant time points $h/n$, $h=0,1,\dots,n$. 
In this case, $\Sigma_Y=[Y,Y]_1$ is naturally estimated by the \textit{realized covariance matrix}:
\begin{equation}\label{def:rc}
\hat{\Sigma}_{Y,n}:=\wh{[Y,Y]}_1^n:=\sum_{h=1}^{n}(Y_{h/n}-Y_{(h-1)/n})(Y_{h/n}-Y_{(h-1)/n})^\top.
\end{equation}
Analogously, we define $\hat{\Sigma}_{X,n}:=\wh{[X,X]}_1^n$ and $\hat{\Sigma}_{YX,n}:=\wh{[Y,X]}_1^n$. 
In addition, we assume that $Z$ and $X$ are respectively $d$-dimensional and $r$-dimensional continuous It\^o semimartingales given by 
\begin{equation*}
Z_t=Z_0+\int_0^t\mu_sds+\int_0^t\sigma_sdW_s,\qquad
X_t=X_0+\int_0^t\wt{\mu}_sds+\int_0^t\wt{\sigma}_sdW_s,
\end{equation*}
where $\mu=(\mu_s)_{s\in[0,1]}$ and $\wt{\mu}=(\wt{\mu}_s)_{s\in[0,1]}$ are respectively $d$-dimensional and $r$-dimensional $(\mathcal{F}_t)$-progressively measurable processes, $\sigma=(\sigma_s)_{s\in[0,1]}$ and $\wt{\sigma}=(\wt{\sigma}_s)_{s\in[0,1]}$ are respectively $\mathbb{R}^{d\times d'}$-valued and $\mathbb{R}^{r\times d'}$-valued $(\mathcal{F}_t)$-progressively measurable processes, and $W=(W_s)_{s\in[0,1]}$ is a $d'$-dimensional standard $(\mathcal{F}_t)$-Wiener process. 
To apply the convergence rate results to this setting, we impose the following assumptions:
\begin{enumerate}[label={\normalfont[E\arabic*]}]

\item\label{rc-bdd} For all $n,\nu\in\mathbb{N}$, we have an event $\Omega_n(\nu)\in\mathcal{F}$ and $(\mathcal{F}_t)$-progressively measurable processes $\mu(\nu)=(\mu(\nu)_s)_{s\in[0,1]}$, $\wt{\mu}(\nu)=(\wt{\mu}(\nu)_s)_{s\in[0,1]}$, $\sigma(\nu)=(\sigma(\nu)_s)_{s\in[0,1]}$ and $\wt{\sigma}(\nu)=(\wt{\sigma}(\nu)_s)_{s\in[0,1]}$ which take values in $\mathbb{R}^d$, $\mathbb{R}^r$, $\mathbb{R}^{d\times d'}$ and $\mathbb{R}^{r\times d'}$, respectively, and they satisfy the following conditions: 
\begin{enumerate}[label={\normalfont(\roman*)}]

\item $\lim_{\nu\to\infty}\limsup_{n\to\infty}P(\Omega_n(\nu)^c)=0$.

\item $\mu=\mu(\nu)$, $\wt{\mu}=\wt{\mu}(\nu)$, $\sigma=\sigma(\nu)$ and $\wt{\sigma}=\wt{\sigma}(\nu)$ on $\Omega_n(\nu)$ for all $\nu\in\mathbb{N}$.  

\item For all $\nu\in\mathbb{N}$, there is a constant $C_\nu>0$ such that
\[
\sup_{n\in\mathbb{N}}\sup_{0\leq t\leq1}\sup_{\omega\in\Omega}\left(\|\mu(\nu)_t(\omega)\|_{\ell_\infty}+\|\wt{\mu}(\nu)_t(\omega)\|_{\ell_\infty}+\|c(\nu)_t(\omega)\|_{\ell_\infty}
+\|\tilde{c}(\nu)_t(\omega)\|_{\ell_\infty}\right)\leq C_\nu,
\]
where $c(\nu)_t:=\sigma(\nu)_t\sigma(\nu)_t^\top$ and $\wt{c}(\nu)_t:=\wt{\sigma}(\nu)_t\wt{\sigma}(\nu)_t^\top$. 

\end{enumerate}

\item\label{dim-rate} $r=O(d)$ and $(\log d)/\sqrt{n}\to0$ as $n\to\infty$. 

\end{enumerate}

\ref{rc-bdd} is a local boundedness assumption on the coefficient processes and typical in the literature: For example, \ref{rc-bdd} is satisfied when $\mu,\wt{\mu},\sigma$ and $\wt{\sigma}$ are all bounded by some locally bounded process independent of $n$. This latter condition is imposed in \cite{FFX2016}, among others. 
\ref{dim-rate} restricts the growth rates of $d$ and $r$. It is indeed an adaptation of \ref{f-ass:est} to the present setting. 

\begin{theorem}\label{rc:rate}
Assume \ref{ass:vol}--\ref{ass:sparse.z} and \ref{rc-bdd}--\ref{dim-rate}. 
Let $\lambda_n$ be a sequence of positive-valued random variables such that $\lambda_n^{-1}\sqrt{(\log d)/n}\to^p0$ and $(s_n+r)\lambda_n\to^p0$ as $n\to\infty$. Then $\lambda_n^{-1}\opnorm{\hat{\Theta}_{Z,\lambda_n}-\Theta_Z}_w=O_p(s_n)$, $\lambda_n^{-1}\opnorm{\hat{\Theta}_{Z,\lambda_n}^{-1}-\Sigma_Z}_2=O_p(s_n)$ and $\lambda_n^{-1}\|\hat{\Sigma}_{Y,\lambda_n}-\Sigma_Y\|_{\ell_\infty}=O_p(s_n+r^2)$ as $n\to\infty$ for any $w\in[1,\infty]$. 
Moreover, if we additionally assume \ref{ass:degree.z}--\ref{ass:loading}, then $\lambda_n^{-1}\opnorm{\hat{\Sigma}_{Y,\lambda_n}^{-1}-\Sigma_Y^{-1}}_2=O_p(s_n+r)$ and $\lambda_n^{-1}\opnorm{\hat{\Sigma}_{Y,\lambda_n}^{-1}-\Sigma_Y^{-1}}_\infty=O_p(r^{3/2}\mf{d}_n(s_n+r))$ as $n\to\infty$.
\end{theorem}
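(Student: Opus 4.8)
The plan is to obtain Theorem~\ref{rc:rate} as a specialization of the abstract results of Section~\ref{sec:factor}: the two bounds on $\hat\Theta_{Z,\lambda_n}$ and $\hat\Theta_{Z,\lambda_n}^{-1}$ from Proposition~\ref{factor:rate}, the $\ell_\infty$-bound from Proposition~\ref{factor:max-rate}, and the two ``moreover'' bounds from Proposition~\ref{factor:inverse}. Since \ref{ass:vol}--\ref{ass:sparse.z} (and, for the last statement, also \ref{ass:degree.z}--\ref{ass:loading}) are part of the hypotheses, the only thing to be done is to verify conditions \ref{f-ass:est}--\ref{f-ass:psd} for the realized covariance estimators $\hat\Sigma_{X,n}=\wh{[X,X]}_1^n$, $\hat\Sigma_{YX,n}=\wh{[Y,X]}_1^n$ and $\hat\Sigma_{Y,n}=\wh{[Y,Y]}_1^n$ of \eqref{def:rc}.

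First I would exploit the bilinearity of the discrete realized covariation together with the decomposition $Y=\beta X+Z$ (with $\beta$ non-random) to remove $\beta$ and $\hat\beta_n$ from the quantities entering \ref{f-ass:est}: a direct computation gives the exact identities $\hat\Sigma_{YX,n}-\beta\hat\Sigma_{X,n}=\wh{[Z,X]}_1^n$ and $\breve\Sigma_{Z,n}=\wh{[Z,Z]}_1^n$. Since $[Z,X]_1=0$, $\Sigma_X=[X,X]_1$ and $\Sigma_Z=[Z,Z]_1$, condition \ref{f-ass:est} then reduces to controlling the $\ell_\infty$-error of the realized covariance matrix of the $(r+d)$-dimensional continuous It\^o semimartingale $(X^\top,Z^\top)^\top$. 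The core estimate is thus a maximal inequality of the form $\|\wh{[U,V]}_1^n-[U,V]_1\|_{\ell_\infty}=O_p(\sqrt{(\log d)/n})$ for continuous It\^o semimartingales $U,V$ of dimensions $O(d)$ whose coefficients satisfy \ref{rc-bdd}. I would prove it by first localizing via \ref{rc-bdd}(i)--(iii) --- this reduces the claim to uniformly bounded coefficients, since the left-hand side is unchanged on $\Omega_n(\nu)$ and $\limsup_n P(\Omega_n(\nu)^c)\to0$ --- and then, using $[U^i,V^j]_1=\int_0^1 c^{U^iV^j}_s\,ds$ for continuous semimartingales, writing each entry as a discrete-time martingale whose increments are products of conditionally sub-Gaussian variables of scale $1/n$, plus negligible drift and cross terms, and concluding by a Bernstein-type tail bound for each of the $O(d^2)$ entries followed by a union bound. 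The dimension restriction $(\log d)/\sqrt n\to0$ in \ref{dim-rate} is exactly what keeps this argument in the sub-exponential regime; alternatively the inequality may be quoted from existing high-frequency concentration results such as those underlying \cite{Koike2018sk}. Combined with the standing assumption $\lambda_n^{-1}\sqrt{(\log d)/n}\to^p0$ this gives \ref{f-ass:est}.

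Condition \ref{f-ass:rate} is assumed verbatim. Condition \ref{f-ass:psd} is immediate: $\ol\Sigma_n$ is precisely the realized covariance matrix of the $(r+d)$-dimensional process $(X^\top,Y^\top)^\top$, hence a sum of rank-one positive semidefinite matrices, so $\ol\Sigma_n\in\mcl{S}_d^+$ with probability one. With \ref{f-ass:est}--\ref{f-ass:psd} verified, Proposition~\ref{factor:rate} yields the bounds on $\opnorm{\hat\Theta_{Z,\lambda_n}-\Theta_Z}_w$ and $\opnorm{\hat\Theta_{Z,\lambda_n}^{-1}-\Sigma_Z}_2$; Proposition~\ref{factor:max-rate} yields $\lambda_n^{-1}\|\hat\Sigma_{Z,\lambda_n}-\Sigma_Z\|_{\ell_\infty}=O_p(s_n+r^2)$ (with $\hat\Sigma_{Z,\lambda_n}:=\hat\Theta_{Z,\lambda_n}^{-1}$), and combining this with a direct $\ell_\infty$-estimate of $\hat\beta_n\hat\Sigma_{X,n}\hat\beta_n^\top-\beta\Sigma_X\beta^\top$ of order $r^2\lambda_n$ (using $\hat\Sigma_{YX,n}=\beta\hat\Sigma_{X,n}+\wh{[Z,X]}_1^n$, $\|\beta\|_{\ell_\infty}=O(1)$ and the asymptotic invertibility of $\hat\Sigma_{X,n}$) gives the stated rate for $\|\hat\Sigma_{Y,\lambda_n}-\Sigma_Y\|_{\ell_\infty}$; finally, under \ref{ass:degree.z}--\ref{ass:loading}, Proposition~\ref{factor:inverse} gives the two bounds on $\opnorm{\hat\Sigma_{Y,\lambda_n}^{-1}-\Sigma_Y^{-1}}_2$ and $\opnorm{\hat\Sigma_{Y,\lambda_n}^{-1}-\Sigma_Y^{-1}}_\infty$.

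I expect the only non-routine ingredient to be the maximal inequality for the realized covariance at the $\ell_\infty$-rate $\sqrt{(\log d)/n}$, and within it the localization bookkeeping of \ref{rc-bdd} together with the sub-exponential moment control that is responsible for the dimension restriction $(\log d)/\sqrt n\to0$; the remaining steps are algebraic identities for realized covariations and applications of the propositions of Section~\ref{sec:factor}.
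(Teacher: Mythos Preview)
Your proposal is correct and follows essentially the same approach as the paper: reduce to Propositions~\ref{factor:rate}--\ref{factor:inverse} by verifying \ref{f-ass:est}--\ref{f-ass:psd}, use the identities $\hat\Sigma_{YX,n}-\beta\hat\Sigma_{X,n}=\wh{[Z,X]}_1^n$ and $\breve\Sigma_{Z,n}=\wh{[Z,Z]}_1^n$, and establish the $\ell_\infty$-rate $\sqrt{(\log d)/n}$ for the realized covariance via localization under \ref{rc-bdd} followed by a Bernstein-type concentration inequality and a union bound (this is exactly Lemmas~\ref{rc:concentrate} and~\ref{lemma:rc-max} in the paper). One minor remark: the $\ell_\infty$-estimate for $\hat\beta_n\hat\Sigma_{X,n}\hat\beta_n^\top-\beta\Sigma_X\beta^\top$ that you add separately is in fact already part of the proof of Proposition~\ref{factor:max-rate} (whose conclusion, despite the notation, is the bound on $\hat\Sigma_{Y,\lambda_n}-\Sigma_Y$), so you need not re-derive it.
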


\begin{rmk}[Optimal convergence rate]
From Theorem \ref{rc:rate}, the convergence rate of $\hat{\Theta}_{Z,\lambda_n}$ to $\Theta_Z$ in the $\ell_w$-operator norm for any $w\in[1,\infty]$ can be arbitrarily close to $s_n\sqrt{(\log d)/n}$, which is similar to that in a standard i.i.d.~setting (cf.~Theorem 14.1.3 in \cite{JvdG2018}). 
On the other hand, in the Gaussian i.i.d.~setting without factor structure, the minimax optimal rate for this problem is known to be $\mf{d}(\Theta_Z)\sqrt{(\log d)/n}$ (see \cite[Theorem 1.1]{CLZ2016} and \cite[Theorem 5]{CRZ2016}), which can be faster than $s_n\sqrt{(\log d)/n}$. In a standard i.i.d.~setting, this rate can be attained by using a node-wise penalized regression (see e.g.~\cite[Section 3.1]{CRZ2016}), so it would be interesting to study the convergence rate of such a method in our setting. We leave it to future research. 
In the meantime, such a method does not ensure the positive definiteness of the estimated precision matrix in general, so our estimator would be preferable for some practical applications such as portfolio allocation. 
\end{rmk}

Next we derive the asymptotic mixed normality of the de-biased estimator in the present setting. 
As announced, we accomplish this purpose with the help of Malliavin calculus. In the following we will freely use standard concepts and notation from Malliavin calculus. We refer to \cite{Nualart2006} and \cite[Chapter 15]{Janson1997} for detailed treatments of this subject. 

We consider the Malliavin calculus with respect to $W$. 
For any real number $p\geq1$ and any integer $k\geq1$, $\mathbb{D}_{k,p}$ denotes the stochastic Sobolev space of random variables which are $k$ times differentiable in the Malliavin sense and the derivatives up to order $k$ have finite moments of order $p$. 
If $F\in\mathbb{D}_{k,p}$, we denote by $D^kF$ the $k$th Malliavin derivative of $F$, which is a random variable taking values in $L^2([0,1]^k;(\mathbb{R}^{d'})^{\otimes k})$. 
Here, we identify the space $(\mathbb{R}^{d'})^{\otimes k}$ with the set of all $d'$-dimensional $k$-way arrays, i.e.~real-valued functions on $\{1,\dots,d'\}^k$. 
Since $D^kF$ is a random function on $[0,1]^k$, we can consider the value $D^kF(t_1,\dots,t_k)$ evaluated at $(t_1,\dots,t_k)\in[0,1]^k$. We denote this value by $D_{t_1,\dots,t_k}F$. Moreover, since $D_{t_1,\dots,t_k}F$ takes values in $(\mathbb{R}^{d'})^{\otimes k}$, we can consider the value $D_{t_1,\dots,t_k}F(a_1,\dots,a_k)$ evaluated at $(a_1,\dots,a_k)\in\{1,\dots,d'\}^k$. This value is denoted by $D^{(a_1,\dots,a_k)}_{t_1,\dots,t_k}F$. 
We remark that the variable $D_{t_1,\dots,t_k}F$ is defined only a.e.~on $[0,1]^k\times\Omega$ with respect to the measure $\leb_k\times P$, where $\leb_k$ denotes the Lebesgue measure on $[0,1]^k$. Therefore, if $D_{t_1,\dots,t_k}F$ satisfies some property a.e.~on $[0,1]^k\times\Omega$ with respect to $\leb_k\times P$, by convention we will always take a version of $D_{t_1,\dots,t_k}F$ satisfying that property everywhere on $[0,1]^k\times\Omega$ if necessary. 
We set $\mathbb{D}_{k,\infty}:=\bigcap_{p=1}^\infty\mathbb{D}_{k,p}$. 
We denote by $\mathbb{D}_{k,\infty}(\mathbb{R}^d)$ the space of all $d$-dimensional random variables $F$ such that $F^i\in\mathbb{D}_{k,\infty}$ for every $i=1,\dots,d$. The space $\mathbb{D}_{k,\infty}(\mathbb{R}^{d\times r})$ is defined in an analogous way. 
Finally, for any $(\mathbb{R}^{d'})^{\otimes k}$-valued random variable $F$ and $p\in(0,\infty]$, we set
\[
\|F\|_{p,\ell_2}:=\left\|\sqrt{\sum_{a_1,\dots,a_k=1}^{d'}F(a_1,\dots,a_k)^2}\right\|_p.
\]

We also need to define some variables related to the ``asymptotic'' covariance matrices of the estimators. 
We define $d^2\times d^2$ random matrix $\mathfrak{C}_n$ by
\begin{multline*}
\mathfrak{C}_n^{(i-1)d+j,(k-1)d+l}:=\\
n\sum_{h=1}^n\left\{\left(\int_{(h-1)/n}^{h/n}c_s^{ik}ds\right)\left(\int_{(h-1)/n}^{h/n}c_s^{jl}ds\right)
+\left(\int_{(h-1)/n}^{h/n}c_s^{il}ds\right)\left(\int_{(h-1)/n}^{h/n}c_s^{jk}ds\right)\right\},\\
i,j,k,l=1,\dots,d,
\end{multline*}
where $c_s:=\sigma_s\sigma_s^\top$. 
Then we set $\mf{V}_n:=(\Theta_{Z}\otimes\Theta_{Z})\mathfrak{C}_n(\Theta_{Z}\otimes\Theta_{Z})$ and $\mf{S}_n:=\diag(\mf{V}_n)^{1/2}$. 
In addition, under \ref{rc-bdd}, we define $\mf{C}_n(\nu)$ similarly to $\mf{C}_n$ with replacing $\sigma$ by $\sigma(\nu)$. 
$\mf{C}_n$ and $\mf{V}_n$ play roles of the asymptotic covariance matrices of $\breve{\Sigma}_{Z,n}$ and $\hat{\Theta}_{Z,\lambda_n}$, respectively. 

We impose the following assumptions on the model. 
\begin{enumerate}[label={\normalfont[F\arabic*]}]

\item\label{rc-mal} We have \ref{rc-bdd} and $\Sigma_Z(\nu):=\int_0^1c(\nu)_tdt$ is a.s.~invertible for all $n,\nu\in\mathbb{N}$. Moreover, for all $n,\nu\in\mathbb{N}$ and $t\in[0,1]$, $\mu(\nu)_t\in\mathbb{D}_{1,\infty}(\mathbb{R}^{d})$, $\sigma(\nu)_t\in\mathbb{D}_{2,\infty}(\mathbb{R}^{d\times r})$ and
\begin{align}
&\sup_{n\in\mathbb{N}}
\max_{1\leq i\leq d}
\sup_{0\leq s,t\leq 1}\|D_s\mu(\nu)_t^{i}\|_{\infty,\ell_2}
<\infty,\label{eq-mu}\\
&\sup_{n\in\mathbb{N}}
\max_{1\leq i\leq d}\left(
\sup_{0\leq s,t\leq 1}\|D_s\sigma(\nu)_t^{i\cdot}\|_{\infty,\ell_2}
+\sup_{0\leq s,t,u\leq 1}\|D_{s,t}\sigma(\nu)_u^{i\cdot}\|_{\infty,\ell_2}
\right)
<\infty,\label{eq-sigma}\\
&\sup_{n\in\mathbb{N}}\lpa\max_{1\leq i\leq d}\|\Theta_Z(\nu)^{ii}\|_\infty+\max_{1\leq k\leq d^2}\|1/\mathfrak{V}_n(\nu)^{kk}\|_\infty\rpa<\infty,\label{Theta-bdd}
\end{align}
where $\Theta_Z(\nu):=\Sigma_Z(\nu)^{-1}$ and $\mf{V}_n(\nu):=(\Theta_{Z}(\nu)\otimes\Theta_{Z}(\nu))\mathfrak{C}_n(\nu)(\Theta_{Z}(\nu)\otimes\Theta_{Z}(\nu))$. 

\item\label{ass:adjacent} The $d\times d$ matrix $Q_{Z}:=(1_{\{\Theta_Z^{ij}\neq0\}})_{1\leq i,j\leq d}$ is non-random and $\mf{d}(Q_Z)=O(1)$ as $n\to\infty$. 

\item\label{dim-AMN} $r=O(d)$ and $(\log d)^{13}/n\to0$ as $n\to\infty$. 

\end{enumerate}

We give a few remarks on these assumptions. 
First, \ref{rc-mal} imposes the (local) Malliavin differentiability on the coefficient processes of the residual process $Z$ and the local boundedness on their Malliavin derivatives. Such an assumption is necessary for the application of the high-dimensional mixed normal limit theorem of \cite{Koike2018sk} to our setting (see Lemma \ref{lemma:rc-AMN}). 
Note that we do not need to impose this type of assumption on the factor process $X$. 
We also remark that analogous assumptions are sometimes used in the literature of high-frequency financial econometrics even in low-dimensional settings; see e.g.~\cite{CG2011,CPTV2017}. 
Second, \ref{ass:adjacent} is clearly understood when we consider a Gaussian graphical model associated with $\Sigma_Z$: The non-randomness of $Q_Z$ implies that the edge structure of this Gaussian graphical model is determined in a non-random manner.\footnote{By conditioning, it is indeed sufficient that the edge structure is determined independently of the driving Wiener process $W$.} 
Also, we remark that the condition $\mf{d}(Q_Z)=O(1)$ is equivalent to \ref{ass:degree.z} with $\mf{d}_n=1$. It is seemingly possible to relax this condition so that it allows a diverging sequence $\mf{d}_n$ as long as $\mf{d}_n(\log d)^\kappa/n\to0$ for an appropriate constant $\kappa>0$. However, to determine the precise value of $\kappa$, we need to carefully revise the proof of Lemma \ref{lemma:rc-AMN} so that it allows the quantity inside $\sup_{n\in\mathbb{N}}$ in \eqref{eq-X} to diverge as $n\to\infty$. To avoid such an additional complexity, we restrict our attention to the case of $\mf{d}_n=1$. 
Third, the condition $(\log d)^{13}/n\to0$ in \ref{dim-AMN} is used again for applying the high-dimensional CLT of \cite{Koike2018sk}. 

Now we are ready to state our result. 
Let $\mathcal{A}^\mathrm{re}(d^2)$ be the set of all hyperrectangles in $\mathbb{R}^{d^2}$, i.e.~$\mathcal{A}^\mathrm{re}(d^2)$ consists of all sets $A$ of the form 
$
A=\{x\in\mathbb{R}^{d^2}:a_j\leq x^j\leq b_j\text{ for all }j=1,\dots,d^2\}
$ 
for some $-\infty\leq a_j\leq b_j\leq\infty$, $j=1,\dots,d^2$. 
\begin{theorem}\label{rc:AMN}
Assume \ref{ass:vol}--\ref{ass:sparse.z} and \ref{rc-mal}--\ref{dim-AMN}.  
Let $\lambda_n$ be a sequence of positive-valued random variables such that $\lambda_n^{-1}\sqrt{(\log d)/n}\to^p0$, $(s_n+r)\lambda_n\to^p0$ and $\lambda_n^2s_n\sqrt{n\log d}\to^p0$ as $n\to\infty$. 
Then we have
\begin{equation}\label{AMN:non-student}
\sup_{A\in\mathcal{A}^\mathrm{re}(d^2)}\left|P\left(\sqrt{n}\vectorize(\hat{\Theta}_{Z,\lambda_n}-\Gamma_{Z,n}-\Theta_Z)\in A\right)-P\left(\mf{V}_n^{1/2}\zeta_n\in A\right)\right|\to0
\end{equation}
and
\begin{equation}\label{AMN:student}
\sup_{A\in\mathcal{A}^\mathrm{re}(d^2)}\left|P\left(\sqrt{n}\mf{S}_n^{-1}\vectorize(\hat{\Theta}_{Z,\lambda_n}-\Gamma_{Z,n}-\Theta_Z)\in A\right)-P\left(\mf{S}_n^{-1}\mf{V}_n^{1/2}\zeta_n\in A\right)\right|\to0
\end{equation}
as $n\to\infty$.
\end{theorem}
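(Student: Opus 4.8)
The plan is to derive both \eqref{AMN:non-student} and \eqref{AMN:student} from the abstract Proposition \ref{factor:AMN}, after specializing it to the realized-covariance setting and isolating the genuinely new probabilistic input as a separate lemma. Many hypotheses of Proposition \ref{factor:AMN} come for free here: Assumptions \ref{ass:vol}--\ref{ass:sparse.z} are assumed; \ref{ass:adjacent} gives $\mf{d}(\Theta_Z)\leq\mf{d}(Q_Z)=O(1)$, so \ref{ass:degree.z} holds with a bounded sequence $\mf{d}_n$; conditions \ref{f-ass:est}--\ref{f-ass:rate} of Proposition \ref{factor:rate} have been verified in the proof of Theorem \ref{rc:rate} (the standing hypotheses $\lambda_n^{-1}\sqrt{(\log d)/n}\to^p0$ and $(s_n+r)\lambda_n\to^p0$ are exactly those needed there, and \ref{dim-AMN} implies \ref{dim-rate}); and \ref{f-ass:psd} is automatic because $\ol{\Sigma}_n$ is the realized covariance matrix of the stacked process $(X^\top,Y^\top)^\top$, hence a.s.\ positive semidefinite. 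We take $a_n:=\sqrt{n}$ and $\mathfrak{C}_n$ to be the explicit matrix introduced in Section \ref{sec:rc}; it then remains, for a suitable choice of $J_n$ (hence of $m$), to verify the rate condition $a_n\opnorm{J_n}_\infty\lambda_n^2s_n\sqrt{\mf{d}_n\log(m+1)}\to^p0$, the Gaussian-approximation condition \eqref{f-est:AMN}, and the non-degeneracy condition \eqref{factor:diag-tight}.

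To convert the one-sided (orthant) approximation delivered by Proposition \ref{factor:AMN} into the hyperrectangle approximation asserted in the theorem, I would apply the proposition with the stacked matrix $J_n:=(\mathsf{E}_{d^2},-\mathsf{E}_{d^2})^\top$ for \eqref{AMN:non-student} and with the (random) stacked matrix $J_n:=(\mf{S}_n^{-1},-\mf{S}_n^{-1})^\top$ for \eqref{AMN:student}; in either case $m=2d^2$. Indeed, every $A\in\mathcal{A}^\mathrm{re}(d^2)$ is the preimage of an orthant in $\mathbb{R}^{2d^2}$ under $x\mapsto(x^\top,-x^\top)^\top$, in the Studentized case the diagonal positive matrix $\mf{S}_n^{-1}$ maps hyperrectangles to hyperrectangles, and, since $\zeta_n$ is symmetric, the resulting Gaussian side on these orthants equals $\mf{V}_n^{1/2}\zeta_n$ (resp.\ $\mf{S}_n^{-1}\mf{V}_n^{1/2}\zeta_n$) on the corresponding hyperrectangles, because $\wt{J}_{Z,n}\mathfrak{C}_n\wt{J}_{Z,n}^\top$ inherits its block structure from $(\Theta_Z\otimes\Theta_Z)\mathfrak{C}_n(\Theta_Z\otimes\Theta_Z)=\mf{V}_n$. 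Since $\opnorm{\mathsf{E}_{d^2}}_\infty=1$ and $\opnorm{\mf{S}_n^{-1}}_\infty=\max_{1\leq k\leq d^2}(\mf{V}_n^{kk})^{-1/2}=O_p(1)$ by \eqref{Theta-bdd} together with the localization \ref{rc-bdd}(i), while $\mf{d}_n=O(1)$ and $\log(2d^2+1)=O(\log d)$, the rate condition reduces in both cases to $\lambda_n^2s_n\sqrt{n\log d}\to^p0$, which is the third standing hypothesis on $\lambda_n$. The non-degeneracy condition \eqref{factor:diag-tight} is immediate in the Studentized case, where the corresponding diagonal entries of $\wt{J}_{Z,n}\mathfrak{C}_n\wt{J}_{Z,n}^\top$ all equal $1$; in the non-Studentized case $\min\diag(\wt{J}_{Z,n}\mathfrak{C}_n\wt{J}_{Z,n}^\top)=\min_{1\leq k\leq d^2}\mf{V}_n^{kk}$ is bounded below by a positive constant on each localizing event $\Omega_n(\nu)$ again by \eqref{Theta-bdd}, so \ref{rc-bdd}(i) yields \eqref{factor:diag-tight}.

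The heart of the argument, and the step I expect to be the main obstacle, is the verification of \eqref{f-est:AMN} for the above choices of $J_n$; I would isolate this as a lemma (Lemma \ref{lemma:rc-AMN}). Unwinding $\wt{J}_{Z,n}=-J_n(\Theta_Z\otimes\Theta_Z)$ through the stacking, this amounts to showing that $\sqrt{n}\,(\Theta_Z\otimes\Theta_Z)\vectorize(\breve{\Sigma}_{Z,n}-\Sigma_Z)$ and its Studentized counterpart $\sqrt{n}\,\mf{S}_n^{-1}(\Theta_Z\otimes\Theta_Z)\vectorize(\breve{\Sigma}_{Z,n}-\Sigma_Z)$ are uniformly approximated over hyperrectangles by $\mf{V}_n^{1/2}\zeta_n$ and $\mf{S}_n^{-1}\mf{V}_n^{1/2}\zeta_n$, respectively. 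The algebraic starting point is the identity $\breve{\Sigma}_{Z,n}=\wh{[Z,Z]}_1^n$, obtained by substituting $Y=\beta X+Z$ into the definition of $\breve{\Sigma}_{Z,n}$ and using the bilinearity of realized covariation, so that all terms involving $X$ cancel; thus $\breve{\Sigma}_{Z,n}-\Sigma_Z$ is exactly the realized-covariance estimation error for the continuous It\^o semimartingale $Z$, whose dominant contribution after the $\sqrt{n}$ scaling is a martingale (the drift-related terms being of smaller order). The essential difficulty is the non-ergodic structure pointed out in the Introduction: the whitening matrix $\Theta_Z\otimes\Theta_Z$ is $\mathcal{F}$-measurable but genuinely random and correlated with that martingale, which destroys the martingale structure one would exploit in an i.i.d.\ setting and rules out a plain martingale central limit theorem. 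This is resolved, as announced, by invoking the high-dimensional central limit theorem of \cite{Koike2018sk}, whose hypotheses are precisely matched by the present assumptions: the (local) Malliavin differentiability of the coefficients of $Z$ and the uniform bounds \eqref{eq-mu}--\eqref{Theta-bdd} on their Malliavin derivatives in \ref{rc-mal}, the $O(1)$ bound on $\mf{d}(Q_Z)$ in \ref{ass:adjacent} needed to control the effective sparsity of $\Theta_Z\otimes\Theta_Z$ (cf.\ \eqref{eq-X}), and the dimension growth condition $(\log d)^{13}/n\to0$ in \ref{dim-AMN}; the localization \ref{rc-bdd} reduces everything to uniformly bounded coefficients and Malliavin derivatives. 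Once Lemma \ref{lemma:rc-AMN} is in hand, feeding it into Proposition \ref{factor:AMN} with the two choices of $J_n$ produces \eqref{AMN:non-student} and \eqref{AMN:student}.
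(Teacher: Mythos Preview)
Your proposal is correct and follows essentially the same route as the paper: reduce hyperrectangles to orthants via the stacked matrices $J_{n,1}=(\mathsf{E}_{d^2},-\mathsf{E}_{d^2})^\top$ and $J_{n,2}=(\mf{S}_n^{-1},-\mf{S}_n^{-1})^\top$, then invoke Proposition~\ref{factor:AMN} with $a_n=\sqrt{n}$ and feed in Lemma~\ref{lemma:rc-AMN} for condition~\eqref{f-est:AMN}. The only place the paper is noticeably more explicit is in verifying the hypotheses of Lemma~\ref{lemma:rc-AMN}: it writes $\Xi_n=\Upsilon_n\circ\bs{X}_n$ with the non-random $\Upsilon_n=(Q_Z\otimes Q_Z,\,Q_Z\otimes Q_Z)^\top$ (via Lemma~\ref{kronecker-hadamard}) and then, through the auxiliary Lemmas~\ref{S-deriv}--\ref{acov-deriv}, propagates the Malliavin bounds from $\sigma(\nu)$ to $\Theta_Z(\nu)\otimes\Theta_Z(\nu)$ and to $\mf{V}_n(\nu)$; in the Studentized case this further requires showing $\mf{S}_n(\nu)^{kk}\in\mathbb{D}_{2,\infty}$ with the requisite uniform derivative bounds, which the paper obtains by writing $(\mf{V}_n(\nu)^{kk})^{-1/2}=(\mf{V}_n(\nu)^{kk})^{5/2}(\mf{V}_n(\nu)^{kk})^{-3}$ and applying the chain rule together with~\eqref{Theta-bdd}.
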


\begin{rmk}
$\lambda_n$ is typically chosen of order close to $\sqrt{\log d/n}$ as possible, so $\lambda_n^2s_n\sqrt{n\log d}\to^p0$ is almost equivalent to $s_n(\log d)^{\frac{3}{2}}/\sqrt{n}\to0$. This is stronger than the condition $s_n(\log d)/\sqrt{n}\to0$ which is used to derive the asymptotic normality of the de-biased weighted graphical Lasso estimator in \cite[Theorem 14.1.6]{JvdG2018} (note that we assume $\mf{d}(\Theta_Z)=O_p(1)$). This is because Theorem \ref{rc:AMN} derives the approximations of the joint distributions of the de-biased estimator and its Studentization, while \cite[Theorem 14.1.6]{JvdG2018} focuses only on approximation of their marginal distributions.  
\end{rmk}

Theorem \ref{rc:AMN} is statistically infeasible in the sense that $\mf{V}_n$ is unobservable. Thus, we need to estimate it from the data. Since $\Theta_Z$ is naturally estimated by $\hat{\Theta}_{Z,\lambda_n}$, we construct an estimator for $\mf{C}_n$. 
Define the $d^2$-dimensional random vectors $\hat{\chi}_h$ by
\[
\hat{\chi}_{h}:=\vectorize\left[(\hat{Z}_{h/n}-\hat{Z}_{(h-1)/n})(\hat{Z}_{h/n}-\hat{Z}_{(h-1)/n})^\top\right],\qquad
h=1,\dots,n,
\]
where $\hat{Z}_{h/n}:=Y_{h/n}-\hat{\beta}X_{h/n}$. 
Then we set
\[
\hat{\mathfrak{C}}_n:=n\sum_{h=1}^n\hat{\chi}_h\hat{\chi}_h^\top-\frac{n}{2}\sum_{h=1}^{n-1}\left(\hat{\chi}_h\hat{\chi}_{h+1}^\top+\hat{\chi}_{h+1}\hat{\chi}_{h}^\top\right).
\]
\begin{lemma}\label{lemma:avar}
Suppose that the assumptions of Theorem \ref{rc:AMN} are satisfied. 
Suppose also $r^2(\log d)/n=O(1)$ as $n\to\infty$ and that there is a constant $\gamma\in(0,\frac{1}{2}]$ such that
\begin{equation}\label{sigma-modulus}
\sup_{0< t\leq1-\frac{1}{n}}\left\|\max_{1\leq i,j\leq d}\left|c(\nu)^{ij}_{t+\frac{1}{n}}-c(\nu)^{ij}_t\right|\right\|_2=O(n^{-\gamma})
\end{equation}
as $n\to\infty$ for all $\nu\in\mathbb{N}$. 
Then, $\|\hat{\mf{C}}_n-\mf{C}_n\|_{\ell_\infty}=O_p(r(\log d)^{5/2}/\sqrt{n}+n^{-\gamma})$ as $n\to\infty$. 
\end{lemma}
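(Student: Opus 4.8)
The plan is: localize to bounded coefficients, split $\wh{\mf{C}}_n-\mf{C}_n$ into the error of substituting $\wh\beta_n$ for $\beta$ and the error of the ``oracle'' estimator $\mf{C}_n^\circ$ built from the true residual increments, and treat $\mf{C}_n^\circ$ by a predictable-part/martingale-difference decomposition. By \ref{rc-bdd}, since $P(\Omega_n(\nu)^c)\to0$ as $\nu\to\infty$ uniformly in $n$ and the assertion is an $O_p$-bound, it suffices to prove the bound on $\Omega_n(\nu)$ with $\nu$-dependent constants and then let $\nu\to\infty$; on $\Omega_n(\nu)$ every quantity coincides with its $\nu$-localized version, so from now on I assume $\mu,\wt\mu,\sigma,\wt\sigma$ (hence $c,\wt c$) uniformly bounded, and I write $\Delta_h$ for the increment over $((h-1)/n,h/n]$ and $\int_h$ for $\int_{(h-1)/n}^{h/n}$.

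Since $\wh Z_{h/n}=Z_{h/n}-\delta X_{h/n}$ with $\delta:=\wh\beta_n-\beta$, we have $\Delta_h\wh Z=\Delta_h Z-\delta\Delta_h X$, hence $\wh\chi_h=\chi_h+\eta_h$, where $\chi_h:=\vectorize[(\Delta_h Z)(\Delta_h Z)^\top]$ and $\eta_h:=\vectorize[-(\Delta_hZ)(\delta\Delta_hX)^\top-(\delta\Delta_hX)(\Delta_hZ)^\top+(\delta\Delta_hX)(\delta\Delta_hX)^\top]$. Substituting into the definition of $\wh{\mf{C}}_n$ produces $\mf{C}_n^\circ:=n\sum_{h=1}^n\chi_h\chi_h^\top-\tfrac n2\sum_{h=1}^{n-1}(\chi_h\chi_{h+1}^\top+\chi_{h+1}\chi_h^\top)$ plus cross terms involving at least one $\eta_h$. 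Using the rate for $\wh\beta_n-\beta$ available from the proof of Theorem \ref{rc:rate}, maximal inequalities for the bounded-coefficient increments $\Delta_hX$, and the hypothesis $r^2(\log d)/n=O(1)$, I would bound these cross terms entry-wise and obtain $\|\wh{\mf{C}}_n-\mf{C}_n^\circ\|_{\ell_\infty}=O_p(r(\log d)^{5/2}/\sqrt n)$; it is precisely the sums over the $r$ factor coordinates in $\delta\Delta_hX$ that bring in the factor $r$ and make the condition on $r$ necessary.

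For $\mf{C}_n^\circ$ I would first replace $\Delta_hZ$ by its martingale part $M_h:=\int_h\sigma_s\,dW_s$; each drift increment $\int_h\mu_s\,ds$ is $O_p(1/n)$ and the resulting error is lower-order after the scaled sums. Then I would decompose $\mf{C}_n^\circ-\mf{C}_n$ into a predictable part, obtained by replacing each summand with its conditional expectation given $\mcl{F}_{(h-1)/n}$ (resp.\ $\mcl{F}_{h/n}$ for the inner increment in a cross-block term), and a martingale-difference remainder. For the predictable part I would invoke \ref{rc-mal}: the coefficients are Malliavin differentiable with derivatives bounded as in \eqref{eq-sigma}, so Malliavin integration by parts expands $\mbb{E}[M_h^iM_h^jM_h^kM_h^l\mid\mcl{F}_{(h-1)/n}]$ (and the cross-block analogues) as the Gaussian-moment formula $(\int_hc^{ij})(\int_hc^{kl})+(\int_hc^{ik})(\int_hc^{jl})+(\int_hc^{il})(\int_hc^{jk})$ plus a remainder of order $n^{-2}$ times bounded Malliavin-derivative norms, hence $O_p(n^{-1/2})$ after multiplication by $n$ and summation. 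The three ``$(\int_hc^{ik})(\int_hc^{jl})$''-type terms arising from $n\sum_h\chi_h\chi_h^\top$ reproduce $\mf{C}_n$ exactly; the remaining ``$(\int_hc^{ij})(\int_hc^{kl})$'' terms are cancelled, up to one boundary term of order $n^{-1}$, by the de-biasing sum $\tfrac n2\sum_h(\chi_h\chi_{h+1}^\top+\chi_{h+1}\chi_h^\top)$, whose conditional mean equals $(\int_hc^{ij})(\int_{h+1}c^{kl})+(\int_{h+1}c^{ij})(\int_hc^{kl})$ to leading order and is symmetric in $h$ and $h+1$; the residual from replacing $\int_{h+1}c$ by $\int_hc$ is controlled by \eqref{sigma-modulus} and is $O_p(n^{-\gamma})$.

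It remains to bound the martingale-difference remainder of $\mf{C}_n^\circ$. Each entry is a sum over $h$ of martingale differences whose summands are sums of products of four bounded-coefficient Brownian increments over one or two consecutive blocks, hence of size $O_p(n^{-2})$ with sub-Weibull tails of order $1/2$; estimating their conditional variances, applying a Bernstein-type inequality for sums of such variables, and taking a union bound over the $d^2$ entries yields $O_p((\log d)^{5/2}/\sqrt n)$, the powers of $\log d$ exceeding $(\log d)^{1/2}$ being the cost of the quartic (heavy-tailed) structure. Adding the three contributions gives the claimed bound. I expect the main obstacle to be exactly this last step together with the Malliavin expansion of the conditional fourth moments: pinning down the precise power of $\log d$ requires a careful tail analysis of quartic functionals of Brownian increments that are not Gaussian (because $\sigma$ is random) and a matching maximal inequality over all $d^2$ coordinates, while the loading-plug-in step is delicate because it couples the estimation error of $\delta$ with this same quartic structure.
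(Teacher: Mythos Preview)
Your overall decomposition is the same as the paper's: split $\wh{\mf C}_n-\mf C_n$ into the oracle error $\wt{\mf C}_n-\mf C_n$ (your $\mf C_n^\circ-\mf C_n$), built from the true residual increments $\chi_h$, and the plug-in error $\wh{\mf C}_n-\wt{\mf C}_n$. For the plug-in error the paper does essentially what you describe, but packages it more cleanly: it first proves $\sum_h(\|\Delta_h^nZ\|_{\ell_\infty}^4+\|\Delta_h^nX\|_{\ell_\infty}^4)=O_p((\log d)^2/n)$ via sub-Gaussian maximal inequalities (Lemma~\ref{lemma:quarticity}), then $\sum_h\|\wh\chi_h-\chi_h\|_{\ell_\infty}^2=O_p(r^2(\log d)^3/n^2)$ (Lemma~\ref{lemma:chihat}), and finishes by Cauchy--Schwarz on the bilinear cross terms; this avoids tracking individual entries and yields the $r(\log d)^{5/2}/\sqrt n$ rate directly.

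The main difference is in the oracle term. The paper simply invokes Proposition~4.1 of \cite{Koike2018sk}, which already gives $\|\wt{\mf C}_n-\mf C_n\|_{\ell_\infty}=O_p((\log d)^2/\sqrt n+n^{-\gamma})$; you instead sketch an independent proof via a predictable-part/martingale decomposition, a Malliavin expansion of the conditional fourth moments, and a Bernstein-type tail bound. Your route is valid and more self-contained, with two small remarks. First, only two (not three) of the Wick pairings enter $\mf C_n$; the third, $(\int_hc^{ij})(\int_hc^{kl})$, is precisely the one cancelled by the cross-block de-biasing. Second, invoking \ref{rc-mal} for the conditional fourth-moment expansion is heavier than necessary: the modulus condition \eqref{sigma-modulus} alone lets you freeze $c_s$ at the left endpoint and reduce to the exact Wick formula up to $O(n^{-\gamma})$, which is how the $n^{-\gamma}$ term actually enters the cited result. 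Your Bernstein step is correct in spirit; the reference achieves the slightly sharper $(\log d)^2$ rather than your $(\log d)^{5/2}$, but this does not affect the final bound since the plug-in term already carries the $(\log d)^{5/2}$ factor.
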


Let us set $\hat{\mf{V}}_n:=(\hat{\Theta}_{Z,\lambda_n}\otimes\hat{\Theta}_{Z,\lambda_n})\hat{\mathfrak{C}}_n(\hat{\Theta}_{Z,\lambda_n}\otimes\hat{\Theta}_{Z,\lambda_n})$ and $\hat{\mf{S}}_n:=\diag(\hat{\mf{V}}_n)$. 
\begin{corollary}\label{coro:feasible}
Under the assumptions of Lemma \ref{lemma:avar}, we have the following results:
\begin{enumerate}[label=(\alph*)]

\item\label{feasible-a} 
Assume $s_n\lambda_n\log d\to^p0$ and $r(\log d)^{7/2}/\sqrt{n}+n^{-\gamma}\log d\to0$ as $n\to\infty$. Then,
\[
\lim_{n\to\infty}\sup_{A\in\mathcal{A}^\mathrm{re}(d^2)}\left|P\left(\sqrt{n}\hat{\mf{S}}_n^{-1}\vectorize(\hat{\Theta}_{Z,\lambda_n}-\Gamma_{Z,n}-\Theta_Z)\in A\right)-P\left(\mf{S}_n^{-1}\mf{V}_n^{1/2}\zeta_n\in A\right)\right|=0.
\]

\item\label{feasible-b} 
Assume $s_n\lambda_n(\log d)^2\to^p0$ and $r(\log d)^{9/2}/\sqrt{n}+n^{-\gamma}(\log d)^2\to0$ as $n\to\infty$. Then,
\begin{align*}
&\sup_{A\in\mathcal{A}^\mathrm{re}(d^2)}\left|P\left(\hat{\mf{V}}_n^{1/2}\zeta_n\in A\mid\mcl{F}\right)-P\left(\mf{V}_n^{1/2}\zeta_n\in A\mid\mcl{F}\right)\right|\to^p0,\\
&\sup_{A\in\mathcal{A}^\mathrm{re}(d^2)}\left|P\left(\hat{\mf{S}}_n^{-1}\hat{\mf{V}}_n^{1/2}\zeta_n\in A\mid\mcl{F}\right)-P\left(\mf{S}_n^{-1}\mf{V}_n^{1/2}\zeta_n\in A\mid\mcl{F}\right)\right|\to^p0
\end{align*}
as $n\to\infty$.

\end{enumerate}
\end{corollary}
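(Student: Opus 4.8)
The plan is to combine the infeasible Gaussian approximations of Theorem~\ref{rc:AMN} with the estimation error bound of Lemma~\ref{lemma:avar}, using standard Gaussian comparison and anti-concentration inequalities for hyperrectangles (the same tools underlying the high-dimensional CLT of \cite{Koike2018sk}). The key preliminary step is a perturbation bound for $\hat{\mf{V}}_n$. Writing $\hat{\mf{V}}_n-\mf{V}_n$ as a telescoping sum of the differences coming from $\hat\Theta_{Z,\lambda_n}\otimes\hat\Theta_{Z,\lambda_n}$ versus $\Theta_Z\otimes\Theta_Z$ and from $\hat{\mf{C}}_n$ versus $\mf{C}_n$, and bounding each summand via $\|AMB\|_{\ell_\infty}\leq\opnorm{A}_\infty\|M\|_{\ell_\infty}\opnorm{B}_1$, I would use: $\opnorm{\Theta_Z\otimes\Theta_Z}_w=\opnorm{\Theta_Z}_w^2=O_p(1)$ for $w\in\{1,\infty\}$, which follows from \ref{ass:eigen.z} and $\mf{d}(\Theta_Z)=\mf{d}(Q_Z)=O(1)$ by \ref{ass:adjacent}; the same bound for $\hat\Theta_{Z,\lambda_n}$ together with $\opnorm{\hat\Theta_{Z,\lambda_n}-\Theta_Z}_w=O_p(s_n\lambda_n)$ from Theorem~\ref{rc:rate}; $\|\mf{C}_n\|_{\ell_\infty}=O_p(1)$ (each entry is $n$ times a sum of $n$ products of $O(1/n)$ integrals of the bounded process $c_s$ under \ref{rc-bdd}); and $\|\hat{\mf{C}}_n-\mf{C}_n\|_{\ell_\infty}=O_p(r(\log d)^{5/2}/\sqrt n+n^{-\gamma})$ from Lemma~\ref{lemma:avar}. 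This gives
\[
\|\hat{\mf{V}}_n-\mf{V}_n\|_{\ell_\infty}=O_p\!\left(s_n\lambda_n+r(\log d)^{5/2}/\sqrt n+n^{-\gamma}\right)=:O_p(\Delta_n),
\]
and the identical bookkeeping bounds $\|\hat{\mf{S}}_n^{-1}\hat{\mf{V}}_n\hat{\mf{S}}_n^{-1}-\mf{S}_n^{-1}\mf{V}_n\mf{S}_n^{-1}\|_{\ell_\infty}$ by $O_p(\Delta_n)$. Since \eqref{Theta-bdd} makes $\min_k\mf{V}_n^{kk}$ bounded away from $0$ in probability and $\|\mf{V}_n\|_{\ell_\infty}=O_p(1)$, once $\Delta_n\to^p0$ the same holds for $\hat{\mf{V}}_n$, and in particular $\max_k|\hat{\mf{S}}_n^{kk}/\mf{S}_n^{kk}-1|=O_p(\Delta_n)$.

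For part~\ref{feasible-b}, both claims compare two $\mcl{F}$-conditionally centered Gaussian laws, so I would invoke the Gaussian comparison bound for hyperrectangles directly: on the asymptotically certain event that the relevant diagonal covariances are bounded away from $0$ and above, each left-hand side is $O_p\big(\delta_n^{1/3}(\log(d^2/\delta_n))^{2/3}\big)$, with $\delta_n=\|\hat{\mf{V}}_n-\mf{V}_n\|_{\ell_\infty}$ for the first line and $\delta_n=\|\hat{\mf{S}}_n^{-1}\hat{\mf{V}}_n\hat{\mf{S}}_n^{-1}-\mf{S}_n^{-1}\mf{V}_n\mf{S}_n^{-1}\|_{\ell_\infty}$ (the target covariances then being correlation matrices with unit diagonal) for the second, both $O_p(\Delta_n)$ by the first paragraph. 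The bound tends to $0$ in probability because the hypotheses $s_n\lambda_n(\log d)^2\to^p0$ and $r(\log d)^{9/2}/\sqrt n+n^{-\gamma}(\log d)^2\to0$ together force $\Delta_n(\log d)^2\to^p0$.

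For part~\ref{feasible-a}, I would run a Slutsky-type argument on hyperrectangles. Put $T_n:=\sqrt n\,\vectorize(\hat\Theta_{Z,\lambda_n}-\Gamma_{Z,n}-\Theta_Z)$ and $G_n:=\mf{S}_n^{-1}\mf{V}_n^{1/2}\zeta_n$; by \eqref{AMN:student}, $\mf{S}_n^{-1}T_n$ is uniformly close to $G_n$ over hyperrectangles, and $G_n$ is $\mcl{F}$-conditionally Gaussian with unit-diagonal covariance. Then $\|\hat{\mf{S}}_n^{-1}T_n-\mf{S}_n^{-1}T_n\|_{\ell_\infty}\leq(\max_k|\hat{\mf{S}}_n^{kk}/\mf{S}_n^{kk}-1|)\,\|\mf{S}_n^{-1}T_n\|_{\ell_\infty}=O_p(\Delta_n\sqrt{\log d})$, where $\|\mf{S}_n^{-1}T_n\|_{\ell_\infty}=O_p(\sqrt{\log d})$ is obtained by comparing with the cube $[-t,t]^{d^2}$ and applying the Gaussian maximal inequality to $G_n$. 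Under the hypotheses of~\ref{feasible-a} this difference is $o_p(1/\sqrt{\log d})$, so one can choose a deterministic $\epsilon_n$ with $\epsilon_n\sqrt{\log d}\to0$ that dominates it with probability tending to $1$; the sandwich
\[
P(\hat{\mf{S}}_n^{-1}T_n\in A)\leq P(\mf{S}_n^{-1}T_n\in A^{\epsilon_n})+o(1)\leq P(G_n\in A^{\epsilon_n})+o(1)\leq P(G_n\in A)+C\epsilon_n\sqrt{\log(d^2)}+o(1),
\]
where the last step is Nazarov's anti-concentration inequality (valid since $G_n$ has unit-diagonal covariance), together with the matching lower bound via $A^{-\epsilon_n}$, gives the claim after taking $\sup_A$ and then $\limsup_n$.

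The individual steps are routine; the point requiring care is that the logarithmic factors line up exactly. The single $\ell_\infty$-enlargement in~\ref{feasible-a} costs one extra $\sqrt{\log d}$ from anti-concentration on top of the $\sqrt{\log d}$ typical size of $\mf{S}_n^{-1}T_n$, which is why the hypothesis there carries precisely one extra power of $\log d$ relative to the rate of $\hat{\mf{C}}_n$; whereas the direct Gaussian comparison in~\ref{feasible-b} costs $(\log d)^2$ after cubing the $\delta_n^{1/3}\log^{2/3}$ rate, matching the $(\log d)^2$ in its hypothesis. The secondary technical obstacle is the Kronecker-product bookkeeping in the perturbation bound for $\hat{\mf{V}}_n$, in particular the use of \ref{ass:adjacent} to keep $\opnorm{\Theta_Z}_1$ and $\opnorm{\Theta_Z}_\infty$ at $O_p(1)$.
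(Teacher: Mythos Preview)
Your proposal is correct and follows essentially the same route as the paper: you establish the perturbation bound $\|\hat{\mf{V}}_n-\mf{V}_n\|_{\ell_\infty}=O_p(s_n\lambda_n+r(\log d)^{5/2}/\sqrt{n}+n^{-\gamma})$ via the Kronecker telescoping and \ref{ass:adjacent}, then for (a) run the Slutsky/anti-concentration sandwich (the paper packages this as \cite[Lemma~3.1]{Koike2018sk}) and for (b) invoke Gaussian comparison over hyperrectangles (the paper cites \cite[Proposition~3.1]{Koike2018sk}). The only cosmetic differences are that the paper bounds $\|T_n\|_{\ell_\infty}$ via \eqref{AMN:non-student} rather than $\|\mf{S}_n^{-1}T_n\|_{\ell_\infty}$ via \eqref{AMN:student}, and your ratio should read $\mf{S}_n^{kk}/\hat{\mf{S}}_n^{kk}-1$ rather than its reciprocal---neither affects the argument.
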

Corollary \ref{coro:feasible}\ref{feasible-a} particularly implies that
\begin{equation}\label{eq:point-AN}
\lim_{n\to\infty}\max_{1\leq i,j\leq d}\sup_{x\in\mathbb{R}}\labs P\lpa\frac{\sqrt{n}\lpa\hat{\Theta}_{Z,\lambda_n}^{ij}-\Gamma_{Z,n}^{ij}-\Theta_Z^{ij}\rpa}{\hat{\mf{S}}_n^{(i-1)d+j,(i-1)d+j}}\leq x\rpa-\Phi(x)\rabs=0,
\end{equation}
where $\Phi$ is the standard normal distribution function. This result can be used to construct entry-wise confidence intervals for $\Theta_Z$. 
Meanwhile, combining Corollary \ref{coro:feasible}\ref{feasible-b} with \cite[Proposition 3.2]{Koike2018sk}, we can estimate the quantiles of $\max_{k\in\mcl{K}}(\mf{V}_n^{1/2}\zeta_n)^k$ and $\max_{k\in\mcl{K}}(\mf{S}_n^{-1}\mf{V}_n^{1/2}\zeta_n)^k$ for a given set of indices $\mcl{K}\subset\{1,\dots,d^2\}$ by simulation. Such a result can be used to construct simultaneous confidence intervals and control the family-wise error rate in multiple testing for entries of $\Theta_Z$; see Sections 2.3--2.4 of \cite{BCCHK2018} for details. 

\section{Simulation study}\label{sec:simulation}

\subsection{Implementation}

In order to implement the proposed estimation procedure, we need to solve the optimization problem in \eqref{f-wglasso}. Among many existing algorithms to solve this problem, we employ the \textsc{glassofast} algorithm of \cite{SC2012}, which is an improved implementation of the popular \textsc{glasso} algorithm of \cite{FHT2008} and implemented in the R package \pck{glassoFast}. 

The remaining problem is how to select the penalty parameter $\lambda$. Following \cite{YL2007,BNS2018}, we select it by minimizing the following formally defined Bayesian information criterion (BIC):
\[
\bic(\lambda):=n\left\{\trace\left(\hat{\Theta}_{Z,\lambda} \hat{\Sigma}_{Z,n}\right)-\log\det\left(\hat{\Theta}_{Z,\lambda}\right)\right\}+(\log n)\sum_{i\leq j}1_{\left\{\hat{\Theta}_{Z,\lambda}^{ij}\neq0\right\}}.
\]
The minimization is carried out by grid search. The grid $\{\lambda_1,\dots,\lambda_m\}$ is constructed analogously to the R package \pck{glmnet} (see Section 2.5 of \cite{FHT2010} for details): First, as the maximum value $\lambda_{\max}$ of the grid, we take the smallest value for which all the off-diagonal entries of $\hat{\Theta}_{Z,\lambda_{\max}}$ are zero: In our case, $\lambda_{\max}$ is set to the maximum modulus of the off-diagonal entries of $\hat{\Sigma}_{Z,n}$ (cf.~\cite[Corollary 1]{WFS2011}). Next, we take a constant $\varepsilon>0$ and set $\lambda_{\min}:=\varepsilon\lambda_{\max}$ as the minimum value of the grid. Finally, we construct the values $\lambda_1,\dots,\lambda_m$ increasing from $\lambda_{\min}$ to $\lambda_{\max}$ on the log scale:
\[
\lambda_i=\exp\left(\log(\lambda_{\min})+\frac{i-1}{m-1}\log(\lambda_{\max}/\lambda_{\min})\right),\qquad i=1,\dots,m.
\]
We use $\varepsilon=\sqrt{(\log d)/n}$ and $m=10$ in our experiments. 

\subsection{Simulation design}

We basically follow the setting of \cite{FFX2016}. We simulate the model \eqref{factor-model} with the following specification: For the factor process $X$, we set $r=3$ and
\begin{align*}
dX^j_t&=\mu_j dt+\sqrt{v^j_t}dW^j_t,&
dv^j_t&=\kappa_j(\theta_j-v^j_t)dt+\eta_j\sqrt{v^j_t}\lpa\rho_jdW^j_t+\sqrt{1-\rho_j^2}d\wt{W}^j_t\rpa,\quad j=1,2,3,
\end{align*}
where $W^1,W^2,W^3,\wt{W}^1,\wt{W}^2,\wt{W}^3$ are independent standard Wiener processes. 
We set $\kappa=(3, 4, 5),\theta=(0.09,0.04,0.06),\eta=(0.3,0.4,0.3),\rho=(-0.6,-0.4,-0.25)$ and $\mu=(0.05,0.03,0.02)$. 
The initial value $v^j_0$ is drawn from the stationary distribution of the process $(v^j_t)_{t\in[0,1]}$, i.e.~the gamma distribution with shape $2\kappa_j\theta_j/\eta_j^2$ and rate $2\kappa_j/\eta_j^2$. 
The entries of the loading $\beta$ are independently drawn as $\beta^{i1}\overset{i.i.d.}{\sim}\mcl{U}[0.25,2.25]$ and $\beta^{i2},\beta^{i3}\overset{i.i.d.}{\sim}\mcl{U}[-0.5,0.5]$ ($\mcl{U}[a,b]$ denotes the uniform distribution on $[a,b]$). 
Finally, as the residual process $Z$, we take a $d$-dimensional Wiener process with covariance matrix $Q$. We consider the following two designs for $Q$:
\begin{description}

\item[Design 1] $Q$ is a block diagonal matrix with 10 blocks of size $(d/10)\times(d/10)$. Each block has diagonal entries independently generated from $\mcl{U}[0.2,0.5]$ and a constant correlation of $0.25$. 

\item[Design 2] We simulate a Chung-Lu random graph $\mcl{G}$ and set $Q:=(\mathsf{E}_d+\bs{D}-\bs{A})$, where $\bs{D}$ and $\bs{A}$ are respectively the degree and adjacent matrices of the random graph $\mcl{G}$. 
Formally, given a weight vector $w\in\mathbb{R}^d$ with $w\geq0$, $\bs{A}$ is defined as a $d\times d$ symmetric random matrix such that all the diagonal entries of $\bs{A}$ are equal to 0 and the off-diagonal upper triangular entries are generated by independent Bernoulli variables so that $P(\bs{A}^{ij}=1)=1-P(\bs{A}^{ij}=0)=w^iw^j/\sum_{k=1}^dw^k$ for $i<j$. Then, $\bs{D}$ is defined as the diagonal matrix such that the $j$-th diagonal entry of $\bs{D}$ is given by $\mf{d}_j(\bs{A})=\sum_{i=1}^d\bs{A}^{ij}$. 
The weight vector $w$ is specified as follows: For every $i=1,\dots,d$, we set $w^i:=c\lpa(i+i_0-1)/d\rpa^{-1/(\alpha-1)}$ with $i_0:=d(c/w_M)^{\alpha-1}$ and $c:=(\alpha-2)/(\alpha-1)$, where we use $\alpha=2.5$ and $w_M=\lfloor d^{0.45}\rfloor$. 

\end{description}
Design 1 is the same one as in \cite{FFX2016}. 
Design 2 is motivated by the recent work of \citet{BBL2018}, which reports that several characteristics of the residual precision matrix of the S\&P 500 assets exhibit power-law behaviors and they are well-described by the power-law partial correlation network model proposed in \cite{BBL2018}; the specification in Design 2 is the same one as in the simulation study of \cite{BBL2018}. 

We observe the processes $Y$ and $X$ at the equidistant sampling times $h/n$, $h=0,1,\dots,n$. 
We set $d=500$ and vary $n$ as $n\in\{78,130,195,390,780\}$. 
We run 10,000 Monte Carlo iterations for each experiment. 

\subsection{Results}

We begin by assessing the estimation accuracy of the proposed estimator in various norms. For comparison, we consider the following 5 different methods to estimate $\Sigma_Y$:
\begin{enumerate}[align=parleft,widest=\textsf{f-wglasso},font=\sffamily,leftmargin=*]

\item[RC] We simply use the realized covariance matrix $\wh{[Y,Y]}_1^n$ defined by \eqref{def:rc} to estimate $\Sigma_Y$. 

\item[glasso] We estimate $\Sigma_Y^{-1}$ by the (unweighted) graphical Lasso based on $\wh{[Y,Y]}_1^n$. Then, $\Sigma_Y$ is estimated by its inverse. 

\item[wglasso] We estimate $\Sigma_Y^{-1}$ by the weighted graphical Lasso based on $\wh{[Y,Y]}_1^n$ (i.e.~the estimator defined by \eqref{wglasso} with $\hat{\Sigma}_n=\wh{[Y,Y]}_1^n$). Then, $\Sigma_Y$ is estimated by its inverse. 

\item[f-glasso] We estimate $\Sigma_Z^{-1}$ by the (unweighted) graphical Lasso based on $\hat{\Sigma}_{Z,n}$ defined by \eqref{def:sigma.z.hat} with $\hat{\Sigma}_{Y,n}=\wh{[Y,Y]}_1^n$ and $\hat{\Sigma}_{X,n}=\wh{[X,X]}_1^n$. Then, $\Sigma_Y$ is estimated by \eqref{def:f-sigma.y} with $\hat{\Theta}_{Z,\lambda}$ being the estimator so constructed. 

\item[f-wglasso] We estimate $\Sigma_Z^{-1}$ by the weighted graphical Lasso based on $\hat{\Sigma}_{Z,n}$ defined by \eqref{def:sigma.z.hat} with $\hat{\Sigma}_{Y,n}=\wh{[Y,Y]}_1^n$ and $\hat{\Sigma}_{X,n}=\wh{[X,X]}_1^n$. Then, $\Sigma_Y$ is estimated by \eqref{def:f-sigma.y} with $\hat{\Theta}_{Z,\lambda}$ being the estimator so constructed. 

\end{enumerate}
In addition, for Design 1, we also consider the estimator proposed in \cite{FFX2016}: Assuming that we know which entries of $\Sigma_Z$ are zero, we estimate $\Sigma_Y$ by $\hat{\beta}_n\hat{\Sigma}_{X,n}\hat{\beta}_n^\top+(\hat{\Sigma}_{Z,n}^{ij}1_{\{\Sigma_Z^{ij}\neq0\}})_{1\leq i,j\leq d}$. We label this method \textsf{f-thr}. 
Since the estimates of \textsf{RC} and \textsf{f-thr} are not always regular, we use their Moore-Penrose generalized inverses to estimate $\Sigma_Y^{-1}$ when they are singular. 
Note that the methods \textsf{glasso} and \textsf{f-glasso} correspond to those proposed in \cite{BNS2018}, while \textsf{wglasso} and \textsf{f-wglasso} are those proposed in this paper. 
We report the simulation results in Tables \ref{table:heston}--\ref{table:clgraph}. 

We first focus on the accuracy of estimating the precision matrix $\Sigma_Y^{-1}$.
The tables reveal the excellent performance of graphical Lasso based methods. In particular, they outperform \textsf{f-thr} in Design 1 except for the case $n=780$ even when we ignore the factor structure of the model. 
Nevertheless, the tables also show apparent benefit to take the factor structure into account in constructing the graphical Lasso type estimators. 
When we compare the weighted graphical Lasso estimators with the unweighted versions, the weighted ones tend to outperform the unweighted ones as $n$ increases, especially when the factor structure is taken into account. This is more pronounced in Design 2. 
It is also worth mentioning that the estimation errors for $\Sigma_Y^{-1}$ in the method \textsf{RC} are greater at $n=390,780$ than those at $n=78,135,390$. This is presumably due to a ``resonance'' effect between the sample size $n$ and dimension $d$ coming from the use of the Moore-Penrose generalized inverse, which is well-known in multivariate analysis (see e.g.~\cite{Hoyle2011}): The estimation error for the precision matrix by the generalized inverse of the sample covariance matrix drastically increases as $n$ approaches $d$. Theoretically, this occurs because the smallest non-zero eigenvalue of the  sample covariance matrix tends to 0 as $n$ approaches $d$. 

Turning to the estimation accuracy for $\Sigma_Y$ in terms of the $\ell_\infty$-norm, we find little advantage to use the graphical Lasso type methods over the realized covariance matrix: \textsf{f-glasso} and \textsf{f-wglasso} tend to outperform \textsf{RC} at small values of $n$, but the differences of the performance become less clear as $n$ increases. In addition, in Design 1 \textsf{f-thr} performs the best in terms of estimating $\Sigma_Y$ at all the values of $n$. 

Next we assess the accuracy of the mixed normal approximation for the de-biased estimator. 
For this purpose, we construct entry-wise confidence intervals for $\Theta_Z$ based on \eqref{eq:point-AN} (with taking the factor structure into account) and evaluate their nominal coverages. Table \ref{table:ci} reports these coverages averaged over the sets $\{(i,j):i\leq j,~\Theta_Z^{ij}=0\}$ and $\{(i,j):i\leq j,~\Theta_Z^{ij}\neq0\}$, respectively. 
We see from the table that the asymptotic approximation perfectly works to construct confidence intervals for zero entries of $\Theta_Z$. By contrast, confidence intervals for non-zero entries of $\Theta_Z$ tend to be over-coverages, especially in Design 1. However, these coverage distortions tend to be moderate at larger values of $n$, which suggests that the normal approximation starts to work for relatively large sample sizes. 

\begin{table}[ht]
\centering
\small
\caption{Estimation accuracy of different methods in Design 1} 
\label{table:heston}
\begin{tabular}{ccrrrrrr}
  & $n$ & \textsf{RC} & \textsf{glasso} & \textsf{wglasso} & \textsf{f-glasso} & \textsf{f-wglasso} & \textsf{f-thr} \\ 
  \hline
 & 78 & 22.431 & 18.857 & 19.083 & 15.122 & 15.130 & 416.197 \\ 
   & 130 & 26.307 & 17.931 & 17.954 & 14.353 & 14.353 & 93.242 \\ 
  $\opnorm{\hat{\Sigma}_Y^{-1}-\Sigma_Y^{-1}}_{\infty}$ & 195 & 45.795 & 17.447 & 17.471 & 13.923 & 13.928 & 50.605 \\ 
   & 390 & 722.381 & 16.687 & 16.678 & 11.306 & 10.806 & 25.335 \\ 
   & 780 & 423.434 & 15.965 & 15.908 & 9.387 & 8.851 & 15.227 \\ 
   \hline
 & 78 & 6.576 & 4.270 & 4.263 & 3.419 & 3.420 & 138.442 \\ 
   & 130 & 6.508 & 3.654 & 3.468 & 3.193 & 3.193 & 28.384 \\ 
  $\opnorm{\hat{\Sigma}_Y^{-1}-\Sigma_Y^{-1}}_{2}$ & 195 & 6.480 & 3.381 & 3.271 & 3.094 & 3.097 & 14.307 \\ 
   & 390 & 203.038 & 3.009 & 3.015 & 2.133 & 2.100 & 6.446 \\ 
   & 780 & 93.354 & 2.788 & 2.855 & 1.782 & 1.693 & 3.562 \\ 
   \hline
 & 78 & 0.361 & 0.432 & 0.441 & 0.351 & 0.351 & 0.347 \\ 
   & 130 & 0.279 & 0.311 & 0.296 & 0.281 & 0.281 & 0.268 \\ 
  $\|\hat{\Sigma}_Y-\Sigma_Y\|_{\ell_{\infty}}$ & 195 & 0.227 & 0.255 & 0.250 & 0.241 & 0.241 & 0.219 \\ 
   & 390 & 0.160 & 0.181 & 0.189 & 0.166 & 0.169 & 0.154 \\ 
   & 780 & 0.112 & 0.130 & 0.143 & 0.118 & 0.119 & 0.108 \\ 
   \hline
\end{tabular}\vspace{2mm}

\parbox{14cm}{\footnotesize
\textit{Note}. \textsf{RC}: realized covariance matrix; \textsf{glasso}: graphical Lasso; \textsf{wglasso}: weighted graphical Lasso; \textsf{f-glasso}: graphical Lasso with taking the factor structure into account; \textsf{f-wglasso}: weighted graphical Lasso with taking the factor structure into account; \textsf{f-thr}: location-based thresholding with taking the factor structure into account (the method of \cite{FFX2016}). 
The results are based on 10,000 Monte Carlo iterations. 
} 
\end{table}

\begin{table}[ht]
\centering
\small
\caption{Estimation accuracy of different methods in Design 2} 
\label{table:clgraph}
\begin{tabular}{ccrrrrr}
  & $n$ & \textsf{RC} & \textsf{glasso} & \textsf{wglasso} & \textsf{f-glasso} & \textsf{f-wglasso} \\ 
  \hline
 & 78 & 47.934 & 43.144 & 43.055 & 35.347 & 35.263 \\ 
   & 130 & 48.266 & 43.166 & 41.750 & 34.767 & 34.284 \\ 
  $\opnorm{\hat{\Sigma}_Y^{-1}-\Sigma_Y^{-1}}_{\infty}$ & 195 & 50.049 & 42.806 & 40.571 & 34.154 & 32.835 \\ 
   & 390 & 338.847 & 41.060 & 37.801 & 33.100 & 29.934 \\ 
   & 780 & 401.447 & 38.886 & 34.961 & 32.163 & 23.121 \\ 
   \hline
 & 78 & 17.805 & 13.557 & 13.522 & 7.857 & 7.843 \\ 
   & 130 & 17.798 & 13.543 & 12.628 & 7.954 & 7.866 \\ 
  $\opnorm{\hat{\Sigma}_Y^{-1}-\Sigma_Y^{-1}}_{2}$ & 195 & 17.752 & 13.319 & 11.630 & 8.006 & 7.742 \\ 
   & 390 & 87.239 & 12.296 & 9.888 & 8.059 & 7.416 \\ 
   & 780 & 55.619 & 11.189 & 8.522 & 8.065 & 6.072 \\ 
   \hline
 & 78 & 0.669 & 0.723 & 0.721 & 0.632 & 0.631 \\ 
   & 130 & 0.509 & 0.678 & 0.572 & 0.489 & 0.481 \\ 
  $\|\hat{\Sigma}_Y-\Sigma_Y\|_{\ell_{\infty}}$ & 195 & 0.412 & 0.567 & 0.470 & 0.403 & 0.390 \\ 
   & 390 & 0.289 & 0.298 & 0.339 & 0.282 & 0.273 \\ 
   & 780 & 0.203 & 0.198 & 0.252 & 0.197 & 0.192 \\ 
   \hline
\end{tabular}\vspace{2mm}

\parbox{14cm}{\footnotesize
\textit{Note}. \textsf{RC}: realized covariance matrix; \textsf{glasso}: graphical Lasso; \textsf{wglasso}: weighted graphical Lasso; \textsf{f-glasso}: graphical Lasso with taking the factor structure into account;  \textsf{f-wglasso}: weighted graphical Lasso with taking the factor structure into account. 
The results are based on 10,000 Monte Carlo iterations. 
} 
\end{table}


\begin{table}[ht]
\centering
\small
\caption{Average coverages of entry-wise confidence intervals} 
\label{table:ci}
\begin{tabular}{cccrrrrr}
  & & & \multicolumn{2}{c}{Design 1} &  & \multicolumn{2}{c}{Design 2} \\
 &  & $n$ & 95\% & 99\% &  & 95\% & 99\% \\ 
   \hline
 &  & 78 & 95.21 & 99.04 &  & 95.22 & 99.04 \\ 
   &  & 130 & 95.13 & 99.03 &  & 95.13 & 99.03 \\ 
  $\Theta_Z^{ij}=0$ &  & 195 & 95.09 & 99.02 &  & 95.09 & 99.02 \\ 
   &  & 390 & 95.04 & 99.01 &  & 95.05 & 99.01 \\ 
   &  & 780 & 95.02 & 99.00 &  & 95.02 & 99.01 \\ 
   \hline
 &  & 78 & 99.33 & 99.87 &  & 95.16 & 99.03 \\ 
   &  & 130 & 99.82 & 99.96 &  & 95.90 & 99.18 \\ 
  $\Theta_Z^{ij}\neq0$ &  & 195 & 99.97 & 99.99 &  & 96.36 & 99.26 \\ 
   &  & 390 & 96.00 & 99.20 &  & 96.65 & 99.33 \\ 
   &  & 780 & 96.09 & 99.22 &  & 96.41 & 99.27 \\ 
   \hline
\end{tabular}\vspace{2mm}

\parbox{11cm}{\footnotesize 
This table reports the average coverages of entry-wise confidence intervals for the residual precision matrix $\Theta_Z$ over the sets $\{(i,j):i\leq j,~\Theta_Z^{ij}=0\}$ and $\{(i,j):i\leq j,~\Theta_Z^{ij}\neq0\}$, respectively. 
The confidence intervals are constructed based on the normal approximation \eqref{eq:point-AN}. 
The results are based on 10,000 Monte Carlo iterations. 
} 
\end{table}

\clearpage

\section{Conclusion}

In this paper we have developed a generic asymptotic theory to estimate the high-dimensional precision matrix of high-frequency data using the weighted graphical Lasso. We have shown that the consistency of the weighted graphical Lasso estimator in matrix operator norms follows from the consistency of the initial estimator in the $\ell_\infty$-norm, while the asymptotic mixed normality of its de-biased version follows from that of the initial estimator, where the asymptotic mixed normality has been formulated appropriately for the high-dimensional setting considered here. Our theory also encompasses a situation where a known factor structure is present in the data. In such a situation, we have applied the weighted graphical Lasso to the residual process obtained after removing the effect of factors. 

We have applied the developed theory to the concrete situation where we can use the realized covariance matrix as the initial covariance estimator. We have derived the desirable asymptotic mixed normality of the realized covariance matrix by an application of the recent high-dimensional central limit theorem obtained in \cite{Koike2018sk}, where Malliavin calculus resolves the main theoretical difficulties caused by the high-dimensionality. As a consequence, we have obtained a feasible asymptotic distribution theory to conduct inference for entries of the precision matrix. A Monte Carlo study has shown the good finite sample performance of our asymptotic theory. 

A natural direction for future work is to apply the developed theory to a more complex situation where the process is asynchronously observed with noise and/or jumps. To accomplish this purpose, we need to establish the high-dimensional asymptotic mixed normality of relevant covariance estimators. 
 

\appendix

\section*{Appendix: Proofs}

\section{Matrix inequalities}

This appendix collects some elementary (but less trivial) inequalities for matrices used in the proofs of the main results. 
%
\begin{lemma}\label{lemma:poincare}
Let $A\in\mcl{S}_d$. Then $\Lambda_{\min}(A)\leq A^{ii}\leq\Lambda_{\max}(A)$ for every $i=1,\dots,d$.
\end{lemma}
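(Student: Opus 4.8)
The plan is to reduce the claim to the Rayleigh--Ritz (min--max) characterization of the extreme eigenvalues of a symmetric matrix. First I would note that, denoting by $e_i\in\mathbb{R}^d$ the $i$-th standard basis vector, we have the identity $A^{ii}=e_i^\top Ae_i$, and $\|e_i\|_{\ell_2}=1$.

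Next I would invoke the variational formulas $\Lambda_{\min}(A)=\min\{x^\top Ax:x\in\mathbb{R}^d,\|x\|_{\ell_2}=1\}$ and $\Lambda_{\max}(A)=\max\{x^\top Ax:x\in\mathbb{R}^d,\|x\|_{\ell_2}=1\}$, which hold for any $A\in\mcl{S}_d$ (e.g.\ by diagonalizing $A$ in an orthonormal eigenbasis and expanding $x$ in that basis). Applying these with the particular unit vector $x=e_i$ yields $\Lambda_{\min}(A)\leq e_i^\top Ae_i\leq\Lambda_{\max}(A)$, i.e.\ $\Lambda_{\min}(A)\leq A^{ii}\leq\Lambda_{\max}(A)$, which is exactly the assertion.

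There is no real obstacle here: the only ingredient beyond elementary linear algebra is the Rayleigh quotient bound, which is standard, so the argument is essentially a one-line specialization of the min--max principle to the coordinate vectors.
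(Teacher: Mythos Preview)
Your argument is correct and is exactly the standard Rayleigh--Ritz specialization; the paper does not spell out any proof at all but simply cites Theorem~14 in Chapter~11 of Magnus and Neudecker, which is precisely this variational characterization. So your proof is the same approach, just written out explicitly rather than referenced.
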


\begin{proof}
See Theorem 14 in \cite[Chapter 11]{MN1988}. 
\end{proof}

\begin{lemma}\label{lemma:rayleigh}
Let $A\in\mcl{S}^+_d$ and $B\in\mathbb{R}^{d\times r}$. Then $\Lambda_{\max}(B^\top AB)\leq\Lambda_{\max}(B^\top B)\Lambda_{\max}(A)$ and $\Lambda_{\min}(B^\top AB)\geq\Lambda_{\min}(B^\top B)\Lambda_{\min}(A)$. 
\end{lemma}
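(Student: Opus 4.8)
The plan is to run the elementary Rayleigh--Ritz argument: for a symmetric matrix $M$ one has $\Lambda_{\max}(M)=\sup_{\|x\|_{\ell_2}=1}x^\top Mx$ and $\Lambda_{\min}(M)=\inf_{\|x\|_{\ell_2}=1}x^\top Mx$. I would apply this to $M=B^\top AB$ (which is symmetric since $A$ is), bound the quadratic form $x^\top B^\top ABx=(Bx)^\top A(Bx)$ by pushing the estimate through $A$ and then through $B^\top B$, and take suprema/infima at the end.

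Concretely, first fix $x\in\mathbb{R}^r$ with $\|x\|_{\ell_2}=1$ and write $x^\top B^\top ABx=(Bx)^\top A(Bx)$. Applying the Rayleigh bounds for $A$ to the vector $Bx$ gives
\[
\Lambda_{\min}(A)\,\|Bx\|_{\ell_2}^2\le (Bx)^\top A(Bx)\le \Lambda_{\max}(A)\,\|Bx\|_{\ell_2}^2 .
\]
Next, since $\|Bx\|_{\ell_2}^2=x^\top B^\top Bx$ and $B^\top B$ is symmetric, the Rayleigh bounds for $B^\top B$ give $\Lambda_{\min}(B^\top B)\le\|Bx\|_{\ell_2}^2\le\Lambda_{\max}(B^\top B)$. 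Because $A\in\mcl{S}_d^+$ we have $\Lambda_{\min}(A)\ge0$ and $\Lambda_{\max}(A)\ge0$, so these two chains of inequalities may be combined multiplicatively without reversing directions, yielding
\[
\Lambda_{\min}(A)\Lambda_{\min}(B^\top B)\le x^\top B^\top ABx\le \Lambda_{\max}(A)\Lambda_{\max}(B^\top B)
\]
for every unit vector $x\in\mathbb{R}^r$. Taking the infimum over such $x$ on the left and the supremum on the right, and invoking Rayleigh--Ritz once more---this time for $B^\top AB$---gives $\Lambda_{\min}(B^\top AB)\ge\Lambda_{\min}(B^\top B)\Lambda_{\min}(A)$ and $\Lambda_{\max}(B^\top AB)\le\Lambda_{\max}(B^\top B)\Lambda_{\max}(A)$, as claimed.

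There is no real obstacle here; the only subtlety is the sign bookkeeping, i.e.\ that the factorization $\Lambda_{\max}(A)\|Bx\|_{\ell_2}^2\le\Lambda_{\max}(A)\Lambda_{\max}(B^\top B)$ (and its lower-bound counterpart) requires the nonnegativity of the eigenvalues of $A$, which is exactly the positive semidefiniteness hypothesis. Note Lemma \ref{lemma:poincare} is not needed for this argument.
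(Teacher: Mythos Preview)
Your proof is correct and follows essentially the same Rayleigh--Ritz argument as the paper: bound $(Bx)^\top A(Bx)$ via the extremal eigenvalues of $A$, then bound $\|Bx\|_{\ell_2}^2=x^\top B^\top Bx$ via those of $B^\top B$, using $A\in\mcl{S}_d^+$ to keep the inequalities oriented. The only cosmetic difference is that the paper starts from a specific unit eigenvector of $B^\top AB$ and cites Theorem~4 in \cite[Chapter~11]{MN1988} for the Rayleigh step, whereas you work with an arbitrary unit vector and take the supremum/infimum at the end.
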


\begin{proof}
Let $x$ be an eigenvector associated with $\Lambda_{\max}(A)$ such that $\|x\|_{\ell_2}=1$. Then, by Theorem 4 in \cite[Chapter 11]{MN1988} we have $\Lambda_{\max}(B^\top AB)=x^\top B^\top ABx\leq\Lambda_{\max}(A)x^\top B^\top Bx\leq \Lambda_{\max}(A)\Lambda_{\max}(B^\top B)$. So we obtain the first inequality. The second one can be shown analogously.  
\end{proof}

\begin{lemma}\label{lemma:weyl}
Let $A,B\in\mcl{S}_d$. Then $|\Lambda_{\max}(A)-\Lambda_{\max}(B)|\vee|\Lambda_{\min}(A)-\Lambda_{\min}(B)|\leq\opnorm{A-B}_2$. 
\end{lemma}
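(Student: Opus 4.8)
The plan is to reduce the estimate to the Rayleigh--Ritz variational characterization of the extreme eigenvalues, in the same spirit as the proof of Lemma \ref{lemma:rayleigh}. The two ingredients I would use are: for any $C\in\mcl{S}_d$,
\[
\Lambda_{\max}(C)=\max_{x\in\mathbb{R}^d,\,\|x\|_{\ell_2}=1}x^\top Cx,\qquad
\Lambda_{\min}(C)=\min_{x\in\mathbb{R}^d,\,\|x\|_{\ell_2}=1}x^\top Cx
\]
(Theorem 4 in \cite[Chapter 11]{MN1988}); and the spectral identity $\opnorm{C}_2=\max\{\Lambda_{\max}(C),-\Lambda_{\min}(C)\}$, valid for symmetric $C$ because the singular values of a symmetric matrix are the absolute values of its eigenvalues, which in particular gives $\Lambda_{\max}(C)\leq\opnorm{C}_2$.

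I would first handle $\Lambda_{\max}$. Splitting $x^\top Ax=x^\top Bx+x^\top(A-B)x$ and maximizing over the unit sphere, subadditivity of the maximum yields $\Lambda_{\max}(A)\leq\Lambda_{\max}(B)+\Lambda_{\max}(A-B)\leq\Lambda_{\max}(B)+\opnorm{A-B}_2$; interchanging the roles of $A$ and $B$ gives the symmetric bound, hence $|\Lambda_{\max}(A)-\Lambda_{\max}(B)|\leq\opnorm{A-B}_2$. For $\Lambda_{\min}$ I would apply this to $-A$ and $-B$, using $\Lambda_{\max}(-A)=-\Lambda_{\min}(A)$, $\Lambda_{\max}(-B)=-\Lambda_{\min}(B)$ and $\opnorm{-A-(-B)}_2=\opnorm{A-B}_2$, to obtain $|\Lambda_{\min}(A)-\Lambda_{\min}(B)|\leq\opnorm{A-B}_2$. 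Taking the larger of the two estimates finishes the argument.

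There is essentially no obstacle here; the only point worth being careful about is to bound $\Lambda_{\max}(A-B)$ itself (rather than its absolute value) by $\opnorm{A-B}_2$, which is precisely what the spectral identity for the $\ell_2$-operator norm of a symmetric matrix delivers.
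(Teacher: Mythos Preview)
Your argument is correct and is essentially the same as the paper's: the paper invokes Weyl's inequality (Corollary 4.3.15 in \cite{HJ2013}) together with the identity $\opnorm{C}_2=\Lambda_{\max}(C)\vee(-\Lambda_{\min}(C))$, while you give the standard Rayleigh--Ritz proof of that very special case of Weyl's inequality. The only difference is that your version is self-contained rather than citing the result.
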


\begin{proof}
Noting the identity $\opnorm{C}_2=\Lambda_{\max}(C)\vee(-\Lambda_{\min}(C))$ holding for any symmetric matrix $C$, the desired result follows from Weyl's inequality (cf.~Corollary 4.3.15 in \cite{HJ2013}).
\end{proof}

\begin{lemma}\label{lemma:op-sp}
For any $A\in\mcl{S}_d$, $\opnorm{A}_1=\opnorm{A}_\infty\leq\sqrt{\mf{d}(A)}\opnorm{A}_2$.
\end{lemma}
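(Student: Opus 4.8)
The statement has two parts: the identity $\opnorm{A}_1=\opnorm{A}_\infty$ for symmetric $A$, and the bound $\opnorm{A}_\infty\leq\sqrt{\mf{d}(A)}\opnorm{A}_2$. The first part is immediate from the formulas quoted in the Notation section: $\opnorm{A}_1=\max_j\sum_i|A^{ij}|$ and $\opnorm{A}_\infty=\max_i\sum_j|A^{ij}|$, and symmetry ($A^{ij}=A^{ji}$) makes these two quantities equal. So the real content is the second inequality.

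\textbf{Proof of the second inequality.} Fix the row index $i_0$ achieving $\opnorm{A}_\infty=\sum_{j=1}^d|A^{i_0 j}|$. The key observation is that this sum has at most $\mf{d}(A)+1$ nonzero terms: the diagonal term $A^{i_0 i_0}$ plus the off-diagonal terms $A^{i_0 j}$ with $j\neq i_0$, of which there are exactly $\mf{d}_{i_0}(A)\leq\mf{d}(A)$ by definition of $\mf{d}_{i_0}$ and $\mf{d}$. Hmm, that gives $\mf{d}(A)+1$ nonzero entries, not $\mf{d}(A)$. Let me reconsider: I would instead test $\opnorm{A}_2$ against the unit vector $x$ with $x^j=\sign(A^{i_0 j})$ on the support of the $i_0$-th row and $x^j=0$ elsewhere, normalized. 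Actually the cleanest route: apply Cauchy--Schwarz to the single row. Write $\opnorm{A}_\infty=\sum_{j}|A^{i_0 j}|=\sum_{j\in T}|A^{i_0 j}|$ where $T$ is the support of the $i_0$-th row, and $\#T\leq\mf{d}(A)+1$ in general, but if $A^{i_0 i_0}=0$ then $\#T\leq\mf{d}(A)$; I should check whether the paper's convention or the use-cases guarantee something sharper, or whether the factor is really $\sqrt{\mf{d}(A)+1}$ absorbed loosely. I would proceed with $\#T\leq\mf{d}(A)$ on the understanding (standard in this literature, and consistent with how $\mf{d}(\Theta_Y)$ is used) that one may also bound by counting only off-diagonal entries when the diagonal contributes negligibly, or simply accept the bound $\sqrt{\#T}\,\|A^{i_0\cdot}\|_{\ell_2}\leq\sqrt{\mf{d}(A)}\,\opnorm{A}_2$ after noting $\|A^{i_0\cdot}\|_{\ell_2}\leq\opnorm{A}_2$ since $A^{i_0\cdot}=Ae_{i_0}$ and $\|e_{i_0}\|_{\ell_2}=1$. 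So: by Cauchy--Schwarz, $\sum_{j\in T}|A^{i_0 j}|\leq\sqrt{\#T}\big(\sum_{j\in T}|A^{i_0 j}|^2\big)^{1/2}=\sqrt{\#T}\,\|A^{i_0\cdot}\|_{\ell_2}$, and $\|A^{i_0\cdot}\|_{\ell_2}=\|Ae_{i_0}\|_{\ell_2}\leq\opnorm{A}_2$.

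\textbf{Main obstacle.} The only subtlety is the counting of nonzero entries in the critical row: whether to claim $\#T\leq\mf{d}(A)$ or $\#T\leq\mf{d}(A)+1$. I would resolve this by checking the paper's intended usage of $\mf{d}$ (it is defined via off-diagonal support, so strictly $\#T\le\mf{d}(A)+1$); if a genuinely sharp constant is needed I would note that in the applications $A$ is a difference of precision-type matrices where one can reduce to the off-diagonal part, or simply carry the harmless $+1$. The rest is a one-line Cauchy--Schwarz plus the trivial operator-norm bound $\|Ae_{i_0}\|_{\ell_2}\leq\opnorm{A}_2$, together with the transpose-symmetry identity for the first equality. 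No deeper machinery is required.
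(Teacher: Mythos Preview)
Your approach is exactly the paper's: it simply says ``This is a straightforward consequence of the Schwarz inequality,'' and your argument---symmetry for the equality, then Cauchy--Schwarz on the maximizing row together with $\|Ae_{i_0}\|_{\ell_2}\le\opnorm{A}_2$---is the intended one-line proof. You are also right to flag the $\mf{d}(A)$ versus $\mf{d}(A)+1$ discrepancy: with the paper's definition of $\mf{d}$ (off-diagonal support only), the row support has size at most $\mf{d}(A)+1$, and indeed the identity matrix gives $\opnorm{A}_\infty=1$, $\mf{d}(A)=0$, so the inequality as literally stated fails; the correct constant is $\sqrt{\mf{d}(A)+1}$. This is a harmless imprecision in the lemma's statement since every application in the paper uses it with $\mf{d}_n\ge1$ (cf.\ \ref{ass:degree}, \ref{ass:degree.z}), where $\sqrt{\mf{d}(A)+1}\le\sqrt{2}\sqrt{\mf{d}(A)}$ and the constant is absorbed into $O_p(\cdot)$.
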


\begin{proof}
This is a straightforward consequence of the Schwarz inequality. 
\end{proof}

\begin{lemma}\label{inv-diff-op}
Let $A,B\in\mathbb{R}^{r\times r}$. If $A$ is invertible and $\opnorm{A^{-1}(B-A)}_w<1$ for some $w\in[1,\infty]$, $B$ is invertible and
\[
\opnorm{B^{-1}-A^{-1}}_w\leq\frac{\opnorm{A^{-1}}_w\opnorm{A^{-1}(B-A)}_w}{1-\opnorm{A^{-1}(B-A)}_w}.
\]
\end{lemma}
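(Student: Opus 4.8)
The plan is a standard Neumann series argument, exploiting that $\opnorm{\cdot}_w$ is a genuine matrix norm and in particular submultiplicative: $\opnorm{CD}_w\leq\opnorm{C}_w\opnorm{D}_w$ for all $C,D\in\mathbb{R}^{r\times r}$ (this follows immediately from the definition of an operator norm). Set $N:=A^{-1}(B-A)$ and $\rho:=\opnorm{N}_w<1$, and write
\[
B=A+(B-A)=A(\mathsf{E}_r+N).
\]
First I would show $\mathsf{E}_r+N$ is invertible. Since $\opnorm{(-N)^k}_w\leq\rho^k$ by submultiplicativity and $\sum_{k\geq0}\rho^k<\infty$, the partial sums of $\sum_{k\geq0}(-N)^k$ form a Cauchy sequence in the finite-dimensional (hence complete) normed space $(\mathbb{R}^{r\times r},\opnorm{\cdot}_w)$, so the series converges to some $M\in\mathbb{R}^{r\times r}$; multiplying by $(\mathsf{E}_r+N)$ on either side and telescoping gives $M(\mathsf{E}_r+N)=(\mathsf{E}_r+N)M=\mathsf{E}_r$, so $(\mathsf{E}_r+N)^{-1}=M$ with $\opnorm{(\mathsf{E}_r+N)^{-1}}_w\leq\sum_{k\geq0}\rho^k=(1-\rho)^{-1}$.

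Consequently $B=A(\mathsf{E}_r+N)$ is a product of invertible matrices, hence invertible, with $B^{-1}=(\mathsf{E}_r+N)^{-1}A^{-1}$. Then I would compute
\[
B^{-1}-A^{-1}=\left[(\mathsf{E}_r+N)^{-1}-\mathsf{E}_r\right]A^{-1}=-N(\mathsf{E}_r+N)^{-1}A^{-1},
\]
where the last identity uses $(\mathsf{E}_r+N)^{-1}-\mathsf{E}_r=-N(\mathsf{E}_r+N)^{-1}$ (multiply through by $\mathsf{E}_r+N$ to check). Applying submultiplicativity and the bound on $\opnorm{(\mathsf{E}_r+N)^{-1}}_w$ yields
\[
\opnorm{B^{-1}-A^{-1}}_w\leq\opnorm{N}_w\opnorm{(\mathsf{E}_r+N)^{-1}}_w\opnorm{A^{-1}}_w\leq\frac{\rho\,\opnorm{A^{-1}}_w}{1-\rho},
\]
which is exactly the claimed inequality once $\rho=\opnorm{A^{-1}(B-A)}_w$ is substituted back.

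There is no real obstacle here; the proof is entirely routine. The only points requiring a moment's care are that $\opnorm{\cdot}_w$ is submultiplicative (true for every operator norm) and that the Neumann series converges, which is immediate in finite dimensions. One could alternatively shortcut the convergence discussion by noting $\det(\mathsf{E}_r+N)\neq0$ (since $\opnorm{N}_w<1$ forces all eigenvalues of $N$ to have modulus $<1$) to get invertibility of $B$, and then use the finite geometric identity together with a limiting argument for the norm bound; I would keep the Neumann series version as it delivers both invertibility and the quantitative bound in one stroke.
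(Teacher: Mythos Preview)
Your proof is correct and is precisely the standard Neumann series argument; the paper's own proof simply cites pages 381--382 of Horn and Johnson's \emph{Matrix Analysis} (2nd ed.), where exactly this argument appears. There is nothing to add.
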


\begin{proof}
See pages 381--382 of \cite{HJ2013}.
\end{proof}

\begin{lemma}\label{lemma:ogihara}
Let $A\in\mcl{S}_r$ and $B,C\in\mathbb{R}^{d\times r}$. Then 
\[
\|BAC^\top\|_{\ell_\infty}\leq \opnorm{A}_2\lpa\max_{1\leq i\leq d}\|B^{i\cdot}\|_{\ell_2}\rpa\lpa\max_{1\leq j\leq d}\|C^{j\cdot}\|_{\ell_2}\rpa
\leq r\opnorm{A}_2\|B\|_{\ell_\infty}\|C\|_{\ell_\infty}.
\]
\end{lemma}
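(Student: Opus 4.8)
The plan is to work entry-by-entry and reduce everything to the Cauchy--Schwarz inequality together with the variational characterization of the $\ell_2$-operator norm. First I would observe that the $(i,j)$-th entry of $BAC^\top$ is exactly the scalar $(B^{i\cdot})^\top A\, C^{j\cdot}$, where $B^{i\cdot}$ and $C^{j\cdot}$ denote the $i$-th and $j$-th rows of $B$ and $C$ viewed as column vectors in $\mathbb{R}^r$. Hence $\|BAC^\top\|_{\ell_\infty}=\max_{1\le i,j\le d}\labs(B^{i\cdot})^\top A\,C^{j\cdot}\rabs$.

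Next, for fixed $i,j$, I would apply the Cauchy--Schwarz inequality followed by the definition of $\opnorm{A}_2$ (noting $\opnorm{A}_2=\sup_{\|x\|_{\ell_2}=1}\|Ax\|_{\ell_2}$):
\[
\labs(B^{i\cdot})^\top A\,C^{j\cdot}\rabs\le \|B^{i\cdot}\|_{\ell_2}\,\|A\,C^{j\cdot}\|_{\ell_2}\le \opnorm{A}_2\,\|B^{i\cdot}\|_{\ell_2}\,\|C^{j\cdot}\|_{\ell_2}.
\]
Taking the maximum over $i$ and $j$ and factoring the right-hand side yields the first claimed inequality, $\|BAC^\top\|_{\ell_\infty}\le \opnorm{A}_2\lpa\max_i\|B^{i\cdot}\|_{\ell_2}\rpa\lpa\max_j\|C^{j\cdot}\|_{\ell_2}\rpa$.

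For the second inequality I would bound each row's $\ell_2$-norm by its $\ell_\infty$-norm: since $B^{i\cdot}\in\mathbb{R}^r$, we have $\|B^{i\cdot}\|_{\ell_2}\le\sqrt{r}\,\max_{1\le k\le r}|B^{ik}|\le\sqrt{r}\,\|B\|_{\ell_\infty}$, and likewise $\|C^{j\cdot}\|_{\ell_2}\le\sqrt{r}\,\|C\|_{\ell_\infty}$. Multiplying these two bounds gives the factor $r$, completing the proof. There is no real obstacle here; the only point requiring a moment's care is correctly identifying the entries of $BAC^\top$ as the bilinear forms $(B^{i\cdot})^\top A\,C^{j\cdot}$ and remembering that, for the symmetric matrix $A$, $\opnorm{A}_2$ is precisely the spectral norm so that $\|Ax\|_{\ell_2}\le\opnorm{A}_2\|x\|_{\ell_2}$ holds.
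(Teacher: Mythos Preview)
Your proof is correct. The paper's argument differs slightly in how it handles the bilinear form $(B^{i\cdot})^\top A\,C^{j\cdot}$: it first diagonalizes $A=U\Lambda U^\top$ using the symmetry hypothesis, rewrites the form as $\sum_k \Lambda^{kk}(U^\top B^{i\cdot})^k(U^\top C^{j\cdot})^k$, pulls out $\max_k|\Lambda^{kk}|=\opnorm{A}_2$, and then applies Cauchy--Schwarz to the remaining sum together with the orthogonality of $U$. Your route---Cauchy--Schwarz followed by $\|Ax\|_{\ell_2}\le\opnorm{A}_2\|x\|_{\ell_2}$---is more direct and in fact does not use the symmetry of $A$ at all, so it yields a slightly more general statement. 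The second inequality is handled identically in both proofs via $\|B^{i\cdot}\|_{\ell_2}\le\sqrt{r}\,\|B\|_{\ell_\infty}$.
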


\begin{proof}
This result has essentially been shown in \cite[Lemma A.7]{Ogihara2018}. 
Since $A$ is symmetric, there is an orthogonal matrix $U\in\mathbb{R}^{r\times r}$ such that $\Lambda:=U^\top AU$ is a diagonal matrix. Now, for any $i,j\in\{1,\dots,d\}$,
\begin{align*}
|(BAC^\top)^{ij}|
&=|(B^{i\cdot})^\top AC^{j\cdot}|
=|(B^{i\cdot})^\top U\Lambda U^\top C^{j\cdot}|
=\labs\sum_{k=1}^r\Lambda^{kk}((B^{i\cdot})^\top U)^k(U^\top C^{j\cdot})^k\rabs\\
&\leq\max_{1\leq k\leq r}|\Lambda^{kk}|\|U^\top B^{i\cdot}\|_{\ell_2}\|UC^{j\cdot}\|_{\ell_2}
=\opnorm{A}_2\|B^{i\cdot}\|_{\ell_2}\|C^{j\cdot}\|_{\ell_2}
\leq r\opnorm{A}_2\|B\|_{\ell_\infty}\|C\|_{\ell_\infty}.
\end{align*}
This yields the desired result.
\end{proof}

\begin{lemma}\label{lemma:schwarz}
Let $A,B,C\in\mathbb{R}^{d\times d}$. Then, for any $i,j=1,\dots,d$,  
\[
|(BAC^\top)^{ij}|^2\leq\lpa\sum_{k=1}^d|B^{ik}|\rpa\lpa\sum_{l=1}^d|C^{il}|\rpa\sum_{k,l=1}^d|B^{ik}C^{jl}|(A^{kl})^2.
\] 
\end{lemma}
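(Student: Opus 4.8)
The plan is to derive the bound from a single application of the Cauchy--Schwarz inequality to the expanded $(i,j)$-entry of $BAC^\top$. First I would write, using the convention $(C^\top)^{lj}=C^{jl}$,
\[
(BAC^\top)^{ij}=\sum_{k=1}^d\sum_{l=1}^dB^{ik}A^{kl}C^{jl},
\]
so that $(BAC^\top)^{ij}$ is a sum over the $d^2$ index pairs $(k,l)$ of the quantities $B^{ik}A^{kl}C^{jl}$.

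Next I would split each summand, peeling off one copy of $\sqrt{|B^{ik}C^{jl}|}$ and absorbing the sign of $B^{ik}C^{jl}$ into the remaining factor:
\[
B^{ik}A^{kl}C^{jl}=\sqrt{|B^{ik}C^{jl}|}\cdot\left(\sign(B^{ik}C^{jl})\sqrt{|B^{ik}C^{jl}|}\,A^{kl}\right),
\]
and then apply Cauchy--Schwarz to the sum over $(k,l)$, obtaining
\[
|(BAC^\top)^{ij}|^2\le\left(\sum_{k,l=1}^d|B^{ik}C^{jl}|\right)\left(\sum_{k,l=1}^d|B^{ik}C^{jl}|(A^{kl})^2\right).
\]
Since the first factor factorizes as $\left(\sum_{k=1}^d|B^{ik}|\right)\left(\sum_{l=1}^d|C^{jl}|\right)$, combining this with the previous display yields the assertion (here the second parenthetical factor in the statement is to be read as $\sum_{l=1}^d|C^{jl}|$).

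I do not expect any genuine obstacle: the argument is a textbook weighted Cauchy--Schwarz estimate. The only points requiring a bit of care are the bookkeeping of signs when forming the two factors of the split, and keeping the transpose convention consistent so that the weights $|B^{ik}C^{jl}|$ are exactly the ones appearing in the final sum of the statement.
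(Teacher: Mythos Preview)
Your argument is correct and is exactly the ``straightforward consequence of the Schwarz inequality'' the paper alludes to without spelling out details; your observation that the second factor should read $\sum_{l=1}^d|C^{jl}|$ rather than $\sum_{l=1}^d|C^{il}|$ is also right (the paper's applications of the lemma are consistent with the corrected version).
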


\begin{proof}
This is a straightforward consequence of the Schwarz inequality. 
\end{proof}

\section{Proofs for Section \ref{sec:main}}

\subsection{Proof of Proposition \ref{glasso:rate}}

The following result has essentially been proven in \cite{JvdG2018} and gives an estimate for the ``deterministic part'' of oracle inequalities for graphical Lasso type estimators. 
\begin{proposition}\label{oracle}
Let $A_0,A\in\mcl{S}_d$ and assume $\|A-A_0\|_{\ell_\infty}\leq\lambda_0$ for some $\lambda_0>0$. Assume also that there are numbers $L>1$ and $\lambda>0$ such that
$L^{-1}\leq\Lambda_\mathrm{min}(A_0)\leq\Lambda_\mathrm{max}(A_0)\leq L$, 
$2\lambda_0\leq\lambda\leq(8Lc_L)^{-1}$ and 
$8c_L^2s\lambda^2+2c_L\lambda_0^2\|\diag(A)-\diag(A_0)\|_{\ell_2}^2\leq\lambda_0/(2L)$, 
where $s:=s(B_0)$ and $c_L:=8L^2$. Set $B_0:=A_0^{-1}$. Then, for any $B\in\mcl{S}_d^{++}$ satisfying
\begin{equation}\label{eq:convex}
\trace\left(BA\right)-\log\det\left(B\right)+\lambda\|B^{-}\|_{\ell_1}
\leq\trace\left(B_0A\right)-\log\det\left(B_0\right)+\lambda\|B_0^{-}\|_{\ell_1},
\end{equation}
it holds that
\[
\|B-B_0\|_{\ell_2}^2/c_L+\lambda\|B^--B_0^-\|_{\ell_1}\leq8c_L^2s\lambda^2+2c_L\lambda_0^2\|\diag(A)-\diag(A_0)\|_{\ell_2}^2.
\]
\if0
and
\[
\opnorm{B-B_0}_1\leq.
\]
\fi
\end{proposition}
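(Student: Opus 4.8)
The plan is to adapt the deterministic core of the oracle inequality for the $\ell_1$-penalized log-determinant (graphical Lasso) estimator, as developed in Chapter~14 of \cite{JvdG2018} (see also \cite{RBLZ2008}). Write $h(B):=\trace(BA)-\log\det B$ for $B\in\mcl{S}_d^{++}$; this function is convex with $\nabla h(B_0)=A-B_0^{-1}=A-A_0$, and because its linear part cancels in the associated Bregman divergence,
\[
\delta(B):=h(B)-h(B_0)-\trace((A-A_0)(B-B_0))=-\log\det B+\log\det B_0+\trace(A_0(B-B_0))\geq0 .
\]
Subtracting $\trace((A-A_0)(B-B_0))$ from both sides of \eqref{eq:convex} then gives the ``basic inequality''
\[
\delta(B)\leq\trace((A_0-A)(B-B_0))+\lambda\big(\|B_0^-\|_{\ell_1}-\|B^-\|_{\ell_1}\big).
\]

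First I would bound the right-hand side. Splitting $A_0-A$ and $B-B_0$ into diagonal and off-diagonal parts, bounding the off-diagonal bilinear term by $\|A-A_0\|_{\ell_\infty}\|B^--B_0^-\|_{\ell_1}\leq\lambda_0\|B^--B_0^-\|_{\ell_1}$ and the diagonal one by the Schwarz inequality, and using that $B_0$ vanishes off $S:=S(B_0)$, so that $\|B_0^-\|_{\ell_1}-\|B^-\|_{\ell_1}\leq\|(B^--B_0^-)_S\|_{\ell_1}-\|(B^--B_0^-)_{S^c}\|_{\ell_1}$ ($S^c$ being the off-diagonal pairs outside $S$), the hypothesis $2\lambda_0\leq\lambda$ lets me absorb the off-diagonal error and the $S^c$-term and arrive at a cone-type inequality of the schematic shape
\[
\delta(B)+c_1\lambda\|B^--B_0^-\|_{\ell_1}\leq\|\diag(A-A_0)\|_{\ell_2}\,\|B-B_0\|_{\ell_2}+c_2\lambda\,\|(B^--B_0^-)_S\|_{\ell_1}
\]
with universal constants $c_1,c_2>0$; by the Schwarz inequality $\|(B^--B_0^-)_S\|_{\ell_1}\leq\sqrt{s}\,\|B-B_0\|_{\ell_2}$ ($s=s(B_0)$), so the right-hand side is at most $(\|\diag(A-A_0)\|_{\ell_2}+c_2\lambda\sqrt{s})\|B-B_0\|_{\ell_2}$.

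The crux is to pass from $\delta(B)$ to a multiple of $\|B-B_0\|_{\ell_2}^2$, i.e.\ to invoke local strong convexity of $-\log\det$; this needs an a priori bound on $\Lambda_{\max}(B)$, which is not yet available. I would obtain one by the standard convexity/localization device. Since $B\mapsto\trace(BA)-\log\det B+\lambda\|B^-\|_{\ell_1}$ is convex on $\mcl{S}_d^{++}$, \eqref{eq:convex} propagates along the segment $B_t:=B_0+t(B-B_0)$ for every $t\in[0,1]$, so the cone inequality holds with $B$ replaced by any $B_t$. Fix $M$ with $0<M<1/L$. For any $B'\in\mcl{S}_d$ with $\|B'-B_0\|_{\ell_2}\leq M$, Lemma~\ref{lemma:weyl} (together with $\opnorm{B'-B_0}_2\leq\|B'-B_0\|_{\ell_2}$) gives $\Lambda_{\min}(B')\geq 1/L-M>0$ and $\Lambda_{\max}(B')\leq L+M\leq 2L$, whence, from the integral representation
\[
\delta(B')=\int_0^1(1-u)\,\trace\!\left(\big(B_0+u(B'-B_0)\big)^{-1}(B'-B_0)\big(B_0+u(B'-B_0)\big)^{-1}(B'-B_0)\right)du
\]
and the elementary bound $\trace(M^{-1}HM^{-1}H)\geq\|H\|_{\ell_2}^2/\Lambda_{\max}(M)^2$ (for $M\in\mcl{S}_d^{++}$, $H\in\mcl{S}_d$), one gets $\delta(B')\geq\tfrac12\|B'-B_0\|_{\ell_2}^2/(2L)^2=\|B'-B_0\|_{\ell_2}^2/c_L$. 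Now either $\|B-B_0\|_{\ell_2}\leq M$, or, by continuity of $t\mapsto\|B_t-B_0\|_{\ell_2}$, there is $t_\ast\in(0,1)$ with $\|B_{t_\ast}-B_0\|_{\ell_2}=M$; in the latter case, inserting $\delta(B_{t_\ast})\geq M^2/c_L$ into the cone inequality at $t_\ast$ and cancelling one factor of $M$ forces $M/c_L\leq\|\diag(A-A_0)\|_{\ell_2}+c_2\lambda\sqrt{s}$. The calibration conditions $2\lambda_0\leq\lambda\leq(8Lc_L)^{-1}$ and $8c_L^2s\lambda^2+2c_L\lambda_0^2\|\diag(A)-\diag(A_0)\|_{\ell_2}^2\leq\lambda_0/(2L)$ are precisely what makes this incompatible with $\|B_{t_\ast}-B_0\|_{\ell_2}=M$ for a suitable choice of $M$ just below $1/L$, so the second alternative cannot occur and $\|B-B_0\|_{\ell_2}\leq M<1/L$.

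With this a priori bound, $\Lambda_{\max}(B)\leq 2L$ holds for $B$ itself, hence $\delta(B)\geq\|B-B_0\|_{\ell_2}^2/c_L$. Substituting this into the cone inequality — now retaining the full $\lambda\|B^--B_0^-\|_{\ell_1}$ on the left — bounding the cross term $(\|\diag(A-A_0)\|_{\ell_2}+c_2\lambda\sqrt{s})\|B-B_0\|_{\ell_2}$ by Young's inequality against $\tfrac12\|B-B_0\|_{\ell_2}^2/c_L$, and rearranging, yields
\[
\|B-B_0\|_{\ell_2}^2/c_L+\lambda\|B^--B_0^-\|_{\ell_1}\leq 8c_L^2s\lambda^2+2c_L\lambda_0^2\|\diag(A)-\diag(A_0)\|_{\ell_2}^2,
\]
the precise constants being those dictated by the calibration hypotheses (and matching the corresponding statement in \cite{JvdG2018}). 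I expect the main obstacle to be this localization step: choosing $M$ consistently with the smallness conditions so that the eigenvalue control is valid exactly where it is invoked, and verifying that the two calibration inequalities genuinely force the contradiction; the remaining manipulations are routine bookkeeping of constants.
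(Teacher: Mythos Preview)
Your proposal is correct and follows essentially the same route as the paper. Both proofs derive the basic inequality from \eqref{eq:convex}, split the linear term into diagonal and off-diagonal parts, use the sparsity of $B_0$ to decompose over $S$ and $S^c$, absorb the off-diagonal error via $2\lambda_0\leq\lambda$, and then combine local strong convexity of $-\log\det$ (giving $\delta(B')\geq\|B'-B_0\|_{\ell_2}^2/c_L$ once $\|B'-B_0\|_{\ell_2}\leq 1/(2L)$) with Schwarz and Young to close. The only cosmetic difference is in the localization: the paper takes the specific convex combination $\tilde{B}=\alpha B+(1-\alpha)B_0$ with $\alpha=M/(M+\|B-B_0\|_{\ell_2})$ and $M=1/(2L)$, applies the lemma to $\tilde{B}$, and uses the calibration $8c_L^2s\lambda^2+2c_L\|\diag(A-A_0)\|_{\ell_2}^2\leq\lambda_0/(2L)$ together with $\lambda_0\leq(16Lc_L)^{-1}$ to conclude $\|\tilde{B}-B_0\|_{\ell_2}\leq M/2$, forcing $\|B-B_0\|_{\ell_2}\leq M$; your intermediate-value argument picking $t_\ast$ with $\|B_{t_\ast}-B_0\|_{\ell_2}=M$ and deriving a contradiction is the same device in different clothing. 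The concrete choice $M=1/(2L)$ (rather than ``just below $1/L$'') is what makes the arithmetic with the calibration hypotheses close exactly.
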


We first prove Proposition \ref{oracle} under an additional assumption:
\begin{lemma}\label{lemma:oracle}
Proposition \ref{oracle} holds true if we additionally have $\|B-B_0\|_{\ell_2}\leq1/(2L)$.   
\end{lemma}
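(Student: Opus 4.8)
The plan is to run the classical Lasso-type oracle argument, adapted to the $-\log\det$ loss via its Bregman divergence, and to use the extra hypothesis $\|B-B_0\|_{\ell_2}\le 1/(2L)$ of Lemma \ref{lemma:oracle} exactly where it is needed: to turn that Bregman divergence into a genuine quadratic lower bound. Write $B_0:=A_0^{-1}$, $\Delta:=B-B_0$, let $\phi:=-\log\det$ (so $\nabla\phi(B_0)=-B_0^{-1}=-A_0$), and set $M_t:=B_0+t\Delta\in\mcl{S}_d^{++}$; introduce the Bregman divergence
\[
D:=-\log\det B+\log\det B_0+\trace(A_0\Delta)=\int_0^1(1-t)\trace\big(M_t^{-1}\Delta M_t^{-1}\Delta\big)\,dt\ge0.
\]
First I would rewrite \eqref{eq:convex}: subtracting $\trace(B_0A)-\log\det B_0$ from both sides, moving $\lambda\|B^-\|_{\ell_1}$ across, and then adding and subtracting $\trace(A_0\Delta)$ to form $D$, one gets the basic bound
\[
D\le-\trace\big(\Delta(A-A_0)\big)+\lambda\big(\|B_0^-\|_{\ell_1}-\|B^-\|_{\ell_1}\big).
\]

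For the lower bound, since $\Lambda_{\max}(B_0)\le L$ and $\opnorm{\Delta}_2\le\|\Delta\|_{\ell_2}\le 1/(2L)\le L$, a perturbation bound (Lemma \ref{lemma:weyl}) gives $\Lambda_{\max}(M_t)\le 2L$ and hence $\Lambda_{\min}(M_t^{-1})\ge 1/(2L)$ for all $t\in[0,1]$; since $\trace(M_t^{-1}\Delta M_t^{-1}\Delta)=\vectorize(\Delta)^\top(M_t^{-1}\otimes M_t^{-1})\vectorize(\Delta)\ge\Lambda_{\min}(M_t^{-1})^2\|\Delta\|_{\ell_2}^2$, integrating yields $D\ge\|\Delta\|_{\ell_2}^2/(8L^2)=\|\Delta\|_{\ell_2}^2/c_L$. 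For the right-hand side I would split the cross term into diagonal and off-diagonal parts: by $\ell_\infty$--$\ell_1$ duality, $\big|\sum_{i\ne j}\Delta^{ij}(A-A_0)^{ij}\big|\le\|A-A_0\|_{\ell_\infty}\|\Delta^-\|_{\ell_1}\le\lambda_0\|\Delta^-\|_{\ell_1}$, and by Cauchy--Schwarz, $\big|\sum_i\Delta^{ii}(A-A_0)^{ii}\big|\le\|\diag(\Delta)\|_{\ell_2}\|\diag(A-A_0)\|_{\ell_2}\le\|\Delta\|_{\ell_2}\|\diag(A-A_0)\|_{\ell_2}$; and since $B_0^{ij}=0$ off the edge set $S:=S(B_0)$, the standard sign argument gives $\|B_0^-\|_{\ell_1}-\|B^-\|_{\ell_1}\le\|\Delta_S^-\|_{\ell_1}-\|\Delta_{S^c}^-\|_{\ell_1}$, where $\Delta_S^-$ and $\Delta_{S^c}^-$ denote the off-diagonal part of $\Delta$ restricted to $S$ and to its complement.

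Combining these, I would cancel $\lambda_0\|\Delta^-\|_{\ell_1}$ against part of $\lambda\|\Delta_{S^c}^-\|_{\ell_1}$ using $2\lambda_0\le\lambda$, bound $\|\Delta_S^-\|_{\ell_1}\le\sqrt{s}\,\|\Delta_S^-\|_{\ell_2}\le\sqrt{s}\,\|\Delta\|_{\ell_2}$ with $s=s(B_0)$, and apply Young's inequality to the two resulting products so that each $\|\Delta\|_{\ell_2}^2$ contribution is reabsorbed into $\|\Delta\|_{\ell_2}^2/c_L\le D$; this leaves an estimate of the shape $\|\Delta\|_{\ell_2}^2/c_L+\lambda\|\Delta^-\|_{\ell_1}\lesssim c_L^2s\lambda^2+c_L\|\diag(A-A_0)\|_{\ell_2}^2$, and tuning the Young weights together with the numerical hypotheses $2\lambda_0\le\lambda\le(8Lc_L)^{-1}$ and the assumed bound on $8c_L^2s\lambda^2+2c_L\lambda_0^2\|\diag(A-A_0)\|_{\ell_2}^2$ collapses the constants to the stated conclusion. (In the only case where this is applied, the correlation-matrix graphical Lasso \eqref{glasso-cormat}, one has $\diag(A)=\diag(A_0)=\mathsf{E}_d$, so the diagonal term drops out entirely.)

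The conceptual content is the quadratic lower bound for $D$, which is immediate once the smallness assumption is in force; that is precisely why this intermediate lemma isolates that hypothesis, and recovering Proposition \ref{oracle} from it will require a separate continuity/bootstrap argument to verify $\|B-B_0\|_{\ell_2}\le1/(2L)$ a posteriori. The main obstacle is therefore the bookkeeping in the last step: arranging the Young-inequality weights so that every $\|\Delta\|_{\ell_2}^2$-term produced on the right is swallowed by the single quadratic term coming from $D$, while the residual constants assemble to exactly $8c_L^2s\lambda^2+2c_L\lambda_0^2\|\diag(A-A_0)\|_{\ell_2}^2$.
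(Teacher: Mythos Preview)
Your proposal is correct and follows essentially the same route as the paper. The only cosmetic difference is that you derive the quadratic lower bound $D\ge\|\Delta\|_{\ell_2}^2/c_L$ yourself via the integral remainder and the eigenvalue perturbation $\Lambda_{\max}(M_t)\le 2L$, whereas the paper simply cites Lemma~2 of \cite{JvdG2018} for the same inequality; the subsequent splitting of $\trace(\Delta(A-A_0))$ into diagonal/off-diagonal parts, the $S$/$S^c$ decomposition, the use of $2\lambda_0\le\lambda$, and the Young-inequality absorption are identical to the paper's argument.
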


\begin{proof}
Set $\Delta=B-B_0$. By assumption we have $\|\Delta\|_{\ell_2}\leq1/(2L)$, so Lemma 2 in \cite{JvdG2018} implies that
\[
\mathcal{E}(\Delta):=\trace\left(\Delta A_0\right)-\left\{\log\det\left(\Delta+B_0\right)-\log\det\left(B_0\right)\right\}
\]
is well-defined and we have
\begin{equation}\label{margin}
\mathcal{E}(\Delta)\geq c\|\Delta\|_{\ell_2},
\end{equation}
where $c=c_L^{-1}$. 
Moreover, \eqref{eq:convex} yields
\begin{align}
\mathcal{E}(\Delta)+\lambda\|B^{-}\|_{\ell_1}
&=-\trace\left(\Delta(A-A_0)\right)
+\left\{\trace\left(BA\right)-\log\det\left(B\right)+\lambda\|B^{-}\|_{\ell_1}\right\}
-\trace\left(B_0A\right)+\log\det\left(B_0\right)\nonumber\\
&\leq-\trace\left(\Delta(A-A_0)\right)
+\left\{\trace\left(B_0A\right)-\log\det\left(B_0\right)+\lambda\|B_0^{-}\|_{\ell_1}\right\}
-\trace\left(B_0A\right)+\log\det\left(B_0\right)\nonumber\\
&=-\trace\left(\Delta(A-A_0)\right)+\lambda\|B_0^{-}\|_{\ell_1}.\label{basic}
\end{align}
Now, note that $\trace(A_1B_1)=\trace(A_1^-B_1^-)+\trace(\diag(A_1)\diag(B_1))$ and $|\trace(A_1B_1)|\leq\|A_1\circ B_1\|_{\ell_1}$ for any $A_1,B_1\in\mathbb{R}^{d\times d}$. Thus, we infer that
\begin{align*}
|\trace\left(\Delta(A-A_0)\right)|
&\leq\|\Delta^-\|_{\ell_1}\|A^--A_0^-\|_{\ell_\infty}+\|\diag(\Delta)\|_{\ell_2}\|\diag(A)-\diag(A_0)\|_{\ell_2}\\
&\leq\lambda_0\|\Delta^-\|_{\ell_1}+\|\diag(A)-\diag(A_0)\|_{\ell_2}\|\diag(\Delta)\|_{\ell_2},
\end{align*}
where we use $\|A-A_0\|_{\ell_\infty}\leq\lambda_0$ in the last line. Combining this with \eqref{margin}--\eqref{basic}, we conclude that
\[
c\|\Delta\|_{\ell_2}^2+\lambda\|B^{-}\|_{\ell_1}
\leq\lambda_0\|\Delta^-\|_{\ell_1}+\|\diag(A)-\diag(A_0)\|_{\ell_2}\|\diag(\Delta)\|_{\ell_2}+\lambda\|B_0^{-}\|_{\ell_1}.
\]
Let $S:=S(B_0)$. Also, for a subset $I$ of $\{1,\dots,d\}^2$ and a $d\times d$ matrix $U$, we define the $d\times d$ matrix $U_I=(U_I^{ij})_{1\leq i,j\leq d}$ by $U_I^{ij}=U^{ij}1_{\{(i,j)\in I\}}$. 
Then, by definition and assumption, we have
$\|B^{-}\|_{\ell_1}=\|B_{S}^{-}\|_{\ell_1}+\|B_{S^c}^{-}\|_{\ell_1}$, 
$\|\Delta^-\|_{\ell_1}=\|\Delta_S^-\|_{\ell_1}+\|B_{S^c}^-\|_{\ell_1}$, 
$\|B_0^{-}\|_{\ell_1}\leq\|\Delta_S^-\|_{\ell_1}+\|B_{S}^{-}\|_{\ell_1}$, 
$\lambda\geq2\lambda_0$, 
so we deduce
\[
c\|\Delta\|_{\ell_2}^2+\frac{\lambda}{2}\|B_{S^c}^{-}\|_{\ell_1}
\leq\frac{3\lambda}{2}\|\Delta^-_S\|_{\ell_1}+\|\diag(A)-\diag(A_0)\|_{\ell_2}\|\diag(\Delta)\|_{\ell_2}.
\]
Consequently, we obtain
\begin{align*}
2c\|\Delta\|_{\ell_2}^2+\lambda\|\Delta^{-}\|_{\ell_1}
&=2c\|\Delta\|_{\ell_2}^2+\lambda(\|B_{S^c}^{-}\|_{\ell_1}+\|\Delta_S^{-}\|_{\ell_1})\\
&\leq4\lambda\|\Delta^-_S\|_{\ell_1}+2\|\diag(A)-\diag(A_0)\|_{\ell_2}\|\diag(\Delta)\|_{\ell_2}\\
&\leq4\lambda\sqrt{s}\|\Delta^-_S\|_{\ell_2}+2\|\diag(A)-\diag(A_0)\|_{\ell_2}\|\diag(\Delta)\|_{\ell_2}~(\because\text{Schwarz inequality})\\
&\leq8s\lambda^2/c^2+c\|\Delta^-_S\|_{\ell_2}^2/2+2\|\diag(A)-\diag(A_0)\|_{\ell_2}^2/c+c\|\diag(\Delta)\|_{\ell_2}^2/2,
\end{align*}
where we use the inequality $xy\leq(x^2+y^2)/2$ in the last line. Since $\|\Delta\|_{\ell_2}^2=\|\diag(\Delta)\|_{\ell_2}^2+\|\Delta^-\|_{\ell_2}^2$, we conclude that
\[
c\|\Delta\|_{\ell_2}^2+\lambda\|\Delta^{-}\|_{\ell_1}
\leq8s\lambda^2/c^2+2\|\diag(A)-\diag(A_0)\|_{\ell_2}^2/c,
\]
which completes the proof.
\end{proof}

\begin{proof}[Proof of Proposition \ref{oracle}]
Thanks to Lemma \ref{lemma:oracle}, it suffices to prove $\|B-B_0\|_{\ell_2}\leq1/(2L)$. 

Set $\tilde{B}=\alpha B+(1-\alpha)B_0$ with $\alpha=M/(M+\|B-B_0\|_{\ell_2})$ and $M=1/(2L)$. By definition we have $\|\tilde{B}-B_0\|_{\ell_2}\leq M=1/(2L)$. Moreover, \eqref{eq:convex} and the convexity of the loss function imply that
\begin{equation*}
\trace(\tilde{B}A)-\log\det(\tilde{B})+\lambda\|B^{-}\|_{\ell_1}
\leq\trace\left(B_0A\right)-\log\det\left(B_0\right)+\lambda\|B_0^{-}\|_{\ell_1}.
\end{equation*}
Therefore, we can apply Lemma \ref{lemma:oracle} with replacing $B$ by $\tilde{B}$, and thus we obtain
\[
\|\tilde{B}-B_0\|_{\ell_2}^2/c_L+\lambda\|\tilde{B}^--B_0^-\|_{\ell_1}\leq8c_L^2s\lambda^2+2c_L\lambda_0^2\|\diag(A)-\diag(\Sigma)\|_{\ell_2}^2.
\]
In particular, we have
\[
\|\tilde{B}-B_0\|_{\ell_2}^2\leq c_L\lambda_0/(2L)\leq1/(16L^2),
\]
so we obtain $\|\tilde{B}-B_0\|_{\ell_2}\leq1/(4L)=M/2$. By construction this yields $\|B-B_0\|_{\ell_2}\leq M=1/(2L)$, which completes the proof. 
\end{proof}

\begin{proof}[Proof of Proposition \ref{glasso:rate}]
Thanks to Lemma 7.2 of \cite{CLZ2016}, it suffices to consider the case $w=1$. 

For any $L,n\in\mathbb{N}$, we define the set $\Omega_{n,L}\subset\Omega$ by
\begin{multline*}
\Omega_{n,L}:=\{\|\hat{R}_n-R_Y\|_{\ell_\infty}\leq\lambda_n/2\}\cap\{L^{-1}\leq\Lambda_{\min}(R_Y)\leq \Lambda_{\max}(R_Y)\leq L\}\\
\cap\{s(K_Y)\leq Ls_n\}
\cap\{8c_{L}^2Ls_n\lambda_n>1/(4L)\},
\end{multline*}
where $c_{L}:=8L^2$. 
Then we have
\[
\lim_{L\to\infty}\limsup_{n\to\infty}P(\Omega_{n,L}^c)=0.
\]
In fact, noting that Lemma \ref{lemma:poincare} and \ref{ass:eigen} yield 
\begin{equation}\label{bdd:vol}
\max_{1\leq j\leq d}\lpa\Sigma_Y^{jj}+1/\Sigma_Y^{jj}\rpa=O_p(1),
\end{equation} 
\ref{ass:eigen}--\ref{ass:sparse}, \ref{ass:est} and Lemma \ref{lemma:rayleigh} imply that $\lambda_n^{-1}\|\hat{R}_n-R_Y\|_{\ell_\infty}=o_p(1)$, $s(K_Y)=O_p(s_n)$ and $\Lambda_{\max}(R_Y)+1/\Lambda_{\min}(R_Y)=O_p(1)$. 
Finally, \ref{ass:rate} yields $\lim_{n\to\infty}P(8Ls_n\lambda_n>1/(4L))=0$ for all $L$. 
Now, note that $\lambda_n\leq1/(16Lc_{L}^2)\leq(8Lc_{L})^{-1}$ on the set $\Omega_{n,L}$. Therefore, applying Proposition \ref{oracle} with $\lambda:=\lambda_n$ and $\lambda_0:=\lambda_n/2$, for any fixed $L$ we have 
\[
\|\hat{K}_{\lambda_n}-K_Y\|_{\ell_2}^2/c_{L}+\lambda_n\|\hat{K}_{\lambda_n}^--K_Y^-\|_{\ell_1}\leq8c_{L}^2s_n\lambda_n^2\qquad\text{on $\Omega_{n,L}$}.
\]
Consequently, we obtain
\[
\limsup_{n\to\infty}P\left(\|\hat{K}_{\lambda_n}-K_Y\|_{\ell_2}>64L^3\sqrt{s_n}\lambda_n\right)
\leq\limsup_{n\to\infty}P(\Omega_{n,L}^c)
\]
and
\[
\limsup_{n\to\infty}P\left(\|\hat{K}_{\lambda_n}^--K_Y^-\|_{\ell_1}>512L^4s_n\lambda_n\right)
\leq\limsup_{n\to\infty}P(\Omega_{n,L}^c).
\]
Therefore, we conclude that
\[
\limsup_{M\to\infty}\limsup_{n\to\infty}P\left(\|\hat{K}_{\lambda_n}-K_Y\|_{\ell_2}>M\sqrt{s_n}\lambda_n\right)
\leq\limsup_{L\to\infty}\limsup_{n\to\infty}P(\Omega_{n,L}^c)=0
\]
and
\[
\limsup_{M\to\infty}\limsup_{n\to\infty}P\left(\|\hat{K}_{\lambda_n}^--K_Y^-\|_{\ell_1}>Ms_n\lambda_n\right)
\leq\limsup_{L\to\infty}\limsup_{n\to\infty}P(\Omega_{n,L}^c)=0,
\]
which yields $\|\hat{K}_{\lambda_n}-K_Y\|_{\ell_2}=O_p(\sqrt{s_n}\lambda_n)$ and $\|\hat{K}_{\lambda_n}^--K_Y^-\|_{\ell_1}=O_p(s_n\lambda_n)$. In particular, we obtain the first convergence of \eqref{rate:K}. Moreover, since we have
\begin{align*}
\opnorm{\hat{K}_{\lambda_n}-K_Y}_1
&\leq\|\diag(\hat{K}_{\lambda_n})-\diag(K_Y)\|_{\ell_\infty}+\|\hat{K}_{\lambda_n}^--K_Y^-\|_{\ell_1}\\
&\leq\|\hat{K}_{\lambda_n}-K_Y\|_{\ell_2}+\|\hat{K}_{\lambda_n}^--K_Y^-\|_{\ell_1},
\end{align*}
we also obtain the second convergence of \eqref{rate:K}. 

Now we prove \eqref{rate:Theta}. 
First, \eqref{bdd:vol} and \ref{ass:est} yield $\lambda_n^{-1}\opnorm{\hat{ V}_n- V}_1\to^p0$. 
Since $\opnorm{ V}_1=O_p(1)$, $\opnorm{K_Y}_1=O_p(s_n)$ and $\lambda_n=o_p(1)$ by \eqref{bdd:vol}, \ref{ass:sparse} and \ref{ass:rate}, we obtain $\opnorm{\hat{ V}_n}_1=O_p(1)$ and $\opnorm{\hat{K}_{\lambda_n}}_1=O_p(s_n)$. Since 
\begin{align*}
&\opnorm{\hat{\Theta}_{\lambda_n}-\Theta_Y}_1
=\opnorm{\hat{ V}_n\hat{K}_{\lambda_n}\hat{ V}_n- VK_Y V}_1\\
&\leq\opnorm{\hat{ V}_n- V}_1\opnorm{\hat{K}_{\lambda_n}}_1\opnorm{\hat{ V}_n}_1
+\opnorm{ V}_1\opnorm{\hat{K}_{\lambda_n}-K_Y}_1\opnorm{\hat{ V}_n}_1
+\opnorm{ V}_1\opnorm{K_Y}_1\opnorm{\hat{ V}_n- V}_1,
\end{align*}
we obtain the first convergence of \eqref{rate:Theta}. 
Next, since $\opnorm{\hat{\Theta}_{\lambda_n}-\Theta_Y}_2=o_p(1)$ by the above result, \ref{ass:eigen} and Lemma \ref{lemma:weyl} yield $\opnorm{\hat{\Theta}_{\lambda_n}^{-1}}_2=\Lambda_{\min}(\hat{\Theta}_{\lambda_n})^{-1}=O_p(1)$. Since $\opnorm{\hat{\Theta}_{\lambda_n}^{-1}-\Sigma_Y}_2\leq\opnorm{\hat{\Theta}_{\lambda_n}^{-1}}_2\opnorm{\Theta_Y-\hat{\Theta}_{\lambda_n}}_2\opnorm{\Sigma_Y}_2$, we obtain the second convergence of \eqref{rate:Theta}. 
\end{proof}

\subsection{Proof of Lemma \ref{prop:AL}}

First, by Proposition 14.4.3 of \cite{Lange2013} there is a (not necessarily measurable) $d\times d$ random matrix $\hat{Z}_n$ such that
\[
\hat{\Sigma}_n-\hat{\Theta}_{\lambda_n}^{-1}+\lambda_n \hat{ V}_n\hat{Z}_n\hat{ V}_n=0,\qquad
\|\hat{Z}_n\|_{\ell_\infty}\leq1,
\]
and $\hat{Z}_n^{ij}=\sign(\hat{\Theta}_{\lambda_n}^{ij})$ if $\hat{\Theta}_{\lambda_n}^{ij}\neq0$. Consequently, it holds that
\[
\hat{\Sigma}_n\hat{\Theta}_{\lambda_n}-\mathsf{E}_d+\lambda_n \hat{ V}_n\hat{Z}_n\hat{ V}_n\hat{\Theta}_{\lambda_n}=0.
\]
Therefore, we have
\begin{align*}
&\hat{\Theta}_{\lambda_n}-\Theta_Y+\Theta_Y(\hat{\Sigma}_n-\Sigma_Y)\Theta_Y\\
&=\hat{\Theta}_{\lambda_n}-\Theta_Y+\Theta_Y(\hat{\Sigma}_n-\Sigma_Y)\hat{\Theta}_{\lambda_n}-\Theta_Y(\hat{\Sigma}_n-\Sigma_Y)(\hat{\Theta}_{\lambda_n}-\Theta_Y)\\
&=\hat{\Theta}_{\lambda_n}-\Theta_Y+\Theta_Y(\mathsf{E}_d-\lambda_n \hat{ V}_n\hat{Z}_n\hat{ V}_n\hat{\Theta}_{\lambda_n}-\Sigma_Y\hat{\Theta}_{\lambda_n})-\Theta_Y(\hat{\Sigma}_n-\Sigma_Y)(\hat{\Theta}_{\lambda_n}-\Theta_Y)\\
&=-\lambda_n\Theta_Y\hat{ V}_n\hat{Z}_n\hat{ V}_n\hat{\Theta}_{\lambda_n}-\Theta_Y(\hat{\Sigma}_n-\Sigma_Y)(\hat{\Theta}_{\lambda_n}-\Theta_Y)\\
&=\lambda_n(\hat{\Theta}_{\lambda_n}-\Theta_Y)\hat{ V}_n\hat{Z}_n\hat{ V}_n\hat{\Theta}_{\lambda_n}-(\hat{\Theta}_{\lambda_n}-\hat{\Theta}_{\lambda_n}\hat{\Sigma}_n\hat{\Theta}_{\lambda_n})-\Theta_Y(\hat{\Sigma}_n-\Sigma_Y)(\hat{\Theta}_{\lambda_n}-\Theta_Y),
\end{align*}
so we obtain
\begin{align*}
&\|\hat{\Theta}_{\lambda_n}-\Theta_Y-\Gamma_n+\Theta_Y(\hat{\Sigma}_n-\Sigma_Y)\Theta_Y\|_{\ell_\infty}\\
&\leq\lambda_n\opnorm{\hat{\Theta}_{\lambda_n}-\Theta_Y}_\infty\|\hat{ V}_n\hat{Z}_n\hat{ V}_n\hat{\Theta}_{\lambda_n}\|_{\ell_\infty}
+\opnorm{\Theta_Y}_\infty\|\hat{\Sigma}_n-\Sigma_Y\|_{\ell_\infty}\opnorm{\hat{\Theta}_{\lambda_n}-\Theta_Y}_{\ell_\infty}\\
&\leq\lambda_n\opnorm{\hat{\Theta}_{\lambda_n}-\Theta_Y}_\infty\opnorm{\hat{ V}_n}_\infty^2\opnorm{\hat{\Theta}_{\lambda_n}}_{\infty}
+\opnorm{\Theta_Y}_\infty\|\hat{\Sigma}_n-\Sigma_Y\|_{\ell_\infty}\opnorm{\hat{\Theta}_{\lambda_n}-\Theta_Y}_{\ell_\infty}.
\end{align*}
Now the desired result follows from Proposition \ref{glasso:rate} and Lemma \ref{lemma:op-sp}.\hfil\qed

\subsection{Proof of Proposition \ref{prop:AMN}}

In the light of Lemma 3.1 of \cite{Koike2018sk}, it is enough to prove 
\[
\sqrt{\log (m+1)}\left\|J_n\vectorize\left(\hat{\Theta}_{\lambda_n}-\Theta_Y-\Gamma_n\right)-\wt{J}_n\vectorize\left(\hat{\Sigma}_n-\Sigma\right)\right\|_{\ell_\infty}\to^p0
\]
as $n\to\infty$. Note that $\vectorize(ABC)=(C^\top\otimes A)\vectorize(B)$ for any $d\times d$ matrices $A,B,C$ (cf.~Theorem 2 in \cite[Chapter 2]{MN1988}). Thus, we obtain the desired result once we prove
\[
\sqrt{\log (m+1)}\left\|J_n\left(\hat{\Theta}_{\lambda_n}-\Theta_Y-\Gamma_n+\Theta_Y\left(\hat{\Sigma}_n-\Sigma\right)\Theta_Y\right)\right\|_{\ell_\infty}\to^p0
\]
as $n\to\infty$. This follows from Lemma \ref{prop:AL} and the assumptions of the proposition.\hfill\qed 

\section{Proofs for Section \ref{sec:factor}}

\subsection{Proof of Proposition \ref{factor:rate}}

Set $\Omega_n:=\{\opnorm{\Sigma_X^{-1}(\hat{\Sigma}_{X,n}-\Sigma_X)}_2\leq1/2\}$.
\begin{lemma}\label{inv-sigma-f}
Under the assumptions of Proposition \ref{factor:rate}, we have the following results:
\begin{enumerate}[label=(\alph*)]

\item\label{sigma.f-inv} On the event $\Omega_n$, $\hat{\Sigma}_{X,n}$ is invertible and 
$
\opnorm{\hat{\Sigma}_{X,n}^{-1}-\Sigma_X^{-1}}_2\leq2\opnorm{\Sigma_X^{-1}}_2\opnorm{\Sigma_X^{-1}(\hat{\Sigma}_{X,n}-\Sigma_X)}_2.
$

\item\label{sigma.f-sp} $\lambda_n^{-1}\opnorm{\hat{\Sigma}_{X,n}-\Sigma_X}_2=o_p(r)$ and $\opnorm{\hat{\Sigma}_{X,n}}_2=O_p(r)$ as $n\to\infty$. 

\item\label{omega.n} $P(\Omega_n)\to1$ as $n\to\infty$.  

\end{enumerate}
\end{lemma}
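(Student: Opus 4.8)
The plan is to establish the three parts in the stated order; they all reduce to the perturbation bound of Lemma \ref{inv-diff-op} together with a single crude estimate comparing the $\ell_2$-operator norm of an $r\times r$ matrix with its $\ell_\infty$-norm.

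For part (a), I would apply Lemma \ref{inv-diff-op} with $A=\Sigma_X$, $B=\hat{\Sigma}_{X,n}$, and $w=2$. On $\Omega_n$ the hypothesis $\opnorm{\Sigma_X^{-1}(\hat{\Sigma}_{X,n}-\Sigma_X)}_2\leq1/2<1$ is satisfied, so that lemma yields the invertibility of $\hat{\Sigma}_{X,n}$ together with the bound $\opnorm{\hat{\Sigma}_{X,n}^{-1}-\Sigma_X^{-1}}_2\leq\opnorm{\Sigma_X^{-1}}_2\opnorm{\Sigma_X^{-1}(\hat{\Sigma}_{X,n}-\Sigma_X)}_2\big/\bigl(1-\opnorm{\Sigma_X^{-1}(\hat{\Sigma}_{X,n}-\Sigma_X)}_2\bigr)$. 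Since $t\mapsto t/(1-t)$ is nondecreasing on $[0,1)$ and the relevant operator norm is at most $1/2$ on $\Omega_n$, replacing the denominator by $1/2$ gives the claimed inequality.

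For parts (b) and (c) I would first record the elementary fact that $\opnorm{M}_2\leq\|M\|_{\ell_2}\leq r\|M\|_{\ell_\infty}$ for any $M\in\mathbb{R}^{r\times r}$ (operator norm dominated by the Frobenius norm, which in turn is at most $r$ times the max-entry norm). Applying this with $M=\hat{\Sigma}_{X,n}-\Sigma_X$ and invoking the first convergence in \ref{f-ass:est} gives $\lambda_n^{-1}\opnorm{\hat{\Sigma}_{X,n}-\Sigma_X}_2\leq r\,\lambda_n^{-1}\|\hat{\Sigma}_{X,n}-\Sigma_X\|_{\ell_\infty}=o_p(r)$, which is the first assertion of (b). For the second, note $\opnorm{\Sigma_X}_2\leq r\|\Sigma_X\|_{\ell_\infty}=O_p(r)$ by \ref{ass:eigen.f}; since $\lambda_n\to^p0$ follows from \ref{f-ass:rate}, the preceding bound improves to $\opnorm{\hat{\Sigma}_{X,n}-\Sigma_X}_2=o_p(r)$, and the triangle inequality then gives $\opnorm{\hat{\Sigma}_{X,n}}_2\leq\opnorm{\hat{\Sigma}_{X,n}-\Sigma_X}_2+\opnorm{\Sigma_X}_2=O_p(r)$.

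For part (c), submultiplicativity gives $\opnorm{\Sigma_X^{-1}(\hat{\Sigma}_{X,n}-\Sigma_X)}_2\leq\opnorm{\Sigma_X^{-1}}_2\opnorm{\hat{\Sigma}_{X,n}-\Sigma_X}_2$, where $\opnorm{\Sigma_X^{-1}}_2=1/\Lambda_{\min}(\Sigma_X)=O_p(1)$ by \ref{ass:eigen.f}, while $\opnorm{\hat{\Sigma}_{X,n}-\Sigma_X}_2\leq r\lambda_n\cdot\bigl(\lambda_n^{-1}\|\hat{\Sigma}_{X,n}-\Sigma_X\|_{\ell_\infty}\bigr)=o_p(1)$ because $r\lambda_n\leq(s_n+r)\lambda_n\to^p0$ by \ref{f-ass:rate}. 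Hence $\opnorm{\Sigma_X^{-1}(\hat{\Sigma}_{X,n}-\Sigma_X)}_2\to^p0$, so $P(\Omega_n)\to1$. I do not expect any genuine obstacle here; the only point needing a little care is bookkeeping the dimension factor $r$ in the passage from the $\ell_\infty$- to the $\ell_2$-norm, which is harmless since \ref{f-ass:rate} makes $\lambda_n$ decay faster than $(s_n+r)^{-1}$ and hence faster than $r^{-1}$.
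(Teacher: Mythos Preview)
Your proof is correct and follows essentially the same approach as the paper: part (a) is Lemma \ref{inv-diff-op} with the denominator bounded by $1/2$, and parts (b)--(c) use the inequality $\opnorm{M}_2\leq r\|M\|_{\ell_\infty}$ together with \ref{ass:eigen.f}, \ref{f-ass:est}, and \ref{f-ass:rate}. The paper's proof is a terser version of exactly what you wrote.
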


\begin{proof}
\ref{sigma.f-inv} is a direct consequence of Lemma \ref{inv-diff-op}. 
\ref{sigma.f-sp} follows from \ref{ass:eigen.f}, \ref{f-ass:rate} and the inequalities $\opnorm{\hat{\Sigma}_{X,n}-\Sigma_X}_2\leq r\|\hat{\Sigma}_{X,n}-\Sigma_X\|_{\ell_\infty}$ and $\opnorm{\Sigma_X}_2\leq r\|\Sigma_X\|_{\ell_\infty}$. 
\ref{omega.n} follows from the inequality $\opnorm{\Sigma_X^{-1}(\hat{\Sigma}_{X,n}-\Sigma_X)}_2\leq r\opnorm{\Sigma_X^{-1}}_2\|\hat{\Sigma}_{X,n}-\Sigma_X\|_{\ell_\infty}$. 
\end{proof}

\begin{lemma}\label{lemma:approx-z}
Under the assumptions of Proposition \ref{factor:rate}, $\lambda_n^{-2}\|\hat{\Sigma}_{Z,n}-\breve{\Sigma}_{Z,n}\|_{\ell_\infty}=o_p(r)$ as $n\to\infty$. 
\end{lemma}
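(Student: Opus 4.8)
\emph{Proof proposal.} The plan is to reduce the whole estimate to the high-probability event $\Omega_n=\{\opnorm{\Sigma_X^{-1}(\hat{\Sigma}_{X,n}-\Sigma_X)}_2\le 1/2\}$ from Lemma \ref{inv-sigma-f}, on which $\hat{\Sigma}_{X,n}$ is invertible and $\hat{\beta}_n=\hat{\Sigma}_{YX,n}\hat{\Sigma}_{X,n}^{-1}$. The key observation is that on $\Omega_n$ the difference $\hat{\Sigma}_{Z,n}-\breve{\Sigma}_{Z,n}$ collapses to a single quadratic remainder. Writing $D_n:=\hat{\Sigma}_{YX,n}-\beta\hat{\Sigma}_{X,n}$, I claim that on $\Omega_n$
\[
\hat{\Sigma}_{Z,n}-\breve{\Sigma}_{Z,n}=-D_n\hat{\Sigma}_{X,n}^{-1}D_n^\top .
\]
First I would verify this identity: on $\Omega_n$ we have $\hat{\beta}_n\hat{\Sigma}_{X,n}\hat{\beta}_n^\top=\hat{\Sigma}_{YX,n}\hat{\Sigma}_{X,n}^{-1}\hat{\Sigma}_{YX,n}^\top$, so $\hat{\Sigma}_{Z,n}=\hat{\Sigma}_{Y,n}-\hat{\Sigma}_{YX,n}\hat{\Sigma}_{X,n}^{-1}\hat{\Sigma}_{YX,n}^\top$; substituting $\hat{\Sigma}_{YX,n}=\beta\hat{\Sigma}_{X,n}+D_n$ into this expression and into the definition of $\breve{\Sigma}_{Z,n}$ and expanding, the terms $\hat{\Sigma}_{Y,n}$, $\beta\hat{\Sigma}_{X,n}\beta^\top$, $\beta D_n^\top$ and $D_n\beta^\top$ all cancel pairwise, leaving exactly $-D_n\hat{\Sigma}_{X,n}^{-1}D_n^\top$. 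This is the only genuinely computational step, and the only care needed is the bookkeeping of the terms produced by expanding $(\beta\hat{\Sigma}_{X,n}+D_n)\hat{\Sigma}_{X,n}^{-1}(\hat{\Sigma}_{X,n}\beta^\top+D_n^\top)$.

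Next I would bound the remainder. Since $\hat{\Sigma}_{X,n}^{-1}\in\mcl{S}_r$ on $\Omega_n$, Lemma \ref{lemma:ogihara} applied with $A=\hat{\Sigma}_{X,n}^{-1}$ and $B=C=D_n$ gives
\[
\|D_n\hat{\Sigma}_{X,n}^{-1}D_n^\top\|_{\ell_\infty}\le r\,\opnorm{\hat{\Sigma}_{X,n}^{-1}}_2\,\|D_n\|_{\ell_\infty}^2 .
\]
On $\Omega_n$, Lemma \ref{inv-sigma-f}\ref{sigma.f-inv} together with $\opnorm{\Sigma_X^{-1}(\hat{\Sigma}_{X,n}-\Sigma_X)}_2\le1/2$ yields $\opnorm{\hat{\Sigma}_{X,n}^{-1}}_2\le2\opnorm{\Sigma_X^{-1}}_2=2/\Lambda_{\min}(\Sigma_X)$, which is $O_p(1)$ by \ref{ass:eigen.f}; meanwhile the second displayed convergence in \ref{f-ass:est} gives $\lambda_n^{-1}\|D_n\|_{\ell_\infty}\to^p0$, hence $\bigl(\lambda_n^{-1}\|D_n\|_{\ell_\infty}\bigr)^2\to^p0$. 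Combining the two bounds, on $\Omega_n$,
\[
\lambda_n^{-2}\|\hat{\Sigma}_{Z,n}-\breve{\Sigma}_{Z,n}\|_{\ell_\infty}\le r\cdot\opnorm{\hat{\Sigma}_{X,n}^{-1}}_2\cdot\bigl(\lambda_n^{-1}\|D_n\|_{\ell_\infty}\bigr)^2 = r\cdot O_p(1)\cdot o_p(1).
\]

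Finally, since $P(\Omega_n)\to1$ by Lemma \ref{inv-sigma-f}\ref{omega.n}, a routine truncation argument (bounding $P(\lambda_n^{-2}\|\hat{\Sigma}_{Z,n}-\breve{\Sigma}_{Z,n}\|_{\ell_\infty}>\varepsilon r)$ by $P(\Omega_n^c)$ plus the probability of the event appearing on the right-hand side above) upgrades this to $\lambda_n^{-2}\|\hat{\Sigma}_{Z,n}-\breve{\Sigma}_{Z,n}\|_{\ell_\infty}=o_p(r)$ without restriction to $\Omega_n$. There is no real obstacle beyond the algebraic cancellation; the only subtlety worth noting is that the quadratic factor $\|D_n\|_{\ell_\infty}^2$ makes the remainder $o_p(r\lambda_n^2)$ rather than $o_p(\lambda_n)$, and this is precisely why condition \ref{f-ass:est} is phrased with $\beta\hat{\Sigma}_{X,n}$ (a natural estimator of $\Sigma_{ZX}=0$) on the right-hand side instead of $\Sigma_{YX}$.
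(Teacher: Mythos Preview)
Your proposal is correct and follows essentially the same route as the paper's proof: reduce to $\Omega_n$, derive the identity $\hat{\Sigma}_{Z,n}-\breve{\Sigma}_{Z,n}=-D_n\hat{\Sigma}_{X,n}^{-1}D_n^\top$ with $D_n=\hat{\Sigma}_{YX,n}-\beta\hat{\Sigma}_{X,n}$, apply Lemma~\ref{lemma:ogihara} to extract the factor $r\opnorm{\hat{\Sigma}_{X,n}^{-1}}_2\|D_n\|_{\ell_\infty}^2$, and conclude via Lemma~\ref{inv-sigma-f} and \ref{f-ass:est}. The paper's version is slightly terser in the algebraic verification (it writes the chain $-\hat{\beta}_n\hat{\Sigma}_{X,n}\hat{\beta}_n^\top+\hat{\Sigma}_{YX,n}\beta^\top+\beta\hat{\Sigma}_{YX,n}^\top-\beta\hat{\Sigma}_{X,n}\beta^\top$ directly), but the argument is the same.
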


\begin{proof}
Since $\hat{\beta}_n=\hat{\Sigma}_{YX,n}\hat{\Sigma}_{X,n}^{-1}$ on the event $\Omega_n$, we have
\begin{align*}
\hat{\Sigma}_{Z,n}-\breve{\Sigma}_{Z,n}
&=-\hat{\beta}_n\hat{\Sigma}_{X,n}\hat{\beta}_n^\top
+\hat{\Sigma}_{YX,n}\beta^\top+\beta\hat{\Sigma}_{YX,n}^\top-\beta\hat{\Sigma}_{X,n}\beta^\top\\
&=-\hat{\Sigma}_{YX,n}(\hat{\beta}_n-\beta)^\top
+\beta\hat{\Sigma}_{X,n}(\hat{\beta}_n-\beta)^\top\\
&=-(\hat{\Sigma}_{YX,n}-\beta\hat{\Sigma}_{X,n})\hat{\Sigma}_{X,n}^{-1}(\hat{\Sigma}_{YX,n}-\beta\hat{\Sigma}_{X,n})^\top\quad\text{on }\Omega_n.
\end{align*}
Therefore, Lemma \ref{lemma:ogihara} yields
\[
\|\hat{\Sigma}_{Z,n}-\breve{\Sigma}_{Z,n}\|_{\ell_\infty}\leq r\opnorm{\hat{\Sigma}_{X,n}^{-1}}_2\|\hat{\Sigma}_{YX,n}-\beta\hat{\Sigma}_{X,n}\|_{\ell_\infty}^2\quad\text{on }\Omega_n.
\]
Now, by \ref{ass:eigen.f} and Lemma \ref{inv-sigma-f} we have $\opnorm{\hat{\Sigma}_{X,n}^{-1}}_21_{\Omega_n}=O_p(1)$, so we obtain $\lambda_n^{-2}\|\hat{\Sigma}_{Z,n}-\breve{\Sigma}_{Z,n}\|_{\ell_\infty}1_{\Omega_n}=o_p(r)$ by \ref{f-ass:est}. Since $P(\Omega_n)\to1$ by Lemma \ref{inv-sigma-f}\ref{omega.n}, we complete the proof. 
\end{proof}

\begin{lemma}\label{lemma:sigma.z}
Under the assumptions of Proposition \ref{factor:rate}, $\lambda_n^{-1}\|\hat{\Sigma}_{Z,n}-\Sigma_Z\|_{\ell_\infty}\to^p0$ and $P(\min_{1\leq i\leq d}\hat{\Sigma}_{Z,n}^{ii}>0)\to1$ as $n\to\infty$.
\end{lemma}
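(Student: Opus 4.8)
The plan is to bound $\|\hat{\Sigma}_{Z,n}-\Sigma_Z\|_{\ell_\infty}$ via the triangle inequality using the ``oracle'' residual estimator $\breve{\Sigma}_{Z,n}$ as an intermediary:
\[
\|\hat{\Sigma}_{Z,n}-\Sigma_Z\|_{\ell_\infty}\leq\|\hat{\Sigma}_{Z,n}-\breve{\Sigma}_{Z,n}\|_{\ell_\infty}+\|\breve{\Sigma}_{Z,n}-\Sigma_Z\|_{\ell_\infty}.
\]
The second term is handled directly by \ref{f-ass:est}, which gives $\lambda_n^{-1}\|\breve{\Sigma}_{Z,n}-\Sigma_Z\|_{\ell_\infty}\to^p0$. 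For the first term, Lemma \ref{lemma:approx-z} yields $\lambda_n^{-2}\|\hat{\Sigma}_{Z,n}-\breve{\Sigma}_{Z,n}\|_{\ell_\infty}=o_p(r)$, hence multiplying by $\lambda_n$ gives $\lambda_n^{-1}\|\hat{\Sigma}_{Z,n}-\breve{\Sigma}_{Z,n}\|_{\ell_\infty}=o_p(r\lambda_n)$; since \ref{f-ass:rate} implies $r\lambda_n\leq(s_n+r)\lambda_n\to^p0$, this is $o_p(1)$. Combining the two bounds gives the first assertion $\lambda_n^{-1}\|\hat{\Sigma}_{Z,n}-\Sigma_Z\|_{\ell_\infty}\to^p0$.

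For the second assertion, I would first note that \ref{f-ass:rate} also forces $\lambda_n\to^p0$, so the first assertion upgrades to $\|\hat{\Sigma}_{Z,n}-\Sigma_Z\|_{\ell_\infty}\to^p0$; in particular $\max_{1\leq i\leq d}|\hat{\Sigma}_{Z,n}^{ii}-\Sigma_Z^{ii}|\to^p0$. On the other hand, Lemma \ref{lemma:poincare} gives $\min_{1\leq i\leq d}\Sigma_Z^{ii}\geq\Lambda_{\min}(\Sigma_Z)$, and \ref{ass:eigen.z} (precisely $1/\Lambda_{\min}(\Sigma_Z)=O_p(1)$) ensures $\Lambda_{\min}(\Sigma_Z)$ is bounded away from $0$ with probability tending to $1$. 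Thus for any $\varepsilon>0$ there is $\delta>0$ with $P(\min_i\Sigma_Z^{ii}>\delta)>1-\varepsilon$ eventually, and since $P(\max_i|\hat{\Sigma}_{Z,n}^{ii}-\Sigma_Z^{ii}|<\delta/2)\to1$, on the intersection of these events
\[
\min_{1\leq i\leq d}\hat{\Sigma}_{Z,n}^{ii}\geq\min_{1\leq i\leq d}\Sigma_Z^{ii}-\max_{1\leq i\leq d}|\hat{\Sigma}_{Z,n}^{ii}-\Sigma_Z^{ii}|>\delta/2>0,
\]
so $P(\min_i\hat{\Sigma}_{Z,n}^{ii}>0)\geq1-2\varepsilon$ eventually. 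Letting $\varepsilon\downarrow0$ yields the claim.

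Since every ingredient has already been established in the preceding lemmas and assumptions, there is no serious obstacle here. The only point requiring a little care is the bookkeeping that converts the $o_p(r)$-type estimate of Lemma \ref{lemma:approx-z}, together with the rate condition \ref{f-ass:rate}, into an $o_p(1)$ bound once the appropriate power of $\lambda_n$ is restored, and the observation that \ref{f-ass:rate} simultaneously delivers $\lambda_n\to^p0$, which is what lets us pass from the normalized convergence to plain convergence in the diagonal argument.
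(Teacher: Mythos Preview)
Your proof is correct and follows essentially the same approach as the paper's own proof, which also invokes Lemma~\ref{lemma:approx-z} together with \ref{f-ass:est} for the first claim, and then Lemma~\ref{lemma:poincare} with \ref{ass:eigen.z} for the second. You have simply written out the details (the triangle inequality decomposition, the conversion of the $o_p(r)$ bound via \ref{f-ass:rate}, and the $\varepsilon$--$\delta$ argument for the diagonal) that the paper leaves implicit.
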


\begin{proof}
The first claim immediately follows from Lemma \ref{lemma:approx-z} and \ref{f-ass:est}. The second one is a consequence of the first one, Lemma \ref{lemma:poincare} and \ref{ass:eigen.z}.  
\end{proof}

\begin{proof}[Proof of Proposition \ref{factor:rate}]
Set $\mcl{E}_n:=\Omega_n\cap\{\ol{\Sigma}_n\in\mcl{S}_d^+\}\cap\{\min_{1\leq i\leq d}\hat{\Sigma}_{Z,n}^{ii}>0\}$. From Eq.(0.8.5.3) in \cite{HJ2013}, we have $\hat{\Sigma}_{Z,n}\in\mcl{S}_d^+$ on $\mcl{E}_n$. Hence, from the proof of \cite[Lemma 1]{DGK2008}, the optimization problem in \eqref{f-wglasso} has the unique solution on $\mcl{E}_n$. Since $P(\mcl{E}_n)\to1$ as $n\to\infty$ by \ref{f-ass:psd} and Lemmas \ref{inv-sigma-f} and \ref{lemma:sigma.z}, the desired result follows once we prove $\lambda_n^{-1}\|\hat{\Sigma}_{Z,n}-\Sigma_Z\|_{\ell_\infty}\to^p0$ as $n\to\infty$ according to Proposition \ref{glasso:rate}. This has already been established in Lemma \ref{lemma:sigma.z}. 
\end{proof}

\subsection{Proof of Proposition \ref{factor:max-rate}}

We first establish some asymptotic properties of $\hat{\beta}_n$ which are necessary for the subsequent proofs. 
\begin{lemma}\label{lemma:factor-cov}
Under the assumptions of Proposition \ref{factor:rate}, we have the following results:
\begin{enumerate}[label=(\alph*)]


\item\label{beta-norm} $\|\beta\|_{\ell_2}=O_p(\sqrt{d})$ as $n\to\infty$. 

\item\label{lemma:beta} $\lambda_n^{-1}\max_{1\leq i\leq d}\|\hat{\beta}_n^{i\cdot}-\beta^{i\cdot}\|_{\ell_2}=o_p(\sqrt{r})$ and $\max_{1\leq i\leq d}\|\hat{\beta}_n^{i\cdot}\|_{\ell_2}=O_p(\sqrt{r})$ as $n\to\infty$. 

\item\label{ell2-beta} $\lambda_n^{-1}\|\hat{\beta}_n-\beta\|_{\ell_2}=o_p(\sqrt{dr})$ and $\|\hat{\beta}_n\|_{\ell_2}=O_p(\sqrt{d})$ as $n\to\infty$. 

\item\label{op1-beta} $\lambda_n^{-1}\opnorm{\hat{\beta}_n-\beta}_{1}=o_p(d\sqrt{r})$ and $\opnorm{\hat{\beta}_n}_{1}=O_p(d)$ as $n\to\infty$. 

\item\label{opi-beta} $\lambda_n^{-1}\opnorm{\hat{\beta}_n-\beta}_{\infty}=o_p(r)$ and $\opnorm{\hat{\beta}_n}_{\infty}=O_p(r)$ as $n\to\infty$. 


\end{enumerate}
\end{lemma}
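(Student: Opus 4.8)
The plan is to establish (b) as the core estimate, derive (c)--(e) from it by elementary manipulations of matrix norms, and obtain (a) directly from the quadratic-covariation decomposition. For (a), since $\Sigma_X$ is a.s.\ positive definite I would write
\[
\|\beta\|_{\ell_2}^2=\trace(\beta^\top\beta)=\trace\lpa\Sigma_X^{-1}\,\Sigma_X^{1/2}\beta^\top\beta\,\Sigma_X^{1/2}\rpa\leq\Lambda_{\max}(\Sigma_X^{-1})\trace(\beta\Sigma_X\beta^\top),
\]
and then, by \eqref{eq:factor-qc} together with $\Sigma_Z\in\mcl{S}_d^+$, bound $\trace(\beta\Sigma_X\beta^\top)=\trace(\Sigma_Y)-\trace(\Sigma_Z)\leq\trace(\Sigma_Y)\leq d\|\Sigma_Y\|_{\ell_\infty}=O_p(d)$ by \ref{ass:vol}, while $\Lambda_{\max}(\Sigma_X^{-1})=1/\Lambda_{\min}(\Sigma_X)=O_p(1)$ by \ref{ass:eigen.f}; this gives $\|\beta\|_{\ell_2}=O_p(\sqrt{d})$.

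For (b), I would work on the event $\Omega_n$ of Lemma \ref{inv-sigma-f}, on which $\hat{\beta}_n=\hat{\Sigma}_{YX,n}\hat{\Sigma}_{X,n}^{-1}$ and hence $\hat{\beta}_n-\beta=(\hat{\Sigma}_{YX,n}-\beta\hat{\Sigma}_{X,n})\hat{\Sigma}_{X,n}^{-1}$. Since $\hat{\Sigma}_{X,n}^{-1}$ is symmetric, its $i$-th row satisfies $(\hat{\beta}_n-\beta)^{i\cdot}=\hat{\Sigma}_{X,n}^{-1}(\hat{\Sigma}_{YX,n}-\beta\hat{\Sigma}_{X,n})^{i\cdot}$, and Lemma \ref{inv-sigma-f}\ref{sigma.f-inv} with \ref{ass:eigen.f} yields $\opnorm{\hat{\Sigma}_{X,n}^{-1}}_2=O_p(1)$ on $\Omega_n$; using $\|v\|_{\ell_2}\leq\sqrt{r}\|v\|_{\ell_\infty}$ for $v\in\mathbb{R}^r$ then gives
\[
\max_{1\leq i\leq d}\|\hat{\beta}_n^{i\cdot}-\beta^{i\cdot}\|_{\ell_2}\leq\sqrt{r}\,\opnorm{\hat{\Sigma}_{X,n}^{-1}}_2\,\|\hat{\Sigma}_{YX,n}-\beta\hat{\Sigma}_{X,n}\|_{\ell_\infty}\quad\text{on }\Omega_n.
\]
Because $P(\Omega_n)\to1$ by Lemma \ref{inv-sigma-f}\ref{omega.n}, the bound $\lambda_n^{-1}\max_i\|\hat{\beta}_n^{i\cdot}-\beta^{i\cdot}\|_{\ell_2}=o_p(\sqrt{r})$ follows from \ref{f-ass:est}; and $\max_i\|\hat{\beta}_n^{i\cdot}\|_{\ell_2}\leq\max_i\|\beta^{i\cdot}\|_{\ell_2}+\max_i\|\hat{\beta}_n^{i\cdot}-\beta^{i\cdot}\|_{\ell_2}=O(\sqrt{r})+o_p(\sqrt{r})=O_p(\sqrt{r})$, using $\max_i\|\beta^{i\cdot}\|_{\ell_2}\leq\sqrt{r}\|\beta\|_{\ell_\infty}=O(\sqrt{r})$ by \ref{ass:vol} and $\lambda_n\to^p0$ by \ref{f-ass:rate}.

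Items (c)--(e) then follow by routine norm conversions. For (c), $\|\hat{\beta}_n-\beta\|_{\ell_2}\leq\sqrt{d}\max_i\|\hat{\beta}_n^{i\cdot}-\beta^{i\cdot}\|_{\ell_2}$ gives the first half, and $\|\hat{\beta}_n\|_{\ell_2}\leq\|\beta\|_{\ell_2}+\|\hat{\beta}_n-\beta\|_{\ell_2}=O_p(\sqrt{d})+o_p(\sqrt{d})$ since $\sqrt{r}\lambda_n\leq(s_n+r)\lambda_n\to^p0$. For (d), on $\Omega_n$ I would use submultiplicativity of the induced norms, $\opnorm{\hat{\beta}_n-\beta}_1\leq\opnorm{\hat{\Sigma}_{YX,n}-\beta\hat{\Sigma}_{X,n}}_1\opnorm{\hat{\Sigma}_{X,n}^{-1}}_1$, with $\opnorm{\hat{\Sigma}_{YX,n}-\beta\hat{\Sigma}_{X,n}}_1\leq d\|\hat{\Sigma}_{YX,n}-\beta\hat{\Sigma}_{X,n}\|_{\ell_\infty}$ and $\opnorm{\hat{\Sigma}_{X,n}^{-1}}_1\leq\sqrt{r}\,\opnorm{\hat{\Sigma}_{X,n}^{-1}}_2=O_p(\sqrt{r})$ by Lemma \ref{lemma:op-sp} applied to the symmetric $r\times r$ matrix $\hat{\Sigma}_{X,n}^{-1}$; then $\opnorm{\hat{\beta}_n}_1\leq\opnorm{\beta}_1+\opnorm{\hat{\beta}_n-\beta}_1$ with $\opnorm{\beta}_1\leq d\|\beta\|_{\ell_\infty}=O(d)$. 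For (e), $\opnorm{\hat{\beta}_n-\beta}_\infty\leq\sqrt{r}\max_i\|\hat{\beta}_n^{i\cdot}-\beta^{i\cdot}\|_{\ell_2}$ by the Schwarz inequality, so the first half is immediate from (b), and $\opnorm{\hat{\beta}_n}_\infty\leq\opnorm{\beta}_\infty+\opnorm{\hat{\beta}_n-\beta}_\infty$ with $\opnorm{\beta}_\infty\leq r\|\beta\|_{\ell_\infty}=O(r)$.

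The only slightly delicate point is (a): because \ref{ass:loading} is \emph{not} among the hypotheses of Proposition \ref{factor:rate}, one cannot invoke $\beta^\top\beta\approx d\bs{B}$, so the argument must instead go through the trace identity $\trace(\beta\Sigma_X\beta^\top)=\trace(\Sigma_Y)-\trace(\Sigma_Z)$ together with the positive semidefiniteness of $\Sigma_Z$. Everything else is bookkeeping among the $\ell_2$-, $\ell_\infty$- and operator norms, relying on Lemma \ref{inv-sigma-f}, Lemma \ref{lemma:op-sp}, and submultiplicativity of the induced norms on the event $\Omega_n$, where $\hat{\beta}_n$ has the explicit form $\hat{\Sigma}_{YX,n}\hat{\Sigma}_{X,n}^{-1}$.
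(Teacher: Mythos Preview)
Your proof is correct and follows essentially the same route as the paper. The only cosmetic differences are in (a), where the paper uses the psd ordering $\Sigma_X\succeq\Lambda_{\min}(\Sigma_X)\mathsf{E}_r$ and takes traces (equivalent to your $\trace(AB)\leq\Lambda_{\max}(A)\trace(B)$ step), and in (d), where the paper simply bounds $\opnorm{\hat{\beta}_n-\beta}_1\leq d\max_i\|\hat{\beta}_n^{i\cdot}-\beta^{i\cdot}\|_{\ell_2}$ directly from (b) rather than going through submultiplicativity; both are trivially valid.
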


\begin{proof}
\ref{beta-norm} 
Since $\Sigma_X-\Lambda_{\min}(\Sigma_X)\mathsf{E}_r$ is positive semidefinite, $\beta\Sigma_X\beta^\top-\Lambda_{\min}(\Sigma_X)\beta\beta^\top$ is also positive semidefinite. Thus $\Sigma_Y-\Lambda_{\min}(\Sigma_X)\beta\beta^\top$ is positive definite by \eqref{eq:factor-qc}. This implies that $0\leq\trace(\Sigma_Y-\Lambda_{\min}(\Sigma_X)\beta\beta^\top)=\trace(\Sigma_Y)-\Lambda_{\min}(\Sigma_X)\|\beta\|_{\ell_2}^2$. Since $\trace(\Sigma_Y)=O_p(d)$ by \ref{ass:vol}, we obtain $\|\beta\|_{\ell_2}^2=O_p(d)$ by \ref{ass:eigen.f}.

\ref{lemma:beta} 
By Lemma \ref{inv-sigma-f}, on the event $\Omega_n$, we have $\hat{\beta}_n=\hat{\Sigma}_{YX,n}\hat{\Sigma}_{X,n}^{-1}$. Hence, for every $i=1,\dots,d$,
\begin{align*}
\|\hat{\beta}_n^{i\cdot}-\beta^{i\cdot}\|_{\ell_2}&=\|(\hat{\Sigma}_{YX,n}^{i\cdot}-\beta\hat{\Sigma}_{X,n}^{i\cdot})\hat{\Sigma}_{X,n}^{-1}\|_{\ell_2}
\leq\opnorm{\hat{\Sigma}_{X,n}^{-1}}_2\|\hat{\Sigma}_{YX,n}^{i\cdot}-\beta\hat{\Sigma}_{X,n}^{i\cdot}\|_{\ell_2}\\
&\leq\sqrt{r}\opnorm{\hat{\Sigma}_{X,n}^{-1}}_2\|\hat{\Sigma}_{YX,n}-\beta\hat{\Sigma}_{X,n}\|_{\ell_\infty} \qquad\text{on }\Omega_n.
\end{align*}
Since $\opnorm{\hat{\Sigma}_{X,n}^{-1}}_21_{\Omega_n}=O_p(1)$ by Lemma \ref{inv-sigma-f}, $\lambda_n^{-1}\max_{1\leq i\leq d}\|\hat{\beta}_n^{i\cdot}-\beta^{i\cdot}\|_{\ell_\infty}1_{\Omega_n}=o_p(\sqrt{r})$ by \ref{f-ass:est}. Since $P(\Omega_n^c)\to0$ by Lemma \ref{inv-sigma-f}, we obtain $\lambda_n^{-1}\max_{1\leq i\leq d}\|\hat{\beta}_n^{i\cdot}-\beta^{i\cdot}\|_{\ell_2}=o_p(\sqrt{r})$. Since $\max_{1\leq i\leq d}\|\beta^{i\cdot}\|_{\ell_2}\leq \sqrt{r}\|\beta\|_{\ell_\infty}=O(\sqrt{r})$ by \ref{ass:vol}, we also obtain $\max_{1\leq i\leq d}\|\hat{\beta}_n^{i\cdot}\|_{\ell_\infty}=O_p(\sqrt{r})$. 

\ref{ell2-beta} This follows from \ref{beta-norm}--\ref{lemma:beta} and $r\lambda_n=o_p(1)$. 

\ref{op1-beta} This is a direct consequence of \ref{lemma:beta}. 

\ref{opi-beta} This follows from \ref{lemma:beta} and the Schwarz inequality. 
\end{proof}

\begin{proof}[Proof of Proposition \ref{factor:max-rate}]
Since $\|A\|_{\ell_\infty}\leq\opnorm{A}_2$ for any matrix $A$, in view of Proposition \ref{factor:rate} it suffices to prove $\lambda_n^{-1}\|\hat{\beta}_n\hat{\Sigma}_{X,n}\hat{\beta}_n^\top-\beta\Sigma_X\beta^\top\|_{\ell_\infty}=O_p(r^2)$. By Lemma \ref{lemma:ogihara} we have
\begin{align*}
\|\hat{\beta}_n\hat{\Sigma}_{X,n}\hat{\beta}_n^\top-\beta\Sigma_X\beta^\top\|_{\ell_\infty}
&\leq\opnorm{\hat{\Sigma}_{X,n}}_2\lpa\max_{1\leq i\leq d}\|\hat{\beta}_n^{i\cdot}-\beta^{i\cdot}\|_{\ell_2}\rpa\lpa\max_{1\leq i\leq d}\|\hat{\beta}_n^{i\cdot}\|_{\ell_2}\rpa\\
&\quad+\opnorm{\hat{\Sigma}_{X,n}-\Sigma_X}_2\lpa\max_{1\leq i\leq d}\|\beta^{i\cdot}\|_{\ell_2}\rpa\lpa\max_{1\leq i\leq d}\|\hat{\beta}_n^{i\cdot}\|_{\ell_2}\rpa\\
&\quad+\opnorm{\Sigma_X}_2\lpa\max_{1\leq i\leq d}\|\beta^{i\cdot}\|_{\ell_2}\rpa\lpa\max_{1\leq i\leq d}\|\hat{\beta}_n^{i\cdot}-\beta^{i\cdot}\|_{\ell_2}\rpa.
\end{align*}
Therefore, the desired result follows from Lemmas \ref{inv-sigma-f}, \ref{lemma:factor-cov}\ref{lemma:beta} and assumption. 
\end{proof}

\subsection{Proof of Proposition \ref{factor:inverse}}\label{proof:factor:inverse}

Set $\Pi:=(\Sigma_{X}^{-1}+\beta^\top\Sigma_{Z}^{-1}\beta)^{-1}$ and $\hat{\Pi}_n:=(\hat{\Sigma}_{X,n}^{\dagger}+\hat{\beta}_n^\top\hat{\Theta}_{Z,\lambda_n}\hat{\beta}_n)^{-1}$. 
\begin{lemma}\label{lemma:loading}
Under the assumptions of Proposition \ref{factor:inverse}, we have the following results:
\begin{enumerate}[label=(\alph*)]

\item\label{sp-beta} $\Lambda_{\min}(\beta^\top\beta)^{-1}=O(d^{-1})$ as $n\to\infty$. 

\item\label{sp-factor} $\opnorm{\Pi}_2=O_p(d^{-1})$ as $n\to\infty$. 

\item\label{sp-pihat} $\lambda_n^{-1}\opnorm{\hat{\Pi}_n-\Pi}_2=O_p(d^{-1}(s_n+r))$ and $\opnorm{\hat{\Pi}_n}_2=O_p(d^{-1})$ as $n\to\infty$. 

\end{enumerate}
\end{lemma}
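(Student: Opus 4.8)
The plan is to dispatch \ref{sp-beta} and \ref{sp-factor} quickly from the structural assumptions and then derive \ref{sp-pihat} from the operator-norm perturbation bound for matrix inverses (Lemma \ref{inv-diff-op}).

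For \ref{sp-beta} I would apply Lemma \ref{lemma:weyl} to the matrices $d^{-1}\beta^\top\beta$ and $\bs{B}$: assumption \ref{ass:loading} gives $\opnorm{d^{-1}\beta^\top\beta-\bs{B}}_2\to0$ and $\Lambda_{\min}(\bs{B})^{-1}=O(1)$, so $\Lambda_{\min}(d^{-1}\beta^\top\beta)$ is bounded away from $0$ for large $n$, whence $\Lambda_{\min}(\beta^\top\beta)^{-1}=d^{-1}\Lambda_{\min}(d^{-1}\beta^\top\beta)^{-1}=O(d^{-1})$. For \ref{sp-factor} I would write $\Pi^{-1}=\Sigma_X^{-1}+\beta^\top\Sigma_Z^{-1}\beta$ as a sum of two positive semidefinite matrices, so that Weyl's inequality together with Lemma \ref{lemma:rayleigh} gives
\[
\Lambda_{\min}(\Pi^{-1})\geq\Lambda_{\min}(\beta^\top\Sigma_Z^{-1}\beta)\geq\Lambda_{\min}(\Sigma_Z^{-1})\Lambda_{\min}(\beta^\top\beta);
\]
since $\Lambda_{\min}(\Sigma_Z^{-1})^{-1}=\Lambda_{\max}(\Sigma_Z)=O_p(1)$ by \ref{ass:eigen.z} and $\Lambda_{\min}(\beta^\top\beta)^{-1}=O(d^{-1})$ by \ref{sp-beta}, this yields $\opnorm{\Pi}_2=\Lambda_{\min}(\Pi^{-1})^{-1}=O_p(d^{-1})$.

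The substance is \ref{sp-pihat}, for which I would invoke Lemma \ref{inv-diff-op} with $A=\Pi^{-1}$ and $B=\hat{\Pi}_n^{-1}$. On the event $\Omega_n$, which has probability tending to $1$ by Lemma \ref{inv-sigma-f}\ref{omega.n}, we have $\hat{\Sigma}_{X,n}^{\dagger}=\hat{\Sigma}_{X,n}^{-1}$, so I would use the telescoping decomposition
\[
\hat{\Pi}_n^{-1}-\Pi^{-1}=\left(\hat{\Sigma}_{X,n}^{-1}-\Sigma_X^{-1}\right)+\hat{\beta}_n^\top\hat{\Theta}_{Z,\lambda_n}(\hat{\beta}_n-\beta)+\hat{\beta}_n^\top\left(\hat{\Theta}_{Z,\lambda_n}-\Theta_Z\right)\beta+(\hat{\beta}_n-\beta)^\top\Theta_Z\beta
\]
and bound each summand in $\opnorm{\cdot}_2$. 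The first summand is $o_p(r\lambda_n)$ on $\Omega_n$ by Lemma \ref{inv-sigma-f}\ref{sigma.f-inv} combined with \ref{ass:eigen.f}, \ref{f-ass:est} and the elementary bound $\opnorm{M}_2\leq r\|M\|_{\ell_\infty}$ for $r\times r$ matrices; the remaining three are handled with $\opnorm{\hat{\beta}_n}_2\vee\opnorm{\beta}_2=O_p(\sqrt d)$ and $\opnorm{\hat{\beta}_n-\beta}_2=o_p(\sqrt{dr}\lambda_n)$ (Lemma \ref{lemma:factor-cov}\ref{beta-norm},\ref{ell2-beta}), together with $\opnorm{\hat{\Theta}_{Z,\lambda_n}}_2=O_p(1)$ and $\opnorm{\hat{\Theta}_{Z,\lambda_n}-\Theta_Z}_2=O_p(s_n\lambda_n)$ (Proposition \ref{factor:rate} and \ref{ass:eigen.z}). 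Adding up, $\opnorm{\hat{\Pi}_n^{-1}-\Pi^{-1}}_2=O_p(d(s_n+r)\lambda_n)$ on $\Omega_n$.

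Finally I would combine the two rate estimates: the relative error $\opnorm{\Pi(\hat{\Pi}_n^{-1}-\Pi^{-1})}_2\leq\opnorm{\Pi}_2\opnorm{\hat{\Pi}_n^{-1}-\Pi^{-1}}_2=O_p((s_n+r)\lambda_n)$ tends to $0$ in probability by \ref{f-ass:rate}, so with probability tending to $1$ Lemma \ref{inv-diff-op} applies and gives
\[
\opnorm{\hat{\Pi}_n-\Pi}_2\leq\frac{\opnorm{\Pi}_2^2\opnorm{\hat{\Pi}_n^{-1}-\Pi^{-1}}_2}{1-\opnorm{\Pi}_2\opnorm{\hat{\Pi}_n^{-1}-\Pi^{-1}}_2}=O_p(d^{-1}(s_n+r)\lambda_n),
\]
which is the first assertion of \ref{sp-pihat} after dividing by $\lambda_n$; the bound $\opnorm{\hat{\Pi}_n}_2=O_p(d^{-1})$ then follows from the triangle inequality with \ref{sp-factor} and $(s_n+r)\lambda_n\to^p0$. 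I expect the main obstacle to be the bookkeeping in the four-term decomposition, so that the crude factors $\sqrt d$, $\sqrt r$, $s_n$ and $\lambda_n$ combine to exactly $d(s_n+r)\lambda_n$ and then collapse to $d^{-1}(s_n+r)$ after multiplication by $\opnorm{\Pi}_2^2=O_p(d^{-2})$; a secondary point is that the identification $\hat{\Sigma}_{X,n}^{\dagger}=\hat{\Sigma}_{X,n}^{-1}$ and the applicability of Lemma \ref{inv-diff-op} hold only on events of asymptotically full probability, so all estimates should be derived on $\Omega_n$ and transferred via Lemma \ref{inv-sigma-f}\ref{omega.n}.
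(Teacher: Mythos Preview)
Your proposal is correct and follows essentially the same route as the paper: parts \ref{sp-beta} and \ref{sp-factor} are handled identically via Lemma~\ref{lemma:weyl}, Lemma~\ref{lemma:rayleigh} and the structural assumptions, and for \ref{sp-pihat} both you and the paper bound $\opnorm{\hat{\Pi}_n^{-1}-\Pi^{-1}}_2$ on $\Omega_n$ by telescoping the quadratic form $\hat{\beta}_n^\top\hat{\Theta}_{Z,\lambda_n}\hat{\beta}_n-\beta^\top\Theta_Z\beta$ (your four-term split is an equivalent rearrangement of the paper's three-term split), invoke Lemma~\ref{lemma:factor-cov} and Proposition~\ref{factor:rate} for the pieces, and then apply Lemma~\ref{inv-diff-op} on an event of asymptotically full probability. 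The only cosmetic difference is that the paper introduces the explicit event $\Omega_{n,1}:=\Omega_n\cap\{\opnorm{\Pi(\hat{\Pi}_n^{-1}-\Pi^{-1})}_2\leq1/2\}$, whereas you phrase this step informally; your treatment is adequate.
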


\begin{proof}
\ref{sp-beta} 
By Lemma \ref{lemma:weyl} we have $|\Lambda_{\min}(d^{-1}\beta^\top\beta)-\Lambda_{\min}(\bs{B})|\leq\opnorm{d^{-1}\beta^\top\beta-\bs{B}}_2$. Hence the desired result follows from \ref{ass:loading}. 

\ref{sp-factor} 
Since $\opnorm{\Pi}_2=\Lambda_{\min}(\Sigma_{X}^{-1}+\beta^\top\Sigma_{Z}^{-1}\beta)^{-1}$ and $\Sigma_X^{-1}$ is positive definite, Corollary 4.3.12 in \cite{HJ2013} and Lemma \ref{lemma:rayleigh} yield
\begin{align*}
\opnorm{\Pi}_2\leq\Lambda_{\min}(\beta^\top\Sigma_{Z}^{-1}\beta)^{-1}
\leq\Lambda_{\min}(\beta^\top\beta)^{-1}\Lambda_{\min}(\Sigma_{Z}^{-1})^{-1}
=\Lambda_{\min}(\beta^\top\beta)^{-1}\Lambda_{\max}(\Sigma_Z).
\end{align*}
Thus, the desired result follows from claim \ref{sp-beta} and \ref{ass:eigen.z}. 

\ref{sp-pihat} 
First, since we have
\begin{align*}
&\opnorm{\hat{\beta}_n^\top\hat{\Theta}_{Z,\lambda_n}\hat{\beta}_n-\beta^\top\Theta_Z\beta}_2\\
&\leq\opnorm{\hat{\beta}_n-\beta}_2\opnorm{\hat{\Theta}_{Z,\lambda_n}}_2\opnorm{\hat{\beta}_n}_2
+\opnorm{\beta}_2\opnorm{\hat{\Theta}_{Z,\lambda_n}-\Theta_Z}_2\opnorm{\hat{\beta}_n}_2
+\opnorm{\beta}_2\opnorm{\Theta_{Z}}_2\opnorm{\hat{\beta}_n-\beta}_2,
\end{align*}
Lemma \ref{lemma:factor-cov}\ref{beta-norm} and \ref{ell2-beta} and Proposition \ref{factor:rate} yield $\lambda_n^{-1}\opnorm{\hat{\beta}_n^\top\hat{\Theta}_{Z,\lambda_n}\hat{\beta}_n-\beta^\top\Theta_Z\beta}_2=O_p(ds_n)$. Combining this with Lemma \ref{inv-sigma-f} and \ref{sp-factor}, we obtain $\lambda_n^{-1}\opnorm{\Pi(\hat{\Pi}_n^{-1}-\Pi^{-1})}_21_{\Omega_n}=O_p(s_n+r)$. Now let us set $\Omega_{n,1}:=\Omega_n\cap\{\opnorm{\Pi(\hat{\Pi}_n^{-1}-\Pi^{-1})}_2\leq1/2\}$. Then, using \ref{sp-factor} and Lemmas \ref{inv-diff-op} and \ref{inv-sigma-f}\ref{omega.n}, we obtain $\lambda_n^{-1}\opnorm{\hat{\Pi}_n-\Pi}_21_{\Omega_{n,1}}=O_p(d^{-1}(s_n+r))$ and $P(\Omega_{n,1}^c)\to0$. This completes the proof. 
\end{proof}

\begin{proof}[Proof of Proposition \ref{factor:inverse}]
By Sherman-Morisson-Woodbury formula (cf.~Eq.(0.7.4.1) in \cite{HJ2013}), for any $w\in\{2,\infty\}$ we have
\begin{align*}
&\opnorm{\hat{\Sigma}_{Y,\lambda_n}^{-1}-\Sigma_{Y}^{-1}}_w\\
&\leq\opnorm{\hat{\Theta}_{Z,\lambda_n}-\Theta_Z}_w
+\opnorm{(\hat{\Theta}_{Z,\lambda_n}-\Theta_Z)\hat{\beta}_{n}\hat{\Pi}_n\hat{\beta}_{n}^\top\hat{\Theta}_{Z,\lambda_n}}_w
+\opnorm{\Theta_Z(\hat{\beta}_{n}-\beta)\hat{\Pi}_n\hat{\beta}_{n}^\top\hat{\Theta}_{Z,\lambda_n}}_w\\
&\quad+\opnorm{\Theta_Z\beta(\hat{\Pi}_n-\Pi)\hat{\beta}_{n}^\top\hat{\Theta}_{Z,\lambda_n}}_w
+\opnorm{\Theta_Z\beta\Pi(\hat{\beta}_{n}-\beta)^\top\hat{\Theta}_{Z,\lambda_n}}_w
+\opnorm{\Theta_Z\beta\Pi\beta^\top(\hat{\Theta}_{Z,\lambda_n}-\Theta_Z)}_w\\
&=:\Delta_1+\Delta_2+\Delta_3+\Delta_4+\Delta_5+\Delta_6.
\end{align*}
Proposition \ref{factor:rate} yields $\lambda_n^{-1}\Delta_1=O_p(s_n)$. 
Moreover, noting that $\opnorm{\Theta_Z}_\infty=O_p(\sqrt{\mf{d}_n})$ by Lemma \ref{lemma:op-sp} and \ref{ass:eigen.z}, Proposition \ref{factor:rate} and Lemmas \ref{lemma:factor-cov}--\ref{lemma:loading} imply that $\lambda_n^{-1}(\Delta_2+\Delta_6)=O_p(s_n)$, $\lambda_n^{-1}(\Delta_3+\Delta_5)=o_p(\sqrt{r})$ and $\lambda_n^{-1}\Delta_4=O_p(s_n+r)$ when $w=2$ and $\lambda_n^{-1}(\Delta_2+\Delta_6)=O_p(r^{3/2}s_n\sqrt{\mf{d}_n})$, $\lambda_n^{-1}\Delta_3=o_p(r^{3/2}\mf{d}_n)$, $\lambda_n^{-1}\Delta_4=O_p(r^{3/2}(s_n+r)\mf{d}_n)$ and $\lambda_n^{-1}\Delta_5=o_p(r^{2}\mf{d}_n)$ when $w=\infty$. This completes the proof. 
\end{proof}

%
%
%
%
%
%

\subsection{Proof of Proposition \ref{factor:AMN}}

We apply Proposition \ref{prop:AMN} to $\hat{\Sigma}_{Z,n}$. From the arguments in the proof of Proposition \ref{factor:rate}, it remains to check condition \eqref{est:AMN}. More precisely, we need to prove 
\[
\lim_{n\to\infty}\sup_{y\in\mathbb{R}^m}\left|P\left(a_n\wt{J}_{Z,n}\vectorize\left(\hat{\Sigma}_{Z,n}-\Sigma_Z\right)\leq y\right)-P\left(\wt{J}_{Z,n}\mathfrak{C}_n^{1/2}\zeta_n\leq y\right)\right|=0.
\]
Thanks to Lemma 3.1 in \cite{Koike2018sk} and \eqref{f-est:AMN}, this claim follows once we prove $\sqrt{\log (m+1)}\|a_n\wt{J}_{Z,n}\vectorize(\hat{\Sigma}_{Z,n}-\Sigma_Z)-a_n\wt{J}_{Z,n}\vectorize(\breve{\Sigma}_{Z,n}-\Sigma_Z)\|_{\ell_\infty}\to0$. Since we have
\[
\|a_n\wt{J}_{Z,n}\vectorize(\hat{\Sigma}_{Z,n}-\Sigma_Z)-a_n\wt{J}_{Z,n}\vectorize(\breve{\Sigma}_{Z,n}-\Sigma_Z)\|_{\ell_\infty}
\leq a_n\opnorm{J_n}_\infty\opnorm{\Theta_Z}_\infty^2\|\hat{\Sigma}_{Z,n}-\breve{\Sigma}_{Z,n}\|_{\ell_\infty}
\]
and $\opnorm{\Theta_Z}_\infty=O_p(\sqrt{\mf{d}_n})$ by Lemma \ref{lemma:op-sp}, the desired result follows from Lemma \ref{lemma:approx-z} and assumption. \hfill\qed

\subsection{Proof of Lemma \ref{lemma:f-inv-AMN} and Proposition \ref{coro:f-inv-AMN}}

We use the same notation as in Section \ref{proof:factor:inverse}. 
By Sherman-Morisson-Woodbury formula we have
\begin{align*}
\|\hat{\Sigma}_{Y,\lambda_n}^{-1}-\Theta_{Z,\lambda_n}\|_{\ell_\infty}
&\leq\|\hat{\Theta}_{Z,\lambda_n}\hat{\beta}_{n}\hat{\Pi}_n\hat{\beta}_{n}^\top\hat{\Theta}_{Z,\lambda_n}\|_{\ell_\infty}\\
&\leq r\|\hat{\Theta}_{Z,\lambda_n}\hat{\beta}_{n}\|_{\ell_\infty}^2\opnorm{\hat{\Pi}_n}_2
\leq r\opnorm{\hat{\Theta}_{Z,\lambda_n}}_\infty^2\|\hat{\beta}_{n}\|_{\ell_\infty}^2\opnorm{\hat{\Pi}_n}_2,
\end{align*}
where the second inequality follows from Lemma \ref{lemma:ogihara}. Since $\opnorm{\Theta_Z}_\infty^2=O(\mf{d}_n)$ by Lemma \ref{lemma:op-sp}, we have $\opnorm{\hat{\Theta}_{Z,\lambda_n}}_\infty^2=O_p(\mf{d}_n)$ by Proposition \ref{factor:rate}. We also have $\|\hat{\beta}_{n}\|_{\ell_\infty}=O_p(1)$ by \ref{ass:vol}, \ref{f-ass:rate} and Lemma \ref{lemma:factor-cov}\ref{lemma:beta}. Consequently, we obtain $\|\hat{\Sigma}_{Y,\lambda_n}^{-1}-\Theta_{Z,\lambda_n}\|_{\ell_\infty}=O_p(r\mf{d}_n/d)$ by Lemma \ref{lemma:loading}. 
Similarly, we can prove $\|\Sigma_{Y}^{-1}-\Theta_{Z}\|_{\ell_\infty}=O_p(r\mf{d}_n/d)$. So we complete the proof of Lemma \ref{lemma:f-inv-AMN}.  

Proposition \ref{coro:f-inv-AMN} is an immediate consequence of Proposition \ref{factor:AMN}, Lemma \ref{lemma:f-inv-AMN} and \cite[Lemma 3.1]{Koike2018sk}.\hfil\qed

\section{Proofs for Section \ref{sec:rc}}

\subsection{Proof of Theorem \ref{rc:rate}}

The proof relies on the following concentration inequalities for discretized quadratic covariations of continuous martingales: 
\begin{lemma}\label{rc:concentrate}
Let $M=(M_t)_{t\in[0,1]}$ and $N=(N_t)_{t\in[0,1]}$ be two continuous martingales. 
Suppose that there is a constant $L>0$ such that
\begin{equation}\label{eq:lipschitz}
|[M,M]_t-[M, M]_s|\vee|[N, N]_t-[N, N]_s|\leq L|t-s|
\end{equation}
for all $s,t\in[0,1]$. Then, for any $\theta>0$, there is a constant $C_{L,\theta}>0$ which depends only on $L$ and $\theta$ such that
\[
P\left(\sqrt{n}\labs\wh{[M,N]}^n_1-[M,N]_1\rabs>x\right)\leq2\exp\lpa-C_{L,\theta}x^2\rpa
\]
for all $n\in\mathbb{N}$ and $x\in[0,\theta\sqrt{n}]$. 
\end{lemma}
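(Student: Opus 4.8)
The plan is to realise $\wh{[M,N]}^n_1-[M,N]_1$ as a sum of martingale differences with good conditional sub-exponential control and then run a Bernstein-type argument by iterated conditioning. Write $\Delta^M_h:=M_{h/n}-M_{(h-1)/n}$, $\Delta^N_h:=N_{h/n}-N_{(h-1)/n}$ and
\[
\eta_h:=\Delta^M_h\Delta^N_h-\big([M,N]_{h/n}-[M,N]_{(h-1)/n}\big),\qquad h=1,\dots,n,
\]
so that $\wh{[M,N]}^n_1-[M,N]_1=\sum_{h=1}^n\eta_h$. Since \eqref{eq:lipschitz} forces $[M,M]_1,[N,N]_1\le L$, both $M$ and $N$ have square-integrable increments on $[0,1]$, and applying the conditional It\^o isometry to $M\pm N$ together with polarisation gives $\ex{\Delta^M_h\Delta^N_h\mid\mcl F_{(h-1)/n}}=\ex{[M,N]_{h/n}-[M,N]_{(h-1)/n}\mid\mcl F_{(h-1)/n}}$; hence $(\eta_h,\mcl F_{h/n})_{h=1}^n$ is a martingale difference sequence.

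Next I would establish a uniform conditional sub-exponential bound for each $\eta_h$. On the block $[(h-1)/n,h/n]$ the process $t\mapsto M_t-M_{(h-1)/n}$ is a continuous martingale whose quadratic variation increment is $\le L/n$ by \eqref{eq:lipschitz}, so the non-negative supermartingale $\exp\!\big(\lambda(M_t-M_{(h-1)/n})-\tfrac{\lambda^2}{2}([M,M]_t-[M,M]_{(h-1)/n})\big)$ yields $\ex{e^{\lambda\Delta^M_h}\mid\mcl F_{(h-1)/n}}\le e^{\lambda^2L/(2n)}$ for every $\lambda\in\mbb R$, and likewise for $\Delta^N_h$. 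Consequently $\ex{e^{s(\Delta^M_h)^2}\mid\mcl F_{(h-1)/n}}$ and $\ex{e^{s(\Delta^N_h)^2}\mid\mcl F_{(h-1)/n}}$ are bounded by $\sqrt2$ for $0\le s\le n/(4L)$; using $|\Delta^M_h\Delta^N_h|\le\tfrac12((\Delta^M_h)^2+(\Delta^N_h)^2)$ and conditional Cauchy--Schwarz shows $\Delta^M_h\Delta^N_h$ is conditionally sub-exponential with parameter $O(L/n)$, and since $|[M,N]_{h/n}-[M,N]_{(h-1)/n}|\le L/n$ by the Kunita--Watanabe inequality and \eqref{eq:lipschitz}, so is $\eta_h$. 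Combining this with $\ex{\eta_h\mid\mcl F_{(h-1)/n}}=0$, the standard centred sub-exponential moment-generating-function estimate gives constants $c_1,C_1>0$ depending only on $L$ with $\ex{e^{\lambda\eta_h}\mid\mcl F_{(h-1)/n}}\le\exp(C_1\lambda^2L^2/n^2)$ whenever $|\lambda|\le c_1n/L$.

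Because this bound is a deterministic constant, conditioning successively on $\mcl F_{(n-1)/n},\dots,\mcl F_{1/n}$ gives $\ex{\exp(\lambda\sum_{h=1}^n\eta_h)}\le\exp(C_1\lambda^2L^2/n)$ for $|\lambda|\le c_1n/L$. Writing $S_n:=\wh{[M,N]}^n_1-[M,N]_1$ and $y:=x/\sqrt n$ (so $y\le\theta$ by hypothesis), Markov's inequality gives $P(S_n>y)\le\exp(-\lambda y+C_1\lambda^2L^2/n)$ on $0\le\lambda\le c_1n/L$. If $y\le2C_1c_1L$ I would take $\lambda=yn/(2C_1L^2)\le c_1n/L$ and obtain $P(S_n>y)\le\exp(-y^2n/(4C_1L^2))$; for $2C_1c_1L<y\le\theta$ I would take $\lambda=c_1n/L$, obtaining $P(S_n>y)\le\exp(-c_1ny/(2L))\le\exp(-(c_1/(2L\theta))\,y^2n)$ since $y\le\theta$. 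Either way $P(S_n>y)\le\exp(-C_{L,\theta}y^2n)$ with $C_{L,\theta}:=\min\{1/(4C_1L^2),\,c_1/(2L\theta)\}$, and the same bound holds for $P(-S_n>y)$ by symmetry; adding the two and using $y^2n=x^2$ yields $P(\sqrt n\,|S_n|>x)\le2\exp(-C_{L,\theta}x^2)$.

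The substantive step is the second one: converting the deterministic Lipschitz control of the angle brackets into a conditional sub-exponential bound for $\eta_h$ of the quadratic-in-$\lambda$ form, with constants depending only on $L$ (the rest is routine Chernoff bookkeeping). The restriction $x\le\theta\sqrt n$ is also used essentially in the last step: for $x\gg\sqrt n$ a sum of $n$ sub-exponential terms of parameter $O(L/n)$ only has an $e^{-cx}$ tail, so confining $y=x/\sqrt n$ to $[0,\theta]$ is exactly what permits the Gaussian-type bound with a $\theta$-dependent constant.
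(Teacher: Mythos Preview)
Your proposal is correct and reaches the conclusion by a genuinely different route than the paper. Both arguments share the same martingale-difference decomposition---by It\^o's product rule your $\eta_h$ equals the sum of cross-integrals $\int_{(h-1)/n}^{h/n}(M_t-M_{(h-1)/n})\,dN_t+\int_{(h-1)/n}^{h/n}(N_t-N_{(h-1)/n})\,dM_t$ that the paper writes as $\xi_{n,h}/\sqrt n$---but they diverge in how the Bernstein-type control is obtained and how the tail bound is closed. The paper applies the sharp Burkholder--Davis--Gundy inequality of Barlow--Yor to the stochastic-integral representation to bound $\expectation[|\xi_{n,h}|^k\mid\mcl F_{(h-1)/n}]$ by $C^k k^k L^k n^{-k/2}$, converts this via Stirling to the form $\tfrac{k!}{2}a^{k-2}b^2$, and then invokes Pinelis's martingale Bernstein inequality. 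You instead extract conditional sub-Gaussianity of the increments directly from the exponential supermartingale (and your constant is exact: Gaussian averaging of the MGF bound gives $\expectation[e^{s(\Delta^M_h)^2}\mid\mcl F_{(h-1)/n}]\le(1-2sL/n)^{-1/2}$ for $0\le s<n/(2L)$, which is $\sqrt2$ at $s=n/(4L)$), upgrade this to a sub-exponential MGF bound for the centred $\eta_h$, and finish by elementary Chernoff with iterated conditioning. Your argument is more self-contained, avoiding both the sharp BDG constant and the Pinelis lemma; the paper's route is more modular and plugs directly into off-the-shelf martingale concentration. Both use the restriction $x\le\theta\sqrt n$ in the same essential way, trading the linear exponent of the large-deviation regime for a quadratic one at the cost of a $\theta$-dependent constant.
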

\begin{rmk}\label{rmk:concentrate}
Similar estimates to Lemma \ref{rc:concentrate} have already been obtained in the literature (see e.g.~\cite[Lemma 3]{FLY2012}, \cite[Lemma 10]{TWZ2013} and \cite[Lemma A.1]{FFX2016}). Since we use slightly different assumptions from the existing ones, we give its proof in Appendix \ref{proof:concentrate} for the shake of completeness. 
\end{rmk}
Define the $(d+r)$-dimensional semimartingale $\bar{Z}=(\bar{Z}_t)_{t\in[0,1]}$ by $\bar{Z}_t=(Z_t^1,\dots,Z_t^d,X_t^1,\dots,X_t^r)^\top$. 
\begin{lemma}\label{lemma:rc-max}
Assume \ref{rc-bdd} and $\log (d+r)/\sqrt{n}\to0$ as $n\to\infty$. 
Then, $\|\wh{[\bar{Z},\bar{Z}]}_1^n-[\bar{Z},\bar{Z}]_1\|_{\ell_\infty}=O_p(\sqrt{\log (d+r)/n})$ as $n\to\infty$. 
\end{lemma}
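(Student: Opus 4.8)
The plan is to first localize so that all coefficients are uniformly bounded, then split $\bar Z$ into its drift and martingale parts, apply the concentration inequality of Lemma \ref{rc:concentrate} to each coordinate martingale, and close the argument with a union bound over the $O((d+r)^2)$ entries. For the localization I would use \ref{rc-bdd}: since $\lim_{\nu\to\infty}\limsup_{n\to\infty}P(\Omega_n(\nu)^c)=0$ and, on $\Omega_n(\nu)$, the increments of $\bar Z$ agree with those of the semimartingale $\bar Z(\nu)$ built from the truncated coefficients $\mu(\nu),\wt\mu(\nu),\sigma(\nu),\wt\sigma(\nu)$, while both $\wh{[\bar Z,\bar Z]}^n_1$ and $[\bar Z,\bar Z]_1$ depend on those increments only, it suffices to prove the claimed rate with $\bar Z$ replaced by $\bar Z(\nu)$ for each fixed $\nu$. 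Fixing $\nu$ and writing $C:=C_\nu$, the process $\bar Z(\nu)$ is a continuous It\^o semimartingale whose drift is bounded by $C$ in $\ell_\infty$-norm everywhere, while the bracket of each coordinate martingale, being $\int_0^\cdot c(\nu)^{ii}_s\,ds$ or $\int_0^\cdot\wt c(\nu)^{ii}_s\,ds$, is Lipschitz with constant $C$ everywhere by condition (iii) of \ref{rc-bdd}; in particular the martingale part is a genuine $L^2$-martingale and Lemma \ref{rc:concentrate} applies coordinatewise with a common constant.

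I would then write $\bar Z(\nu)=\bar Z_0+A+M$, with $A$ the drift part and $M$ the martingale part, so that $[\bar Z(\nu)^i,\bar Z(\nu)^j]_1=[M^i,M^j]_1$, and expand each product of increments $\Delta_h\bar Z(\nu)^i\,\Delta_h\bar Z(\nu)^j$ into the four bilinear terms in $\Delta_hA$ and $\Delta_hM$, where $\Delta_h$ denotes the increment over $((h-1)/n,h/n]$. The drift--drift contribution is at most $C^2/n$ uniformly in $i,j$ since $|\Delta_hA^i|\le C/n$. For a drift--martingale cross term, the Cauchy--Schwarz inequality gives $|\sum_h\Delta_hA^i\,\Delta_hM^j|\le (C/\sqrt n)\,(\sum_h(\Delta_hM^j)^2)^{1/2}$, so I need $\max_{1\le j\le d+r}\sum_h(\Delta_hM^j)^2=O_p(1)$; this follows from Lemma \ref{rc:concentrate} with $M=N=M^j$, a union bound over $j$, and $[M^j,M^j]_1\le C$, the range restriction holding because $\log(d+r)/\sqrt n\to0$ forces $\sqrt{\log(d+r)}=o(\sqrt n)$. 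Hence both cross terms are $O_p(1/\sqrt n)$ uniformly in $i,j$, which is $O_p(\sqrt{\log(d+r)/n})$ since $d+r\ge2$.

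It remains to bound the main term $\max_{1\le i,j\le d+r}|\wh{[M^i,M^j]}^n_1-[M^i,M^j]_1|$. For each fixed pair $(i,j)$, Lemma \ref{rc:concentrate} (with $\theta=1$, say) yields a constant $C'>0$ depending only on $C$ with $P(\sqrt n\,|\wh{[M^i,M^j]}^n_1-[M^i,M^j]_1|>x)\le 2e^{-C'x^2}$ for all $x\in[0,\sqrt n]$. A union bound over the $(d+r)^2$ pairs gives $P(\max_{i,j}\sqrt n\,|\wh{[M^i,M^j]}^n_1-[M^i,M^j]_1|>x)\le 2(d+r)^2e^{-C'x^2}$; choosing $x=A\sqrt{\log(d+r)}$ with $A$ large enough that $C'A^2>2$ sends the right-hand side to $0$, and the constraint $x\le\sqrt n$ holds for large $n$ because $\log(d+r)/\sqrt n\to0$. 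This yields $\max_{i,j}|\wh{[M^i,M^j]}^n_1-[M^i,M^j]_1|=O_p(\sqrt{\log(d+r)/n})$; combining the three contributions and undoing the localization completes the proof.

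The only step requiring genuine care is the interplay between the union bound --- which forces the deviation level $x$ to grow like $\sqrt{\log(d+r)}$ --- and the range restriction $x\in[0,\theta\sqrt n]$ in Lemma \ref{rc:concentrate}; this is exactly where the hypothesis $\log(d+r)/\sqrt n\to0$ enters. The same hypothesis is also needed for the auxiliary bound $\max_j\sum_h(\Delta_hM^j)^2=O_p(1)$, since a crude Markov-plus-union-bound estimate there would only give $O_p(d+r)$ and must be replaced by the concentration argument.
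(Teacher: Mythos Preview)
Your proof is correct and follows essentially the same route as the paper: localize via \ref{rc-bdd}, split $\bar Z(\nu)$ into drift $A$ plus martingale $M$, decompose $\wh{[\bar Z(\nu),\bar Z(\nu)]}_1^n-[\bar Z(\nu),\bar Z(\nu)]_1$ into the four bilinear pieces, handle the martingale--martingale term with Lemma~\ref{rc:concentrate} and a union bound over the $(d+r)^2$ entries, bound the drift--drift term by $C_\nu^2/n$, and control the cross terms via Cauchy--Schwarz together with $\max_j\wh{[M^j,M^j]}_1^n=O_p(1)$. The paper obtains this last auxiliary bound by writing $\max_j\sqrt{\wh{[M^j,M^j]}_1^n}\le\sqrt{\|\mathbb I_n\|_{\ell_\infty}}+\sqrt{C_\nu}$ and recycling the already-proved bound on $\mathbb I_n$, whereas you invoke Lemma~\ref{rc:concentrate} a second time with $M=N=M^j$; both work. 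One minor imprecision: saying $[\bar Z,\bar Z]_1$ ``depends on those increments only'' is not quite the right justification---the quadratic variation equals $\int_0^1\bar\sigma_s\bar\sigma_s^\top\,ds$, and what you really use is that $\sigma=\sigma(\nu)$ on $\Omega_n(\nu)$ (condition~(ii) of \ref{rc-bdd}) together with the local property of It\^o integrals.
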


\begin{proof}
For all $n,\nu\in\mathbb{N}$ and $t\in[0,1]$, set
\[
\bar{\mu}(\nu)_t=\begin{pmatrix}
\mu(\nu)_t\\
\wt{\mu}(\nu)_t
\end{pmatrix},\qquad
\bar{\sigma}(\nu)_t=\begin{pmatrix}
\sigma(\nu)_t\\
\wt{\sigma}(\nu)_t
\end{pmatrix}.
\]
Then we define the processes $\bar{A}(\nu)=(\bar{A}(\nu)_t)_{t\in[0,1]}$ and $\bar{M}(\nu)=(\bar{M}(\nu)_t)_{t\in[0,1]}$ by $\bar{A}(\nu)_t=\int_0^t\bar{\mu}(\nu)_sds$ and $\bar{M}(\nu)_t=\int_0^t\bar{\sigma}(\nu)_sdW_s$. By the local property of It\^o integrals (cf.~pages 17--18 of \cite{Nualart2006}), we have $\bar{Z}=\bar{Z}(\nu):=\bar{A}(\nu)+\bar{M}(\nu)$ on $\Omega_n(\nu)$. Hence, for every $L>0$, it holds that
\begin{align*}
&P\lpa\|\wh{[\bar{Z},\bar{Z}]}_1^n-[\bar{Z},\bar{Z}]_1\|_{\ell_\infty}>L\sqrt{\log (d+r)/n}\rpa\\
&\leq P\lpa\|\wh{[\bar{Z}(\nu),\bar{Z}(\nu)]}_1^n-[\bar{Z}(\nu),\bar{Z}(\nu)]_1\|_{\ell_\infty}>L\sqrt{\log (d+r)/n}\rpa
+P(\Omega_n(\nu)^c).
\end{align*}
Therefore, the proof is completed once we show that
\[
\lim_{L\to\infty}\limsup_{n\to\infty}P\lpa\|\wh{[\bar{Z}(\nu),\bar{Z}(\nu)]}_1^n-[\bar{Z}(\nu),\bar{Z}(\nu)]_1\|_{\ell_\infty}>L\sqrt{\log (d+r)/n}\rpa=0
\]
for any fixed $\nu>0$. 
We decompose the target quantity as
\begin{align*}
&\wh{[\bar{Z}(\nu),\bar{Z}(\nu)]}_1^n-[\bar{Z}(\nu),\bar{Z}(\nu)]_1\\
&=(\wh{[\bar{M}(\nu),\bar{M}(\nu)]}_1^n-[\bar{M}(\nu),\bar{M}(\nu)]_1)
+\wh{[\bar{A}(\nu),\bar{A}(\nu)]}_1^n
+\wh{[\bar{A}(\nu),\bar{M}(\nu)]}_1^n
+\wh{[\bar{M}(\nu),\bar{A}(\nu)]}_1^n\\
&=:\mathbb{I}_n+\mathbb{II}_n+\mathbb{III}_n+\mathbb{IV}_n.
\end{align*}
First we consider $\mathbb{I}_n$. Since we have $|[\bar{M}(\nu)^i,\bar{M}(\nu)^i]_t-[\bar{M}(\nu)^i,\bar{M}(\nu)^i]_s|\leq C_\nu|t-s|$ for all $s,t\in[0,1]$ and $i\in\{1,\dots,d+r\}$ by \ref{rc-bdd}, by Lemma \ref{rc:concentrate} there is a constant $C>0$ such that
\[
\max_{1\leq i,j\leq d+r}P\left(\sqrt{n}\left|\mathbb{I}_n^{ij}\right|>x\right)\leq2e^{-Cx^2}
\]
for all $n\in\mathbb{N}$ and $x\in[0,\sqrt{n}]$. Therefore, for every $L\in[0,\sqrt{n/\log (d+r)}]$ we obtain
\begin{align*}
P\left(\left\|\mathbb{I}_n\right\|_{\ell_\infty}>L\sqrt{\frac{\log (d+r)}{n}}\right)
&\leq \sum_{i,j=1}^{d+r}P\left(\sqrt{n}\left|\mathbb{I}_n^{ij}\right|>L\sqrt{\log (d+r)}\right)
\leq 2(d+r)^{2-CL^2}.
\end{align*}
Hence, noting the assumption $\sqrt{n}/\log (d+r)\to\infty$, we conclude that
\begin{align*}
\lim_{L\to\infty}\limsup_{n\to\infty}P\left(\left\|\mathbb{I}_n\right\|_{\ell_\infty}>L\sqrt{\frac{\log (d+r)}{n}}\right)=0.
\end{align*}
Next, by \ref{rc-bdd} we have $\|\mathbb{II}_n\|_{\ell_\infty}\leq C_\nu^2/n$. So we obtain $\|\mathbb{II}_n\|_{\ell_\infty}=O(n^{-1})=O(\sqrt{\log (d+r)/n})$. 
Third, we consider $\mathbb{III}_n$. By the Schwarz inequality we have 
\[
\|\mathbb{III}_n\|_{\ell_\infty}\leq\sqrt{\|\mathbb{II}_n\|_{\ell_\infty}}\max_{1\leq j\leq d+r}\sqrt{\wh{[\bar{M}(\nu)^j,\bar{M}(\nu)^j]}_1^n}.
\] 
From the above result we have $\sqrt{\|\mathbb{II}_n\|_{\ell_\infty}}=O(1/\sqrt{n})$. Meanwhile, using the inequality $\sqrt{x}\leq\sqrt{|x-y|}+\sqrt{y}$ holding for all $x,y\geq0$, we have
\[
\max_{1\leq j\leq d+r}\sqrt{\wh{[\bar{M}(\nu)^j,\bar{M}(\nu)^j]}_1^n}
\leq\sqrt{\|\mathbb{I}_n\|_{\ell_\infty}}+\max_{1\leq j\leq d+r}\sqrt{[\bar{M}(\nu)^j,\bar{M}(\nu)^j]_1}
\leq\sqrt{\|\mathbb{I}_n\|_{\ell_\infty}}+\sqrt{C_\nu}.
\]
Hence the above result yields $\max_{1\leq j\leq d+r}\sqrt{\wh{[\bar{M}(\nu)^j,\bar{M}(\nu)^j]}_1^n}=O_p(1)$. Thus, we conclude that $\|\mathbb{III}_n\|_{\ell_\infty}=O_p(1/\sqrt{n})=O_p(\sqrt{\log (d+r)/n})$. 
Finally, since $\|\mathbb{IV}_n\|_{\ell_\infty}=\|\mathbb{III}_n\|_{\ell_\infty}$, we complete the proof.
\end{proof}

\begin{proof}[Proof of Theorem \ref{rc:rate}]
In view of Propositions \ref{factor:rate}--\ref{factor:inverse}, it suffices to check \ref{f-ass:est}. Noting that $\hat{\Sigma}_{YX,n}-\beta\hat{\Sigma}_{X,n}=\wh{[Z,X]}_1^n$ and $\breve{\Sigma}_{Z,n}=\wh{[Z,Z]}_1^n$, \ref{f-ass:est} immediately follows from Lemma \ref{lemma:rc-max}. 
\end{proof}

\subsection{Proof of Theorem \ref{rc:AMN}}

Our proof relies on the following ``high-dimensional'' asymptotic mixed normality of the realized covariance matrix:
\begin{lemma}[\cite{Koike2018sk}, Theorem 4.2(b)]\label{lemma:rc-AMN}
Assume \ref{rc-mal}. 
For every $n$, let $\bs{X}_n$ be an $m\times d^2$ random matrix and $\Upsilon_n$ be an $m\times d^2$ non-random matrix such that $\opnorm{\Upsilon_n}_\infty\geq1$, where $m=m_n$ possibly depends on $n$. 
Define $\Xi_n:=\Upsilon_n\circ\bs{X}_n$. 
Suppose that, for all $n,\nu\in\mathbb{N}$, we have $\bs{X}_n(\nu)\in\mathbb{D}_{2,\infty}(\mathbb{R}^{m\times d^2})$ such that $\bs{X}_n=\bs{X}_n(\nu)$ on $\Omega_n(\nu)$ and
\begin{align}
&\lim_{b\downarrow0}\limsup_{n\to\infty}P(\min\diag(\Xi_n(\nu)\mathfrak{C}_n(\nu)\Xi_n(\nu)^\top)<b)=0,\label{rc-diag}\\
&\sup_{n\in\mathbb{N}}
\max_{1\leq i\leq m}\max_{1\leq j\leq d^2}\left(
\|\bs{X}_n(\nu)^{ij}\|_\infty
+\sup_{0\leq t\leq 1}\|D_t\bs{X}_n(\nu)^{ij}\|_{\infty,\ell_2}
+\sup_{0\leq s,t\leq 1}\|D_{s,t}\bs{X}_n(\nu)^{ij}\|_{\infty,\ell_2}
\right)
<\infty,\label{eq-X}
\end{align}
where $\Xi_n(\nu):=\Upsilon_n\circ\bs{X}_n(\nu)$. Suppose also $\opnorm{\Upsilon_n}_\infty^5(\log dm)^{\frac{13}{2}}\to0$ as $n\to\infty$. 
Then we have
\begin{equation}\label{rc:result}
\sup_{y\in\mathbb{R}^m}\left|P\left(\Xi_n\vectorize\left(\wh{[Z,Z]}_1^n-[Z,Z]_1\right)\leq y\right)-P(\Xi_n\mathfrak{C}_n^{1/2}\zeta_n\leq y)\right|\to0
\end{equation}
as $n\to\infty$.
\end{lemma}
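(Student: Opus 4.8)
The plan is to deduce the lemma from \cite[Theorem 4.2(b)]{Koike2018sk}, of which it is a restatement adapted to the notation of the present paper; the substantive content — a Stein--Malliavin bound for the realized covariation of $Z$ combined with a Chernozhukov--Chetverikov--Kato type Gaussian comparison over hyperrectangles, valid in the present non-ergodic situation where the limiting covariance $\mf{C}_n$ is random and correlated with the data — is established in that reference, so the task here reduces to checking that the hypotheses it requires hold under \ref{rc-mal} and the stated growth condition on $\opnorm{\Upsilon_n}_\infty$.

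I would proceed in three steps. \emph{Localization.} \ref{rc-bdd} supplies exactly the apparatus used in \cite{Koike2018sk}: events $\Omega_n(\nu)$ with $\lim_\nu\limsup_n P(\Omega_n(\nu)^c)=0$ on which $Z$ agrees with the process $Z(\nu)_t:=Z_0+\int_0^t\mu(\nu)_sds+\int_0^t\sigma(\nu)_sdW_s$ driven by globally bounded, Malliavin-smooth coefficients; since $\Xi_n=\Xi_n(\nu)$ on $\Omega_n(\nu)$ as well, it suffices to prove \eqref{rc:result} with $Z$ replaced by $Z(\nu)$ for each fixed $\nu$ and then let $\nu\to\infty$, noting $[Z,Z]_1=\int_0^1 c(\nu)_tdt$ there. \emph{Malliavin regularity.} One writes $\wh{[Z(\nu),Z(\nu)]}_1^n-\int_0^1 c(\nu)_tdt$ as a sum of martingale differences $\sum_{h=1}^n\xi_{n,h}$ plus a drift term of order $1/n$ and drift-times-martingale cross terms of order $n^{-1/2}$ in the relevant uniform norm; differentiating the increments $(Z(\nu)_{h/n}-Z(\nu)_{(h-1)/n})^{\otimes2}$ once and twice in the Malliavin sense and applying the chain rule, \eqref{eq-mu}--\eqref{eq-sigma}, and Burkholder--Davis--Gundy yields the uniform bounds on $\|\xi_{n,h}^{ij}\|_\infty$, $\sup_t\|D_t\xi_{n,h}^{ij}\|_{\infty,\ell_2}$ and $\sup_{s,t}\|D_{s,t}\xi_{n,h}^{ij}\|_{\infty,\ell_2}$ (uniformly in $i,j,h$) that \cite[Theorem 4.2]{Koike2018sk} requires — this is where the $\mathbb{D}_{2,\infty}$-regularity of $\sigma(\nu)$ enters. \emph{Non-degeneracy and dimension.} Condition \eqref{rc-diag} is the non-degeneracy hypothesis on $\diag(\Xi_n(\nu)\mf{C}_n(\nu)\Xi_n(\nu)^\top)$ needed for the Gaussian comparison, consistent with \eqref{Theta-bdd}, $\opnorm{\Upsilon_n}_\infty\geq1$, and the boundedness of $\bs{X}_n(\nu)$ from \eqref{eq-X}; and $\opnorm{\Upsilon_n}_\infty^5(\log dm)^{13/2}\to0$ is exactly the growth condition under which \cite[Theorem 4.2(b)]{Koike2018sk} yields \eqref{rc:result}, the Hadamard structure $\Xi_n=\Upsilon_n\circ\bs{X}_n$ being the device that lets the error bound depend on $\opnorm{\Upsilon_n}_\infty$ alone while $\bs{X}_n$ carries the fluctuating weights.

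The only genuine obstacle is the middle step: faithfully translating the hypotheses on $\mu(\nu),\sigma(\nu)$ into the conditions \cite{Koike2018sk} imposes on the martingale-difference array $\{\xi_{n,h}\}$ representing $\wh{[Z(\nu),Z(\nu)]}_1^n-[Z(\nu),Z(\nu)]_1$. This is a careful but routine Malliavin computation — bounding the iterated stochastic integrals arising from $D$ and $D^2$ of the increments via \eqref{eq-mu}--\eqref{eq-sigma}, and checking that the drift and cross terms remain negligible after multiplication by $\Xi_n$ (using the growth condition and boundedness of $\bs{X}_n$) — after which the conclusion is immediate from the cited theorem. The remainder is notational matching with \cite{Koike2018sk}.
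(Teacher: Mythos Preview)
Your proposal is correct and aligned with the paper: the lemma is stated in the paper purely as a citation of \cite[Theorem 4.2(b)]{Koike2018sk} with no proof given, so the paper's own ``proof'' is simply the attribution. You go somewhat further than the paper by sketching the localization and Malliavin-regularity verification needed to see that the present hypotheses feed into that theorem, which is a reasonable elaboration but not something the paper itself undertakes.
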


To apply Lemma \ref{lemma:rc-AMN} to the present setting, we prove some auxiliary results. 
\begin{lemma}\label{kronecker-hadamard}
Let $A_1,A_2,B_1,B_2\in\mathbb{R}^{d\times d}$. Then $(A_1\otimes A_2)\circ(B_1\otimes B_2)=(A_1\circ B_1)\otimes(A_2\circ B_2)$.  
\end{lemma}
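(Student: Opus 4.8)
The plan is to prove the identity by a direct entrywise comparison, exploiting the block structure of the Kronecker product. Recall that for $d\times d$ matrices $C,D$, the Kronecker product $C\otimes D$ is the $d^2\times d^2$ matrix whose $(p,q)$ entry, with $p=(i_1-1)d+i_2$ and $q=(j_1-1)d+j_2$ for $i_1,i_2,j_1,j_2\in\{1,\dots,d\}$, is given by $(C\otimes D)^{pq}=C^{i_1j_1}D^{i_2j_2}$. (This is the convention consistent with the identity $\vectorize(ABC)=(C^\top\otimes A)\vectorize(B)$ used elsewhere in the paper; the particular choice of convention is immaterial here, as long as it is applied uniformly to both sides.)

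First I would fix $i_1,i_2,j_1,j_2\in\{1,\dots,d\}$ and set $p=(i_1-1)d+i_2$, $q=(j_1-1)d+j_2$. Applying the block description twice gives $(A_1\otimes A_2)^{pq}=A_1^{i_1j_1}A_2^{i_2j_2}$ and $(B_1\otimes B_2)^{pq}=B_1^{i_1j_1}B_2^{i_2j_2}$. Hence, by the definition of the Hadamard product and a reordering of the scalar factors,
\[
\big((A_1\otimes A_2)\circ(B_1\otimes B_2)\big)^{pq}
=\big(A_1^{i_1j_1}B_1^{i_1j_1}\big)\big(A_2^{i_2j_2}B_2^{i_2j_2}\big)
=(A_1\circ B_1)^{i_1j_1}(A_2\circ B_2)^{i_2j_2}.
\]
By the block description of the Kronecker product once more, the right-hand side is exactly the $(p,q)$ entry of $(A_1\circ B_1)\otimes(A_2\circ B_2)$. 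Since $(p,q)$ ranges over all of $\{1,\dots,d^2\}^2$ as $(i_1,i_2,j_1,j_2)$ varies, the two $d^2\times d^2$ matrices agree in every entry, which is the assertion.

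There is no genuine obstacle in this argument: the only point requiring a little care is to fix a single indexing convention for the rows and columns of a Kronecker product and to use it consistently on both sides, after which the identity reduces to the commutativity and associativity of multiplication of real numbers.
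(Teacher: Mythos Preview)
Your proof is correct and is precisely the ``straightforward computation'' the paper invokes: an entrywise comparison using the block indexing of the Kronecker product and the commutativity of scalar multiplication. There is nothing to add.
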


\begin{proof}
This follows from a straightforward computation. 
%
\end{proof}

\begin{lemma}\label{S-deriv}
Assume \ref{rc-mal}. Then, for any $n,\nu\in\mathbb{N}$ and $t\in[0,1]$, $c(\nu)_t\in\mathbb{D}_{2,\infty}(\mathbb{R}^{d\times d})$, $\mf{C}_n(\nu)\in\mathbb{D}_{2,\infty}(\mathbb{R}^{d^2\times d^2})$ and
\begin{align*}
&\sup_{n\in\mathbb{N}}\max_{1\leq i,j\leq d}\left(
\sup_{0\leq t,u\leq 1}\|D_sc(\nu)_t^{ij}\|_{\infty,\ell_2}
+\sup_{0\leq t,u,v\leq 1}\|D_{u,v}c(\nu)_t^{ij}\|_{\infty,\ell_2}
\right)<\infty,\\
&\sup_{n\in\mathbb{N}}\max_{1\leq k,l\leq d^2}\left(
\sup_{0\leq u\leq 1}\|D_u\mf{C}_n(\nu)^{kl}\|_{\infty,\ell_2}
+\sup_{0\leq u,v\leq 1}\|D_{u,v}\mf{C}_n(\nu)^{kl}\|_{\infty,\ell_2}
\right)<\infty.
\end{align*}
\end{lemma}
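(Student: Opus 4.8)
The plan is to propagate Malliavin differentiability and the uniform Sobolev bounds through the algebraic operations that build $c(\nu)_t$ and $\mathfrak{C}_n(\nu)$ out of the primitive object $\sigma(\nu)_t$, using only the chain rule, the Leibniz rule for Malliavin derivatives, and the H\"older inequality in the form $\|FG\|_p \le \|F\|_{2p}\|G\|_{2p}$. First I would treat $c(\nu)_t = \sigma(\nu)_t \sigma(\nu)_t^\top$. Since $\sigma(\nu)_t \in \mathbb{D}_{2,\infty}(\mathbb{R}^{d\times d'})$ by \ref{rc-mal}, each entry $c(\nu)_t^{ij} = \sum_{a=1}^{d'} \sigma(\nu)_t^{ia}\sigma(\nu)_t^{ja}$ is a finite sum of products of two $\mathbb{D}_{2,\infty}$ variables, hence lies in $\mathbb{D}_{2,\infty}$. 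Applying the Leibniz rule, $D_u c(\nu)_t^{ij} = \sum_a (D_u\sigma(\nu)_t^{ia})\sigma(\nu)_t^{ja} + \sigma(\nu)_t^{ia}(D_u\sigma(\nu)_t^{ja})$ and similarly for the second derivative one gets a sum of terms each of which is a product of a $\le 2$-nd order derivative of $\sigma(\nu)$ with a $\le 2$-nd order derivative of $\sigma(\nu)$. Taking $\|\cdot\|_{\infty,\ell_2}$ and using the triangle inequality together with Cauchy--Schwarz in the $d'$ summation and in the tensor index, each such term is bounded by a product of two quantities of the form $\sup_{s,t}\|D_s\sigma(\nu)_t^{i\cdot}\|_{\infty,\ell_2}$ or $\sup_{s,t,u}\|D_{s,t}\sigma(\nu)_u^{i\cdot}\|_{\infty,\ell_2}$ or $\sup_t\max_i\|c(\nu)_t^{ij}\|_\infty$; all of these are bounded uniformly in $n$ by \eqref{eq-sigma} and by \ref{rc-bdd}(iii). (One needs $\|\sigma(\nu)_t^{i\cdot}\|_{\ell_2}$ to be uniformly bounded, which follows from $\|c(\nu)_t\|_{\ell_\infty}\le C_\nu$ via $\sigma(\nu)_t^{i\cdot}\cdot\sigma(\nu)_t^{i\cdot} = c(\nu)_t^{ii}$.) This proves the first display.

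Next I would handle $\mathfrak{C}_n(\nu)$. By definition $\mathfrak{C}_n(\nu)^{(i-1)d+j,(k-1)d+l}$ is a sum over $h=1,\dots,n$ of two products of the form $\big(n\!\int_{(h-1)/n}^{h/n} c(\nu)_s^{ik}\,ds\big)\big(n\!\int_{(h-1)/n}^{h/n} c(\nu)_s^{jl}\,ds\big)\cdot n^{-1}$, i.e.\ $\mathfrak{C}_n(\nu)^{kl}$ is a finite average (over $n$ blocks, each weighted by $1/n$) of products of pairs of block-averages of entries of $c(\nu)$. Since Malliavin differentiation commutes with Lebesgue integration in the time variable — $D_u\int_{(h-1)/n}^{h/n} c(\nu)_s^{ik}\,ds = \int_{(h-1)/n}^{h/n} D_u c(\nu)_s^{ik}\,ds$, with the corresponding $\|\cdot\|_{\infty,\ell_2}$ dominated by $\sup_s\|D_u c(\nu)_s^{ik}\|_{\infty,\ell_2}$ (Minkowski/Jensen over $[(h-1)/n,h/n]$ times the block length $1/n$) — each block-average of $c(\nu)$ inherits membership in $\mathbb{D}_{2,\infty}$ with $\|\cdot\|_{\infty}$, $\sup_u\|D_u\cdot\|_{\infty,\ell_2}$, $\sup_{u,v}\|D_{u,v}\cdot\|_{\infty,\ell_2}$ all bounded uniformly in $n$ and $h$ by the bounds just established for $c(\nu)$ together with $\|c(\nu)_s^{ik}\|_\infty\le C_\nu$. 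Applying the Leibniz rule once more to the product of two such block-averages, and then summing $n$ terms each carrying a factor $1/n$, the uniform bound on $\|\mathfrak{C}_n(\nu)^{kl}\|_\infty$, $\sup_u\|D_u\mathfrak{C}_n(\nu)^{kl}\|_{\infty,\ell_2}$ and $\sup_{u,v}\|D_{u,v}\mathfrak{C}_n(\nu)^{kl}\|_{\infty,\ell_2}$ follows, because the number of blocks cancels against the $1/n$ weights. This gives the second display and completes the proof.

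The routine bookkeeping — counting the Leibniz terms at second order, keeping track of the $\ell_2$ norm over the tensor indices $\{1,\dots,d'\}^k$, and verifying the interchange of $D$ with the time integral — is the bulk of the work but is entirely standard. The only point requiring a little care, and the step I expect to be the main (mild) obstacle, is making sure all constants are genuinely independent of $n$: the block-average over $[(h-1)/n,h/n]$ of $c(\nu)$ must be controlled uniformly in the block index $h$ and the refinement level $n$, which is where one crucially uses that the bounds in \eqref{eq-mu}, \eqref{eq-sigma} and \ref{rc-bdd}(iii) are $\sup_{n}\sup_{0\le s,t\le1}$-type bounds rather than bounds depending on $n$, and that $\mathfrak{C}_n(\nu)$ is an average (total weight $1$) rather than a sum. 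Once that is observed, no compactness or limiting argument is needed — everything is a deterministic chain of $L^p$ and $\ell_2$ estimates.
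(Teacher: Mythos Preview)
Your argument is correct and complete. The paper's own proof consists of a single sentence citing Lemmas B.11--B.12 in \cite{Koike2018sk}, which are general results on the Malliavin differentiability and derivative bounds for products and time-integrals of processes with bounded derivatives --- exactly the two operations you unpack by hand. In other words, you have essentially reproduced the content of those cited lemmas in the specific setting needed here: the Leibniz rule together with Cauchy--Schwarz in the $d'$-summation handles $c(\nu)_t=\sigma(\nu)_t\sigma(\nu)_t^\top$, and the commutation of $D$ with the time integral (Remark 15.87 in \cite{Janson1997}, invoked elsewhere in the paper) plus a second application of the Leibniz rule handles $\mathfrak{C}_n(\nu)$. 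Your observation that the block weights total $1$ so that the bounds are uniform in $n$ is the right reason the second display holds. The only cosmetic point is that you might cite the interchange of $D$ with Lebesgue integration explicitly (e.g.\ via the closedness of $D$, as in \cite{Janson1997} or \cite{Nualart2006}), since that is the one step that is not purely algebraic; but this is standard and poses no difficulty.
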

\begin{proof}
This directly follows from Lemmas B.11--B.12 in \cite{Koike2018sk}. 
\end{proof}

\begin{lemma}\label{Theta-deriv}
Assume \ref{rc-mal}--\ref{ass:adjacent}. For any $n,\nu\in\mathbb{N}$, $\Theta_Z(\nu)\in\mathbb{D}_{2,\infty}(\mathbb{R}^{d\times d})$ and
\[
\sup_{n\in\mathbb{N}}\max_{1\leq i,j\leq d}\left(
\sup_{0\leq t\leq 1}\|D_t\Theta_Z(\nu)^{ij}\|_{\infty,\ell_2}
+\sup_{0\leq s,t\leq 1}\|D_{s,t}\Theta_Z(\nu)^{ij}\|_{\infty,\ell_2}
\right)<\infty.
\]
\end{lemma}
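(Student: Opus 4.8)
The plan is to reduce the statement to two facts about $\Sigma_Z(\nu)=\int_0^1c(\nu)_t\,dt$ --- that it is twice Malliavin differentiable with uniformly bounded derivatives, and that $\Lambda_{\min}(\Sigma_Z(\nu))$ is bounded away from $0$ deterministically --- and then to differentiate a localized matrix-inversion map. For the first fact, Lemma \ref{S-deriv} gives $c(\nu)_t\in\mathbb{D}_{2,\infty}(\mathbb{R}^{d\times d})$ with its zeroth-, first-, and second-order Malliavin norms bounded uniformly in $n$ and in $t\in[0,1]$; since these bounds are uniform in $t$, the Malliavin derivative commutes with the Lebesgue integral over $t$ (a routine closability/Fubini argument), so $\Sigma_Z(\nu)\in\mathbb{D}_{2,\infty}(\mathbb{R}^{d\times d})$ with $D_s\Sigma_Z(\nu)^{ij}=\int_0^1D_sc(\nu)_u^{ij}\,du$ and $D_{s,t}\Sigma_Z(\nu)^{ij}=\int_0^1D_{s,t}c(\nu)_u^{ij}\,du$, and consequently $\sup_n\max_{i,j}\sup_s\|D_s\Sigma_Z(\nu)^{ij}\|_{\infty,\ell_2}<\infty$ and $\sup_n\max_{i,j}\sup_{s,t}\|D_{s,t}\Sigma_Z(\nu)^{ij}\|_{\infty,\ell_2}<\infty$.

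The crucial second step is an a.s.\ bound on $\opnorm{\Theta_Z(\nu)}_\infty$ that is uniform in $n$. By \ref{rc-mal} we have $M:=\sup_n\max_{1\leq i\leq d}\|\Theta_Z(\nu)^{ii}\|_\infty<\infty$, and since $\Theta_Z(\nu)\in\mcl{S}_d^{++}$ the inequality $|\Theta_Z(\nu)^{ij}|^2\leq\Theta_Z(\nu)^{ii}\Theta_Z(\nu)^{jj}\leq M^2$ shows that every entry of $\Theta_Z(\nu)$ is bounded by $M$. By \ref{ass:adjacent} --- taking, as permitted by the freedom in \ref{rc-bdd} and the footnote to \ref{ass:adjacent}, the localizing sequence so that the sparsity pattern of $\Theta_Z(\nu)$ is governed by $Q_Z$ --- each row of $\Theta_Z(\nu)$ has at most $\mf{d}(Q_Z)+1=O(1)$ nonzero entries, whence $\sup_n\opnorm{\Theta_Z(\nu)}_\infty\leq(1+\sup_n\mf{d}(Q_Z))M<\infty$. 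Using the standard inequality $\opnorm{A}_2\leq\opnorm{A}_\infty$ valid for symmetric $A$, this yields $\Lambda_{\min}(\Sigma_Z(\nu))=1/\opnorm{\Theta_Z(\nu)}_2\geq\delta_\nu$ a.s.\ for some constant $\delta_\nu>0$ independent of $n$.

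Given the deterministic lower bound $\Lambda_{\min}(\Sigma_Z(\nu))\geq\delta_\nu$, choose $\psi\in C^\infty(\mcl{S}_d;\mcl{S}_d)$ agreeing with $A\mapsto A^{-1}$ on $\{A\in\mcl{S}_d:\Lambda_{\min}(A)\geq\delta_\nu\}$ and having bounded first and second partial derivatives. Since $\Theta_Z(\nu)=\psi(\Sigma_Z(\nu))$ a.s., the (iterated) Malliavin chain rule applied to $\psi$ gives $\Theta_Z(\nu)\in\mathbb{D}_{2,\infty}(\mathbb{R}^{d\times d})$ together with
\[
D_s\Theta_Z(\nu)=-\Theta_Z(\nu)\bigl(D_s\Sigma_Z(\nu)\bigr)\Theta_Z(\nu)
\]
and
\begin{multline*}
D_{s,t}\Theta_Z(\nu)=\Theta_Z(\nu)\bigl(D_s\Sigma_Z(\nu)\bigr)\Theta_Z(\nu)\bigl(D_t\Sigma_Z(\nu)\bigr)\Theta_Z(\nu)\\
{}+\Theta_Z(\nu)\bigl(D_t\Sigma_Z(\nu)\bigr)\Theta_Z(\nu)\bigl(D_s\Sigma_Z(\nu)\bigr)\Theta_Z(\nu)-\Theta_Z(\nu)\bigl(D_{s,t}\Sigma_Z(\nu)\bigr)\Theta_Z(\nu),
\end{multline*}
interpreted componentwise in the $(\mathbb{R}^{d'})^{\otimes k}$-index. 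Fixing a value of the Malliavin multi-index, so that each factor $D_\bullet\Sigma_Z(\nu)$ becomes an ordinary real matrix, every summand on the right-hand side has the form $(\Theta_Z(\nu)A_1\Theta_Z(\nu)\cdots A_p\Theta_Z(\nu))^{ij}$ with $p\leq2$; by Lemma \ref{lemma:schwarz} (or by iterating $\|AB\|_{\ell_\infty}\leq\opnorm{A}_\infty\|B\|_{\ell_\infty}$ together with Lemma \ref{lemma:ogihara}) and then summing over the multi-index, the $\ell_2$-norm of the $(i,j)$-entry is bounded by a fixed power of $\opnorm{\Theta_Z(\nu)}_\infty$ times $\sup_s\max_{k,l}\|D_s\Sigma_Z(\nu)^{kl}\|_{\infty,\ell_2}$ (respectively its second-order analogue). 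These are all finite uniformly in $n,i,j$ by the first two steps, which is the desired conclusion.

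The step I expect to be the main obstacle is the second one --- the a.s., $n$-uniform bound on $\opnorm{\Theta_Z(\nu)}_\infty$, equivalently the deterministic lower bound on $\Lambda_{\min}(\Sigma_Z(\nu))$ together with uniform control of the number of nonzero entries per row of $\Theta_Z(\nu)$. This is precisely where the boundedness built into \ref{rc-mal}, the non-randomness of the sparsity pattern in \ref{ass:adjacent} (with the attendant freedom in choosing the localizing sequence), and the elementary positive-definite-matrix inequalities of Appendix A must be combined; once this bound is available, everything else is a routine Malliavin chain-rule computation paralleling the proof of Lemma \ref{S-deriv}.
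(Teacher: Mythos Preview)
Your overall strategy --- bound $\opnorm{\Theta_Z(\nu)}_\infty$ uniformly via the diagonal bound \eqref{Theta-bdd} together with the row-sparsity from \ref{ass:adjacent}, then apply the chain rule to a localized inversion map --- is exactly the paper's approach, and your treatment of $\Sigma_Z(\nu)\in\mathbb{D}_{2,\infty}$, the $\opnorm{\Theta_Z(\nu)}_\infty$ bound, and the first-derivative estimate is correct.

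The gap is in the second-derivative bound, specifically for the cross terms $\Theta_Z(\nu)\bigl(D_t^{(b)}\Sigma_Z(\nu)\bigr)\Theta_Z(\nu)\bigl(D_s^{(a)}\Sigma_Z(\nu)\bigr)\Theta_Z(\nu)$. Your claim that Lemma~\ref{lemma:schwarz} or iterating $\|AB\|_{\ell_\infty}\leq\opnorm{A}_\infty\|B\|_{\ell_\infty}$ bounds such a five-fold product entrywise by a fixed power of $\opnorm{\Theta_Z(\nu)}_\infty$ times entrywise norms of the $D\Sigma_Z(\nu)$ factors is false as a pure matrix inequality: with $\Theta=\mathsf{E}_d$ and $A_1=A_2=\bs{1}\bs{1}^\top$ one has $\opnorm{\Theta}_\infty=1$ and $\|A_k\|_{\ell_\infty}=1$, yet $(\Theta A_1\Theta A_2\Theta)^{ij}=d$. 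The trouble is that the intermediate product $\Theta A_1$ is a full $d\times d$ matrix, and the sum over its column index has $d$ terms with no compensating sparsity.

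The paper's fix --- which uses ingredients you already have --- is to rewrite $\Theta_Z(\nu)\bigl(D_t^{(b)}\Sigma_Z(\nu)\bigr)\Theta_Z(\nu)=-D_t^{(b)}\Theta_Z(\nu)$ and observe that this matrix inherits the sparsity pattern $Q_Z$ (since $Q_Z$ is non-random, differentiating a structurally-zero entry gives zero). Applying Lemma~\ref{lemma:schwarz} with $B=D_t^{(b)}\Theta_Z(\nu)$ and $C=\Theta_Z(\nu)$, then the Schwarz inequality together with $\mf{d}(Q_Z)=O(1)$ to control $\sum_k|D_t^{(b)}\Theta_Z(\nu)^{ik}|$, and finally inserting the already-established first-derivative bound on $\|D_t\Theta_Z(\nu)^{ik}\|_{\infty,\ell_2}$, closes the argument. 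You need to assemble the pieces in this order rather than treating the five-fold product as a single block.
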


\begin{proof}
First, by Remark 15.87 in \cite{Janson1997} and Lemma \ref{S-deriv}, $\Sigma_Z(\nu)\in\mathbb{D}_{2,\infty}(\mathbb{R}^{d\times d})$ and $D^k\Sigma_Z(\nu)=\int_0^1D^kc(\nu)_sds$ for $k=1,2$. In particular, we have
\begin{equation}\label{eq-IC}
\sup_{n\in\mathbb{N}}\max_{1\leq i,j\leq d}\left(
\sup_{0\leq t\leq 1}\|D_t\Sigma_Z(\nu)^{ij}\|_{\infty,\ell_2}
+\sup_{0\leq s,t\leq 1}\|D_{s,t}\Sigma_Z(\nu)^{ij}\|_{\infty,\ell_2}
\right)<\infty
\end{equation}
by Lemma \ref{S-deriv} and \eqref{eq-sigma}. 
Next, by Theorem 15.78 in \cite{Janson1997} and Theorem 4 in \cite[Chapter 8]{MN1988}, we have $\Theta_Z(\nu)\in\mathbb{D}_{2,\infty}(\mathbb{R}^{d\times d})$ with 
$
D_s^{(a)}\Theta_Z(\nu)=-\Theta_Z(\nu)D_s^{(a)}\Sigma_Z(\nu)\Theta_Z(\nu)
$ 
and
\begin{multline*}
D_{s,t}^{(a,b)}\Theta_Z(\nu)=\Theta_Z(\nu)D_t^{(b)}\Sigma_Z(\nu)\Theta_Z(\nu)D_s^{(a)}\Sigma_Z(\nu)\Theta_Z(\nu)\\
-\Theta_Z(\nu)D_{s,t}^{(a,b)}\Sigma_Z(\nu)\Theta_Z(\nu)
+\Theta_Z(\nu)D_s^{(a)}\Sigma_Z(\nu)\Theta_Z(\nu)D_t^{(b)}\Sigma_Z(\nu)\Theta_Z(\nu)
\end{multline*}
for all $s,t\in[0,1]$ and $a,b\in\{1,\dots,d'\}$. 
Therefore, by Lemma \ref{lemma:schwarz} we have
\[
\|D_s\Theta_Z(\nu)^{ij}\|_{\ell_2}\leq\opnorm{\Theta_Z(\nu)}_\infty^2\max_{1\leq k,l\leq d}\|D_s\Sigma_Z(\nu)^{kl}\|_{\ell_2}
\]
for all $i,j=1,\dots,d$. Then, noting that $Q_{Z}$ is non-random, we have $(1_{\{\Theta_Z(\nu)^{ij}\neq0\}})_{1\leq i,j\leq d}=Q_Z$ by assumption. So we obtain $\opnorm{\Theta_Z(\nu)}_\infty\leq\opnorm{Q_Z}_\infty\|\Theta_Z(\nu)\|_{\ell_\infty}$. Hence, \eqref{eq-IC}, \ref{ass:adjacent} and \eqref{Theta-bdd} yield $\sup_{n\in\mathbb{N}}\max_{1\leq i,j\leq d}\sup_{0\leq t\leq 1}\|D_t\Theta_Z(\nu)^{ij}\|_{\infty,\ell_2}<\infty$. 
In the meantime, by Lemma \ref{lemma:schwarz} we also have
\begin{align*}
&\|D_{s,t}\Theta_Z(\nu)^{ij}\|_{\ell_2}\\
&\leq2\sqrt{\sum_{b=1}^{d'}\lpa\sum_{k=1}^d\labs\lpa\Theta_Z(\nu)D_t^{(b)}\Sigma_Z(\nu)\Theta_Z(\nu)\rpa^{ik}\rabs\rpa^2}\opnorm{\Theta_Z(\nu)}_\infty\max_{1\leq k,l\leq d}\|D_s\Sigma_Z(\nu)^{kl}\|_{\ell_2}\\
&\quad+\opnorm{\Theta_Z(\nu)}_\infty^2\max_{1\leq k,l\leq d}\|D_{s,t}\Sigma_Z(\nu)^{kl}\|_{\ell_2}\\
&=2\sqrt{\sum_{b=1}^{d'}\lpa\sum_{k=1}^d\labs D_t^{(b)}\Theta_Z(\nu)^{ik}\rabs\rpa^2}\opnorm{\Theta_Z(\nu)}_\infty\max_{1\leq k,l\leq d}\|D_s\Sigma_Z(\nu)^{kl}\|_{\ell_2}\\
&\quad+\opnorm{\Theta_Z(\nu)}_\infty^2\max_{1\leq k,l\leq d}\|D_{s,t}\Sigma_Z(\nu)^{kl}\|_{\ell_2}.
\end{align*}
Now, since $Q_{Z}$ is non-random, we have $D_t^{(b)}\Theta_Z(\nu)=Q_Z\circ D_t^{(b)}\Theta_Z(\nu)$. So, the Schwarz inequality yields
\begin{align*}
&\|D_{s,t}\Theta_Z(\nu)^{ij}\|_{\ell_2}\\
&\leq2\sqrt{\opnorm{Q_Z}_\infty\sum_{k=1}^d\sum_{b=1}^{d'}\labs D_t^{(b)}\Theta_Z(\nu)^{ik}\rabs^2}\opnorm{\Theta_Z(\nu)}_\infty\max_{1\leq k,l\leq d}\|D_s\Sigma_Z(\nu)^{kl}\|_{\ell_2}\\
&\quad+\opnorm{\Theta_Z(\nu)}_\infty^2\max_{1\leq k,l\leq d}\|D_{s,t}\Sigma_Z(\nu)^{kl}\|_{\ell_2}\\
&\leq2\opnorm{Q_Z}_\infty\lpa\max_{1\leq k,l\leq d}\|D_t\Theta_Z(\nu)^{kl}\|_{\ell_2}\rpa\opnorm{\Theta_Z(\nu)}_\infty\max_{1\leq k,l\leq d}\|D_s\Sigma_Z(\nu)^{kl}\|_{\ell_2}\\
&\quad+\opnorm{\Theta_Z(\nu)}_\infty^2\max_{1\leq k,l\leq d}\|D_{s,t}\Sigma_Z(\nu)^{kl}\|_{\ell_2}.
\end{align*}
Consequently, we conclude $\sup_{n\in\mathbb{N}}\max_{1\leq i,j\leq d}\sup_{0\leq s,t\leq 1}\|D_{s,t}\Theta_Z(\nu)^{ij}\|_{\infty,\ell_2}<\infty$ by \ref{ass:adjacent}, \eqref{Theta-bdd} and the results proved above.
\end{proof}

\begin{lemma}\label{acov-deriv}
Assume \ref{rc-mal}--\ref{ass:adjacent}. For any $n,\nu\in\mathbb{N}$, $\Theta_Z(\nu)\otimes\Theta_Z(\nu),\mf{V}_n(\nu)\in\mathbb{D}_{2,\infty}(\mathbb{R}^{d^2\times d^2})$ and
\begin{align}
&\sup_{n\in\mathbb{N}}\max_{1\leq i,j\leq d^2}\left(
\sup_{0\leq t\leq 1}\|D_t\{\Theta_Z(\nu)\otimes\Theta_Z(\nu)\}^{ij}\|_{\infty,\ell_2}
+\sup_{0\leq s,t\leq 1}\|D_{s,t}\{\Theta_Z(\nu)\otimes\Theta_Z(\nu)\}^{ij}\|_{\infty,\ell_2}
\right)<\infty,\label{eq-Theta2}\\
&\sup_{n\in\mathbb{N}}\max_{1\leq i,j\leq d^2}\left(
\sup_{0\leq t\leq 1}\|D_t\mf{V}_n(\nu)^{ij}\|_{\infty,\ell_2}
+\sup_{0\leq s,t\leq 1}\|D_{s,t}\mf{V}_n(\nu)^{ij}\|_{\infty,\ell_2}
\right)<\infty.\label{eq-V}
\end{align}
\end{lemma}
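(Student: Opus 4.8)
The plan is to prove the two assertions \eqref{eq-Theta2} and \eqref{eq-V} in turn, in each case obtaining membership in $\mathbb{D}_{2,\infty}$ from the closure of that space under finite products and the entry-wise norm bounds from the differentiated Leibniz rule; the only delicate point is uniformity of the bounds in $n$ (equivalently in $d$).

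For \eqref{eq-Theta2}, note that $\{\Theta_Z(\nu)\otimes\Theta_Z(\nu)\}^{(i_1-1)d+i_2,(j_1-1)d+j_2}=\Theta_Z(\nu)^{i_1j_1}\Theta_Z(\nu)^{i_2j_2}$, so membership in $\mathbb{D}_{2,\infty}$ is immediate from Lemma \ref{Theta-deriv}. Applying the Leibniz rule to this product, $D_t$ and $D_{s,t}$ of the entry are finite sums of products of at most two factors, each factor being an entry of $\Theta_Z(\nu)$ or a first/second Malliavin derivative of such an entry. Since $|\Theta_Z(\nu)^{ij}|\le\sqrt{\Theta_Z(\nu)^{ii}\Theta_Z(\nu)^{jj}}$ by Lemma \ref{lemma:poincare}, the entries are uniformly bounded in $L^\infty(\Omega)$ by \eqref{Theta-bdd}; combined with the uniform bounds on $\|D_t\Theta_Z(\nu)^{ij}\|_{\infty,\ell_2}$ and $\|D_{s,t}\Theta_Z(\nu)^{ij}\|_{\infty,\ell_2}$ from Lemma \ref{Theta-deriv} and the triangle inequality for $\|\cdot\|_{\infty,\ell_2}$, \eqref{eq-Theta2} follows. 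I would also record here that, because $Q_Z$ is non-random, $D_t\Theta_Z(\nu)=Q_Z\circ D_t\Theta_Z(\nu)$ and $D_{s,t}\Theta_Z(\nu)=Q_Z\circ D_{s,t}\Theta_Z(\nu)$, so every entry of $\Theta_Z(\nu)\otimes\Theta_Z(\nu)$ and of its first and second Malliavin derivatives vanishes unless the corresponding index pair belongs to a fixed (non-random) pattern whose row- and column-degrees are $O(1)$ by \ref{ass:adjacent}.

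For \eqref{eq-V}, write $\mf{V}_n(\nu)=(\Theta_Z(\nu)\otimes\Theta_Z(\nu))\,\mathfrak{C}_n(\nu)\,(\Theta_Z(\nu)\otimes\Theta_Z(\nu))$; membership in $\mathbb{D}_{2,\infty}$ follows from \eqref{eq-Theta2}, Lemma \ref{S-deriv} and closure under products. For the bounds I would avoid the crude matrix-product estimate (which costs a power of $d$) and instead expand $D_t\mf{V}_n(\nu)^{pq}$ and $D_{s,t}\mf{V}_n(\nu)^{pq}$ by the Leibniz rule into finite sums of terms of the form $\sum_{r,r'}F^{pr}G^{rr'}H^{r'q}$, where $F$ and $H$ are $\Theta_Z(\nu)\otimes\Theta_Z(\nu)$ or one of its first/second Malliavin derivatives and $G$ is $\mathfrak{C}_n(\nu)$ or one of its first/second Malliavin derivatives. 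Because $F$ carries the sparsity pattern in its columns and $H$ carries it in its rows, each inner sum has only $O(1)$ nonzero terms by \ref{ass:adjacent}. Each surviving term is then controlled, in the $\|\cdot\|_{\infty,\ell_2}$ sense, by the product of an $L^\infty(\Omega)$ bound on two of the three factors and an $\|\cdot\|_{\infty,\ell_2}$ bound on the one carrying a derivative; the needed entry-wise bounds are: the $L^\infty(\Omega)$ and derivative bounds on $\{\Theta_Z(\nu)\otimes\Theta_Z(\nu)\}^{kl}$ from \eqref{eq-Theta2}; $\|\mathfrak{C}_n(\nu)^{kl}\|_\infty\le 2C_\nu^2$ (which holds because $\|c(\nu)_s\|_{\ell_\infty}\le C_\nu$ by \ref{rc-bdd}, so each $\int_{(h-1)/n}^{h/n}c(\nu)_s\,ds$ is $O(1/n)$ and the outer $n\sum_{h=1}^n$ contributes only $O(1)$); and $\sup_n\max_{k,l}\big(\|D_u\mathfrak{C}_n(\nu)^{kl}\|_{\infty,\ell_2}+\|D_{u,v}\mathfrak{C}_n(\nu)^{kl}\|_{\infty,\ell_2}\big)<\infty$ from Lemma \ref{S-deriv}. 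Summing the $O(1)$ terms via the triangle inequality yields \eqref{eq-V}.

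The main obstacle is precisely this uniformity in $d$: a naive bound on the $d^2\times d^2$ triple matrix product would introduce a diverging power of $d$, so it is essential to exploit that the two outer factors $\Theta_Z(\nu)\otimes\Theta_Z(\nu)$ (and all their Malliavin derivatives) inherit the non-random, $O(1)$-degree sparsity pattern built from $Q_Z$, which confines the internal summations to a bounded number of terms. Once this is in place the remainder is a routine, if tedious, application of the Leibniz rule and the triangle inequality for $\|\cdot\|_{\infty,\ell_2}$, introducing no new ideas beyond those already used for Lemmas \ref{S-deriv} and \ref{Theta-deriv}.
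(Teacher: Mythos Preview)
Your proposal is correct and follows essentially the same route as the paper: apply the Leibniz rule to the products, then exploit the non-random $O(1)$-degree sparsity pattern $Q_Z\otimes Q_Z$ (inherited by all Malliavin derivatives of $\Theta_Z(\nu)\otimes\Theta_Z(\nu)$) to keep the internal sums uniformly bounded in $d$, and finally feed in the entry-wise bounds from \eqref{Theta-bdd}, Lemma~\ref{Theta-deriv}, Lemma~\ref{S-deriv} and \ref{rc-bdd}. The only stylistic difference is that the paper channels the triple-product estimates through Lemma~\ref{lemma:schwarz} (a Cauchy--Schwarz bound that converts $\sum_{r,r'}F^{pr}G^{rr'}H^{r'q}$ into products of $\opnorm{\cdot}_\infty$-type row sums) before invoking sparsity, whereas you argue directly that the sums over $r$ and $r'$ have $O(1)$ nonzero terms; both reach the same endpoint.

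Two minor clean-ups: (i) the bound $|\Theta_Z(\nu)^{ij}|\le\sqrt{\Theta_Z(\nu)^{ii}\Theta_Z(\nu)^{jj}}$ follows from positive definiteness (the $2\times2$ principal minor is nonnegative), not from Lemma~\ref{lemma:poincare}; (ii) your description ``an $L^\infty(\Omega)$ bound on two of the three factors and an $\|\cdot\|_{\infty,\ell_2}$ bound on the one carrying a derivative'' is accurate for the first-derivative terms and for the $D_{s,t}$ terms where a single factor carries the full second derivative, but for the cross terms of the form $(D_sF)(D_tG)H$ or $(D_sF)G(D_tH)$ two factors carry one derivative each---there the $\ell_2$-norm in $(a,b)$ factorizes as $\|D_sF^{pr}\|_{\ell_2}\|D_tG^{rr'}\|_{\ell_2}$ (respectively $\|D_sF^{pr}\|_{\ell_2}\|D_tH^{r'q}\|_{\ell_2}$), and the argument goes through unchanged. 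Neither point affects the validity of your proof.
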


\begin{proof}
First, Corollary 15.80 in \cite{Janson1997}, \eqref{Theta-bdd} and Lemma \ref{Theta-deriv} imply that $\Theta_Z(\nu)\otimes\Theta_Z(\nu)\in\mathbb{D}_{2,\infty}(\mathbb{R}^{2d^2\times d^2})$ and \eqref{eq-Theta2} holds true. 
Next, Corollary 15.80 in \cite{Janson1997} and Lemma \ref{S-deriv} imply that $\mf{V}_n(\nu)\in\mathbb{D}_{2,\infty}(\mathbb{R}^{d^2\times d^2})$ and
\begin{align*}
D_s^{(a)}\mf{V}_n(\nu)&=\{D_s^{(a)}\Theta_Z^{\otimes2}(\nu)\}\mf{C}_n(\nu)\Theta_Z^{\otimes2}(\nu)\\
&\quad+\Theta_Z^{\otimes2}(\nu)\{D_s^{(a)}\mf{C}_n(\nu)\}\Theta_Z^{\otimes2}(\nu)
+\Theta_Z^{\otimes2}(\nu)\mf{C}_n(\nu)\{D_s^{(a)}\Theta_Z^{\otimes2}(\nu)\}
\end{align*}
and
\begin{align*}
D_{s,t}^{(a,b)}\mf{V}_n(\nu)&=\{D_{s,t}^{(a,b)}\Theta_Z^{\otimes2}(\nu)\}\mf{C}_n(\nu)\Theta_Z^{\otimes2}(\nu)
+\{D_s^{(a)}\Theta_Z^{\otimes2}(\nu)\}\{D_t^{(b)}\mf{C}_n(\nu)\}\Theta_Z^{\otimes2}(\nu)\\
&\quad+\{D_s^{(a)}\Theta_Z^{\otimes2}(\nu)\}\mf{C}_n(\nu)\{D_t^{(b)}\Theta_Z^{\otimes2}(\nu)\}
+\{D_s^{(b)}\Theta_Z^{\otimes2}(\nu)\}\{D_s^{(a)}\mf{C}_n(\nu)\}\Theta_Z^{\otimes2}(\nu)\\
&\quad+\Theta_Z^{\otimes2}(\nu)\{D_{s,t}^{(a,b)}\mf{C}_n(\nu)\}\Theta_Z^{\otimes2}(\nu)
+\Theta_Z^{\otimes2}(\nu)\{D_s^{(a)}\mf{C}_n(\nu)\}\{D_t^{(b)}\Theta_Z^{\otimes2}(\nu)\}\\
&\quad+\{D_t^{(b)}\Theta_Z^{\otimes2}(\nu)\}\mf{C}_n(\nu)\{D_s^{(a)}\Theta_Z^{\otimes2}(\nu)\}
+\Theta_Z^{\otimes2}(\nu)\{D_t^{(b)}\mf{C}_n(\nu)\}\{D_s^{(a)}\Theta_Z^{\otimes2}(\nu)\}\\
&\quad+\Theta_Z^{\otimes2}(\nu)\mf{C}_n(\nu)\{D_{s,t}^{(a,b)}\Theta_Z^{\otimes2}(\nu)\}
\end{align*}
for any $s,t\in[0,1]$, where $\Theta_Z^{\otimes2}(\nu):=\Theta_Z(\nu)\otimes\Theta_Z(\nu)$. 
Thus, by Lemma \ref{lemma:schwarz} we obtain
\begin{align*}
\max_{1\leq i,j\leq d^2}\|D_s\mf{V}_n(\nu)^{ij}\|_{\ell_2}
&\leq2\max_{1\leq i\leq d^2}\sqrt{\sum_{a=1}^{d'}\lpa\sum_{k=1}^{d^2}\labs D_s^{(a)}\Theta_Z^{\otimes2}(\nu)^{ik}\rabs\rpa^2}\|\mf{C}_n(\nu)\|_{\ell_\infty}\opnorm{\Theta_Z^{\otimes2}(\nu)}_\infty\\
&\quad+\opnorm{\Theta_Z^{\otimes2}(\nu)}_\infty^2\max_{1\leq i,j\leq d^2}\|D_s\mf{C}_n(\nu)^{ij}\|_{\ell_2}
\end{align*}
and
\begin{align*}
&\max_{1\leq i,j\leq d^2}\|D_{s,t}\mf{V}_n(\nu)^{ij}\|_{\ell_2}\\
&\leq2\max_{1\leq i\leq d^2}\sup_{0\leq s,t\leq 1}\sqrt{\sum_{a,b=1}^{d'}\lpa\sum_{k=1}^{d^2}\labs D_{s,t}^{(a,b)}\Theta_Z^{\otimes2}(\nu)^{ik}\rabs\rpa^2}\|\mf{C}_n(\nu)\|_{\ell_\infty}\opnorm{\Theta_Z^{\otimes2}(\nu)}_\infty\\
&\quad+4\max_{1\leq i,j,l\leq d^2}\sup_{0\leq s,t\leq 1}\sqrt{\sum_{a=1}^{d'}\lpa\sum_{k=1}^{d^2}\labs D_{s}^{(a)}\Theta_Z^{\otimes2}(\nu)^{ik}\rabs\rpa^2}\|D_t\mf{C}_n(\nu)^{jl}\|_{\ell_2}\opnorm{\Theta_Z^{\otimes2}(\nu)}_\infty\\
&\quad+2\max_{1\leq i\leq d^2}\sup_{0\leq s\leq 1}\|\mf{C}_n(\nu)\|_{\ell_\infty}\sum_{a=1}^{d'}\lpa\sum_{k=1}^{d^2}\labs D_{s}^{(a)}\Theta_Z^{\otimes2}(\nu)^{ik}\rabs\rpa^2\\
&\quad+\max_{1\leq i,j\leq d^2}\sup_{0\leq s,t\leq 1}\opnorm{\Theta_Z^{\otimes2}(\nu)}_\infty^2\|D_{s,t}\mf{C}_n(\nu)^{ij}\|_{\ell_2}.
\end{align*}
Now, as pointed out in the proof of Lemma \ref{Theta-deriv}, we have $\Theta_Z(\nu)=Q_Z\circ\Theta_Z(\nu)$. So Lemma \ref{kronecker-hadamard} yields $\Theta_Z^{\otimes2}(\nu)=(Q_Z\otimes Q_Z)\circ\Theta_Z^{\otimes2}(\nu)$. Since $Q_Z$ is non-random by \ref{ass:adjacent}, we have $D_s^{(a)}\Theta_Z^{\otimes2}(\nu)=(Q_Z\otimes Q_Z)\circ D_s^{(a)}\Theta_Z^{\otimes2}(\nu)$. Thus, using the Schwarz inequality repeatedly, we obtain
\begin{align*}
\max_{1\leq i,j\leq d^2}\|D_s\mf{V}_n(\nu)^{ij}\|_{\ell_2}
&\leq2\max_{1\leq ij\leq d^2}\opnorm{Q_Z}_\infty^2\|D_s\Theta_Z^{\otimes2}(\nu)^{ij}\|_{\ell_2}\|\mf{C}_n(\nu)\|_{\ell_\infty}\opnorm{\Theta_Z^{\otimes2}(\nu)}_\infty\\
&\quad+\opnorm{\Theta_Z^{\otimes2}(\nu)}_\infty^2\max_{1\leq i,j\leq d^2}\|D_s\mf{C}_n(\nu)^{ij}\|_{\ell_2}
\end{align*}
and
\begin{align*}
&\max_{1\leq i,j\leq d^2}\|D_{s,t}\mf{V}_n(\nu)^{ij}\|_{\ell_2}\\
&\leq2\max_{1\leq i,j\leq d^2}\sup_{0\leq s,t\leq 1}\opnorm{Q_Z}_\infty^2\|D_{s,t}\Theta_Z^{\otimes2}(\nu)^{ij}\|_{\ell_2}\|\mf{C}_n(\nu)\|_{\ell_\infty}\opnorm{\Theta_Z^{\otimes2}(\nu)}_\infty\\
&\quad+4\max_{1\leq i,j,k,l\leq d^2}\sup_{0\leq s,t\leq 1}\opnorm{Q_Z}_\infty^2\|D_{s}\Theta_Z^{\otimes2}(\nu)^{ik}\|_{\ell_2}\|D_t\mf{C}_n(\nu)^{jl}\|_{\ell_2}\opnorm{\Theta_Z^{\otimes2}(\nu)}_\infty\\
&\quad+2\max_{1\leq i,j\leq d^2}\sup_{0\leq s\leq 1}\|\mf{C}_n(\nu)\|_{\ell_\infty}\opnorm{Q_Z}_\infty^2\|D_{s}\Theta_Z^{\otimes2}(\nu)^{ij}\|_{\ell_2}\\
&\quad+\max_{1\leq i,j\leq d^2}\sup_{0\leq s,t\leq 1}\opnorm{\Theta_Z^{\otimes2}(\nu)}_\infty^2\|D_{s,t}\mf{C}_n(\nu)^{ij}\|_{\ell_2}.
\end{align*}
Hence we complete the proof by Lemma \ref{S-deriv}, \eqref{eq-Theta2} and assumption. 
\end{proof}

\begin{proof}[Proof of Theorem \ref{rc:AMN}]
Set $U_n:=\sqrt{n}\vectorize(\hat{\Theta}_{Z,\lambda_n}-\Gamma_{Z,n}-\Theta_Z)$. 
Define the $2d^2\times d^2$ matrices $J_{n,1}$ and $J_{n,2}$ by
\[
J_{n,1}=\begin{pmatrix}
\mathsf{E}_{d^2}\\
-\mathsf{E}_{d^2}
\end{pmatrix},\qquad
J_{n,2}=\begin{pmatrix}
\mf{S}_n^{-1}\\
-\mf{S}_n^{-1}
\end{pmatrix}.
\]
Then we have
\[
\sup_{A\in\mathcal{A}^\mathrm{re}(d^2)}\left|P\left(U_n\in A\right)-P\left(\mf{V}_n^{1/2}\zeta_n\in A\right)\right|
=\sup_{y\in\mathbb{R}^{2d^2}}\left|P\left(J_{n,1}U_n\leq y\right)-P\left(J_{n,1}\mf{V}_n^{1/2}\zeta_n\leq y\right)\right|
\]
and
\[
\sup_{A\in\mathcal{A}^\mathrm{re}(d^2)}\left|P\left(\mf{S}_n^{-1}U_n\in A\right)-P\left(\mf{S}_n^{-1}\mf{V}_n^{1/2}\zeta_n\in A\right)\right|
=\sup_{y\in\mathbb{R}^{2d^2}}\left|P\left(J_{n,2}U_n\leq y\right)-P\left(J_{n,2}\mf{V}_n^{1/2}\zeta_n\leq y\right)\right|.
\]
Therefore, in view of Proposition \ref{factor:AMN}, it suffices to check \ref{f-ass:est} and \eqref{f-est:AMN}--\eqref{factor:diag-tight} for $J_n\in\{J_{n,1},J_{n,2}\}$. 
We have already checked \ref{f-ass:est} in the proof of Theorem \ref{rc:rate}. 
Meanwhile, \eqref{factor:diag-tight} immediately follows from \ref{rc-bdd} and \eqref{Theta-bdd}. 
To check \eqref{f-est:AMN}, we apply Lemma \ref{lemma:rc-AMN} with $\Xi_n=-J_n(\Theta_Z\otimes\Theta_Z)$ (note that $\breve{\Sigma}_{Z,n}=\wh{[Z,Z]}_1^n$). Set
\[
\Upsilon_{n}=\begin{pmatrix}
Q_{Z}\otimes Q_{Z}\\
Q_{Z}\otimes Q_{Z}
\end{pmatrix}.
\]
Then we have $\Xi_n=\Upsilon_n\circ\Xi_n$ by Lemma \ref{kronecker-hadamard}. Since $\Upsilon_n$ is non-random by \ref{ass:adjacent}, we can apply Lemma \ref{lemma:rc-AMN} with $\bs{X}_n=\Xi_n$ once we show that, for every $\nu\in\mathbb{N}$, there is an $\bs{X}_n(\nu)\in\mathbb{D}_{2,\infty}(\mathbb{R}^{m\times d^2})$ such that $\bs{X}_n=\bs{X}_n(\nu)$ on $\Omega_n(\nu)$ and \eqref{rc-diag}--\eqref{eq-X} hold true. 
Now we separately consider the two cases. 

\ul{Case 1: $J_n=J_{n,1}$}. 
In this case we set $\bs{X}_n(\nu):=-J_{n,1}(\Theta_Z(\nu)\otimes\Theta_Z(\nu))$. By \ref{rc-bdd} we have $\bs{X}_n=\bs{X}_n(\nu)$ on $\Omega_n(\nu)$, while \eqref{rc-diag}--\eqref{eq-X} follow from \eqref{Theta-bdd} and Lemma \ref{acov-deriv}, respectively. 

\ul{Case 2: $J_n=J_{n,2}$}. 
In this case we set $\bs{X}_n(\nu):=-J_{n,2}(\nu)(\Theta_Z(\nu)\otimes\Theta_Z(\nu))$, where
\[
J_{n,2}(\nu)=\begin{pmatrix}
\mf{S}_n(\nu)^{-1}\\
-\mf{S}_n(\nu)^{-1}
\end{pmatrix}.
\]
By \ref{rc-bdd} we have $\bs{X}_n=\bs{X}_n(\nu)$ on $\Omega_n(\nu)$, while \eqref{rc-diag} is evident because $\Xi_n(\nu)\mf{C}_n(\nu)\Xi_n(\nu)^\top$ is the identity matrix in this case. 
So it remains to prove \eqref{eq-X}. Noting that $\mf{S}_n(\nu)$ is a diagonal matrix, \eqref{eq-X} follows from Corollary 15.80 in \cite{Janson1997} and Lemma \ref{acov-deriv} once we show that $\mf{S}_n(\nu)^{kk}\in\mathbb{D}_{2,\infty}$ for every $k=1,\dots,d^2$ and
\begin{align*}
&\sup_{n\in\mathbb{N}}\max_{1\leq k\leq d^2}\left(
\|\mf{S}_n(\nu)^{kk}\|_\infty
+\sup_{0\leq t\leq 1}\|D_t\mf{S}_n(\nu)^{kk}\|_{\infty,\ell_2}
+\sup_{0\leq s,t\leq 1}\|D_{s,t}\mf{S}_n(\nu)^{kk}\|_{\infty,\ell_2}
\right)<\infty.
\end{align*}
Since we can write $\mf{S}_n(\nu)^{kk}=(\mf{V}_n(\nu)^{kk})^{5/2}(\mf{V}_n(\nu)^{kk})^{-3}$, we obtain the desired result by combining Theorem 15.78 and Lemma 15.152 in \cite{Janson1997} with Lemma \ref{acov-deriv}. 
\end{proof}

\subsection{Proof of Lemma \ref{lemma:avar}}

We use the following notation: 
For a $d$-dimensional process $U=(U_t)_{t\in[0,1]}$, we set $\Delta^n_hU:=U_{h/n}-U_{(h-1)/n}$, $h=1,\dots,n$. 
Also, we set $\chi_h:=\vectorize[\Delta^n_hZ(\Delta^n_hZ)^\top]$ for $h=1,\dots,n$ and
\[
\wt{\mathfrak{C}}_n:=n\sum_{h=1}^n\chi_h\chi_h^\top-\frac{n}{2}\sum_{h=1}^{n-1}\left(\chi_h\chi_{h+1}^\top+\chi_{h+1}\chi_{h}^\top\right).
\]
\begin{lemma}\label{lemma:quarticity}
Assume \ref{rc-bdd}. Then $\sum_{h=1}^n(\|\Delta^n_hZ\|_{\ell_\infty}^4+\|\Delta^n_hX\|_{\ell_\infty}^4)=O_p(\log^2(d+r)/n)$ as $n\to\infty$. 
\end{lemma}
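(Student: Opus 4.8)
The plan is to localize via \ref{rc-bdd} and then estimate moments of the increments of the resulting bounded continuous semimartingales. On $\Omega_n(\nu)$ we have $Z=Z(\nu):=A(\nu)+M(\nu)$ and $X=X(\nu):=\wt A(\nu)+\wt M(\nu)$, where $A(\nu)_t:=\int_0^t\mu(\nu)_sds$, $M(\nu)_t:=\int_0^t\sigma(\nu)_sdW_s$, and $\wt A(\nu),\wt M(\nu)$ are defined analogously from $\wt\mu(\nu),\wt\sigma(\nu)$ (by the local property of It\^o integrals). Since $\lim_{\nu\to\infty}\limsup_{n\to\infty}P(\Omega_n(\nu)^c)=0$, a standard localization argument reduces the claim to showing that, for each fixed $\nu$, $\sum_{h=1}^n(\|\Delta^n_hZ(\nu)\|_{\ell_\infty}^4+\|\Delta^n_hX(\nu)\|_{\ell_\infty}^4)=O_p(\log^2(d+r)/n)$ as $n\to\infty$: writing $\xi_n$ and $\xi_n(\nu)$ for the left-hand side of the assertion built from $(Z,X)$ and from $(Z(\nu),X(\nu))$ respectively, one has $P(\xi_n>K)\le P(\xi_n(\nu)>K)+P(\Omega_n(\nu)^c)$ for every $K>0$, and $\xi_n=\xi_n(\nu)$ on $\Omega_n(\nu)$.

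Using $(a+b)^4\le8(a^4+b^4)$ entrywise, the drift and martingale parts are handled separately. For the drift part, \ref{rc-bdd}(iii) gives $\|\Delta^n_hA(\nu)\|_{\ell_\infty}\vee\|\Delta^n_h\wt A(\nu)\|_{\ell_\infty}\le C_\nu/n$, so $\sum_{h=1}^n(\|\Delta^n_hA(\nu)\|_{\ell_\infty}^4+\|\Delta^n_h\wt A(\nu)\|_{\ell_\infty}^4)\le2C_\nu^4/n^3$, which is negligible compared with $\log^2(d+r)/n$.

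For the martingale part, fix $h$ and $i$ and consider the continuous martingale $N_t:=M(\nu)^i_{(h-1)/n+t}-M(\nu)^i_{(h-1)/n}$, $t\in[0,1/n]$, which satisfies $\langle N\rangle_{1/n}=\int_{(h-1)/n}^{h/n}c(\nu)^{ii}_sds\le C_\nu/n$ a.s.\ by \ref{rc-bdd}(iii). The exponential supermartingale $\exp(\theta N-\tfrac{1}{2}\theta^2\langle N\rangle)$ then yields $\ex{e^{\theta\Delta^n_hM(\nu)^i}}\le e^{\theta^2C_\nu/(2n)}$ for all $\theta\in\mathbb{R}$, i.e.\ $\Delta^n_hM(\nu)^i$ is centered sub-Gaussian with variance proxy $C_\nu/n$, uniformly in $n,h,i$; in particular $\|\Delta^n_hM(\nu)^i\|_p\le C\sqrt{pC_\nu/n}$ for all $p\ge1$ with an absolute constant $C$. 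Bounding the coordinate-wise maximum by an $L^p$-norm,
$\ex{\|\Delta^n_hM(\nu)\|_{\ell_\infty}^4}\le\big(\sum_{i=1}^d\ex{|\Delta^n_hM(\nu)^i|^p}\big)^{4/p}\le d^{4/p}(C\sqrt{pC_\nu/n})^4$,
and choosing $p:=\log(d\vee e^2)\ (\ge2)$ makes $d^{4/p}\le e^4$, so $\ex{\|\Delta^n_hM(\nu)\|_{\ell_\infty}^4}=O(\log^2(d\vee e)/n^2)$ uniformly in $n,h$. Summing over $h=1,\dots,n$ and applying Markov's inequality gives $\sum_{h=1}^n\|\Delta^n_hM(\nu)\|_{\ell_\infty}^4=O_p(\log^2(d\vee e)/n)$, and the identical argument with $(d,\sigma(\nu),c(\nu))$ replaced by $(r,\wt\sigma(\nu),\wt c(\nu))$ gives $\sum_{h=1}^n\|\Delta^n_h\wt M(\nu)\|_{\ell_\infty}^4=O_p(\log^2(r\vee e)/n)$. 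Since $d\ge2$ forces $\log(d+r)\ge\log2>0$ and $\log(d\vee e)\vee\log(r\vee e)\le C'\log(d+r)$ for an absolute constant $C'$, combining this with the drift estimate and the reduction of the first paragraph completes the proof.

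The localization bookkeeping and the drift bound are entirely routine; the only step requiring a little care is keeping the logarithmic factor in the coordinate-wise maximum of the martingale increments sharp, which is obtained by the standard device of optimizing the exponent $p$ in the $L^p$–union bound (equivalently, by the Gaussian tail bound from the exponential martingale inequality). I do not anticipate any genuine obstacle.
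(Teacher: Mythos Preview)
Your argument is correct and follows essentially the same strategy as the paper: localize via \ref{rc-bdd}, establish coordinate-wise sub-Gaussianity of the increments with variance proxy $C_\nu/n$, pass to the $\ell_\infty$-maximum with a logarithmic loss, and sum. The paper packages $Z$ and $X$ into a single $(d+r)$-dimensional process and obtains sub-Gaussianity via the sharp Burkholder--Davis--Gundy inequality (Lemma~\ref{sharp-BDG}) together with the $\psi_2$-norm machinery of \cite{Vershynin2018} and \cite{VW1996}, whereas you obtain it more directly from the exponential supermartingale inequality (legitimate here because $\langle N\rangle$ is a.s.\ bounded, so Novikov's condition holds) and then use the $L^p$-optimization trick instead of Orlicz norms; these are interchangeable standard devices.

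One small technical point: your displayed inequality $\ex{\|\Delta^n_hM(\nu)\|_{\ell_\infty}^4}\le\big(\sum_{i}\ex{|\Delta^n_hM(\nu)^i|^p}\big)^{4/p}$ relies on Jensen's inequality for the concave map $x\mapsto x^{4/p}$, which requires $p\ge4$, not merely $p\ge2$. Replace your choice by $p:=\log(d\vee e^4)$ (or simply $p:=4\vee\log d$); then $d^{4/p}\le e^4$ still holds and the rest of the argument goes through unchanged.
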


\begin{proof}
We use the same notation as in the proof of Lemma \ref{lemma:rc-max}. Then, we need to prove $\sum_{h=1}^n\|\Delta^n_h\bar{Z}\|_{\ell_\infty}^4=O_p(\log^2(d+r)/n)$ as $n\to\infty$. For every $\nu\in\mathbb{N}$ and $L>0$, we have
\begin{align*}
P\lpa\sum_{h=1}^n\|\Delta^n_h\bar{Z}\|_{\ell_\infty}^4>L\rpa
\leq P\lpa\sum_{h=1}^n\|\Delta^n_h\bar{Z}(\nu)\|_{\ell_\infty}^4>L\rpa
+P(\Omega_n(\nu)^c).
\end{align*}
Hence it suffices to prove $\sum_{h=1}^n\|\Delta^n_h\bar{Z}(\nu)\|_{\ell_\infty}^4=O_p(\log^2(d+r)/n)$ as $n\to\infty$ for any fixed $\nu\in\mathbb{N}$. 
By Lemma \ref{sharp-BDG} there is a universal constant $\mathsf{c}>0$ such that $\|\Delta^n_h\bar{M}(\nu)^j\|_p\leq\mathsf{c}\sqrt{p}\|\sqrt{\Delta^n_h[\bar{M}(\nu)^j,\bar{M}(\nu)^j]}\|_p$ for all $p\geq2$. Thus, by \ref{rc-bdd} we obtain $\|\Delta^n_h\bar{Z}(\nu)^j\|_p\leq C_\nu/n+\mathsf{c}\sqrt{C_\nu}\sqrt{p/n}$. Therefore, by \cite[Proposition 2.5.2]{Vershynin2018}, there is a constant $C'>0$ such that $\max_{j,h}\|\Delta^n_h\bar{Z}(\nu)^j\|_{\psi_2}\leq C'/\sqrt{n}$ for all $n$, where $\|\xi\|_{\psi_2}:=\inf\{\Lambda>0:\expe{\exp(|\xi|/\Lambda)}\leq2\}$ for a random variable $\xi$. Thus, \cite[Lemma 2.2.2]{VW1996} implies that there is a constant $C''>0$ such that $\max_{h}\|\|\Delta^n_h\bar{Z}(\nu)\|_{\ell_\infty}\|_{\psi_2}\leq C''\sqrt{\log(d+r)/n}$ for all $n$. Thus we obtain
\[
\ex{\sum_{h=1}^n\|\Delta^n_h\bar{Z}(\nu)\|_{\ell_\infty}^4}\leq4!^4C''\frac{\log^2(d+r)}{n},
\]
so the desired result follows from the Markov inequality. 
\end{proof}

\begin{lemma}\label{lemma:chihat}
Assume \ref{ass:vol}--\ref{ass:sparse.z} and \ref{rc-bdd}. Then $\sum_{h=1}^n\|\hat{\chi}_h-\chi_h\|_{\ell_\infty}^2=O_p(r^2(\log d)^3/n^2)$ as $n\to\infty$. 
\end{lemma}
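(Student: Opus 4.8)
The plan is to expand $\hat{\chi}_h-\chi_h$ in powers of the loading estimation error and then bound the resulting terms by combining a rate for $\hat{\beta}_n-\beta$ with the quarticity estimate of Lemma~\ref{lemma:quarticity}. Since $Y=\beta X+Z$ and $\hat{Z}_{h/n}=Y_{h/n}-\hat{\beta}_nX_{h/n}$, we have $\Delta^n_h\hat{Z}=\Delta^n_hZ+e_h$ with $e_h:=(\beta-\hat{\beta}_n)\Delta^n_hX$, whence
\[
\Delta^n_h\hat{Z}(\Delta^n_h\hat{Z})^\top-\Delta^n_hZ(\Delta^n_hZ)^\top
=\Delta^n_hZ\,e_h^\top+e_h(\Delta^n_hZ)^\top+e_he_h^\top .
\]
Because vectorization preserves the $\ell_\infty$-norm and $\|ab^\top\|_{\ell_\infty}=\|a\|_{\ell_\infty}\|b\|_{\ell_\infty}$ for vectors $a,b$, this gives $\|\hat{\chi}_h-\chi_h\|_{\ell_\infty}\le 2\|\Delta^n_hZ\|_{\ell_\infty}\|e_h\|_{\ell_\infty}+\|e_h\|_{\ell_\infty}^2$, and hence $\|\hat{\chi}_h-\chi_h\|_{\ell_\infty}^2\le 6\|\Delta^n_hZ\|_{\ell_\infty}^2\|e_h\|_{\ell_\infty}^2+3\|e_h\|_{\ell_\infty}^4$.

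Next I would control $e_h$ entrywise. By the Schwarz inequality, $\|e_h\|_{\ell_\infty}\le\sqrt{r}\,\varrho_n\|\Delta^n_hX\|_{\ell_\infty}$ with $\varrho_n:=\max_{1\le i\le d}\|\hat{\beta}_n^{i\cdot}-\beta^{i\cdot}\|_{\ell_2}$. Repeating the computation in the proof of Lemma~\ref{lemma:factor-cov}\ref{lemma:beta}, on the event $\Omega_n$ one has $\varrho_n\le\sqrt{r}\,\opnorm{\hat{\Sigma}_{X,n}^{-1}}_2\|\hat{\Sigma}_{YX,n}-\beta\hat{\Sigma}_{X,n}\|_{\ell_\infty}$; here $\hat{\Sigma}_{YX,n}-\beta\hat{\Sigma}_{X,n}=\wh{[Z,X]}_1^n$, whose $\ell_\infty$-norm is $O_p(\sqrt{\log(d+r)/n})$ by Lemma~\ref{lemma:rc-max} (recall $[Z,X]_1=0$), while $\opnorm{\hat{\Sigma}_{X,n}^{-1}}_2 1_{\Omega_n}=O_p(1)$ by Lemma~\ref{inv-sigma-f}\ref{sigma.f-inv} and \ref{ass:eigen.f}. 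Since $P(\Omega_n)\to1$ — this is where the mild rate $r^2\log(d+r)/n\to0$, available in the ambient setting, is used — we obtain $\varrho_n=O_p(\sqrt{r\log(d+r)/n})$.

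It remains to assemble the pieces. Summing the bound from the first paragraph over $h$ and inserting $\|e_h\|_{\ell_\infty}\le\sqrt{r}\,\varrho_n\|\Delta^n_hX\|_{\ell_\infty}$,
\[
\sum_{h=1}^n\|\hat{\chi}_h-\chi_h\|_{\ell_\infty}^2
\le 6r\varrho_n^2\sum_{h=1}^n\|\Delta^n_hZ\|_{\ell_\infty}^2\|\Delta^n_hX\|_{\ell_\infty}^2
+3r^2\varrho_n^4\sum_{h=1}^n\|\Delta^n_hX\|_{\ell_\infty}^4 .
\]
By the Schwarz inequality and Lemma~\ref{lemma:quarticity}, $\sum_h\|\Delta^n_hZ\|_{\ell_\infty}^2\|\Delta^n_hX\|_{\ell_\infty}^2\le(\sum_h\|\Delta^n_hZ\|_{\ell_\infty}^4)^{1/2}(\sum_h\|\Delta^n_hX\|_{\ell_\infty}^4)^{1/2}=O_p(\log^2(d+r)/n)$, and likewise $\sum_h\|\Delta^n_hX\|_{\ell_\infty}^4=O_p(\log^2(d+r)/n)$. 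Substituting $\varrho_n^2=O_p(r\log(d+r)/n)$, the first term is $O_p(r^2\log^3(d+r)/n^2)$ and the second is $O_p(r^4\log^4(d+r)/n^3)$; since $r^2\log d/n\to0$ in the ambient setting (it follows from $\lambda_n^{-1}\sqrt{(\log d)/n}\to^p0$ and $r\lambda_n\to^p0$) and $\log(d+r)\asymp\log d$ because $r=O(d)$, the second term is $o_p$ of the first, yielding $\sum_h\|\hat{\chi}_h-\chi_h\|_{\ell_\infty}^2=O_p(r^2(\log d)^3/n^2)$. The point requiring care is tracking the $\sqrt{r}$ factors produced by each Schwarz step and keeping the $O_p(1)$ control of $\opnorm{\hat{\Sigma}_{X,n}^{-1}}_2$ on $\Omega_n$, so that the quadratic remainder $e_he_h^\top$ ends up genuinely lower order; the terms linear in $e_h$ are the dominant contribution and match the asserted rate.
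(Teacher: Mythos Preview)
Your proof is correct and follows essentially the same route as the paper: expand $\hat{\chi}_h-\chi_h$ in the loading error $e_h=(\beta-\hat{\beta}_n)\Delta^n_hX$, bound $\|e_h\|_{\ell_\infty}$ via the row-wise $\ell_2$-error of $\hat{\beta}_n$ (the paper uses the equivalent $\opnorm{\hat{\beta}_n-\beta}_\infty$), obtain the rate $O_p(r\sqrt{(\log d)/n})$ from Lemma~\ref{lemma:rc-max} and Lemma~\ref{inv-sigma-f}, and finish with Lemma~\ref{lemma:quarticity}. The only cosmetic difference is that you handle the cross term $\sum_h\|\Delta^n_hZ\|_{\ell_\infty}^2\|\Delta^n_hX\|_{\ell_\infty}^2$ by Cauchy--Schwarz, whereas the paper uses $2ab\le a^2+b^2$; both give the same bound.
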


\begin{proof}
Since $\hat{Z}_{h/n}=Z_{h/n}-(\hat{\beta}_n-\beta)X_{h/n}$, we have
\begin{align*}
\hat{\chi}_h-\chi_h
&=-\vectorize[(\hat{\beta}_n-\beta)\Delta^n_hX(\Delta^n_hZ)^\top]
-\vectorize[\Delta^n_hZ((\hat{\beta}_n-\beta)\Delta^n_hX)^\top]\\
&\quad+\vectorize[(\hat{\beta}_n-\beta)\Delta^n_hX((\hat{\beta}_n-\beta)\Delta^n_hX)^\top].
\end{align*}
Now, since $\|\vectorize(xy^\top)\|_{\ell_\infty}\leq\|x\|_{\ell_\infty}\|y\|_{\ell_\infty}$ for any $x,y\in\mathbb{R}^d$, it holds that
\begin{align*}
\|\hat{\chi}_h-\chi_h\|_{\ell_\infty}
&\leq2\|(\hat{\beta}_n-\beta)\Delta^n_hX\|_{\ell_\infty}\|\Delta^n_hZ\|_{\ell_\infty}
+\|(\hat{\beta}_n-\beta)\Delta^n_hX\|_{\ell_\infty}^2\\
&\leq2\opnorm{\hat{\beta}_n-\beta}_\infty\|\Delta^n_hX\|_{\ell_\infty}\|\Delta^n_hZ\|_{\ell_\infty}
+\opnorm{\hat{\beta}_n-\beta}_\infty^2\|\Delta^n_hX\|_{\ell_\infty}^2.
\end{align*}
Therefore, we obtain
\begin{align*}
\sum_{h=1}^n\|\hat{\chi}_h-\chi_h\|_{\ell_\infty}^2
&\leq2\opnorm{\hat{\beta}_n-\beta}_\infty^2\sum_{h=1}^n(\|\Delta^n_hX\|_{\ell_\infty}^4+\|\Delta^n_hZ\|_{\ell_\infty}^4)
+2\opnorm{\hat{\beta}_n-\beta}_\infty^4\sum_{h=1}^n\|\Delta^n_hX\|_{\ell_\infty}^4.
\end{align*}
Now, noting Lemma \ref{lemma:rc-max}, we infer that $\opnorm{\hat{\beta}_n-\beta}_\infty=O_p(r\sqrt{(\log d)/n})$ from the proof of Lemma \ref{lemma:factor-cov}\ref{opi-beta}. Thus, we complete the proof by Lemma \ref{lemma:quarticity}. 
\end{proof}

\begin{proof}[Proof of Lemma \ref{lemma:avar}]
Since $\|\wt{\mf{C}}_n-\mf{C}_n\|_{\ell_\infty}=O_p((\log d)^{2}/\sqrt{n}+n^{-\gamma})$ by Proposition 4.1 in \cite{Koike2018sk}, it suffices to prove $\|\hat{\mf{C}}_n-\wt{\mf{C}}_n\|_{\ell_\infty}=O_p(r(\log d)^{5/2}/\sqrt{n})$. Since $\|\vectorize(xy^\top)\|_{\ell_\infty}\leq\|x\|_{\ell_\infty}\|y\|_{\ell_\infty}$ for any $x,y\in\mathbb{R}^d$, Lemma \ref{lemma:quarticity} yields $\sum_{h=1}^n\|\chi_h\|_{\ell_\infty}^2=O_p((\log d)^2/n)$. Combining this with Lemma \ref{lemma:chihat} and $r^2(\log d)/n=O(1)$, we also obtain $\sum_{h=1}^n\|\hat{\chi}_h\|_{\ell_\infty}^2=O_p((\log d)^2/n)$. Now the desired result follows from the Schwarz inequality and Lemma \ref{lemma:chihat}. 
\end{proof}

\subsection{Proof of Corollary \ref{coro:feasible}}

\ref{feasible-a}
Since $\opnorm{\Theta_Z}_\infty=O_p(1)$ by \eqref{Theta-bdd} and \ref{ass:adjacent}, we have $\|\mf{C}_n\|_{\ell_\infty}+\|\mf{V}_n\|_{\ell_\infty}=O_p(1)$ by \ref{rc-bdd} and $\lambda_n^{-1}\opnorm{\hat{\Theta}_{Z,\lambda_n}\otimes\hat{\Theta}_{Z,\lambda_n}-\Theta_Z\otimes\Theta_Z}_\infty=O_p(s_n)$ by Theorem \ref{rc:rate}. Combining this with Lemma \ref{lemma:avar} and assumption, we obtain $\|\hat{\mf{V}}_n\|_{\ell_\infty}=O_p(1)$ and $(\log d)\|\hat{\mf{V}}_n-\mf{V}_n\|_{\ell_\infty}\to^p0$. 
Noting \eqref{Theta-bdd} and the fact that $\hat{\mf{S}}_n$ is a diagonal matrix, we also obtain $\opnorm{\hat{\mf{S}}_n^{-1}}_{\infty}=O_p(1)$ and $(\log d)\opnorm{\hat{\mf{S}}_n^{-1}-\mf{S}_n^{-1}}_\infty\to^p0$. Since \eqref{AMN:non-student} yields $\|\sqrt{n}\vectorize(\hat{\Theta}_{Z,\lambda_n}-\Gamma_{Z,n}-\Theta_Z)\|_{\ell_\infty}=O_p(\sqrt{\log d})$, we obtain $\sqrt{\log d}\|\sqrt{n}(\hat{\mf{S}}_n^{-1}-\mf{S}_n)\vectorize(\hat{\Theta}_{Z,\lambda_n}-\Gamma_{Z,n}-\Theta_Z)\|_{\ell_\infty}\to^p0$. Now the desired result follows from Theorem \ref{rc:AMN} and \cite[Lemma 3.1]{Koike2018sk}.

\ref{feasible-b}
The same argument as above implies that $(\log d)^2\|\hat{\mf{V}}_n-\mf{V}_n\|_{\ell_\infty}\to^p0$ and $(\log d)^2\opnorm{\hat{\mf{S}}_n^{-1}-\mf{S}_n^{-1}}_\infty\to^p0$. Thus, the desired result follows from \cite[Proposition 3.1]{Koike2018sk}. \hfil\qed

\section{Proof of Lemma \ref{rc:concentrate}}\label{proof:concentrate}

In this appendix we prove Lemma \ref{rc:concentrate} with the help of two general martingale inequalities. 
The first one is the Burkholder-Davis-Gundy inequality with a sharp constant:
\begin{lemma}[\citet{BY1982}, Proposition 4.2]\label{sharp-BDG}
There is a universal constant $\mathsf{c}>0$ such that
\[
\left\|\sup_{0\leq t\leq T}|M_t|\right\|_p\leq \mathsf{c}\sqrt{p}\left\|[M,M]_T^{1/2}\right\|_p
\]
for any $p\in[2,\infty)$ and any continuous martingale $M=(M_t)_{t\in[0,T]}$ with $M_0=0$. 
\end{lemma}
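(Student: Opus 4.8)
\textbf{Proof proposal for Lemma~\ref{sharp-BDG}.}
This is a classical estimate (it is exactly \citet{BY1982}, Proposition~4.2, and one may simply invoke that reference), but the approach I would take is a good-$\lambda$ argument built on a sub-Gaussian maximal inequality for continuous martingales with bounded quadratic variation. First I would reduce to the case where all the $L^p$-quantities below are finite: stopping $M$ at $\sigma_k:=\inf\{t:[M,M]_t\ge k\}$ gives continuous martingales $M^{\sigma_k}$ with $[M^{\sigma_k},M^{\sigma_k}]_T\le k$, and (by the tail bound of the next paragraph) $\sup_{t\le T}|M^{\sigma_k}_t|$ then has finite moments of every order; it suffices to prove the inequality for each $M^{\sigma_k}$ with a constant independent of $k$ and let $k\to\infty$ by monotone convergence (the right-hand side is nondecreasing in $k$ and $\le\|[M,M]_T^{1/2}\|_p$).

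The key lemma is: if $N$ is a continuous local martingale with $N_0=0$ and $[N,N]_\infty\le K$ a.s.\ for a deterministic $K>0$, then $P(\sup_t|N_t|\ge x)\le 2\exp(-x^2/(2K))$ for all $x>0$. Indeed, for $\lambda>0$ the process $\exp(\lambda N_t-\tfrac{\lambda^2}{2}[N,N]_t)$ is a nonnegative local martingale, hence a supermartingale, equal to $1$ at $t=0$; by Doob's maximal inequality for nonnegative supermartingales its running supremum exceeds $a$ with probability at most $1/a$. Since $[N,N]_t\le K$, on $\{\sup_t N_t\ge x\}$ this supermartingale attains at least $\exp(\lambda x-\tfrac{\lambda^2}{2}K)$, so $P(\sup_t N_t\ge x)\le\exp(-\lambda x+\tfrac{\lambda^2}{2}K)$, and taking $\lambda=x/K$ gives $\exp(-x^2/(2K))$; applying the same bound to $-N$ yields the factor $2$. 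The conditional version (conditioning on an initial $\sigma$-field) holds verbatim, since the shifted exponential process is still a supermartingale equal to $1$ at its starting time.

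Next I would prove a good-$\lambda$ inequality. Write $M^*=\sup_{t\le T}|M_t|$, $A=[M,M]_T$, and for $\lambda>0$ set $\rho_\lambda=\inf\{t:|M_t|\ge\lambda\}\wedge T$, so that on $\{M^*>\lambda\}=\{\rho_\lambda<T\}\in\mathcal{F}_{\rho_\lambda}$ we have $|M_{\rho_\lambda}|=\lambda$ by continuity. For $\beta>1$ and $\delta>0$, on $\{M^*>\beta\lambda,\ A^{1/2}\le\delta\lambda\}$ the post-passage martingale $u\mapsto M_{(\rho_\lambda+u)\wedge T}-M_{\rho_\lambda}$ must exceed $(\beta-1)\lambda$ in modulus while its quadratic variation is at most $A\le(\delta\lambda)^2$; conditioning on $\mathcal{F}_{\rho_\lambda}$ and applying the sub-Gaussian bound (to a version of this post-passage martingale truncated at the level $(\delta\lambda)^2$ of accumulated quadratic variation, which agrees with it on $\{A^{1/2}\le\delta\lambda\}$) gives
\[
P(M^*>\beta\lambda,\ A^{1/2}\le\delta\lambda)\ \le\ 2\exp\!\left(-\frac{(\beta-1)^2}{2\delta^2}\right)P(M^*>\lambda).
\]
Combining this with $P(M^*>\beta\lambda)\le P(A^{1/2}>\delta\lambda)+P(M^*>\beta\lambda,\ A^{1/2}\le\delta\lambda)$, multiplying by $p\lambda^{p-1}$ and integrating over $\lambda\in(0,\infty)$ (substituting $x=\beta\lambda$, resp.\ $\mu=\delta\lambda$) yields
\[
\mathbb{E}[(M^*)^p]\ \le\ \beta^p\left(\delta^{-p}\,\mathbb{E}[A^{p/2}]+2e^{-(\beta-1)^2/(2\delta^2)}\,\mathbb{E}[(M^*)^p]\right).
\]
I would then fix $\beta=2$ and choose $\delta=\delta_p:=(2p\log 2+2\log 4)^{-1/2}$, so that $2\cdot2^{p}e^{-1/(2\delta_p^2)}=\tfrac12$; since $\mathbb{E}[(M^*)^p]<\infty$ by the first-paragraph reduction, that term is absorbed, leaving $\mathbb{E}[(M^*)^p]\le 2^{p+1}\delta_p^{-p}\,\mathbb{E}[A^{p/2}]$, i.e.\ $\|M^*\|_p\le 4\delta_p^{-1}\|[M,M]_T^{1/2}\|_p$. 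As $\delta_p^{-1}\asymp\sqrt{p}$, this is the claimed bound with a universal $\mathsf{c}$; undoing the localization $\sigma_k$ completes the proof.

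The step I expect to be the main obstacle is the good-$\lambda$ inequality: it requires the strong Markov property for the post-passage martingale together with a careful truncation so that the conditional sub-Gaussian estimate applies (the raw post-passage quadratic variation is not deterministically bounded), and — crucially for the \emph{sharp} constant — a precise balancing of the two free parameters $\beta,\delta$ so that the exponential factor $e^{-(\beta-1)^2/(2\delta^2)}$ simultaneously absorbs the $\mathbb{E}[(M^*)^p]$ term and forces $\delta\asymp p^{-1/2}$, which is exactly what produces the $\sqrt{p}$ growth. The localization bookkeeping is routine but must precede the absorption step.
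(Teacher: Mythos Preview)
Your proof is correct. The good-$\lambda$ argument you outline is sound: the sub-Gaussian tail bound via the exponential supermartingale is standard, your truncation of the post-passage martingale at quadratic-variation level $(\delta\lambda)^2$ is exactly what is needed to apply the conditional bound, and your balancing of $\beta,\delta$ cleanly produces the $\sqrt{p}$ growth. The localization and absorption bookkeeping are handled correctly.

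That said, there is nothing to compare: the paper does not give its own proof of this lemma. It is stated purely as a citation of \citet{BY1982}, Proposition~4.2, and used as a black box in the proofs of Lemmas~\ref{rc:concentrate} and~\ref{lemma:quarticity}. So your proposal goes strictly beyond what the paper does. (For what it is worth, the Barlow--Yor paper itself, as its title indicates, obtains such inequalities via the Garsia--Rodemich--Rumsey lemma rather than a direct good-$\lambda$ argument; your route is the more elementary of the two, and both yield the same $\sqrt{p}$ constant up to absolute multiples.)
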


The second one is a Bernstein-type inequality for martingales:
\begin{lemma}\label{bernstein}
Let $(\xi_i)_{i=1}^n$ be a martingale difference sequence with respect to the filtration $(\mcl{G}_i)_{i=0}^n$. 
Suppose that there are constants $a,b>0$ such that $\sum_{i=1}^n\expe{|\xi_i|^k\mid\mcl{G}_{i-1}}\leq k!a^{k-2}b^2/2$ a.s.~for any integer $k\geq 2$. Then, for any $x\geq0$,
\[
P\lpa\max_{1\leq m\leq n}\labs\sum_{i=1}^m\xi_i\rabs\geq x\rpa\leq2\exp\lpa-\frac{x^2}{b^2+b\sqrt{b^2+2ax}}\rpa.
\]
\end{lemma}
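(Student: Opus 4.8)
The plan is to run the classical exponential‑supermartingale (Freedman–type) argument adapted to the aggregated moment hypothesis, and then to optimize the free exponential parameter. Write $S_m:=\sum_{i=1}^m\xi_i$ and fix a real number $\lambda\in(0,1/a)$; the constraint $\lambda a<1$ is exactly what guarantees convergence of the geometric series below. The hypothesis forces $\expe{|\xi_i|^k\mid\mcl{G}_{i-1}}\le\tfrac{k!}{2}a^{k-2}b^2$ for each $i$, so $\expe{e^{\lambda|\xi_i|}\mid\mcl{G}_{i-1}}<\infty$; combining this with $\expe{\xi_i\mid\mcl{G}_{i-1}}=0$ and $\bigl|\expe{\xi_i^k\mid\mcl{G}_{i-1}}\bigr|\le\expe{|\xi_i|^k\mid\mcl{G}_{i-1}}$ yields the conditional moment‑generating‑function bound $\expe{e^{\lambda\xi_i}\mid\mcl{G}_{i-1}}\le 1+t_i\le e^{t_i}$, where $t_i:=\sum_{k\ge2}\tfrac{\lambda^k}{k!}\expe{|\xi_i|^k\mid\mcl{G}_{i-1}}\ge0$. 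Hence, setting $\psi_m:=\sum_{i=1}^m t_i$, the process $W_m:=\exp(\lambda S_m-\psi_m)$, $W_0=1$, is a nonnegative supermartingale, because $\expe{W_m\mid\mcl{G}_{m-1}}=W_{m-1}e^{-t_m}\expe{e^{\lambda\xi_m}\mid\mcl{G}_{m-1}}\le W_{m-1}$.

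Next I would convert this into a maximal inequality. The point at which the hypothesis is actually invoked is the terminal compensator: summing first over $i$ and then over $k\ge2$, $\psi_n=\sum_{k\ge2}\tfrac{\lambda^k}{k!}\sum_{i=1}^n\expe{|\xi_i|^k\mid\mcl{G}_{i-1}}\le\tfrac{b^2}{2}\sum_{k\ge2}\lambda^k a^{k-2}=\tfrac{b^2\lambda^2}{2(1-a\lambda)}=:\beta_\lambda$ almost surely. Let $\tau$ be the first $m\le n$ with $S_m\ge x$, and $\tau:=n$ otherwise, a bounded stopping time. On the event $\{\max_{1\le m\le n}S_m\ge x\}$ we have $S_\tau\ge x$ and $\psi_\tau\le\psi_n\le\beta_\lambda$ (here nonnegativity of the $t_i$ is used), so $W_\tau\ge e^{\lambda x-\beta_\lambda}$ there; since $W$ is a nonnegative supermartingale, optional sampling gives $e^{\lambda x-\beta_\lambda}\,P\bigl(\max_{1\le m\le n}S_m\ge x\bigr)\le\expe{W_\tau}\le\expe{W_0}=1$. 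Applying the identical reasoning to $(-\xi_i)_i$, which satisfies exactly the same hypothesis, and adding the two estimates supplies the factor $2$, so that $P\bigl(\max_{1\le m\le n}|S_m|\ge x\bigr)\le 2\exp\!\bigl(-\lambda x+\tfrac{b^2\lambda^2}{2(1-a\lambda)}\bigr)$ for every $\lambda\in(0,1/a)$.

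It then remains to optimize over $\lambda$. Minimizing $\lambda\mapsto-\lambda x+\tfrac{b^2\lambda^2}{2(1-a\lambda)}$ on $(0,1/a)$ gives the stationary point $\lambda^{\ast}=a^{-1}(1-1/\mu)$ with $\mu:=\sqrt{1+2ax/b^2}\ge1$ (one checks $\lambda^{\ast}\in(0,1/a)$ for all $x\ge0$), at which the value equals $-\tfrac{b^2(\mu-1)^2}{2a^2}$. Using the identity $(b^2+ax)^2-b^2(b^2+2ax)=a^2x^2$ one rewrites $\tfrac{b^2(\mu-1)^2}{2a^2}=\dfrac{x^2}{b^2+ax+b\sqrt{b^2+2ax}}$, which is the exponent in the statement; substituting back completes the proof for all $n\in\mbb{N}$ and $x\ge0$.

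I do not expect a genuine obstacle, since this is the textbook exponential‑inequality scheme; the two steps that require attention are (i) organizing the compensator $\psi_m$ so that the \emph{aggregated} moment bound $\sum_i\expe{|\xi_i|^k\mid\mcl{G}_{i-1}}\le\tfrac{k!}{2}a^{k-2}b^2$ is consumed exactly once, at the stopping time $\tau$ — one cannot afford to control the individual $t_i$ — and (ii) the elementary but slightly finicky optimization and rationalization in the last paragraph, including the verification $\lambda^{\ast}\in(0,1/a)$.
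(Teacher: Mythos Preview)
Your exponential--supermartingale argument is carried out correctly, but the bound it produces is \emph{not} the one stated in the lemma. After optimizing in $\lambda$ you obtain the exponent
\[
\frac{b^2(\mu-1)^2}{2a^2}=\frac{x^2}{b^2+ax+b\sqrt{b^2+2ax}}
=\frac{2x^2}{\bigl(b+\sqrt{b^2+2ax}\bigr)^2},
\]
whereas the lemma asserts the larger exponent $x^2/\bigl(b^2+b\sqrt{b^2+2ax}\bigr)$. Writing $y:=\sqrt{b^2+2ax}\ge b$, the two denominators are $(b+y)^2/2$ and $b(b+y)$, and $(b+y)^2/2\ge b(b+y)$ with equality only at $x=0$; hence your exponent is strictly smaller for every $x>0$ and your final inequality is strictly weaker than the one claimed. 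The sentence ``which is the exponent in the statement'' is therefore incorrect: the extra $+ax$ in your denominator is exactly the gap, and it does not disappear under any algebraic rewriting.

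This is a genuine limitation of the plain Cram\'er--Chernoff/Freedman scheme: the best one can extract from $\expe{e^{\lambda\xi_i}\mid\mcl{G}_{i-1}}\le e^{t_i}$ together with the aggregated moment bound is precisely the exponent you computed. The paper does not attempt a direct proof; it simply invokes \cite[Theorem~3.3]{Pinelis1994}, whose argument (in a Hilbert-space martingale setting) uses a sharper comparison function than $e^{\lambda s}$ and thereby removes the $ax$ term from the denominator. If you want to match the stated constant you need Pinelis' refinement; your approach, as written, proves only the classical Bernstein form. That said, for the sole application in this paper (Lemma~\ref{rc:concentrate}) one only needs $2\exp(-C_{L,\theta}x^2)$ for $x\in[0,\theta\sqrt{n}]$, and on that range your bound yields such a constant just as well---so your argument would suffice for the downstream result even though it falls short of the lemma as stated.
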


\begin{proof}
This is a special case of \citet[Theorem 3.3]{Pinelis1994}. In fact, since $\mathbb{R}$ is a Hilbert space, we can apply this result with $\mcl{X}=\mathbb{R}$ and $D=1$ in the notation of that paper.  
\end{proof}

\begin{proof}[Proof of Lemma \ref{rc:concentrate}]
For every $h=1,\dots,n$, set
\[
\xi_{n,h}:=\sqrt{n}\left\{\int_{t_{h-1}}^{t_h}(M_t-M_{t_{h-1}})dN_t+\int_{t_{h-1}}^{t_h}(N_t-N_{t_{h-1}})dM_t\right\}.
\]
It\^o's formula yields
\[
\sqrt{n}\lpa\wh{[M,N]}^n_1-[M,N]_1\rpa=\sum_{h=1}^n\xi_{n,h}.
\]
Also, by assumption $(\xi_{n,h})_{h=1}^n$ is a martingale difference with respect to $(\mathcal{F}_{t_h})_{h=0}^n$. Moreover, for any integer $k\geq2$, we have
\begin{align*}
&\expe{|\xi_{n,h}|^k\mid\mcl{F}_{t_{h-1}}}\\
&\leq2^{k-1}n^{k/2}\ex{\labs\int_{t_{h-1}}^{t_h}(M_t-M_{t_{h-1}})dN_t\rabs^{k}+\labs\int_{t_{h-1}}^{t_h}(N_t-N_{t_{h-1}})dM_t\rabs^{k}\mid\mcl{F}_{t_{h-1}}}
\\
&\leq 2^{k-1}n^{k/2}\mathsf{c}^kk^{k/2}\ex{\lpa\int_{t_{h-1}}^{t_h}(M_t-M_{t_{h-1}})^2d[N, N]_t\rpa^{k/2}+\lpa\int_{t_{h-1}}^{t_h}(N_t-N_{t_{h-1}})^2d[M, M]_t\rpa^{k/2}\mid\mcl{F}_{t_{h-1}}}\\
&\pushright{(\because\text{Lemma \ref{sharp-BDG}})}\\
&\leq 2^{k-1}\mathsf{c}^kk^{k/2}L^{k/2}\ex{\sup_{t_{h-1}<t\leq t_h}|M_t-M_{t_{h-1}}|^{k}+\sup_{t_{h-1}<t\leq t_h}|N_t-N_{t_{h-1}}|^k\mid\mcl{F}_{t_{h-1}}}
~(\because\text{\eqref{eq:lipschitz}})\\
&\leq 2^{k-1}\mathsf{c}^{2k}k^{k}L^{k/2}\ex{([M, M]_{t_h}-[M, M]_{t_{h-1}})^{k/2}+([N, N]_{t_h}-[N, N]_{t_{h-1}})^{k/2}\mid\mcl{F}_{t_{h-1}}}
~(\because\text{Lemma \ref{sharp-BDG}})\\
&\leq 2^{k}\mathsf{c}^{2k}k^{k}\frac{L^{k}}{n^{k/2}}
~(\because\text{\eqref{eq:lipschitz}}),
\end{align*}
where $\mathsf{c}>0$ is the universal constant appearing in Lemma \ref{sharp-BDG}. 
Thus, using Stirling's formula, we obtain
\[
\sum_{h=1}^n\expe{|\xi_{n,h}|^k\mid\mcl{F}_{t_{h-1}}}\leq 2^{k}\mathsf{c}^{2k}\frac{e^k}{\sqrt{2\pi k}}k!\frac{L^{k}}{n^{k/2-1}}
\leq\frac{k!}{2}\lpa\frac{a_0}{\sqrt{n}}\rpa^{k-2}b_0^2,
\]
where $a_0:=2e\mathsf{c}^2L$ and $b_0:=2\sqrt{2}\mathsf{c}^2Le/(2\pi)^{1/4}$. 
Hence, Lemma \ref{bernstein} yields
\[
P\lpa\labs\sum_{h=1}^n\xi_{n,h}\rabs\geq x\rpa\leq2\exp\lpa-\frac{x^2}{b_0^2+b_0\sqrt{b_0^2+2(a_0/\sqrt{n})x}}\rpa
\]
for every $x\geq0$. Consequently, when $x\in[0,\theta\sqrt{n}]$ for some $\theta>0$, we have
\[
P\lpa\labs\sum_{h=1}^n\xi_{n,h}\rabs\geq x\rpa\leq2\exp\lpa-C_{L,\theta}x^2\rpa
\]
with $C_{L,\theta}:=(b_0^2+b_0\sqrt{b_0^2+2a_0\theta})^{-1}$. This completes the proof.
\end{proof}

\section*{Acknowledgements}

This work was supported by JST CREST Grant Number JPMJCR14D7 and JSPS KAKENHI Grant Numbers JP17H01100, JP18H00836, JP19K13668.

{\small
\renewcommand*{\baselinestretch}{1}\selectfont
\addcontentsline{toc}{section}{References}

\begin{thebibliography}{58}
\expandafter\ifx\csname natexlab\endcsname\relax\def\natexlab#1{#1}\fi

\bibitem[{Acemoglu \emph{et~al.}(2012)Acemoglu, Carvalho, Ozdaglar \&
  Tahbaz-Salehi}]{ACOT2012}
Acemoglu, D., Carvalho, V.~M., Ozdaglar, A. \& Tahbaz-Salehi, A. (2012).
\newblock The network origins of aggregate fluctuations.
\newblock \emph{Econometrica} \textbf{80}, 1977--2016.

\bibitem[{A{\"\i}t-Sahalia \& Xiu(2017)}]{AX2017}
A{\"\i}t-Sahalia, Y. \& Xiu, D. (2017).
\newblock Using principal component analysis to estimate a high dimensional
  factor model with high-frequency data.
\newblock \emph{J. Econometrics} \textbf{201}, 384--399.

\bibitem[{Barigozzi \emph{et~al.}(2018)Barigozzi, Brownlees \&
  Lugosi}]{BBL2018}
Barigozzi, M., Brownlees, C. \& Lugosi, G. (2018).
\newblock Power-law partial correlation network models.
\newblock \emph{Electron. J. Stat.} \textbf{12}, 2905--2929.

\bibitem[{Barlow \& Yor(1982)}]{BY1982}
Barlow, M.~T. \& Yor, M. (1982).
\newblock Semi-martingale inequalities via the {G}arsia-{R}odemich-{R}umsey
  lemma and application to local times.
\newblock \emph{J. Funct. Anal.} \textbf{49}, 198--229.

\bibitem[{Belloni \emph{et~al.}(2018)Belloni, Chernozhukov, Chetverikov, Hansen
  \& Kato}]{BCCHK2018}
Belloni, A., Chernozhukov, V., Chetverikov, D., Hansen, C. \& Kato, K. (2018).
\newblock High-dimensional econometrics and regularized {GMM}.
\newblock Working paper. arXiv: 1806.01888.

\bibitem[{Bickel \& Levina(2008{\natexlab{a}})}]{BL2008th}
Bickel, P.~J. \& Levina, E. (2008{\natexlab{a}}).
\newblock Covariance regularization by thresholding.
\newblock \emph{Ann. Statist.} \textbf{36}, 2577--2604.

\bibitem[{Bickel \& Levina(2008{\natexlab{b}})}]{BL2008band}
Bickel, P.~J. \& Levina, E. (2008{\natexlab{b}}).
\newblock Regularized estimation of large covariance matrices.
\newblock \emph{Ann. Statist.} \textbf{36}, 199--227.

\bibitem[{Brownlees \emph{et~al.}(2018)Brownlees, Nualart \& Sun}]{BNS2018}
Brownlees, C., Nualart, E. \& Sun, Y. (2018).
\newblock Realized networks.
\newblock \emph{J. Appl. Econometrics} \textbf{33}, 986--1006.

\bibitem[{B{\"u}hlmann \& {v}an~de Geer(2011)}]{BvdG2011}
B{\"u}hlmann, P. \& {v}an~de Geer, S. (2011).
\newblock \emph{Statistics for high-dimensional data}.
\newblock Springer.

\bibitem[{Cai \emph{et~al.}(2016{\natexlab{a}})Cai, Liu \& Zhou}]{CLZ2016}
Cai, T.~T., Liu, W. \& Zhou, H.~H. (2016{\natexlab{a}}).
\newblock Estimating sparse precision matrix: Optimal rates of convergence and
  adaptive estimation.
\newblock \emph{Ann. Statist.} \textbf{44}, 455--488.

\bibitem[{Cai \emph{et~al.}(2016{\natexlab{b}})Cai, Ren \& Zhou}]{CRZ2016}
Cai, T.~T., Ren, Z. \& Zhou, H.~H. (2016{\natexlab{b}}).
\newblock Estimating structured high-dimensional covariance and precision
  matrices: Optimal rates and adaptive estimation.
\newblock \emph{Electron. J. Stat.} \textbf{10}, 1--59.

\bibitem[{Campbell \emph{et~al.}(1997)Campbell, Lo \& MacKinlay}]{CLM1997}
Campbell, J.~Y., Lo, A.~W. \& MacKinlay, A.~C. (1997).
\newblock \emph{The econometrics of financial markets}.
\newblock Princeton University Press.

\bibitem[{Chang \emph{et~al.}(2018)Chang, Qiu, Yao \& Zou}]{CQYZ2018}
Chang, J., Qiu, Y., Yao, Q. \& Zou, T. (2018).
\newblock Confidence regions for entries of a large precision matrix.
\newblock \emph{J. Econometrics} \textbf{206}, 57--82.

\bibitem[{Christensen \emph{et~al.}(2017)Christensen, Podolskij, Thamrongrat \&
  Veliyev}]{CPTV2017}
Christensen, K., Podolskij, M., Thamrongrat, N. \& Veliyev, B. (2017).
\newblock Inference from high-frequency data: A subsampling approach.
\newblock \emph{J. Econometrics} \textbf{197}, 245--272.

\bibitem[{Cl\'{e}ment \& Gloter(2011)}]{CG2011}
Cl\'{e}ment, E. \& Gloter, A. (2011).
\newblock Limit theorems in the {F}ourier transform method for the estimation
  of multivariate volatility.
\newblock \emph{Stochastic Process. Appl.} \textbf{121}, 1097--1124.

\bibitem[{Cochrane(2005)}]{Cochrane2005}
Cochrane, J.~H. (2005).
\newblock \emph{Asset pricing}.
\newblock Princeton University Press, revised edn.

\bibitem[{Dai \emph{et~al.}(2019)Dai, Lu \& Xiu}]{DLX2019}
Dai, C., Lu, K. \& Xiu, D. (2019).
\newblock Knowing factors or factor loadings, or neither? {E}valuating
  estimators of large covariance matrices with noisy and asynchronous data.
\newblock \emph{J. Econometrics} \textbf{208}, 43--79.

\bibitem[{Duchi \emph{et~al.}(2008)Duchi, Gould \& Koller}]{DGK2008}
Duchi, J., Gould, S. \& Koller, D. (2008).
\newblock Projected subgradient methods for learning sparse {G}aussians.
\newblock In \emph{Proceedings of the twenty-fourth conference on uncertainty
  in artificial intelligence}. AUAI Press, pp. 153--160.

\bibitem[{Fan \emph{et~al.}(2016)Fan, Furger \& Xiu}]{FFX2016}
Fan, J., Furger, A. \& Xiu, D. (2016).
\newblock Incorporating global industrial classification standard into
  portfolio allocation: A simple factor-based large covariance matrix estimator
  with high-frequency data.
\newblock \emph{J. Bus. Econom. Statist.} \textbf{34}, 489--503.

\bibitem[{Fan \& Kim(2018)}]{FK2017}
Fan, J. \& Kim, D. (2018).
\newblock Robust high-dimensional volatility matrix estimation for
  high-frequency factor model.
\newblock \emph{J. Amer. Statist. Assoc.} \textbf{113}, 1268--1283.

\bibitem[{Fan \emph{et~al.}(2012)Fan, Li \& Yu}]{FLY2012}
Fan, J., Li, Y. \& Yu, K. (2012).
\newblock Vast volatility matrix estimation using high-frequency data for
  portfolio selection.
\newblock \emph{J. Amer. Statist. Assoc.} \textbf{107}, 412--428.

\bibitem[{Fan \emph{et~al.}(2011)Fan, Liao \& Mincheva}]{FLM2011}
Fan, J., Liao, Y. \& Mincheva, M. (2011).
\newblock High-dimensional covariance matrix estimation in approximate factor
  models.
\newblock \emph{Ann. Statist.} \textbf{39}, 3320--3356.

\bibitem[{Friedman \emph{et~al.}(2008)Friedman, Hastie \& Tibshirani}]{FHT2008}
Friedman, J., Hastie, T. \& Tibshirani, R. (2008).
\newblock Sparse inverse covariance estimation with the graphical lasso.
\newblock \emph{Biostatistics} \textbf{9}, 432--441.

\bibitem[{Friedman \emph{et~al.}(2010)Friedman, Hastie \& Tibshirani}]{FHT2010}
Friedman, J., Hastie, T. \& Tibshirani, R. (2010).
\newblock Regularization paths for generalized linear models via coordinate
  descent.
\newblock \emph{Journal of Statistical Software} \textbf{33}, 1--22.

\bibitem[{Hautsch \emph{et~al.}(2012)Hautsch, Kyj \& Oomen}]{HKO2012}
Hautsch, N., Kyj, L.~M. \& Oomen, R.~C. (2012).
\newblock A blocking and regularization approach to high-dimensional realized
  covariance estimation.
\newblock \emph{J. Appl. Econometrics} \textbf{27}, 625--645.

\bibitem[{Horn \& Johnson(2013)}]{HJ2013}
Horn, R.~A. \& Johnson, C.~R. (2013).
\newblock \emph{Matrix analysis}.
\newblock Cambridge University Press, 2nd edn.

\bibitem[{Hoyle(2011)}]{Hoyle2011}
Hoyle, D.~C. (2011).
\newblock Accuracy of pseudo-inverse covariance learning---a random matrix
  theory analysis.
\newblock \emph{IEEE Transactions on Pattern Analysis and Machine Intelligence}
  \textbf{33}, 1470--1481.

\bibitem[{Jankov{\'a} \& {v}an~de Geer(2015)}]{JvdG2015}
Jankov{\'a}, J. \& {v}an~de Geer, S. (2015).
\newblock Confidence intervals for high-dimensional inverse covariance
  estimation.
\newblock \emph{Electron. J. Stat.} \textbf{9}, 1205--1229.

\bibitem[{Jankov{\'a} \& {v}an~de Geer(2018)}]{JvdG2018}
Jankov{\'a}, J. \& {v}an~de Geer, S. (2018).
\newblock Inference in high-dimensional graphical models.
\newblock In \emph{Handbook of graphical models}, chap.~14. CRC Press, pp.
  325--351.

\bibitem[{Janson(1997)}]{Janson1997}
Janson, S. (1997).
\newblock \emph{Gaussian {H}ilbert space}.
\newblock Cambridge University Press.

\bibitem[{Kim \emph{et~al.}(2018)Kim, Kong, Li \& Wang}]{KKLW2018}
Kim, D., Kong, X.-B., Li, C.-X. \& Wang, Y. (2018).
\newblock Adaptive thresholding for large volatility matrix estimation based on
  high-frequency financial data.
\newblock \emph{J. Econometrics} \textbf{203}, 69--79.

\bibitem[{Kim \& Wang(2016)}]{KW2016}
Kim, D. \& Wang, Y. (2016).
\newblock Sparse {PCA}-based on high-dimensional {It\^o} processes with
  measurement errors.
\newblock \emph{J. Multivariate Anal.} \textbf{152}, 172--189.

\bibitem[{Kim \emph{et~al.}(2016)Kim, Wang \& Zou}]{KWZ2016}
Kim, D., Wang, Y. \& Zou, J. (2016).
\newblock Asymptotic theory for large volatility matrix estimation based on
  high-frequency financial data.
\newblock \emph{Stochastic Process. Appl.} \textbf{126}, 3527--3577.

\bibitem[{Koike(2019)}]{Koike2018sk}
Koike, Y. (2019).
\newblock Mixed-normal limit theorems for multiple {S}korohod integrals in
  high-dimensions, with application to realized covariance.
\newblock \emph{Electron. J. Stat.} \textbf{13}, 1443--1522.

\bibitem[{Koike \& Yoshida(2019)}]{KY2019}
Koike, Y. \& Yoshida, N. (2019).
\newblock Covariance estimation and quasi-likelihood analysis.
\newblock In J.~Chevallier, S.~Goutte, D.~Guerreiro, S.~Saglio \& B.~Sanhaji,
  eds., \emph{Financial mathematics, volatility and covariance modelling},
  vol.~2. Routledge.
\newblock Forthcoming.

\bibitem[{Kong \& Liu(2018)}]{KL2018}
Kong, X.-B. \& Liu, C. (2018).
\newblock Testing against constant factor loading matrix with large panel
  high-frequency data.
\newblock \emph{J. Econometrics} \textbf{204}, 301--319.

\bibitem[{Lam \emph{et~al.}(2017)Lam, Feng \& Hu}]{LFH2017}
Lam, C., Feng, P. \& Hu, C. (2017).
\newblock Nonlinear shrinkage estimation of large integrated covariance
  matrices.
\newblock \emph{Biometrika} \textbf{104}, 481--488.

\bibitem[{Lange(2013)}]{Lange2013}
Lange, K. (2013).
\newblock \emph{Optimization}.
\newblock Springer, 2nd edn.

\bibitem[{Ledoit \& Wolf(2012)}]{LW2012}
Ledoit, O. \& Wolf, M. (2012).
\newblock Nonlinear shrinkage estimation of large-dimensional covariance
  matrices.
\newblock \emph{Ann. Statist.} \textbf{40}, 1024--1060.

\bibitem[{Magnus \& Neudecker(1988)}]{MN1988}
Magnus, J.~R. \& Neudecker, H. (1988).
\newblock \emph{Matrix differential calculus with applications in statistics
  and econometrics}.
\newblock Wiley.

\bibitem[{Morimoto \& Nagata(2017)}]{MN2017}
Morimoto, T. \& Nagata, S. (2017).
\newblock Robust estimation of a high-dimensional integrated covariance matrix.
\newblock \emph{Comm. Statist. Simulation Comput.} \textbf{46}, 1102--1112.

\bibitem[{Nualart(2006)}]{Nualart2006}
Nualart, D. (2006).
\newblock \emph{The {M}alliavin calculus and related topics}.
\newblock Springer, 2nd edn.

\bibitem[{Ogihara(2018)}]{Ogihara2018}
Ogihara, T. (2018).
\newblock Parametric inference for nonsynchronously observed diffusion
  processes in the presence of market microstructure noise.
\newblock \emph{Bernoulli} \textbf{24}, 3318--3383.

\bibitem[{Pelger(2019)}]{Pelger2019}
Pelger, M. (2019).
\newblock Large-dimensional factor modeling based on high-frequency
  observations.
\newblock \emph{J. Econometrics} \textbf{208}, 23--42.

\bibitem[{Pinelis(1994)}]{Pinelis1994}
Pinelis, I. (1994).
\newblock Optimum bounds for the distributions of martingales in {B}anach
  spaces.
\newblock \emph{Ann. Probab.} \textbf{22}, 1679--1706.

\bibitem[{Podolskij \& Vetter(2010)}]{PV2010}
Podolskij, M. \& Vetter, M. (2010).
\newblock Understanding limit theorems for semimartingales: a short survey.
\newblock \emph{Stat. Neerl.} \textbf{64}, 329--351.

\bibitem[{Rei\ss \emph{et~al.}(2015)Rei\ss, Todorov \& Tauchen}]{RTT2015}
Rei\ss, M., Todorov, V. \& Tauchen, G. (2015).
\newblock Nonparametric test for a constant beta between {I}t\^o
  semi-martingales based on high-frequency data.
\newblock \emph{Stochastic Process. Appl.} \textbf{125}, 2955--2988.

\bibitem[{Ren \emph{et~al.}(2015)Ren, Sun, Zhang \& Zhou}]{RSZZ2015}
Ren, Z., Sun, T., Zhang, C.-H. \& Zhou, H.~H. (2015).
\newblock Asymptotic normality and optimalities in estimation of large
  {G}aussian graphical models.
\newblock \emph{Ann. Statist.} \textbf{43}, 991--1026.

\bibitem[{Rothman \emph{et~al.}(2008)Rothman, Bickel, Levina \& Zhu}]{RBLZ2008}
Rothman, A.~J., Bickel, P.~J., Levina, E. \& Zhu, J. (2008).
\newblock Sparse permutation invariant covariance estimation.
\newblock \emph{Electron. J. Stat.} \textbf{2}, 494--515.

\bibitem[{Sustik \& Calderhead(2012)}]{SC2012}
Sustik, M.~A. \& Calderhead, B. (2012).
\newblock {GLASSOFAST}: An efficient {GLASSO} implementation.
\newblock UTCS Technical Report TR-12-29, The University of Texas at Austin.

\bibitem[{Tao \emph{et~al.}(2013{\natexlab{a}})Tao, Wang \& Chen}]{TWC2013}
Tao, M., Wang, Y. \& Chen, X. (2013{\natexlab{a}}).
\newblock Fast convergence rates in estimating large volatility matrices using
  high-frequency financial data.
\newblock \emph{Econometric Theory} \textbf{29}, 838--856.

\bibitem[{Tao \emph{et~al.}(2013{\natexlab{b}})Tao, Wang \& Zhou}]{TWZ2013}
Tao, M., Wang, Y. \& Zhou, H. (2013{\natexlab{b}}).
\newblock Optimal sparse volatility matrix estimation for high-dimensional
  {I}t{\^o} processes with measurement errors.
\newblock \emph{Ann. Statist.} \textbf{41}, 1816--1864.

\bibitem[{{v}an~der Vaart \& Wellner(1996)}]{VW1996}
{v}an~der Vaart, A.~W. \& Wellner, J.~A. (1996).
\newblock \emph{Weak convergence and empirical processes}.
\newblock Springer.

\bibitem[{Vershynin(2018)}]{Vershynin2018}
Vershynin, R. (2018).
\newblock \emph{High-dimensional probability}.
\newblock Cambridge University Press.

\bibitem[{Wang \& Zou(2010)}]{WZ2010}
Wang, Y. \& Zou, J. (2010).
\newblock Vast volatility matrix estimation for high-frequency financial data.
\newblock \emph{Ann. Statist.} \textbf{38}, 943--978.

\bibitem[{Witten \emph{et~al.}(2011)Witten, Friedman \& Simon}]{WFS2011}
Witten, D.~M., Friedman, J.~H. \& Simon, N. (2011).
\newblock New insights and faster computations for the graphical lasso.
\newblock \emph{J. Comput. Graph. Statist.} \textbf{20}, 892--900.

\bibitem[{Yuan \& Lin(2007)}]{YL2007}
Yuan, M. \& Lin, Y. (2007).
\newblock Model selection and estimation in the {G}aussian graphical model.
\newblock \emph{Biometrika} \textbf{94}, 19--35.

\bibitem[{Zheng \& Li(2011)}]{ZL2011}
Zheng, X. \& Li, Y. (2011).
\newblock On the estimation of integrated covariance matrices of high
  dimensional diffusion processes.
\newblock \emph{Ann. Statist.} \textbf{39}, 3121--3151.

\end{thebibliography}

}

\end{document}